\pgfplotsset{compat=newest}
\tikzset{cross/.style={cross out, draw=black, minimum size=2*(#1-\pgflinewidth), inner sep=0pt, outer sep=0pt},
	%default radius will be 1pt.
	cross/.default={1pt}}
\DeclareMathOperator*{\esssup}{ess\,sup}
\newtheorem{thm}{Theorem}[section]
\newtheorem{lem}[thm]{Lemma}
\newtheorem{prop}[thm]{Proposition}
\newtheorem{prob}[thm]{Problem}
\theoremstyle{definition}
\newtheorem{defn}[thm]{Definition}
\theoremstyle{remark}
\newtheorem{rem}[thm]{Remark}
\numberwithin{equation}{section}
\begin{document}
	
\title[Stabilization of higher order Schrödinger equations]{Stabilization of higher order  Schrödinger equations on a finite interval: Part II}%

\author{Türker Özsarı$^{\MakeLowercase{a},*}$ and Kemal Cem Yılmaz$^{\MakeLowercase{b}}$}
\address{$^a$Department of Mathematics, Bilkent University\\ Çankaya, Ankara, 06800 Turkey}
\address{$^b$Department of Mathematics, Izmir Institute of Technology\\ Urla, Izmir, 35430 Turkey}
\thanks{*Corresponding author: Türker Özsarı, turker.ozsari@bilkent.edu.tr}
\keywords{higher order Schrödinger equation, backstepping, stabilization, observer, boundary controller, exponential stability.}
\subjclass[2020]{35Q93, 93B52, 93C20, 93D15, 93D20, 93D23 (primary), and 35A01, 35A02, 35Q55, 35Q60 (secondary)}

\begin{abstract}
Backstepping based controller and observer models were designed for higher order linear and nonlinear Schrödinger equations on a finite interval in \cite{batal2020} where the controller was assumed to be acting from the left endpoint of the medium. In this companion paper, we further the analysis by considering boundary controller(s) acting at the right endpoint of the domain.  It turns out that the problem is more challenging in this scenario as the associated boundary value problem for the backstepping kernel becomes overdetermined and lacks a smooth solution.  The latter is essential to switch  back and forth between the original plant and the so called target system. To overcome this difficulty we rely on the strategy of using an imperfect kernel, namely one of the boundary conditions in kernel PDE model is disregarded. The drawback is that one loses rapid stabilization in comparison with the left endpoint controllability. Nevertheless,  the exponential decay of the $L^2$-norm with a certain rate still holds. The observer design is associated with new challenges from the point of view of wellposedness and one has to prove smoothing properties for an associated initial boundary value problem with inhomogeneous boundary data.  This problem is solved by using Laplace transform in time.  However, the Bromwich integral that inverts the transformed solution is associated with certain analyticity issues which are treated through a subtle analysis. Numerical algorithms and simulations verifying the theoretical results are given.

\end{abstract}

\maketitle
\newpage
\tableofcontents

\section{Introduction}
\subsection{Statements of problems and main results}
Backstepping based controller and observer models were designed for higher order linear and nonlinear Schrödinger equations on a finite interval in \cite{batal2020} where the controller was assumed to be acting from the left endpoint of the medium. In this companion paper, we further the
analysis by considering boundary controller(s) acting at the right endpoint of the domain.  We will consider only the linear higher order Schrödinger (HLS) equation in this  paper:
	\begin{eqnarray} \label{plant_lin}
		\begin{cases}
			iu_t + i\beta u_{xxx} +\alpha u_{xx} +i\delta u_x = 0, x\in (0,L), t\in (0,T),\\
			u(0,t)=0, u(L,t)=h_0(t), u_x(L,t)=h_1(t),\\
			u(x,0)=u_0(x),
		\end{cases}
	\end{eqnarray}
	where $\beta > 0$, $\alpha, \delta \in \mathbb{R}$, $h_0(t) = h_0(u(\cdot,t))$ and $h_1(t) = h_1(u(\cdot,t))$ are feedbacks acting at the right endpoint of the domain. The control design results of this paper can be extended to associated higher-order nonlinear  Schrödinger equations
$$iu_t + i\beta u_{xxx} +\alpha u_{xx} +i\delta u_x + f(u) = 0$$  as in Part I (see \cite{batal2020}) with additional assumptions on the coefficients, but this topic is omitted here considering the volume of current text and postponed to a future paper.  In addition, it is also possible to consider other sets of boundary conditions here as in Part I that involves second order traces such as $$u(0,t)=0, u_x(L,t)=h_0(t), u_{xx}(L,t)=h_1(t),$$ but this will also be discussed in another place.
	
	The higher-order nonlinear Schrödinger equation was originally given by
	\begin{equation} \label{hnls_or}
	i u_t + \frac{1}{2} u_{xx} + |u|^2u + \epsilon i \left(\beta_1 u_{xxx} + \beta_2 (|u|^2u)_x + \beta_3 u |u|^2_x\right) = 0,
	\end{equation}
	which has been used to describe the evolution of femtosecond pulse propagation in a nonlinear optical fiber \cite{koda85,koda87}. In this equation the first term represents the evolution, second term is the group velocity dispersion, third term is self-phase modulation, fourth term is the higher order linear dispersive term, fifth term is related to self-steepening and sixth term is related to self-frequency shift due to the stimulated Raman scattering. In the absence of the last three terms, the model becomes classical nonlinear Schrödinger equation (NLS) which describes slowly varying wave envelopes in a dispersive medium. It has applications in several fields of physics such as plasma physics, solid-state physics, nonlinear optics. It also describes the propagation of picosecond optical pulse in a mono-mode fiber \cite{xu2002soliton}. However, for the pulses in the femtosecond regime, the NLS equation becomes inadequate and higher order nonlinear and dispersive terms become crucial. See \cite{agrawal} for a detailed discussion of the higher order effects upon the propagation of an optical pulse.
	
	Higher order linear and nonlinear Schrödinger equations were studied from the point of many different aspects.   Regarding the wellposedness of solutions, we refer the reader to \cite{Carvajal03,Carvajal04,Carvajal06,staffilani2,Laurey97,staffilani1,Taka00}. A numerical study of this problem was given in \cite{Caval19}. From the controllability and stabilization perspective, we refer the reader to \cite{Ceballos05} for exact boundary controllability, \cite{Bis07} and \cite{chen2018} for internal feedback stabilization and \cite{batal2020} for boundary feedback stabilization.

	From a practical point of view, stabilization of solutions is necessary in order to prevent the transmission of an undesirable pulse propagation. Our study offers a practical solution to this issue because: (i) the stabilization is fast, i.e. the absorption effect is exponential and (ii) the control acts only from the boundary which is desirable when access to medium is limited.  In the absence of feedback controllers, $L^2-$norm of the solution satisfies
	\begin{equation} \label{wdecayL2}
		\frac{d}{dt} \|u(\cdot,t)\|_{L^2(0,L)}^2 = -\beta |u_x(0,t)|^2 \leq 0.
	\end{equation}
	This can be shown by taking $L^2-$inner product of the main equation in \eqref{plant_lin} with $u$ and applying integration by parts. From the estimate \eqref{wdecayL2}, we infer that $L^2-$norm of the solution does not increase in time. Furthermore, it was shown that if $\alpha^2 + 3\beta\delta > 0$ and
	\begin{equation} \label{critic_len}
		L \in \mathcal{N} \doteq \left\{2\pi \beta \sqrt{\frac{k^2 + kl + l^2}{3\beta \delta + \alpha^2}}: k,l \in \mathbb{Z}^+  \right\},
	\end{equation}
	then the $L^2-$norm of the solution does not necessarily decay to zero. Here $\mathcal{N}$ is the set of critical lengths in the context of exact boundary controllability for the HLS (see \cite{Ceballos05, dasilva} for the derivation of this set of critical lengths). For instance choosing the coefficients $\beta = 1$, $\alpha = 2$ and $\delta = 8$ with $k = 1$ and $l = 2$, we obtain $L = \pi \in \mathcal{N}$. Moreover, choosing the initial state as
	\begin{equation*}
		u_0(x) = 3 - e^{4ix} - 2e^{-2ix},
	\end{equation*}
	we see that $u(x,t) = u_0(x)$ solves \eqref{plant_lin}. Therefore, we find a time--independent solution with a constant energy if no control acts on the system.
	
	In this paper, we are interested in constructing suitable feedback controllers to make sure that we can steer all solutions to zero with an exponential rate of decay on domains of both critical and uncritical lengths. More precisely, we consider the problem below:
	
	\begin{prob} \label{prob1}
		Given $L>0$, find $\lambda > 0$ and feedback control laws $h_0(t) = h_0(u(\cdot,t))$ and $h_1(t) = h_1(u(\cdot,t))$ such that the solution of \eqref{plant_lin} satisfies $\|u(\cdot,t)\|_{L^2(0,L)} = O(e^{-\lambda t})$ for some $t > 0$.
	\end{prob}
	
	In order to solve this problem, we use backstepping method (see \cite{KrsBook} for a general discussion on the backstepping method), which is a well studied method for the second order evolutionary partial differential equations \cite{smys4,Liu03,smys1,smys2,smys3}. In recent years, researchers studied backstepping stabilization of several higher order evolutionary equations that include third order dispersion term \cite{batal2020,cerpacoron2013,Marx18,eda,Tang2013,Tang2015}. In these studies on KdV type equations, a single boundary feedback control is located at one endpoint and the number of boundary conditions located at the opposite endpoint are two. In particular, Part I of this study \cite{batal2020} assumes a control input acting from the left endpoint, and there are two homogeneous boundary conditions that are imposed from the right endpoint. Conversely, if there are two boundary controllers acting from the right endpoint and a single homogeneous boundary condition imposed at the left endpoint, which is the subject of the present paper, the situation becomes mathematically very different as we explain below.
	
	To this end, we want to transform the original plant via the \emph{backstepping} transformation
	\begin{equation}\label{bt}
		w(x,t) = [(I - \Upsilon_k) u](x,t) \doteq u(x,t) - \int_0^x k(x,y) u(y,t)dy
	\end{equation}
	to a target system which already has the desired exponential stability. The classical approach is to take the linearly damped version of the same type of pde with homogeneous boundary conditions:
	\begin{eqnarray} \label{tar_lin}
	\begin{cases}
	iw_t + i\beta w_{xxx} +\alpha w_{xx} +i\delta w_x  + ir w= 0, x\in (0,L), t\in (0,T),\\
	w(0,t)= w(L,t)= w_x(L,t)=0,\\
	w(x,0)=w_0(x) \doteq u_0(x) - \int_0^x k(x,y) u_0(y)dy.
	\end{cases}
	\end{eqnarray}
	The key point here is to be able to show the existence of a sufficiently smooth kernel and that the transformation $I - \Upsilon_k$ has a bounded inverse on a suitable space. Then, the wellposedness and stability properties for the target model will also be true for the original plant. Figure \ref{fig:bs_figure} below summarizes the standard algorithm of the backstepping method.
	\begin{figure}[H]
		\begin{tikzpicture}
		\node[draw] (plant) at (0,0)
		{\begin{tabular}{l l}
			\textbf{Linear Plant}  &\\
			\multicolumn{2}{l}{State variable $u(x,t)$}
			\end{tabular}};
		
		\node[draw] (target) at (7.5,0)
		{\begin{tabular}{l l}
			\textbf{Target}  &\\
			\multicolumn{2}{l}{State variable $w(x,t)$}
			\end{tabular}};
		
		\draw [->,bend right=-18] (plant) to node [above,midway] {$w(x,t) = u(x,t) - \int_0^x k(x,y) u(y,t)dy$} (target);
		
		\draw [->,bend right=-18] (target) to node [below,midway] {Inverse transformation} (plant);
		\end{tikzpicture}
		\caption{Backstepping} \label{fig:bs_figure}
	\end{figure}
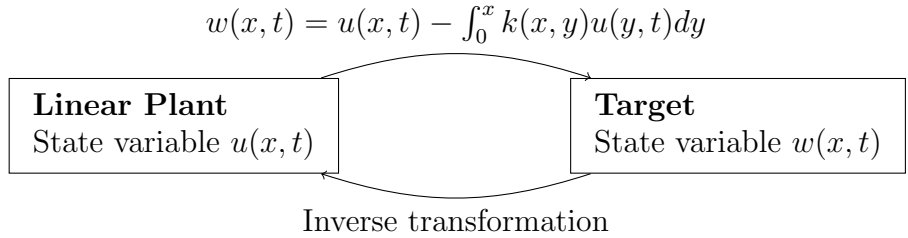
	
	To prove the existence of a kernel, we differentiate \eqref{bt} and use the original plant together with the target system to see what conditions $k$ must satisfy. After some calculations (see Appendix \ref{app1} for details), we deduce that $k$ must solve the following boundary value problem
	\begin{eqnarray}\label{kernel_k}
	\begin{cases}
	\beta(k_{xxx}+k_{yyy})-i\alpha(k_{xx}-k_{yy})+\delta(k_x+k_y)+rk=0, \\
	k(x,x)= k_y(x,0) = k(x,0)=0,\\
	k_x(x,x)=\frac{rx}{3\beta},
	\end{cases}
	\end{eqnarray}
	where $(x,y)$ belongs to the triangular region $\Delta_{x,y}\doteq\{(x,y)\in \mathbb{R}^2\,|\,y\in (0,x), x\in (0,L)\}$. To solve this problem, we change variables as
	$s=x - y$ and $t = y$ and write $G(s,t) \equiv k(x,y)$. Then \eqref{kernel_k} transforms into
	\begin{eqnarray}\label{kernel_G}
	\begin{cases}
	\beta(3G_{sst}-3G_{tts}+G_{ttt})+i\alpha(G_{tt}-2G_{ts})+\delta G_t+ r G=0,\\
	G(0,t)= G_t(s,0) = G(s,0)=0,\\
	G_s(0,t)=\frac{rt}{3\beta},
	\end{cases}
	\end{eqnarray}
	where $(s,t)$ belongs to $\Delta_{s,t}\doteq\{(s,t)\in \mathbb{R}^2\,|\,t\in (0,L-s), s\in (0,L)\}$. See Figure \ref{fig:delta} for transformation of the triangular region under the above change of variables.
	\begin{figure}[H]
		\begin{tikzpicture}
		\draw [black, fill={rgb:orange,1;yellow,2;pink,5}] (0,0) -- (2.5,2.5) -- (2.5,0) -- (0,0);
		\draw [thick, <->] (0,3) node[above]{$y$} -- (0,0) -- (3,0) node[right]{$x$};
		\filldraw
		(2.5,0) circle (1pt) node[align=center, below] {$L$};
		\filldraw
		(0,2.5) circle (1pt) node[align=center, left] {$L$};
		\draw[dotted]
		(0,2.5) -- (2.5,2.5);
		\node[align=center] (title) at (1.5,-0.7) {{Triangular region $\Delta_{x,y}$}};
		
		\node  (A) at (3,1.8)  {};
		\node  (B) at (6,1.8)  {};
		\draw[->,thick]  (A.north)  to [bend left = 30]  node[above,yshift=12pt] {$s = x - y$} node[above] {$t = y$} (B.north);
		
		\draw [black, fill={rgb:orange,1;yellow,2;pink,5}] (7,2.5) -- (7,0) -- (9.5,0) --  (7,2.5);
		\draw [thick, <->] (7,3) node[above]{$t$} -- (7,0) -- (10,0) node[right]{$s$};
		\filldraw
		(9.5,0) circle (1pt) node[align=center, below] {$L$};
		\filldraw
		(7,2.5) circle (1pt) node[align=center, left] {$L$};
		\node[align=center] (title) at (8.5,-0.7) {{Triangular region $\Delta_{s,t}$}};
		\end{tikzpicture}
		\caption{Triangular regions} \label{fig:delta}
	\end{figure}
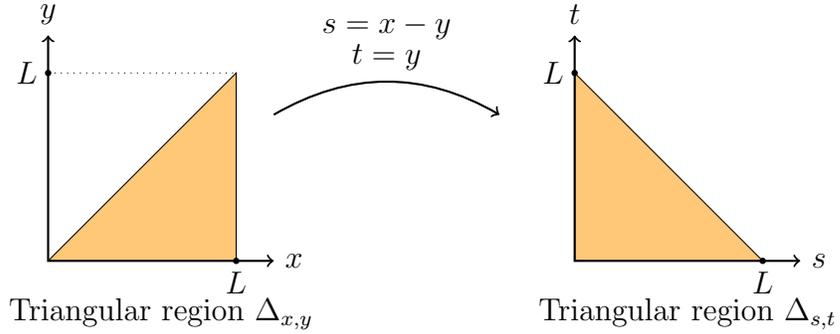
	Observe that there is a mismatch between $G_t(s,0)$ and $G_s(0,t)$ in the sense that $0=G_{ts}(0,0) \neq G_{st}(0,0)=\frac{r}{3\beta}$. This implies that \eqref{kernel_G} cannot have a smooth solution and the standard algorithm of backstepping method fails. This issue was previously observed in Korteweg de-Vries equation \cite{cerpacoron2013} and later treated in the case of uncritical domains in \cite{coronlu2014} and in the case of critical domains in \cite{BatalOzsari2018-1}.  The idea of the latter work was to drop one of the boundary conditions from the kernel pde model and take $r$ sufficiently small.  Note that if $r$ is small, then the mismatch is also small and one can hope that the solution of the corrected pde model will yield a kernel which is good enough for our purposes. Once we drop the boundary condition $G_t(s,0)=0$ from \eqref{kernel_G}, the corrected version of the pde model \eqref{kernel_G} becomes
	\begin{eqnarray}\label{kernel_pG}
		\begin{cases}
			\beta(3G^*_{sst}-3G^*_{tts}+G^*_{ttt})+i\alpha(G^*_{tt}-2G^*_{ts})+\delta G^*_t+ r G^*=0,\\
			G^*(0,t)= G^*(s,0)=0,\\
			G^*_s(0,t)=\frac{rt}{3\beta},
		\end{cases}
	\end{eqnarray}
	where $(s,t)\in \Delta_{s,t}$. Setting $k^*(x,y)=G^*(s,t)$, we deduce that $k^*$ is the sought after solution of
	\begin{eqnarray} \label{kernel_pk}
		\begin{cases}
			\beta(k^*_{xxx}+k^*_{yyy})-i\alpha(k^*_{xx}-k^*_{yy})+\delta(k^*_x+k^*_y)+rk^*=0, \\
			k^*(x,x)= k^*(x,0)=0,\\
			k^*_x(x,x)=\frac{rx}{3\beta},
		\end{cases}
	\end{eqnarray}
	where $(x,y)\in\Delta_{x,y}$.  Existence of a smooth $k^*$ is given in Lemma \ref{lemkernel}.
	
	If we use the backstepping transformation
	\begin{equation} \label{p_bt}
		w^*(x,t) = u(x,t) - \int_0^x k^*(x,y) u(y,t) dy,
	\end{equation}
	then the corresponding target model becomes
	\begin{eqnarray} \label{p_tar_lin}
	\begin{cases}
	iw^*_t + i\beta w^*_{xxx} +\alpha w^*_{xx} +i\delta w^*_x  + ir w^*= i\beta k^*_y(x,0) w^*_x(0,t), x\in (0,L), t\in (0,T),\\
	w^*(0,t)= w^*(L,t)= w^*_x(L,t)=0,\\
	w^*(x,0)=w^*_0(x) \doteq u_0(x) - \int_0^x k^*(x,y)u_0(y)dy.
	\end{cases}
	\end{eqnarray}
	See \eqref{k_cond_1}-\eqref{k_cond_5} in Appendix \ref{app1} for details. Notice that \eqref{p_tar_lin} is a modified version of \eqref{tar_lin} in the sense that there is a trace term at the right hand side of the main equation. This trace term is due to disregarding the condition $k_y(x,0)=0$ and using the relation $u_x(0,t) = w_x^*(0,t)$. It is shown in Proposition \ref{ctrl_tar_stab} that the solution of \eqref{p_tar_lin} exponentially decays to zero for small $r$. Furhermore, it is important that the transformation \eqref{p_bt} has a bounded inverse (see Lemma \ref{inverselem}). Graphical illustration of the new scheme is shown in Figure \ref{fig:pseudo-bs}.
	\begin{figure}[H]
		\begin{tikzpicture}
		\node[draw] (plant) at (0,0)
		{\begin{tabular}{l l}
			\textbf{Linear Plant} \\ \textbf{}  &\\
			\multicolumn{2}{l}{State variable $u(x,t)$}
			\end{tabular}};
		
		\node[draw] (target) at (7.5,0)
		{\begin{tabular}{l l}
			\textbf{Modified target} \\ \textbf{with a trace term}  &\\
			\multicolumn{2}{l}{State variable $w^*(x,t)$}
			\end{tabular}};
		
		\draw [->,bend right=-18] (plant) to node [above,midway] {$w^*(x,t) = u(x,t) - \int_0^x k^*(x,y) u(y,t)dy$} (target);
		
		\draw [->,bend right=-18] (target) to node [below,midway] {Inverse transformation} (plant);
		\end{tikzpicture}
		\caption{Backstepping with an imperfect kernel}
		\label{fig:pseudo-bs}
	\end{figure}
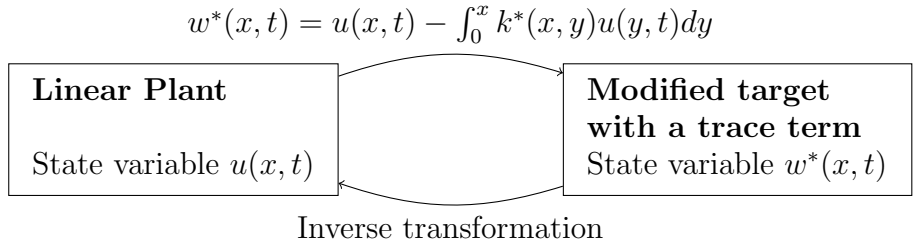
	Based on the above strategy feedback controllers take the following forms:
	\begin{equation} \label{ctrl_controllers}
		h_0(t) = \int_0^L k^*(L,y)u(y,t)dy, \quad h_1(t) = \int_0^L k_x^*(L,y)u(y,t)dy.
	\end{equation}

	Let us introduce the space $$X_T^\ell \doteq C([0,T];H^\ell(0,L)) \cap L^2(0,T;H^{\ell+1}(0,L)), \ell\ge 0.$$ Now, regarding plant \eqref{plant_lin}, we prove the following theorem.
	\begin{thm} \label{ctrl_thm}
		Let $T, \beta > 0$, $\alpha, \delta \in \mathbb{R}$, $u_0 \in L^2(0,L)$. Assume that the right endpoint feedback controllers $h_0(t)$, $h_1(t)$ are given by \eqref{ctrl_controllers} and  let $k^*$ be a smooth backstepping kernel solving \eqref{kernel_pk}. Then, we have the following:
		\begin{enumerate}
			\item [(i)]{(Wellposedness)} \eqref{plant_lin} has a unique solution $u \in X_T^0$ satisfying also $u_x \in C([0,L];L^2(0,T))$. Moreover, if $u_0 \in H^3(0,L)$ and $w_0^* = (I - \Upsilon_{k^*})[u_0]$ satisfies the  compatibility conditions, then $u \in X_T^3$.
			
			\item[(ii)]{(Decay)} Suppose $u_0 \in L^2(0,L)$. Then, there is $r > 0$ such that $$\lambda = \beta\left(\frac{r}{\beta} -  \frac{\|k^*_y(\cdot,0;r)\|_{L^2(0,L)}^2}{2}\right) > 0.$$ Moreover, the solution $u$ of \eqref{plant_lin} with feedback controllers \eqref{ctrl_controllers} satisfies
			\begin{equation*}
				\|u(\cdot,t)\|_{L^2(0,L)} \leq c_{k^*} \|u_0\|_{L^2(0,L)} e^{-\lambda t},\quad t \geq 0,
			\end{equation*}
			where $c_{k^*}$ is a nonnegative constant which depends on $k^*$.
		\end{enumerate}
	\end{thm}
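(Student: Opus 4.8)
The plan is to prove both parts by passing to the target system \eqref{p_tar_lin} through the backstepping transformation \eqref{p_bt} and transferring the conclusions back to the plant via the bounded invertibility of $I-\Upsilon_{k^*}$. First I would record the consistency of the controllers: evaluating \eqref{p_bt} and its $x$-derivative at $x=L$ and using $k^*(L,L)=0$ (from $k^*(x,x)=0$) shows that the homogeneous conditions $w^*(L,t)=w^*_x(L,t)=0$ are equivalent to $u(L,t)=\int_0^L k^*(L,y)u(y,t)\,dy$ and $u_x(L,t)=\int_0^L k^*_x(L,y)u(y,t)\,dy$, which are exactly the feedback laws \eqref{ctrl_controllers}, while $w^*(0,t)=u(0,t)=0$ is automatic. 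Hence \eqref{plant_lin} with \eqref{ctrl_controllers} is mapped precisely onto \eqref{p_tar_lin}, and it suffices to analyze the latter.

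For wellposedness (i) I would first establish wellposedness of \eqref{p_tar_lin} in $X_T^0$. The homogeneous third-order dispersive operator subject to $w^*(0,t)=w^*(L,t)=w^*_x(L,t)=0$ generates a strongly continuous semigroup, and the multiplier (Kato smoothing) estimates yield both the interior regularity $w^*\in X_T^0$ and the sharp trace regularity $w^*_x\in C([0,L];L^2(0,T))$, in particular the well-definedness of $w^*_x(0,\cdot)$ as an $L^2$-in-time object. The nonlocal forcing $i\beta k^*_y(x,0)w^*_x(0,t)$ is then handled as a bounded perturbation: using the trace estimate for $w^*_x(0,\cdot)$ together with the smoothness of $k^*_y(\cdot,0)$ from Lemma \ref{lemkernel}, a contraction mapping argument on a short time interval (iterated to exhaust $[0,T]$) produces a unique solution with the stated trace regularity. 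Since $k^*$ is smooth, the Volterra operator $I-\Upsilon_{k^*}$ and its bounded inverse (Lemma \ref{inverselem}) act boundedly on $L^2(0,L)$, so $u=(I-\Upsilon_{k^*})^{-1}[w^*]$ inherits membership in $X_T^0$ and $u_x\in C([0,L];L^2(0,T))$. For the $H^3$ statement I would take $u_0\in H^3(0,L)$ with $w_0^*$ satisfying the corner compatibility conditions, run the same fixed-point scheme at the $X_T^3$ level using higher-order multiplier estimates, and transfer back through $I-\Upsilon_{k^*}$, which is likewise bounded and boundedly invertible on $H^3(0,L)$ by smoothness of $k^*$.

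For decay (ii) the energy identity for \eqref{p_tar_lin} is the heart of the matter. Multiplying by $\overline{w^*}$, integrating over $(0,L)$, and invoking the boundary conditions, the $\alpha$- and $\delta$-terms drop out and the dispersive term contributes the dissipation $-\beta|w^*_x(0,t)|^2$, giving
\begin{equation*}
\frac{d}{dt}\|w^*(\cdot,t)\|_{L^2}^2 = -\beta|w^*_x(0,t)|^2 - 2r\|w^*\|_{L^2}^2 + 2\beta\,\mathrm{Re}\!\left[w^*_x(0,t)\int_0^L k^*_y(x,0)\overline{w^*}(x,t)\,dx\right].
\end{equation*}
Bounding the cross term by Cauchy--Schwarz and Young's inequality, $2\beta|w^*_x(0,t)|\,\|k^*_y(\cdot,0)\|_{L^2}\|w^*\|_{L^2}\le \beta|w^*_x(0,t)|^2 + \beta\|k^*_y(\cdot,0)\|_{L^2}^2\|w^*\|_{L^2}^2$, cancels the boundary dissipation and yields $\frac{d}{dt}\|w^*\|_{L^2}^2\le -2\lambda\|w^*\|_{L^2}^2$ with exactly the $\lambda$ of the statement; this is the content of Proposition \ref{ctrl_tar_stab}. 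To ensure $\lambda>0$ I would exploit that, from \eqref{kernel_pk}, the kernel depends on $r$ linearly to leading order, so $\|k^*_y(\cdot,0;r)\|_{L^2}=O(r)$ as $r\to0^+$; hence $\tfrac12\|k^*_y(\cdot,0;r)\|_{L^2}^2=O(r^2)$ is dominated by $r/\beta$ for small $r$, making $\lambda>0$. Grönwall's inequality then gives $\|w^*(\cdot,t)\|_{L^2}\le\|w_0^*\|_{L^2}e^{-\lambda t}$, and transferring back with $\|w_0^*\|_{L^2}\le\|I-\Upsilon_{k^*}\|\,\|u_0\|_{L^2}$ and $\|u(\cdot,t)\|_{L^2}\le\|(I-\Upsilon_{k^*})^{-1}\|\,\|w^*(\cdot,t)\|_{L^2}$ produces the claimed bound with $c_{k^*}=\|(I-\Upsilon_{k^*})^{-1}\|\,\|I-\Upsilon_{k^*}\|$.

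The main obstacle I anticipate is the wellposedness of the target system, specifically making rigorous sense of the nonlocal trace forcing $i\beta k^*_y(x,0)w^*_x(0,t)$: one must first secure the sharp trace estimate $w^*_x\in C([0,L];L^2(0,T))$ for the third-order dispersive dynamics, so that $w^*_x(0,\cdot)$ is a genuine $L^2$-in-time quantity, before the perturbation and fixed-point argument can even be set up. The decay, by contrast, follows cleanly once the energy identity is in hand, modulo the smallness estimate $\|k^*_y(\cdot,0;r)\|_{L^2}=O(r)$ needed for positivity of $\lambda$.
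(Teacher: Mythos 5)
Your proposal is correct and follows essentially the same route as the paper: backstepping onto the modified target \eqref{p_tar_lin}, wellposedness of the target via semigroup and trace estimates plus a contraction argument, decay via the energy identity with Young's inequality (cancelling the boundary dissipation against the trace term), positivity of $\lambda$ from the smallness $\|k^*_y(\cdot,0;r)\|_{L^2}=O(r)$, and transfer back through the bounded invertibility of $I-\Upsilon_{k^*}$. The only caveat is that the two ingredients you treat as available are exactly where the paper invests its technical effort: the sharp trace regularity $w^*_x\in C([0,L];L^2(0,T))$ is not a routine multiplier bound but requires the Fourier/Laplace-transform analysis of Section 2, and the bound $\|k^*_y(\cdot,0;r)\|_{L^2}=O(r)$ is established rigorously through the $r$-uniform estimates on the successive-approximation series for the kernel, rather than by appeal to leading-order linearity in $r$; since you flagged both points, these are gaps of detail, not of structure.
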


	\begin{rem}
		It is shown in the proof of Proposition \ref{ctrl_tar_stab} that there exists $r > 0$ which guarantees the condition $\lambda > 0$. See also Table \ref{table_decay} for different values of $r, \lambda$. Note also from the same table that smallness of $r$ is essential for $\lambda$ to be positive.
	\end{rem}

	In the second part of the paper, we consider the case where the state of the system is not fully measurable, in particular at time $t=0$. However, we assume that the first order boundary trace $y_1(t) = u_x(0,t)$ and the second order boundary trace $y_2(t) = u_{xx}(0,t)$ are known, say detectable through boundary sensors. In order to deal with the robustness of the state, we construct an observer, which uses the given boundary measurements. To this end, we propose the following observer model
	\begin{eqnarray} \label{obs_lin}
		\begin{cases}
			i\hat{u}_t + i\beta \hat{u}_{xxx} +\alpha \hat{u}_{xx} +i\delta \hat{u}_x - p_1(x) (y_1(t) - \hat{u}_{x}(0,t)) \\
			- p_2(x) (y_2(t) - \hat{u}_{xx}(0,t))= 0, \quad x\in (0,L), t\in 	(0,T),\\
			\hat{u}(0,t)=0, \hat{u}(L,t)=h_0(t), \hat{u}_x(L,t)=h_1(t),\\
			\hat{u}(x,0)=\hat{u}_0(x).
		\end{cases}
	\end{eqnarray}
	In the above model, the feedback controllers $h_0(t)$, $h_1(t)$ depend on the state of the observer model. The same controllers will also be applied to the original plant \eqref{plant_lin}. $p_1$, $p_2$ are set to be observer gains and they will be constructed in such a way that the so called error $\tilde{u} = u - \hat{u}$ must approach to zero as $t$ gets larger. More precisely, we want to solve the following problem:
	\begin{prob} \label{prob2}
		Given $L > 0$, find observer gains $p_1$, $p_2$, and feedback laws $h_0(t) = h_0(\hat{u}(\cdot,t))$, $h_1(t) = h_1(\hat{u}(\cdot,t))$ such that there exists $\lambda > 0$ for which the solution $u$ of \eqref{plant_lin} satisfies $\|u(\cdot,t)\|_{L^2(0,L)} = O(e^{-\lambda t})$.
	\end{prob}	
	Note that the error function $\tilde{u}$ satisfies
	\begin{eqnarray} \label{err_lin}
	\begin{cases}
	i\tilde{u}_t + i\beta \tilde{u}_{xxx} +\alpha \tilde{u}_{xx} +i\delta \tilde{u}_x \\
+ p_1(x) \tilde{u}_x(0,t)+ p_2(x) \tilde{u}_{xx}(0,t)= 0, \quad x\in (0,L), t\in (0,T),\\
	\tilde{u}(0,t)=\tilde{u}(L,t)=\tilde{u}_x(L,t)=0,\\
	\tilde{u}(x,0)=\tilde{u}_0(x).
	\end{cases}
	\end{eqnarray}
	In order to guarantee the decay of solutions of \eqref{err_lin} at an exponential rate, we treat $p_1$ and $p_2$ as control inputs and suitably construct them via the backstepping technique. To this end, we transform \eqref{err_lin} using the transformation
	\begin{equation} \label{btp}
		\tilde{u}(x,t) = \tilde{w}(x,t) - \int_0^x p(x,y) \tilde{w}(y,t)dy,
	\end{equation}
	to a target error model
	\begin{eqnarray} \label{err_tar_lin_rem}
		\begin{cases}
			i\tilde{w}_t + i\beta \tilde{w}_{xxx} +\alpha \tilde{w}_{xx} +i\delta \tilde{w}_x + ir \tilde{w} = 0, \quad x\in (0,L), t\in (0,T),\\
			\tilde{w}(0,t)=\tilde{w}(L,t)=\tilde{w}_x(L,t)= 0,\\
			\tilde{w}(x,0)=\tilde{w}_0(x),
		\end{cases}
	\end{eqnarray}
	which has exponential decay property. Differentiating \eqref{btp} and using \eqref{err_lin} with \eqref{err_tar_lin_rem}, one can see that $p$ must satisfy the following boundary value problem
	\begin{eqnarray} \label{kernel_p_old}
		\begin{cases}
			\beta(p_{xxx}+p_{yyy})-i\alpha(p_{xx}-p_{yy})+\delta(p_x+p_y)-rp=0, \\
			p(x,x)= p(L,y)= p_x(L,y)0,\\
			p_x(x,x) = \frac{r}{3\beta}(L - x),
		\end{cases}
	\end{eqnarray}
	on $\Delta_{x,y}$ (see Appendix \ref{app2} for detailed calculations). However, changing variables as $s = L - x$, $t = x - y$ and defining $H(s,t) \equiv p(x,y)$, one can see that the resulting boundary value problem is overdetermined in the sense that there is a mismatch between the boundary conditions: $0 = H_{st}(0,0) \neq H_{ts}(0,0) = \frac{r}{3\beta}$. Therefore, there cannot exist a smooth kernel satisfying all boundary conditions. Following a similar approach as in the earlier part of the paper, we could consider disregarding one of the boundary conditions, namely $p_x(L,y) = 0$, and take $r$ sufficiently small. Then, the corrected version of the pde model \eqref{kernel_p_old} becomes
	\begin{eqnarray} \label{kernel_p}
		\begin{cases}
			\beta(p^*_{xxx}+p^*_{yyy})-i\alpha(p^*_{xx}-p_{yy})+\delta(p^*_x+p^*_y)-rp^*=0, \\
			p^*(x,x)= p^*(L,y)=0,\\
			p^*_x(x,x) = \frac{r}{3\beta}(L - x).
		\end{cases}
	\end{eqnarray}
	
	Now if we use the backstepping transformation
	\begin{equation} \label{bt_p}
		\tilde{u}(x,t) = \tilde{w}^*(x,t) - \int_0^x p^*(x,y) \tilde{w}^*(y,t)dy,
	\end{equation}
	where $p^*$ solves \eqref{kernel_p}, then the corresponding target error model for \eqref{tar_lin} becomes
	\begin{eqnarray} \label{err_tar_lin}
		\begin{cases}
			i\tilde{w}^*_t + i\beta \tilde{w}^*_{xxx} +\alpha \tilde{w}^*_{xx} +i\delta \tilde{w}^*_x + ir \tilde{w}^* = 0, \quad x\in (0,L), t\in (0,T),\\
			\tilde{w}^*(0,t)=\tilde{w}^*(L,t)=0,
			\tilde{w}^*_x(L,t)=\int_0^L p^*_x(L,y) \tilde{w}^*(y,t)dy,\\
			\tilde{w}^*(x,0)=\tilde{w}^*_0(x),
		\end{cases}
	\end{eqnarray}
	if the control gains are chosen such that $p_1(x) = i\beta p^*_y(x,0) - \alpha p^*(x,0)$ and $p_2(x) = -i\beta p^*(x,0)$ (see Appendix \ref{app2} for details). Note that nonhomogeneous Neumann type boundary condition in \eqref{err_tar_lin} is due to disregarding the condition $p_x(L,y) = 0$. Nevertheless, we still have the exponential decay of solutions of \eqref{err_tar_lin} assuming that $r$ is sufficiently small. This is given in Proposition \ref{tar_err_stab}. Also, it is not difficult to see that $p^*$ and $k^*$ are related via
	\begin{equation*}
		p^*(x,y) \equiv k^*(L-y,L-x;-r),
	\end{equation*}
	which immediately guarantees the existence of a smooth kernel $p^*$. This yields the exponential decay of the solution of the error model.
	
	Next, we apply the backstepping method to the observer model. To this end, we differentiate
	\begin{equation} \label{obs_trans}
		\hat{w}^*(x,t) = \hat{u}(x,t) - \int_0^x k^*(x,y)\hat{u}(y,t)dy
	\end{equation}
	and use \eqref{obs_lin} to deduce that $\hat{w}^*$ solves target observer model given by
	\begin{eqnarray} \label{tar_p_obs_lin}
	\begin{cases}
			i\hat{w}^*_t + i\beta \hat{w}^*_{xxx} +\alpha \hat{w}^*_{xx} +i\delta \hat{w}^*_x  + ir 	\hat{w}^*= i\beta k^*_y(x,0) \hat{w}^*_x(0,t) \\
			+ [(I - \Upsilon_{k^*})p_1](x) \tilde{w}_{x}(0,t) + [(I - \Upsilon_{k^*})p_2](x) \tilde{w}_{xx}(0,t), \quad x\in (0,L), t\in (0,T),\\
			\hat{w}^*(0,t)= \hat{w}^*(L,t)= \hat{w}^*_x(L,t)=0,\\
			\hat{w}^*(x,0)=\hat{w}^*_0(x) \doteq \hat{u}_0(x) - \int_0^x k^*(x,y)\hat{u}(y,t)dy.
		\end{cases}
	\end{eqnarray}
	Notice that we still use the solution of the corrected kernel pde model in the backstepping transformation given in \eqref{obs_trans}. Therefore, as in the first part of this paper, an extra trace term shows up in the main equation of \eqref{tar_p_obs_lin}. We prove in Proposition \ref{tar_obs_stab} that the solution of \eqref{tar_p_obs_lin} exponentially decays to zero in time, again assuming that $r$ is sufficiently small.
	
	Thanks to the bounded invertibility of the transformations $(I - \Upsilon_{k^*})$, $(I - \Upsilon_{p^*})$, stability estimates with same decay rates also hold for observer and error models. We have the following theorem for the wellposedness and stabilization of the plant-observer-error system \eqref{plant_lin}-\eqref{obs_lin}-\eqref{err_lin}:
	\begin{thm} \label{obs_thm}
		Let $T, \beta > 0$, $\alpha, \delta \in \mathbb{R}$, and $u_0, \hat{u}_0 \in H^6(0,L)$. Assume that the right endpoint feedback controllers are given by
		\begin{equation*}
			h_0(t) = \int_0^L k^*(L,y;r) \hat{u}(y,t) dy, \quad  	h_1(t) = \int_0^L k_x^*(L,y;r) \hat{u}(y,t) dy,
		\end{equation*}
		where $k^*$ and $p^*$ are smooth solutions of \eqref{kernel_pk} and \eqref{kernel_p}, respectively. Then, we have the following:
		\begin{enumerate}
			\item [(i)]{(Wellposedness)} Suppose that $\hat{w}_0^* = (I - \Upsilon_{k^*})[\hat{u}_0]$ satisfies the compatibility conditions and the pair $(\tilde{w}_0^*,\psi)$, where $\tilde{w}_0^* = (I - \Upsilon_{p^*})^{-1}[\tilde{u}_0]$, $\psi = \psi(\tilde{w}) \doteq \int_0^L p_x^*(L,y)\tilde{w}(y,t)dy$, satisfies the higher order compatibility conditions. Then the plant-observer-error system \eqref{plant_lin}-\eqref{obs_lin}-\eqref{err_lin} has a unique solution $(u, \hat{u},\tilde{u}) \in X_T^3 \times X_T^3 \times X_T^6$.
			\item [(ii)]{(Decay)} Moreoever, for sufficiently small $r > 0$, there exists $\mu > \nu > 0$ such that,
			\begin{align*}
			\|u(\cdot,t)\|_{L^2(0,L)} \lesssim& c_{k,p}\left(\|\hat{u}_0\|_{L^2(0,L)} + c_p \|u_0 - \hat{u}_0\|_{H^3(0,L)}\right) e^{-\nu t}\\
			 &+ \|u_0 - \hat{u}_0\|_{L^2(0,L)} e^{-\mu t}, \\
			\|\hat{u}(\cdot,t)\|_{L^2(0,L)} \lesssim& c_{k,p} \left( \|\hat{u}_0\|_{L^2(0,L)} + \|u_0 - \hat{u}_0\|_{H^3(0,L)} \right) e^{-\nu t}, \\
			\|(u - \hat{u})(\cdot,t)\|_{L^2(0,L)} \lesssim& c_p \|u_0 - \hat{u}_0\|_{L^2(0,L)} e^{-\mu t}, \\
			\|(u - \hat{u})(\cdot,t)\|_{H^3(0,L)} \lesssim& c_{p^\prime} \|u_0 - \hat{u}_0\|_{H^3(0,L)} e^{-\mu t}.
			\end{align*}
			for $t \geq 0$, where $c_k, c_p, c_{p^\prime}, c_{k,p}$ are nonnegative constants depending on their subindices.
		\end{enumerate}
	\end{thm}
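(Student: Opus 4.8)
The plan is to exploit the cascade structure of the plant--observer--error triple. The error equation \eqref{err_lin} is closed in $\tilde u$ alone; substituting the measurements $y_1-\hat u_x(0,t)=\tilde u_x(0,t)$ and $y_2-\hat u_{xx}(0,t)=\tilde u_{xx}(0,t)$ into \eqref{obs_lin} turns the observer into a closed equation for $\hat u$ that is forced by the error traces $\tilde u_x(0,\cdot),\tilde u_{xx}(0,\cdot)$ (its boundary data still depending on $\hat u$); and the plant is recovered from $u=\hat u+\tilde u$. Throughout I would pass to the target systems via the boundedly invertible maps $I-\Upsilon_{p^*}$ and $I-\Upsilon_{k^*}$ (Lemma \ref{inverselem}), so that wellposedness and decay need only be proved for the targets \eqref{err_tar_lin} and \eqref{tar_p_obs_lin} and then transported back with constants controlled by the kernels.

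For part (i) I would first solve the target error model \eqref{err_tar_lin}, which is the crux since it carries the inhomogeneous, nonlocal Neumann condition $\tilde w^*_x(L,t)=\int_0^L p^*_x(L,y)\tilde w^*(y,t)\,dy$. I would treat it by first establishing smoothing estimates for the linear third--order IBVP with prescribed inhomogeneous data $\tilde w^*_x(L,t)=\psi(t)$, via the Laplace transform in $t$ and a careful analysis of the Bromwich inversion contour (the analyticity issues flagged in the abstract); the feedback term is then closed by a contraction on $[0,T]$, and the higher-order compatibility of $(\tilde w_0^*,\psi)$ is exactly what lets the fixed point run at the $X_T^6$ level, giving $\tilde w^*\in X_T^6$ and hence $\tilde u\in X_T^6$. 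The insistence on $X_T^6$ is dictated by the observer forcing in \eqref{tar_p_obs_lin}, which involves the second-order traces: by the hidden regularity for this equation, an $X_T^6$ error yields traces $\tilde u_x(0,\cdot),\tilde u_{xx}(0,\cdot)$ with enough temporal smoothness to serve as an admissible source at the $X_T^3$ level. Granting the compatibility of $\hat w_0^*=(I-\Upsilon_{k^*})\hat u_0$, I would then solve \eqref{tar_p_obs_lin}---a forced version of the modified target already handled in Theorem \ref{ctrl_thm}, so the internal trace term $i\beta k^*_y(x,0)\hat w^*_x(0,t)$ is absorbed by the same fixed point as there---to obtain $\hat w^*\in X_T^3$, hence $\hat u\in X_T^3$ and $u=\hat u+\tilde u\in X_T^3$.

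For part (ii) I would invoke the three target--stability propositions and push them through the transformations. Proposition \ref{tar_err_stab} gives $\|\tilde w^*(\cdot,t)\|\lesssim\|\tilde w_0^*\|e^{-\mu t}$ in both $L^2$ and $H^3$, and bounded invertibility of $I-\Upsilon_{p^*}$ converts this to the stated $e^{-\mu t}$ decay of $\tilde u$ with constants $c_p,c_{p^\prime}$. For the observer, Proposition \ref{tar_obs_stab} supplies intrinsic exponential decay of the homogeneous part of \eqref{tar_p_obs_lin}; since the forcing is controlled by the error traces, which decay like $e^{-\mu t}$ and are bounded by $\|\tilde u_0\|_{H^3}$, a Duhamel--Gr\"onwall estimate yields $\|\hat w^*(\cdot,t)\|\lesssim(\|\hat w_0^*\|+\|\tilde u_0\|_{H^3})e^{-\nu t}$ for any $\nu$ strictly below both the intrinsic observer rate and $\mu$, whence $\nu<\mu$. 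Transporting back through $I-\Upsilon_{k^*}$ and adding $u=\hat u+\tilde u$ produces the four displayed bounds, the $H^3$--norm of $u_0-\hat u_0$ entering the observer and plant estimates precisely through the second-order error traces in the forcing.

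The hard part, as the abstract signals, is the first step of wellposedness: the smoothing theory for the third--order Schr\"odinger IBVP with inhomogeneous boundary data and the attendant Bromwich-contour analyticity. Everything downstream---the cascade solve, the regularity bookkeeping that forces $X_T^6$ on the error, and the transport of decay through the two backstepping maps---is then bookkeeping built on the propositions and lemmas already in place.
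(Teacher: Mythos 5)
Your proposal follows the paper's own route essentially step for step: the cascade error $\to$ observer $\to$ plant, the transport through the boundedly invertible backstepping maps, the Laplace-transform smoothing plus a contraction argument to solve the target error model in $X_T^6$ (with higher-order compatibility entering exactly where you place it), the treatment of the target observer model as a forced version of the controller target solved at the $X_T^3$ level, and the decay obtained from Propositions \ref{tar_err_stab} and \ref{tar_obs_stab} with $\nu<\mu$ and the triangle inequality $u=\hat u+\tilde u$. The only cosmetic difference is that the paper's Proposition \ref{tar_obs_stab} builds the forcing directly into a single energy inequality rather than splitting into homogeneous decay plus Duhamel--Gr\"onwall, but these are equivalent.
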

	
	\subsection{Preliminaries}
	In this section, we state a few important inequalities and notation which will be useful in our proofs.
	
	\subsubsection{Notation}
	$L^p(0,L)$, $1 \leq p < \infty$, is the usual Lebesgue space and given $u \in L^p(0,L)$, a Lebesgue measurable function, we will denote its $L^p-$norm by $\|u\|_p$, i.e.
	\begin{equation*}
		\|u\|_p \doteq \left(\int_0^L |u|^p dx\right)^\frac{1}{p}.
	\end{equation*}
	If $p = \infty$, then the corresponding norm is given by
	\begin{equation*}
		\|u\|_\infty \doteq \underset{x \in (0,L)}{\esssup} |u|.
	\end{equation*}
	Given $k > 0$, we denote the $L^2-$based Sobolev space by $H^k(0,L)$. In particular, $H_0^1(0,L)$ is the space of functions which belong to $H^1(0,L)$ that vanish at the endpoints in the sense of traces. If $A$ is a linear and bounded operator on $L^2(0,L)$, we will denote its operator norm on $L^2(0,L)$ by $\|A\|_{2 \to 2}$. We will write $a \lesssim b$ to denote an inequality $a \leq c b$ where $c > 0$ may only depend on fixed parameters of the problem under consideration which are not of interest. Sometimes, to prove the wellposedness of the models that we are interested in, we require compatibility conditions between initial and boundary data. We define the notion of compatibility in the following sense.
	\begin{defn}[Compatibility]
		Let $T, L > 0$.
		\begin{enumerate}
			\item [(i)] If $\phi \in H^{3}(0,L)$, $\psi \in H^{1}(0,T)$ are such that
			\begin{equation}
				\phi(0) = 0, \quad \phi(L) = 0, \quad \phi^\prime(L) = \psi(0)
			\end{equation} then we say $(\phi,\psi)$ satisfies compatibility conditions.
			\item[(ii)] If $\phi \in H^{6}(0,L)$, $\psi \in H^{2}(0,T)$ and $\tilde{\phi} \doteq \beta \phi^{\prime\prime\prime} - i\alpha \phi^{\prime\prime} + \delta \phi^\prime$, then we say $(\phi,\psi)$ satisfies higher order compatibility conditions provided it satisfies compatibility conditions and
			\begin{equation}
				\tilde\phi(0) = 0, \quad \tilde\phi(L) = 0, \quad \tilde\phi^\prime(L) = \psi^\prime(0).
			\end{equation}
		\end{enumerate}
		If $\psi \equiv 0$, then we only refer to $\phi$ regarding compatibility conditions.
	\end{defn}
	Finally, starting from Section \ref{estimates}, we drop the superscript notation $^*$ that expresses modified target models and modified backstepping kernels, and simply write $k, p, w, \hat{w}, \tilde{w}$, etc.
	
	\subsubsection{Some useful inequalities}
	We will use the Cauchy--Schwarz inequality, for $u,v \in L^2 (0,L)$
	\begin{equation*}
		\left|\int_0^L u(x) \overline{v(x)} dx \right| \leq \|u\|_2 \|v\|_2,
	\end{equation*}
	and $\epsilon-$Young's inequality
	\begin{equation*}
		|u v| \leq \frac{\epsilon}{p} |u|^p + \frac{1}{q \epsilon^{1/(p-1)}} |v|^q,
	\end{equation*}
	where $\epsilon > 0$, $1 < p < \infty$ and $\frac{1}{p} + \frac{1}{q} = 1$. We will use Gagliardo--Nirenberg's interpolation inequality in our estimates: Let $p \geq 2$, $\alpha = 1/2 - 1/p$ and $u \in H_0^1(0,L)$. Then,
	\begin{equation*}
		\|u\|_p \leq c \|u^\prime\|_2^\alpha \|u\|_2^{1 - \alpha}.
	\end{equation*}
	We will also use its higher order version: Let $u \in H^k(0,L) \cap H_0^1(0,L)$
	for $\alpha = j/k \leq 1$ where $j,k \in \mathbb{N}$. Then
	\begin{equation*}
		\|u^{(j)}\|_p \leq c \|u^{(k)}\|_2^\alpha \|u\|_2^{1 - \alpha}.
	\end{equation*}
	Special case of the Gronwall's inequality reads: given $f : [0,t] \to \mathbb{R}^+$ and $\alpha, \beta > 0$, the inequality
	\begin{equation*}
		f(t) \leq \alpha + \beta \int_0^t f(s)ds
	\end{equation*}
	implies
	\begin{equation*}
		f(t) \leq \alpha e^{\beta t}.
	\end{equation*}

	\subsection{Outline}
	In Section \ref{estimates}, we prove smoothing properties for a nonhomogeneous initial--boundary value problem with inhomogeneous boundary conditions. These will be useful for the wellposedness analysis that will be carried out in Section \ref{ctrl} and Section \ref{obs}. The tools we mainly use are semigroup theory, multipliers and the Laplace transform. In Section \ref{ctrl}, we first show the existence of an infinitely differentiable smooth backstepping kernel in Lemma \ref{lemkernel} and then state the invertibility of the backstepping transformation with a bounded inverse in Lemma \ref{inverselem}. Then, we study the wellposedness and exponential decay properties of modified target model \eqref{p_tar_lin}. Finally, thanks to the bounded invertibility of the backstepping transformation, we obtain the wellposedness and exponential decay for the original plant. Section \ref{obs} is devoted to the observer design problem where we assume that the state of the system is not known and only some partial boundary measurements are available. We first make the wellposedness analysis for modified target error model \eqref{err_tar_lin} and modified target observer model \eqref{tar_p_obs_lin}, respectively. Thanks to the bounded invertibility of the backstepping transformations, we show  the wellposedness for error and observer, which imply the wellposedness of the original plant. Next, we study the decay properties of the target error and target observer models. Again, by using the invertibility of backstepping transformations, we obtain the exponential stability of plant--observer--error system. In Section \ref{num_sim}, we introduce a numerical algorithm and then provide two numerical simulations for controller and observer designs. Finally, in appendices, we give details of some calculations.
	
	\section{Auxiliary lemmas} \label{estimates}
	In this section we prove some auxiliary lemmas which will be useful in order to show wellposesness results in Section \ref{ctrl_wp} and Section \ref{obs_wp}. Let us start by considering the following model
	\begin{eqnarray} \label{wp_pde}
	\begin{cases}
	iy_t + i\beta y_{xxx} +\alpha y_{xx} +i\delta y_x = f(x,t), x\in (0,L), t\in (0,T),\\
	y(0,t)=\psi_1(t), y(L,t)= \psi_2(t), y_x(L,t)= \psi_3 (t),\\
	y(x,0)= \phi(x).
	\end{cases}
	\end{eqnarray}
    We will denote the solution of \eqref{wp_pde} by $y[\phi,f,\psi_1,\psi_2,\psi_3]$.
	Let $A$ be the linear operator defined by
	\begin{equation} \label{A_opr}
	Ay \doteq -\beta y^{\prime\prime\prime} + i\alpha y^{\prime\prime} - \delta y^\prime
	\end{equation}
	with domain
	\begin{equation}\label{A_dom}
	D(A) = \{y \in H^3(0,L) : y(0) = y(L) = y^\prime(L) = 0 \}.
	\end{equation}
	It is shown in \cite{Ceballos05} that $A$ generates a strongly continuous semigroup of contractions on $L^2(0,L)$ denoted by $S(t)$, $t \geq 0$. Thus, by standard semigroup theory, \eqref{wp_pde} with $f=0$ and $\psi_i \equiv 0$, $i=1,2,3$, admits a unique mild solution (see \cite{Pazy}) for $\phi\in L^2(0,L)$. In the presence of nonhomogeneous boundary conditions $\psi_i$, $i=1,2,3$, but with zero forcing and zero initial state, i.e. $\phi \equiv f \equiv 0$, analysis of solutions of \eqref{wp_pde} will be carried out by obtaining a representation for the solution via the Laplace transform in $t$. Also, in the following lemmas below we obtain regularity estimates for solutions corresponding to initial, interior and boundary data in \eqref{wp_pde}.
	
	\begin{lem} \label{wp_lem_1}
		Let $f \equiv \psi_i \equiv 0$, $i=1,2,3$. Then, for $T > 0$ and $\phi \in L^2(0,L)$, $y(\cdot) = S(\cdot)\phi=y[\phi,0,0,0,0]$  satisfies space-time estimates
		\begin{enumerate}
			\item[(i)] $\|y\|_{C([0,T];L^2(0,L))}^2 + \beta \|y_x(0,\cdot)\|_{L^2(0,T)}^2 = \|\phi\|_2^2$,
			\item [(ii)] $\|y\|_{L^2(0,T;H^1(0,L))}^2 \lesssim (1+ T)\|\phi\|_2^2$
		\end{enumerate}
		and the time-space estimate
		\begin{enumerate}
			\item[(iii)] $\underset{x\in[0,L]}{\sup}\|y_x(x,\cdot)\|_{L^2(0,T)} \lesssim (1+\sqrt{T}) \|\phi\|_2$.
		\end{enumerate}
	\end{lem}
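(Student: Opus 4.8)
My plan is to treat all three estimates by the energy/multiplier method, working first with smooth data $\phi \in D(A)$ so that $y = S(\cdot)\phi$ is a classical solution and every trace below is well defined, and then to extend to $\phi \in L^2(0,L)$ by density, since each assertion is an inequality (or identity) controlled by $\|\phi\|_2$. For (i) I would multiply the equation, written as $y_t = -\beta y_{xxx} + i\alpha y_{xx} - \delta y_x$, by $\bar y$, integrate over $(0,L)$ and take real parts. Using $y(0,t)=y(L,t)=y_x(L,t)=0$, the first-order term contributes $\mathrm{Re}\int_0^L y_x\bar y\,dx = \tfrac12[|y|^2]_0^L = 0$ and the second-order term is purely imaginary, while integrating the dispersive term by parts twice leaves the single surviving flux $\mathrm{Re}\int_0^L y_{xxx}\bar y\,dx = \tfrac12|y_x(0,t)|^2$. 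This gives the conservation law $\frac{d}{dt}\|y(\cdot,t)\|_2^2 = -\beta|y_x(0,t)|^2$; integrating in time yields $\|y(\cdot,t)\|_2^2 + \beta\int_0^t|y_x(0,s)|^2\,ds = \|\phi\|_2^2$, which is the content of (i).

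For (ii) the natural choice is the weighted multiplier $x\bar y$. Computing $\frac{d}{dt}\int_0^L x|y|^2\,dx$ and integrating the dispersive term by parts, the Kato mechanism $\mathrm{Re}\int_0^L x\,y_{xxx}\bar y\,dx = \tfrac32\int_0^L|y_x|^2\,dx$ (the boundary terms again dropping out because $y(L)=y_x(L)=0$) produces a coercive $-3\beta\int_0^L|y_x|^2\,dx$. The remaining contributions are lower order: the $\delta$-term is $\delta\|y\|_2^2$ and the $\alpha$-term is $2\alpha\,\mathrm{Im}\int_0^L y_x\bar y\,dx$, which I would split by $\epsilon$-Young's inequality as $\le \tfrac{3\beta}{2}\|y_x\|_2^2 + C\|y\|_2^2$ and absorb half of the good term. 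Integrating in time, bounding $\int_0^L x|\phi|^2\,dx \le L\|\phi\|_2^2$ and $\int_0^T\|y\|_2^2\,dt \le T\|\phi\|_2^2$ via (i), leaves $\int_0^T\|y_x\|_2^2\,dt \lesssim (1+T)\|\phi\|_2^2$, and combined with (i) this is (ii).

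Estimate (iii) is the real difficulty. The natural first move is to run the multiplier computation on the truncated interval $(x_0,L)$: with $E_{x_0}(t)=\int_{x_0}^L|y|^2\,dx$ one is led to
\[
\beta\int_0^T |y_x(x_0,t)|^2\,dt = \int_{x_0}^L\!\big(|\phi|^2 - |y(\cdot,T)|^2\big)\,dx + 2\alpha\!\int_0^T\!\mathrm{Im}\big(\bar y\, y_x\big)(x_0,t)\,dt + \delta\!\int_0^T\!|y(x_0,t)|^2\,dt + 2\beta\!\int_0^T\!\mathrm{Re}\big(\bar y\, y_{xx}\big)(x_0,t)\,dt .
\]
The first three terms are harmless: $\int_0^T|y(x_0,t)|^2\,dt \le L\int_0^T\|y_x\|_2^2\,dt$ (since $y(0,t)=0$) is controlled by (ii), and the $\alpha$-term is absorbed by $\epsilon$-Young into the left side. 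The obstruction is the second-order boundary flux $\int_0^T\mathrm{Re}(\bar y\,y_{xx})(x_0,t)\,dt$: writing $R(x_0)=\int_0^T|y(x_0,t)|^2\,dt$ one checks it equals $\tfrac12 R''(x_0)-\int_0^T|y_x(x_0,t)|^2\,dt$, so the identity only ties $\int_0^T|y_x(x_0)|^2$ to $R''(x_0)$, and neither this identity nor its counterpart on $(0,x_0)$ cancels the term. The needed uniform-in-$x_0$ bound is a genuine gain-of-derivative (Kato smoothing) statement, not an energy one, so dispersive structure must be injected.

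To close (iii) I would therefore pass through the Laplace transform in $t$ that the paper develops. With $\tilde y(x,s)=\int_0^\infty e^{-st}y(x,t)\,dt = \big((sI-A)^{-1}\phi\big)(x)$, $\mathrm{Re}\,s=\gamma>0$, the function $\tilde y(\cdot,s)$ solves the third-order resolvent ODE $i\beta\tilde y''' + \alpha\tilde y'' + i\delta\tilde y' + is\,\tilde y = i\phi$ with $\tilde y(0,s)=\tilde y(L,s)=\tilde y'(L,s)=0$. By the Paley--Wiener/Plancherel identity for the Laplace transform, $\int_0^\infty e^{-2\gamma t}|y_x(x,t)|^2\,dt = \tfrac1{2\pi}\int_{\mathbb R}|\tilde y_x(x,\gamma+i\tau)|^2\,d\tau$, so after choosing $\gamma$ comparable to $1/T$ (which is where the $(1+\sqrt T)$ factor originates) estimate (iii) reduces to the uniform resolvent bound $\sup_{x\in[0,L]}\int_{\mathbb R}|\tilde y_x(x,\gamma+i\tau)|^2\,d\tau \lesssim \|\phi\|_2^2$. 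I expect this to be the hard part: it requires solving the ODE in terms of its three characteristic roots $\sim s^{1/3}$, controlling their exponential growth/decay uniformly over $x\in[0,L]$, and exploiting the algebraic gain in $\tau$ from the dispersion to make the $\tau$-integral converge with a constant stable as $\gamma\downarrow 0$ — exactly the analyticity-on-vertical-lines analysis that the introduction flags as the delicate point of the Bromwich inversion.
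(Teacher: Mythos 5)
Your parts (i) and (ii) are exactly the paper's argument (multipliers $\bar y$ and $x\bar y$, boundary terms killed by $y(0,t)=y(L,t)=y_x(L,t)=0$, Young's inequality, density from $D(A)$), and your diagnosis of why a truncated energy identity cannot give (iii) is correct. The gap is in (iii) itself: you reduce the statement to the uniform resolvent bound
\begin{equation*}
\sup_{x\in[0,L]}\int_{\mathbb{R}}\bigl|\partial_x\bigl((sI-A)^{-1}\phi\bigr)(x)\bigr|^2\,d\tau \lesssim \|\phi\|_2^2,\qquad s=\gamma+i\tau,\ \gamma\sim 1/T,
\end{equation*}
and then explicitly leave it unproven (``I expect this to be the hard part''). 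But this bound \emph{is} the lemma: it encodes the full gain-of-derivative, uniformly in $x$, and proving it requires a variation-of-parameters/Green's function representation of the third-order resolvent ODE (the forcing $\phi$ sits in the interior, so unlike the pure boundary-data problem the transformed solution is not just a combination of three exponentials), together with control of the coefficient determinant $\Delta(s)$ along the vertical line. That determinant vanishes at the eigenvalues of $A$, which for critical lengths $L\in\mathcal{N}$ lie \emph{on} the imaginary axis, so the constant degenerates as $\gamma\downarrow 0$ and extracting the stated $(1+\sqrt{T})$ dependence from $\gamma\sim 1/T$ requires quantitative first-order blow-up estimates of the resolvent near those eigenvalues — none of which is sketched. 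A reduction plus an acknowledged unproven core estimate is not a proof.

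For comparison, the paper never applies the Laplace transform to the initial-value problem. It extends $\phi$ to $\phi^*\in L^2(\mathbb{R})$, writes the free evolution $v$ on the line via the Fourier transform, splits $v=v_1+v_2$ with a frequency cutoff chosen so that $\omega(\xi)=\beta\xi^3-\alpha\xi^2-\delta\xi$ is invertible with $\omega'$ bounded away from zero on the support of $1-\theta$, and obtains the Kato-smoothing trace bounds $\|v(x,\cdot)\|_{H^{1/3}(0,T)}+\|\partial_x v(x,\cdot)\|_{L^2(0,T)}\lesssim(1+\sqrt{T})\|\phi^*\|_{L^2(\mathbb{R})}$ by the substitution $\tau=\omega(\xi)$ and Plancherel in time (the $\sqrt{T}$ comes from the compactly supported low-frequency piece $v_1$, not from a choice of abscissa). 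It then writes $y[\phi,0,0,0,0]=v|_{(0,L)}-y[0,0,v(0,\cdot),v(L,\cdot),v_x(L,\cdot)]$ and absorbs the correction with Lemma \ref{wp_lem_3}; the Laplace transform enters only there, for the problem with zero initial data, where the transformed ODE is homogeneous and its solution is an explicit combination of $e^{\lambda_j(s)x}$ — no Green's function, and the contour analysis around $\Delta(s)=0$ is exactly what Lemmas \ref{wp_lem_ld}--\ref{wp_lem_rn} carry out. If you want to salvage your route, you must either prove the resolvent estimate in full (including the critical-length spectrum on $i\mathbb{R}$), or switch to the paper's decomposition, where the interior data and the boundary data are handled by two separate, individually tractable mechanisms.
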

	
	\begin{proof}
		We first assume that $\phi \in D(A)$ and the solution is sufficiently smooth. The general case then can be shown by using the classical density argument.
		\begin{enumerate}
			\item[(i)] We take $L^2-$inner product of the main equation in \eqref{wp_pde} by $2y$ and get
			\begin{equation}
				\label{lin1iden}
				2\Im\int_0^L iy_t\bar{y}dx +  2\Im\int_0^Li\beta y_{xxx}\bar{y}dx +2\Im\int_0^L\alpha  y_{xx}\bar{y}dx
				+ 2\Im\int_0^Li\delta y_x\bar{y}dx = 0.
			\end{equation}
			The first term at the left hand side of \eqref{lin1iden} can be written as
			\begin{equation}
				\label{lin1iden2}
				2\Im\int_0^Liy_t\bar{y}dx=2\Re\int_0^Ly_t\bar{y}dx=\displaystyle\frac{d}{dt}\left|y(\cdot,t)\right|_2^2.
			\end{equation}
			The second term can be integrated by parts in $x$, and using boundary conditions we have
			\begin{equation}
				\label{lin1iden3}
				2\Im\int_0^Li\beta y_{xxx}\bar{y}dx=-2\Re\int_0^L\beta y_{xx}\bar{y}_xdx=\beta|y_x(0,t)|^2.
			\end{equation}
			The third term, again via integration by parts in $x$, gives
			\begin{equation}
				\label{lin1iden4}
				2\Im\int_0^L\alpha  y_{xx}\bar{y}dx =-2\Im\int_0^L\alpha  |y_{x}|^2dx=0.
			\end{equation}
			The fourth term vanishes since
			\begin{equation}
				\label{lin1iden5}
				2\Im\int_0^Li\delta u_x\bar{u}dx= \delta|u(x,t)|^2\bigg|_0^L=0.
			\end{equation}
			Combining \eqref{lin1iden2}-\eqref{lin1iden5} and integrating with respect to $t$, we arrive at
			\begin{equation} \label{wp_lem21_1}
			\|y(\cdot,t)\|_2^2 + \beta \|y_x(0,\cdot)\|_{L^2(0,T)}^2 = \|\phi\|_2^2.
			\end{equation}
			Passing to supremum on both sides over $[0,T]$ yields the desired result.
			
			\item[(ii)] Now we take $L^2-$inner product of the main equation \eqref{wp_pde} by $2xy$ and consider the imaginary parts of both sides to get
			\begin{equation}
			\label{wp_lem21_2}
			2\Im\int_0^L ixy_t\bar{y}dx +  2\Im\int_0^Li\beta xy_{xxx}\bar{y}dx +2\Im\int_0^L\alpha x y_{xx}\bar{y}dx \\
			+ 2\Im\int_0^Li\delta xy_x\bar{y}dx  = 0.
			\end{equation}
			The first term at the left hand side of \eqref{wp_lem21_2} can be written as
			\begin{equation}
			\label{wp_lem21_3}
			2\Im\int_0^Lixy_t\bar{y}dx= \frac{d}{dt}\int_0^L x |y|^2dx.
			\end{equation}
			The second term can be integrated by parts in $x$ and due to the boundary conditions we have
			\begin{align}
			\label{wp_lem21_4}
			2\Im\int_0^Li\beta xy_{xxx}\bar{y}dx &=2\beta\left(-\int_0^L y_{xx}\bar{u}dx - \int_0^L xy_{xx}\overline{y}_x dx\right) \nonumber\\
			&= 2\beta \left(\int_0^L|y_x|^2 dx - \frac{1}{2} \int_0^L x \frac{d}{dx} |y_x|^2 dx \right) \nonumber\\
			&= 2\beta \left(\|y_x(\cdot,t)\|_2^2 dx  + \frac{1}{2} \int_0^L |y_x|^2 dx \right) \nonumber\\
			&= 3\beta \|y_x(\cdot,t)\|_2^2.
			\end{align}
			Third and fourth terms, again via integration by parts in $x$ give us
			\begin{align}
			\label{wp_lem21_5}
			2\Im\int_0^L\alpha  xy_{xx}\bar{y}dx =2\alpha \left(-\Im\int_0^L y_x \overline{y} dx - \Im\int_0^L x |y_x|^2 dx\right)
			\end{align}
			and
			\begin{equation}
			\label{wp_lem21_6}
			2\Im\int_0^Li\delta x y_x\bar{y}dx= 2\delta \int_0^L x \frac{d}{dx} |y|^2 dx = -\delta\|y(\cdot,t)\|_2^2.
			\end{equation}
			Combining \eqref{wp_lem21_3}-\eqref{wp_lem21_6}, we get
			\begin{equation*}
			\frac{d}{dt} \int_0^L x |y|^2dx + 3\beta \|y_x(\cdot,t)\|_2^2 = \delta \|y(\cdot,t)\|_2^2 + 2\alpha \Im\int_0^L y_x \overline{y}dx
			\end{equation*}
			which, by $\epsilon-$Young's inequality applied to the second term at the right hand side, is equivalent to
			\begin{equation*}
			\frac{d}{dt} \int_0^L x |y|^2dx + (3\beta - \epsilon) \|y_x(\cdot,t)\|_2^2 \le c_{\alpha,\delta,\epsilon}\|y(\cdot,t)\|_2^2.
			\end{equation*}
			Integrating this result with respect to $t$ over $[0,T]$ yields
			\begin{equation*}
			\int_0^L x|y|^2dx + (3\beta - \epsilon) \int_0^T \|y_x(\cdot,t)\|_2^2 dt = \int_0^L x|\phi|^2dx + c_{\alpha,\delta,\epsilon}\int_0^T \|y(\cdot,t)\|_2^2 dt.
			\end{equation*}
			Combining this result with \eqref{wp_lem21_1}, using Poincare inequality and choosing $\epsilon > 0$ sufficiently small, we obtain (ii).
			
			\item[(iii)] Let us take an extension of $\phi$ in $L^2(\mathbb{R})$, denoted ${\phi^*}$, with the property that $\|\phi^*\|_{L^2(\mathbb{R})}\lesssim \|\phi\|_{L^2(0,L)}$. Consider the Cauchy problem
			\begin{eqnarray} \label{lem21_cauchy}
			\begin{cases}
			iv_t + i\beta v_{xxx} +\alpha v_{xx} +i\delta v_x = 0, x\in \mathbb{R}, t\in (0,T),\\
			v(x,0)= {\phi^*}(x).
			\end{cases}
			\end{eqnarray}
			Using the Fourier transform $\hat{\varphi}(\xi) = \int_{-\infty}^\infty e^{-i x \xi} \varphi(x)dx$, the solution of the above model can be represented as
			\begin{equation*}
				v(x,t) = \int_{-\infty}^{\infty} e^{i x \xi} e^{i\omega(\xi) t}\hat{{\phi^*}}(\xi) d\xi,
			\end{equation*}
			where $\omega(\xi) \doteq \beta\xi^3 - \alpha\xi^2 -\delta\xi$. We pick a smooth cut-off function
			\begin{equation*}
			\theta(\xi) =
			\begin{cases}
				1, & a \leq \xi \leq b, \\
                \text{smooth}, & a-\epsilon<\xi<a \text{ or } b<\xi<b+\epsilon\\
				0, & \xi \le a - \epsilon \text{ or } \xi \ge b + \epsilon,
			\end{cases}
			\end{equation*}
			where $\epsilon > 0$ is arbitrary, $|\theta|\le 1$, and  $a$ and $b$ will be chosen below in a suitable manner. Now, we decompose $v$ as
			\begin{equation} \label{wp_lem21_6.5}
				\begin{split}
				v(x,t) &=  \int_{-\infty}^{\infty} e^{i x \xi} e^{i\omega(\xi)t}\theta(\xi)\hat{{\phi^*}}(\xi) d\xi +  \int_{-\infty}^{\infty} e^{i x \xi} e^{i\omega(\xi)t}(1 - \theta(\xi))\hat{{\phi^*}}(\xi) d\xi \\
				&\doteq v_1(x,t) + v_2(x,t).
				\end{split}
			\end{equation}
			Using Cauchy-Schwarz inequality on $v_1$, Plancherel theorem and considering that $\theta$ is a compactly supported function, we get
			\begin{align*}
				|\partial_x v_1(x,t)| &= \left|\int_{-\infty}^{\infty} i\xi e^{i x \xi} e^{i\omega(\xi)t}\theta(\xi)\hat{{\phi^*}}(\xi) d\xi \right| \\
				&= \left(\int_{a-\epsilon}^{b+\epsilon} |i\xi|^2 |\theta(\xi)|^2 d\xi\right)^{\frac{1}{2}} \left(\int_{-\infty}^\infty |\hat{{\phi^*}}(\xi)|^2 d\xi\right)^{\frac{1}{2}} \\
				&\lesssim \|{\phi^*}\|_{L^2(\mathbb{R})}.
			\end{align*}
			Taking square of both sides and integrating over $[0,T]$ yields
			\begin{equation} \label{wp_lem21_7}
				\|\partial_xv_1(x,\cdot)\|_{L^2(0,T)}^2 \lesssim T \|{\phi^*}\|_{L^2(\mathbb{R})}^2.
			\end{equation}
By similar arguments, we also have
\begin{equation} \label{wp_lem21_7dir}
				\|\partial_t^jv_1(x,\cdot)\|_{L^2(0,T)}^2 \lesssim T \|{\phi^*}\|_{L^2(\mathbb{R})}^2
			\end{equation} for $j=0,1$, where the constant of the inequality depends on $j$ and $\theta$.
Interpolating, we get
\begin{equation} \label{wp_lem21_7int}
				\|v_1(x,\cdot)\|_{H^m(0,T)}^2 \lesssim T \|{\phi^*}\|_{L^2(\mathbb{R})}^2
			\end{equation} for any $m\in [0,1]$, where the constant of the inequality depends on $\theta$ and $m$.  In particular, for $m=1/3$, we have
\begin{equation} \label{wp_lem21_7int13}
				\|v_1(x,\cdot)\|_{H^{1/3}(0,T)} \lesssim \sqrt{T} \|{\phi^*}\|_{L^2(\mathbb{R})}.
			\end{equation} The last inequality is a smoothing property and will have a particular importance in our wellposedness analysis later.
			Next, consider the second term in \eqref{wp_lem21_6.5} and rewrite it as $$v_2 = \int_{-\infty}^a \cdot + \int_b^{\infty} \cdot \doteq v_{2-} + v_{2+}.$$ Consider the change of variable given by
			\begin{equation} \label{wp_lem21_8}
				\begin{split}
				\tau &= \omega_{-}(\xi) = \omega(\xi) : (-\infty,a] \to (-\infty,\omega(a)], \\
				\tau &= \omega_{+}(\xi) = \omega(\xi) : [b,\infty) \to [\omega(b),\infty),
				\end{split}
			\end{equation}
			where we define their inverses as
			\begin{equation} \label{wp_lem21_8.5}
			\begin{split}
				\xi &= \omega_{-}^{-1}(\tau) \doteq \xi_{-}(\tau), \\
				\xi &= \omega_{+}^{-1}(\tau) \doteq \xi_{+}(\tau)
			\end{split}
			\end{equation}
			for each integral respectively. Indeed a suitable choice of the support of the cut-off function ensures that the transformations given by \eqref{wp_lem21_8} are $1-1$, therefore their inverses exist.  Moreover, same choices given below will provide us that $\omega'(\xi)$ stays away from zero in \eqref{wp_lem21_9}. Depending on the sign of $\alpha^2 + 3\beta \delta$, we have three different cases:
			\begin{enumerate}
				\item [(a)] Let $\alpha^2 + 3\beta \delta > 0$. Then, any choice $a < \frac{\alpha - \sqrt{\alpha^2 + 3\beta\delta}}{3\beta}$ and $b > \frac{\alpha + \sqrt{\alpha^2 + 3\beta\delta}}{3\beta}$ provides that the mapping is $1-1$ and $\omega'(\xi)$ stays away from zero on $(-\infty,a] \cup [b,\infty)$.
				
				\item[(b)] Let $\alpha^2 + 3\beta\delta = 0$. Then the mapping is $1-1$ for all choices of $a < b$. On the other hand, choosing $a < \frac{\alpha}{3\beta} < b$ will provide that  $\omega'(\xi)$ stays away from zero on $(-\infty,a] \cup [b,\infty)$.
				
				\item[(c)] Let $\alpha^2 + 3\beta\delta < 0$. Then the mapping is $1-1$ and  $\omega'(\xi)$ stays away from zero on $(-\infty,a]  \cup [b,\infty) $ for all choices of $a < b$ .
			\end{enumerate}
			Assume that we choose appropriate $a$ and $b$ values for each case of $\alpha^2 + 3\beta\delta$ described above. Following from \eqref{wp_lem21_8}-\eqref{wp_lem21_8.5}, we have
			\begin{equation} \label{wp_lem21_8.6}
				d\xi = \frac{1}{3\beta \xi_{\mp}^2(\tau) - 2\alpha\xi_{\mp}(\tau) - \delta} d\tau.
			\end{equation}
			Hence $v_2$ becomes
			\begin{equation*}
			\begin{split}
			v_2(x,t) =& \int_{-\infty}^{\omega(a)} e^{i\xi_-(\tau) x} e^{i\tau t} (1 - \theta(\xi_-(\tau)))  \frac{\hat{{\phi^*}}(\xi_-(\tau))}{3\beta \xi_{-}^2(\tau) - 2\alpha\xi_{-}(\tau) - \delta} d\tau \\
			&+ \int_{\omega(b)}^{\infty}  e^{i\xi_+(\tau) x} e^{i\tau t} (1 - \theta(\xi_+(\tau)))  \frac{\hat{{\phi^*}}(\xi_+(\tau))}{3\beta \xi_{+}^2(\tau) - 2\alpha\xi_{+}(\tau) - \delta} d\tau.
			\end{split}
			\end{equation*}
			Let us first consider $v_{2-}$ and observe that the function
			\begin{equation*}
				\begin{cases}
					e^{i\xi_-(\tau) x} (1 - \theta(\xi_-(\tau)))  \frac{\hat{{\phi^*}}(\xi_-(\tau))}{3\beta \xi_{-}^2(\tau) - 2\alpha\xi_{-}(\tau) - \delta}, & \tau \in (-\infty,\omega(a)], \\
					0, & \text{elsewhere,}
				\end{cases}
			\end{equation*}
			is the Fourier transform of $v_{2-}$ with respect to the second component. Thus,
			\begin{align*}
				\|v_{2-}(x,\cdot)\|_{H^{\frac{1}{3}}(0,T)}^2 &\leq \|v_{2-}(x,\cdot)\|_{H_t^{\frac{1}{3}}(\mathbb{R})}^2 \\
				&= \int_{-\infty}^\infty(1+\tau^2)^{\frac{1}{3}} |\hat{v}_{2-}(x,\tau)|^2 d\tau \\
				&= \int_{-\infty}^{\omega(a)}(1+\tau^2)^{\frac{1}{3}}\left| e^{i\xi_-(\tau) x}(1 - \theta(\xi_-(\tau))) \frac{\hat{{\phi^*}}(\xi_-(\tau))}{3\beta \xi_{-}^2(\tau) - 2\alpha\xi_{-}(\tau) - \delta}\right|^2 d\tau \\
				&\le \int_{-\infty}^{\omega(a)} (1+\tau^2)^{\frac{1}{3}}|\frac{|\hat{{\phi^*}}(\xi_-(\tau))|^2}{|3\beta \xi_{-}^2(\tau) - 2\alpha\xi_{-}(\tau) - \delta|^2} d\tau.
			\end{align*}
			Changing variables back as $\tau = \omega_-(\xi)$, it follows from the above estimate that
			\begin{align}
				\label{wp_lem21_9}
				\|v_{2-}(x,\cdot)\|_{H^{\frac{1}{3}}(0,T)}^2 &\leq \int_{-\infty}^a (1+\omega^2(\xi))^{\frac{1}{3}} \frac{|\hat{{\phi^*}}(\xi)|^2}{|3\beta \xi^2 - 2\alpha\xi - \delta|^2} (3\beta \xi^2 - 2\alpha \xi - \delta) d\xi \\
            &\lesssim \int_{-\infty}^a (1+\xi^6)^{\frac{1}{3}} \frac{|\hat{{\phi^*}}(\xi)|^2}{3\beta \xi^2 - 2\alpha\xi - \delta} d\xi\simeq \int_{-\infty}^a |\hat{{\phi^*}}(\xi)|^2 d\xi \nonumber \\
				&\leq \|{\phi^*}\|_{L^2(\mathbb{R})}^2. \nonumber
			\end{align}
			$\|v_{2+}(x,\cdot)\|_{H^{\frac{1}{3}}(0,T)}^2\lesssim \|{\phi^*}\|_{L^2(\mathbb{R})}^2$ can be shown similarly. Hence $$\|v_{2}(x,\cdot)\|_{H^{\frac{1}{3}}(0,T)} \lesssim \|{\phi^*}\|_{L^2(\mathbb{R})}^2.$$ Combining this with \eqref{wp_lem21_7}, we get
			\begin{equation*}
				\|v(x,\cdot)\|_{H^{\frac{1}{3}}(0,T)} \lesssim (1+\sqrt{T}) \|{\phi^*}\|_{L^2(\mathbb{R})}.
			\end{equation*}
Differentiating in $x$ and repeating the above arguments it also follows that
\begin{equation*}
				\|\partial_x v(x,\cdot)\|_{L^{2}(0,T)} \lesssim (1+\sqrt{T}) \|{\phi^*}\|_{L^2(\mathbb{R})}.
			\end{equation*}
			We also have the continuity of the mappings $x \to \|v(x,\cdot)\|_{H^{1/3}(0,T)}$ and $x \to \|\partial_xv(x,\cdot)\|_{L^{2}(0,T)}$. To this end, one needs to show that, given $\{x_n\} \subset \mathbb{R}$ converging to $x \in \mathbb{R}$
\begin{equation*}
				\|v(x,\cdot) -v(x_n,\cdot)\|_{H^{1/3}(0,T)} \to 0, \quad \text{as } n \to \infty
			\end{equation*} and
			\begin{equation*}
				\|\partial_xv(x,\cdot) - \partial_xv(x_n,\cdot)\|_{L^2(0,T)} \to 0, \quad \text{as } n \to \infty
			\end{equation*}
			hold. These can be easily shown by using the dominated convergence theorem.	
    Now, we can represent the solution of \eqref{wp_pde} as $$y[\phi,0,0,0,0]=v|_{(0,L)}-y[0,0,v(0,\cdot),v(L,\cdot),v_x(L,\cdot)],$$ where $y[0,0,v(0,\cdot),v(L,\cdot),v_x(L,\cdot)]$ is the solution of \eqref{wp_pde} with  $f\equiv \phi\equiv 0,$ $$y(0,t)\doteq v(0,t)\in H^{1/3}(0,T), y(L,t)\doteq v(L,t)\in H^{1/3}(0,T),$$ $$y_x(L,t)\doteq v_x(L,t)\in L^{2}(0,T).$$
    Hence, part (iii) follows by combining the boundary smoothing property of $v$ and the inhomogeneous boundary value problem given in Lemma \ref{wp_lem_3} below.
	\end{enumerate}
	\end{proof}

	\begin{lem} \label{wp_lem_2}
		Let $\phi \equiv \psi_i \equiv 0$, $i=1,2,3$, $T > 0$, and $f \in L^1(0,T;L^2(0,L))$. Then the solution $y=y[0,f,0,0,0]$ of \eqref{wp_pde} satisfies space-time estimates
		\begin{enumerate}
			\item [(i)] $\|y\|_{C([0,T];L^2(0,L))}^2 + \beta \|y_x(0,\cdot)\|_{L^2(0,T)}^2 \lesssim \|f \|_{L^1(0,T;L^2(0,L))}^2$,
			\item [(ii)] $\|y\|_{L^2(0,T;H^1(0,L))}^2 \lesssim (1 + T)\|f\|_{L^1(0,T;L^2(0,L))}^2$
		\end{enumerate}
		and the time-space estimate
		\begin{enumerate}
			\item [(iii)] $\underset{x\in[0,L]}{\sup}\|y_x(x,\cdot)\|_{L^2(0,T)} \lesssim (1+\sqrt{T})\|f\|_{L^1(0,T;L^2(0,L))}$.
		\end{enumerate}
	\end{lem}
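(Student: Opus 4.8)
The plan is to reduce all three estimates to the homogeneous bounds of Lemma \ref{wp_lem_1} via Duhamel's principle. Since $A$ generates the contraction semigroup $S(t)$ and both the initial and boundary data vanish, the main equation rewrites as $y_t = Ay - if$, so the mild solution of \eqref{wp_pde} with data $[0,f,0,0,0]$ is
\begin{equation*}
y(\cdot,t) = -i\int_0^t S(t-s) f(\cdot,s)\,ds.
\end{equation*}
As usual I would first establish everything for $f \in C^1([0,T];D(A))$, where this representation and the manipulations below are classical, and then pass to general $f \in L^1(0,T;L^2(0,L))$ by density. The single unifying tool for all three parts is Minkowski's integral inequality, which lets me pull the relevant space--time norm inside the $ds$-integral and then apply the corresponding single-propagator bound of Lemma \ref{wp_lem_1} to each frozen datum $f(\cdot,s)$, combined with the elementary monotonicity that a time shift only shrinks the integration interval, $\|\,\cdot\,\|_{L^2(s,T)} \le \|\,\cdot\,\|_{L^2(0,T)}$.

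For part (i), the triangle inequality together with contractivity of $S(t)$ gives $\|y(\cdot,t)\|_2 \le \int_0^t \|f(\cdot,s)\|_2\,ds \le \|f\|_{L^1(0,T;L^2(0,L))}$, and taking the supremum over $[0,T]$ controls the $C([0,T];L^2)$ term. For the boundary trace I would freeze $s$, set $g_s(t) \doteq \partial_x[S(t-s)f(\cdot,s)](0)$ for $t>s$ and $g_s(t)\doteq 0$ otherwise, and write $y_x(0,\cdot) = -i\int_0^T g_s(\cdot)\,ds$; Minkowski then yields $\|y_x(0,\cdot)\|_{L^2(0,T)} \le \int_0^T \|g_s\|_{L^2(0,T)}\,ds$, while the trace part of Lemma \ref{wp_lem_1}(i), after the time shift $t\mapsto t-s$, bounds each $\|g_s\|_{L^2(0,T)}$ by $\beta^{-1/2}\|f(\cdot,s)\|_2$. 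Integrating in $s$ produces $\beta\|y_x(0,\cdot)\|_{L^2(0,T)}^2 \lesssim \|f\|_{L^1(0,T;L^2(0,L))}^2$.

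Parts (ii) and (iii) follow the identical template with the norms $\|\cdot\|_{L^2(0,T;H^1)}$ and $\sup_{x}\|\partial_x(\cdot)(x,\cdot)\|_{L^2(0,T)}$ replacing the boundary-trace norm. Minkowski moves the norm inside the $ds$-integral, the single-propagator estimates Lemma \ref{wp_lem_1}(ii) and (iii) supply the per-$s$ bounds $\sqrt{1+T}\,\|f(\cdot,s)\|_2$ and $(1+\sqrt{T})\|f(\cdot,s)\|_2$, and the final $s$-integration converts these into $\|f\|_{L^1(0,T;L^2(0,L))}$ carrying exactly the advertised $(1+T)$ and $(1+\sqrt{T})$ prefactors.

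I expect no serious obstacle here: the only genuinely nontrivial regularity, namely the hidden boundary trace $y_x(0,\cdot)\in L^2(0,T)$ and the sharp interior derivative bound, neither of which is accessible from contraction-semigroup theory alone, has already been established in Lemma \ref{wp_lem_1}, so the Duhamel-plus-Minkowski packaging is essentially bookkeeping. The one point that deserves care is the interchange of the $ds$-integral with the (fractional and derivative) space--time norms together with the time-shift monotonicity $\|\,\cdot\,\|_{L^2(s,T)} \le \|\,\cdot\,\|_{L^2(0,T)}$; this is precisely what Minkowski's integral inequality legitimizes, and working first with smooth $f$ and invoking density at the end removes any lingering measurability concerns.
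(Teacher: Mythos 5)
Your argument is correct, but for parts (i) and (ii) it follows a genuinely different route from the paper; only your part (iii) coincides with the paper's proof. The paper proves (i) and (ii) by direct multiplier identities on the inhomogeneous equation: multiplying by $2\bar{y}$ and integrating over $[0,T]\times[0,L]$ makes the boundary term $\beta\|y_x(0,\cdot)\|_{L^2(0,T)}^2$ appear on the left-hand side of the energy identity, the forcing term is handled by Cauchy--Schwarz and $\epsilon$-Young with the $\sup_t\|y(\cdot,t)\|_2^2$ absorbed into the left side, and then the multiplier $2x\bar{y}$ (reusing the computations \eqref{wp_lem21_3}--\eqref{wp_lem21_6} of Lemma \ref{wp_lem_1}) yields (ii) together with (i). You instead run the Duhamel representation $y(\cdot,t)=-i\int_0^t S(t-s)f(\cdot,s)\,ds$ plus Minkowski's integral inequality through all three parts, inheriting each estimate from the corresponding homogeneous bound of Lemma \ref{wp_lem_1}. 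Your packaging is more unified and makes the constants' provenance transparent, and it is exactly how the paper itself treats the genuinely nontrivial case (iii), where no simple multiplier identity gives the sharp trace bound; the price is that you must justify commuting the $ds$-integral with the trace/derivative norms and invoke a density argument, which you correctly flag. The paper's energy-identity route for (i)--(ii) buys self-containedness: the hidden trace $y_x(0,\cdot)$ comes out of integration by parts with no need to interchange the Bochner integral with boundary traces, so no smoothing-then-density step is needed there. Both proofs are valid and yield the stated constants $(1+T)$ and $(1+\sqrt{T})$.
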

	
	\begin{proof} \hfill
		\begin{enumerate}
			\item [(i)] Multiplying the main equation by $2\bar{y}$, integrating over $[0,T]\times[0,L]$ and using \eqref{lin1iden2}-\eqref{lin1iden5}, we get
			\begin{equation}\label{220}
				\|y(\cdot,t)\|_2^2 + \beta \|y_x(0,\cdot)\|_{L^2(0,T)}^2 \leq 2\int_0^T\int_0^L |f(x,t)| |y(x,t)| dx dt.
			\end{equation}						
			We apply Cauchy--Schwarz and $\epsilon-$Young's inequalities to the right hand side of \eqref{220} to obtain
			\begin{equation*}
				\|y(\cdot,t)\|_2^2 + \beta \|y_x(0,\cdot)\|_{L^2(0,T)}^2 \leq \epsilon \underset{t\in[0,T]}{\sup}\|y(\cdot,t)\|_2^2 + c_\epsilon\|f\|_{L^1(0,T;L^2(0,L))}^2.
			\end{equation*}
			Right hand side is independent of $t$. So passing to supremum on both sides over $[0,T]$ and choosing $\epsilon > 0$ sufficiently small yield (i).
			
			\item[(ii)] Multiplying the main equation by $2xy$, integrating over $[0,T]\times[0,L]$ and using the same arguments in \eqref{wp_lem21_3}-\eqref{wp_lem21_6}, we get
			\begin{multline}\label{wp_lem22_3}
			\int_0^L x |y(x,t)|^2 dx + 3\beta \int_0^T \|y_x(\cdot,t)\|_2^2 dt \\= \delta \int_0^T \|y(\cdot,t)\|_2^2 dt + 2\alpha \Im\int_0^T\int_0^L y_x(x,t) \overline{y}(x,t)dx dt + 2\Im\int_0^T\int_0^L x f(x,t) \overline{y}(x,t) dx dt.
			\end{multline}
			Second term at the right hand side of \eqref{wp_lem22_3} can be estimated via $\epsilon-$Young's inequality as
			\begin{equation} \label{wp_lem22_4}
				2\alpha \Im\int_0^T\int_0^L y_x(x,t) \overline{y}(x,t)dx dt \leq \epsilon\int_0^T\|y_x(\cdot,t)\|_2^2dt + c_{\alpha,\epsilon} \int_0^T\|y(\cdot,t)\|_2^2 dt.
			\end{equation}
			Using Cauchy--Schwarz inequality and $\epsilon-$Young's inequality, and thanks to (i), the third term at the right hand side of \eqref{wp_lem22_3} can be estimated as	
			\begin{equation} \label{wp_lem22_5}
				\begin{split}
					2\Im\int_0^T\int_0^L x f(x,t) \overline{y}(x,t) dx dt &\leq \underset{t\in[0,T]}{\sup}\|y(\cdot,t)\|_2^2 + L^2\|f\|_{L^1(0,T;L^2(0,L))}^2 \\
				&\leq c_L \|f\|_{L^1(0,T;L^2(0,L))}^2.
				\end{split}
			\end{equation}
			Using \eqref{wp_lem22_4}-\eqref{wp_lem22_5}, it follows from \eqref{wp_lem22_3} that
			\begin{equation}
				\begin{split}
					&\int_0^L x |y(x,t)|^2 dx + (3\beta-\epsilon) \int_0^T \|y_x(\cdot,t)\|_2^2 dt \\ \leq& c_{\alpha,\delta,\epsilon} \int_0^T\|y(\cdot,t)\|_2^2 +  c_L \|f\|_{L^1(0,T;L^2(0,L))}^2 \\
					\leq& c_{\alpha,\delta,\epsilon} T \underset{t\in[0,T]}{\sup}\|y(\cdot,t)\|_2^2 + c_L \|f\|_{L^1(0,T;L^2(0,L))}^2 \\
					\leq& c_{L,\alpha,\delta,\epsilon} (1 + T) \|f\|_{L^1(0,T;L^2(0,L))}^2,
				\end{split}
			\end{equation}
			where we used (i) in the last line again. Finally, choosing $\epsilon\in (0, 3\beta)$, using the Poincare inequality and dropping the first term at the left hand side, we conclude with (ii).
			
			\item [(iii)] Using Duhamel's principle, the solution is of the form
			\begin{equation*}
				y(x,t) = \int_0^t S(t - \tau) f(x,\tau)d\tau.
			\end{equation*}
			By differentiating with respect to $x$
			\begin{equation*}
				\begin{split}
					|\partial_xy(x,t)| &= \left|\partial_x \left[ \int _0^t S(t - \tau) f(x,\tau) d\tau \right] \right| \\
					&\leq \int_0^t \left|\partial_x \left[  S(t - \tau) f(x,\tau) \right] \right| d\tau \\
					&\leq \int_0^T \left|\partial_x \left[  S(t - \tau) f(x,\tau) \right] \right| d\tau.
				\end{split}
			\end{equation*}
			Taking $L^2-$ norm of both sides with respect to $t$ on $[0,T]$ and using the result in Lemma \ref{wp_lem_1}-(iii)
			\begin{equation*}
				\begin{split}
					\|\partial_x y(x,\cdot)\|_{L^2(0,T)} &\leq \left\| \int_0^T \left|\partial_x \left[  S(\cdot - \tau) f(x,\tau) \right] \right| d\tau \right\|_{L^2(0,T)} \\
					&\leq \int_0^T \left\| \partial_x \left[  S(\cdot - \tau) f(x,\tau) \right] \right\|_{L^2(0,T)} d\tau \\
					&\lesssim \int_0^T (1+\sqrt{T}) \|f(\cdot,\tau)\|_{L^2(\mathbb{R})} d\tau \\
					&\lesssim (1+\sqrt{T})\|f\|_{L^1(0,T;L^2(0,L))}.
				\end{split}
			\end{equation*}
			Passing to supremum in $x$ over $[0,L]$ ends the proof of (iii).
 		\end{enumerate}	
	\end{proof}
		
	Now let us turn our attention to the nonhomogeneous boundary value problem with $\phi \equiv f \equiv 0$ and let us first obtain an explicit representation for $y[0,0,\psi_1,\psi_2,\psi_3]$ in terms of the boundary data $\psi_m$, $m = 1, 2, 3$, where we consider, $\psi_m^*$, of $\psi_m$'s from $(0,T)$ to $\mathbb{R}$ satisfying $\|\psi_j^*\|_{H^{1/3}(\mathbb{R})} \lesssim \|\psi\|_{H^{1/3}(0,T)}$, $j = 1,2$ and $\|\psi_3^*\|_{L^2(\mathbb{R})} \lesssim \|\psi\|_{L^2(0,T)}$. We can further assume that $\psi_m^*(t)=0$ for $t<0$. For simplicity, we denote the extended functions again by $\psi_m$. Our approach for obtaining a representation for the solution is to apply the Laplace transformation in time:
	\begin{equation*}
		\tilde{f}(s) = \int_0^\infty e^{-st} f(t)dt.
	\end{equation*}
	This approach is motivated from \cite{Bona03} on the KdV equation. However, due to the parameters $\beta, \alpha, \delta$ and assuming that $L$ may be critical, the situation gets more complicated and the treatment of the problem is more subtle.
	
	To this end, we apply the Laplace transformation and transform \eqref{wp_pde} to the following infinite family of third--order boundary value problems
	\begin{eqnarray} \label{lap_trans}
		\begin{cases}
			is\tilde{y}(x,s) + i\beta \tilde{y}_{xxx} (x,s) + \alpha \tilde{y}_{xx}(x,s) + i\delta \tilde{y}_x(x,s) = 0, (x,s) \in (0,L) \times \mathbb{C}, \\
			\tilde{y}(0,s) = \tilde{\psi_1}(s),  \tilde{y}(L,s) = \tilde{\psi_2}(s), \tilde{y}_x(L,s) = \tilde{\psi_3}(s),
		\end{cases}
	\end{eqnarray}
	where a suitable set for the complex valued independent variable $s$ is specified below. %Here, we take zero extensinon of $\psi$ on $\mathbb{R}$ and for simplicity we denote the extended function again by $\psi$.
	Using the Bromwich integral, $y$ can be represented as
	\begin{equation} \label{inv_lap}
		y(x,t) = \frac{1}{2\pi i} \int_{r - i\infty}^{r + i \infty} e^{st} \tilde y (x,s) ds,
	\end{equation}
	where the vertical integration path $(r-i\infty,r+i\infty)$ in the complex plane is chosen so that, all possible singularities of $\tilde y$ lie at the left of it. Note that for sufficiently large $r$ the characteristic equation,
	\begin{equation} \label{char_eqn}
		s+ \beta \lambda^3  - i\alpha \lambda^2 + \delta \lambda = 0,
	\end{equation}
	for \eqref{lap_trans} has distinct roots. In fact, there exists only finitely many $s$ for which \eqref{char_eqn} has double or possibly triple roots. We can classify these cases depending on the sign of the quantity $\alpha^2 + 3\beta \delta$. To this end, let $\lambda_j$, $j = 1, 2, 3$, denote the roots of \eqref{char_eqn} and assume that $\lambda_2 = \lambda_3$ for some $s \in \mathbb{C}$. Then direct calculations (see Appendix \ref{app_charroots} for details) yield the following cases.
	\begin{enumerate}
		\item [(i)] If $\alpha^2 + 3\beta \delta > 0$, then there exists only two possible values of $s$ and these values belong to the imaginary axis.
		
		\item[(ii)] If $\alpha^2 + 3\beta \delta = 0$, then we have one and only one possible value of $s$ and this value belongs to the imaginary axis. Note that for this value of $s$, we have $\lambda_1 = \lambda_2 = \lambda_3$.
		
		\item[(iii)] If $\alpha^2 + 3\beta \delta < 0$, then there exists only two possible values of $s$ and these values are symmetric with respect to the imaginary axis.
	\end{enumerate}
	Consequently, for sufficiently large $r$ \eqref{char_eqn} has distinct roots on the line $\Re(s)=r$ and solution of \eqref{lap_trans} is of the form
	\begin{equation} \label{soln_lap}
		\tilde{y}(x,s) = \sum_{j=1}^3 c_j(s)e^{\lambda_j(s)x},
	\end{equation}
	where the column vector $(c_1(s), c_2(s), c_3(s))^T$ is the solution of the linear system
	\begin{equation} \label{cj_system}
		\begin{pmatrix}
		1 & 1  & 1 \\
		e^{\lambda_1(s)L} & e^{\lambda_2(s)L}  & e^{\lambda_3(s)L} \\
		\lambda_1(s) e^{\lambda_1(s)L}   & \lambda_2(s) e^{\lambda_2(s)L} & \lambda_3(s) e^{\lambda_3(s)L}
		\end{pmatrix}
		\begin{pmatrix}
		c_1(s) \\
		c_2(s) \\
		c_3(s)
		\end{pmatrix}
		=
		\begin{pmatrix}
		\tilde{\psi_1}(s) \\
		\tilde{\psi_2}(s) \\
		\tilde{\psi_3}(s)
		\end{pmatrix}.
	\end{equation}
	Applying Cramer's rule, these coefficients can be obtained as $c_j = \frac{\Delta_j}{\Delta}$, where $\Delta$ is the determinant of the coefficient matrix and $\Delta_j$'s are determinants of the matrices formed by replacing the $j-$th column of the coefficient matrix by the column vector $(\tilde{\psi_1},\tilde{\psi_2},\tilde{\psi_3})^T$. Thus $y$ is of the form
	\begin{equation} \label{inv_lap1.5}
		y(x,t) = \frac{1}{2\pi i}  \sum_{j=1}^3 \int_{r - i\infty}^{r + i \infty} e^{st} \frac{\Delta_j(s)}{\Delta(s)} e^{\lambda_j(s)x } ds.
	\end{equation}
	We can rewrite $y[0,0,\psi_1,\psi_2,\psi_3]$ as $y \equiv \sum_{m=1}^3 y_m$, where $y_m$ solves the same problem with boundary data $\psi_j \equiv 0$ if $j \neq m$, $j = 1, 2, 3$. Thus $y_m$'s can be expressed as
	\begin{equation} \label{inv_lap2}
		y_m(x,t) = \frac{1}{2\pi i}  \sum_{j=1}^3 \int_{r - i\infty}^{r + i \infty} e^{st} \frac{\Delta_{j,m}(s)}{\Delta(s)} e^{\lambda_j(s)x} \tilde{\psi_m}(s) ds, \quad m = 1, 2, 3.
	\end{equation}
	Here $\Delta_{j,m}$'s, $m = 1,2,3$, are obtained from $\Delta_j$, where $\psi_m$ is replaced by $1$ and $\psi_j$'s, $j \neq m$, are replaced by $0$ for each $j$.
	
	To change the integration path in \eqref{inv_lap2} by a more convenient one, one needs to investigate possible zeros of $\Delta(s)$ in the complex plane. These points occur not only due to the double or possibly triple roots of \eqref{char_eqn} but may also occur due to the eigenvalues of the operator $A$ defined in \eqref{A_opr} with domain $D(A)$ defined in \eqref{A_dom}. Note that $A$ is a dissipative operator:
	\begin{equation*}
		\Re ( A\varphi, \varphi ) = \Re \int_0^L (-\beta \varphi^{\prime\prime\prime} + i\alpha \varphi^{\prime\prime} - \delta \varphi^\prime)(x) \overline{\varphi} (x) dx = - \frac{\beta |\varphi^\prime(0)|^2}{2} \leq 0.
	\end{equation*}
	Thus, in particular, all eigenvalues of $A$ lie on the left complex half  plane or possibly on the imaginary axis. The latter situation occurs only if the problem
	\begin{equation} \label{eigenprob}
		\begin{cases}
		-\beta \varphi^{\prime\prime\prime} + i\alpha \varphi^{\prime\prime} - \delta \varphi^\prime = \lambda \varphi, \quad \text{in }(0,L), \\
		\varphi(0) = \varphi(L) = \varphi^\prime(0) = \varphi^\prime(L),
		\end{cases}
	\end{equation}
	has nontrivial solutions. Using the corresponding characteristic equation $$- \beta m^3 + i\alpha m^2 - \delta m = \lambda$$ for the main equation of \eqref{eigenprob} together with the boundary conditions, one can obtain that the roots $m_j$, $j = 1,2,3$, must be distinct, i.e. $\varphi(x) = \sum_{j = 1}^3 d_1 e^{m_j x}$. Together with the boundary conditions, this implies that $m_j$'s must satisfy
	\begin{equation*}
		e^{m_1 L} = e^{m_2 L} = e^{m_3 L}
	\end{equation*}		
	(see \cite[Proposition 2]{Glass10} for similar calculations). Therefore, we have
	\begin{equation*}
		\begin{split}
		m_2 - m_1 &= \frac{2k\pi i}{L}, \\
		m_3 - m_2 &= \frac{2l\pi i}{L},
		\end{split}
	\end{equation*}
	where without loss of generality, upon relabeling $m_j$'s we can assume that $k, l \in \mathbb{Z}^+$. Using $m_1 + m_2 + m_3 = \frac{i\alpha}{\beta}$, we get
	\begin{equation*}
		\begin{split}
			m_1 &= \frac{i \alpha}{3\beta} + \frac{2(-2k - l) \pi i}{3L}, \\
			m_2 &= \frac{i \alpha}{3\beta} + \frac{2(k - l) \pi i}{3L}, \\
			m_3 &= \frac{i \alpha}{3\beta} + \frac{2(k + 2l) \pi i}{3L}. \\
		\end{split}
	\end{equation*}
	Substituting these into $m_1 m_2 + m_1 m_3 + m_2 m_3 = \frac{\delta}{\beta}$, after some calculations, we obtain
	\begin{equation*}
		\frac{\delta}{\beta} = -\frac{\alpha^2}{3\beta^2} + \frac{4\pi^2(k^2 + kl + l^2)}{3L^2}
	\end{equation*}
	or equivalently
	\begin{equation} \label{critic_rel}
 		\alpha^2 + 3\beta\delta = \frac{4\pi^2 \beta^2 (k^2 + kl + l^2)}{L^2}.
	\end{equation}
	Consequently, depending on the sign of $\alpha^2 + 3\beta\delta$ and the interval length $L$, it is possible to obtain a nontrivial solution of \eqref{eigenprob}, therefore there can be some eigenvalues on the imaginary axis.
	
	\begin{enumerate}
	\item [(i)] Let $\alpha^2 + 3\beta\delta > 0$. Then, choosing $L$ from the set of critical lengths given in \eqref{critic_len} imply the existence of some eigenvalues that are located on the imaginary axis. Now from the equation $m_1 m_2 m_3 = - \frac{\lambda}{\beta}$ and using $L = 2\pi \beta \sqrt{\frac{k^2 + kl + l^2}{\alpha^2 + 3\beta\delta}}$, one obtains after some calculations that
	\begin{equation} \label{eigval}
		\lambda = \frac{i}{27 \beta^2} \left[\alpha^3 - 3\alpha (\alpha^2 + 3\beta\delta) + 2 (\alpha^2 + 3\beta\delta)^{3/2} H(k,l)\right]
	\end{equation}
	where
	\begin{equation}
		H(k,l) = \frac{(-2k-l)(k-l)(k+2l)}{2(k^2 + kl + l^2)^{\frac{3}{2}}}.
	\end{equation}
	It is not difficult to see that $-1 < H(k,l) < 1$ for $k, l \in \mathbb{Z}^+$. Thus, from \eqref{eigval}, we deduce that $\Im(\lambda) \in (\Im s_2^+, \Im s_1^+)$, where
	\begin{equation*}
		s_1^+ \doteq \frac{i}{27 \beta^2} \left[\alpha^3 - 3\alpha (\alpha^2 + 3\beta\delta) + 2 (\alpha^2 + 3\beta\delta)^{3/2}\right]
	\end{equation*}
	and
	\begin{equation*}
		s_2^+ \doteq \frac{i}{27 \beta^2} \left[\alpha^3 - 3\alpha (\alpha^2 + 3\beta\delta) - 2 (\alpha^2 + 3\beta\delta)^{3/2}\right].
	\end{equation*}
	See Figure \ref{fig:hkl} for the graph of $H(k,l)$.
	\begin{figure}[h]
		\centering
		\includegraphics[width=8cm]{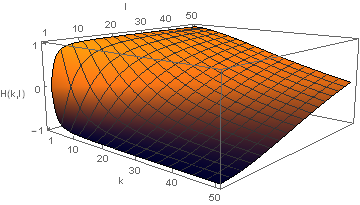}
		\caption{$H(k,l) : [1,\infty) \times [1,\infty) \to (-1,1)$}
		\label{fig:hkl}
	\end{figure}

	On the other hand $s_1^+$ and $s_2^+$ are the points where \eqref{lap_trans} assumes double root (see Appendix \ref{app_charroots} for detailed calculations), hence zeros of $\Delta(s)$. This fact together with the location of the possible pure imaginary eigenvalues imply that all possible singular points of \eqref{inv_lap1.5} belonging to the imaginary axis lie in the closed interval $[\Im(s_2^+),\Im(s_1^+)]$. Thus, we can deform the vertical integration path of \eqref{inv_lap1.5} by shifting the parts $\{s:\Im(s) > \Im(s^+_1) + \rho\}$ and $\{s:\Im(s) < \Im(s^+_2) - \rho\}$ to $C_1^+ \doteq (s^+_1 + i\rho,i\infty)$ and $C_3^+ \doteq (-i\infty,s^+_2 - i\rho)$  respectively, $\rho > 0$ is fixed, whereas we shift the rest of the integration path up to $\rho$ units from the imaginary axis, and avoid the points $s_1^+$, $s_2^+$ by a quarter-circular arcs to the upper-right and lower-right respectively, denoted by $C_2^+$. See Figure \ref{fig:int_path+} below.
	
	\begin{figure}[H]
		\begin{tikzpicture}[decoration={markings,
				mark=at position 1cm   with {\arrow[line width=1.5pt]{stealth}},
				mark=at position 4.5cm with {\arrow[line width=1.5pt]{stealth}},
				mark=at position 7cm   with {\arrow[line width=1.5pt]{stealth}},
				mark=at position 9.5cm with {\arrow[line width=1.5pt]{stealth}}
			}]
			
			\draw [<->] (-3,0) -- (0,0) -- (3,0) node[right]{$\Re(s)$};
			\draw [<->] (0,-1.2) -- (0,0) -- (0,4.7) node[right]{$\Im(s)$};
			
			\draw[dashed]
			(0.5,0.9) -- (0,0.9) node[left] {$s_2^+$};
			\draw[dashed]
			(0.5,2.5) -- (0,2.5) node[left] {$s_1^+$};
			
			\draw (0,1.3) node[cross=2pt,rotate=0] {};
			\draw (0,1.7) node[cross=2pt,rotate=0] {};
			\draw (0,2.1) node[cross=2pt,rotate=0] {};

			\filldraw
			(0,0.9) circle (1.25pt) node[align=center, above] {};
			\filldraw
			(0,2.5) circle (1.25pt) node[align=center, below] {};
			
			%\filldraw [blue]
			%(0,0.95) circle (1pt) node[align=center, below] {};
			%\filldraw [blue]
			%(2,0.95) circle (1pt) node[align=center, below] {};
			%\filldraw [blue]
			%(2,1.55) circle (1pt) node[align=center, below] {};
			%\filldraw [blue]
			%(0,1.55) circle (1pt) node[align=center, below] {};
			
			\tikzset{->-/.style={decoration={
						markings,
						mark=at position .5 with {\arrow{>}}},postaction={decorate}}}
			
			\draw [->-, blue, very thick] (0,-0.8) -- (0,-0.3) node[right] {$C_3^+$} --  (0,0.4) {};
			\draw [->-, blue, very thick] (0,0.4) arc[start angle=-90, end angle=0, radius=5mm] {};
			\draw [->-, blue, very thick] (0.5,0.9) -- (0.5,1.7) node[right] {$C_2^+$} -- (0.5,2.5){};
			\draw [->-, blue, very thick] (0.5,2.5) arc[start angle=0, end angle=90, radius=5mm] {};
			\draw [->-, blue, very thick] (0,3) -- (0,3.6) node[right] {$C_1^+$} --  (0,4.2) {};			
		\end{tikzpicture}
		\caption{Integration path for the case $\alpha^2 + 3\beta\delta > 0$.} \label{fig:int_path+}
	\end{figure}
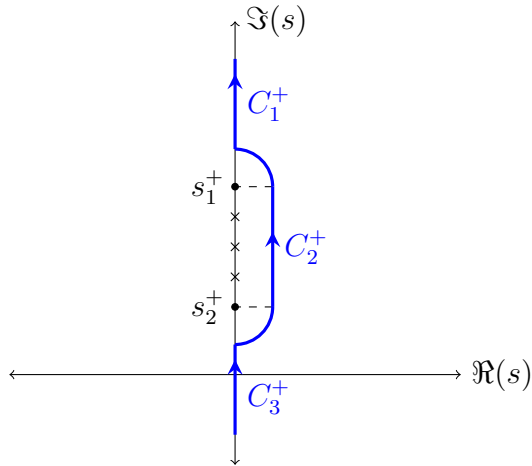
	Thus we can express \eqref{inv_lap2} as
	\begin{equation} \label{invlap>0}
		\begin{split}
			y_m(x,t) =& \frac{1}{2\pi i} \sum_{j=1}^3  \int_{C_1^+} e^{st} \frac{\Delta_{j,m}(s)}{\Delta(s)} e^{\lambda_j(s)x} \tilde{\psi_m}(s) ds
			\\ &+\frac{1}{2\pi i}\sum_{j=1}^3 \int_{C_2^+} e^{st} \frac{\Delta_{j,m}(s)}{\Delta(s)} e^{\lambda_j(s)x} \tilde{\psi_m}(s) ds \\
			&+ \frac{1}{2\pi i} \sum_{j=1}^3  \int_{C_3^+} e^{st} \frac{\Delta_{j,m}(s)}{\Delta(s)} e^{\lambda_j(s)x} \tilde{\psi_m}(s) ds.
		\end{split}
	\end{equation}
	Now we change the variable in the first and the third integral as $s = i\omega(\xi) = i(\beta \xi^3 - \alpha \xi^2 - \delta \xi)$. For $\alpha^2 + 3\beta\delta > 0$, the function $\omega(\xi)$ has one local maximum and one local minimum. After some calculations one can find that the most right inverse image of $s_1^+$ and the most left inverse image of $s_2^+$ are given by
	\begin{equation*}
		\xi_1^+  \doteq \frac{\alpha +2 \sqrt{\alpha^2 + 3\beta\delta}}{3\beta}, \quad 	\xi_2^+  \doteq \frac{\alpha -2 \sqrt{\alpha^2 + 3\beta\delta}}{3\beta}
	\end{equation*}
	respectively (see Figure \ref{fig:int_trans_>0}). Thus inverse images of the paths $C_1^+$ and $C_3^+$ under the transformation $s = i \omega(\xi)$ become $(\xi_1^+ + \eta_1^+, \infty)$ and $(-\infty,\xi_2^+ - \eta_2^+)$ respectively for $\eta_1^+, \eta_2^+ > 0$.
	\begin{figure}[h]
		\begin{tikzpicture}
			\begin{axis}[
			axis x line=none,
			axis y line=none,
			ticks=none,
			xmin=-2.85,
			xmax=1.7,
			ymin=-3,
			ymax=3.75]
			\addplot [domain=-28/15:1.2, smooth, thick] { x^3 + x^2 - x } node[above,pos=1] {$ \omega(\xi)$};
			\addplot [dotted] coordinates {(-5/3,-5/27) (-5/3,-1.2)}
			node[below,pos=1] {$\xi_{2}^+$};
			
			%\addplot [dotted] coordinates {(-1,1) (-1,-1.2)}
			%node[below,pos=1] {};
			
			%\addplot [dotted] coordinates {(1/3,-5/27) (1/3,-1.2)}
			%node[below,pos=1] {};
			
			\addplot [dotted] coordinates {(1,1) (1,-1.2)}
			node[below,pos=1] {$\xi_1^+$};
			
			\addplot [dotted] coordinates {(1/3,-5/27) (-2,-5/27)}
			node[left,pos=1] {$\Im(s_2^+)$};
			
			\addplot [dotted] coordinates {(1,1) (-2,1) }
			node[left,pos=1] {$\Im(s_1^+)$};
			
			\addplot [->] coordinates {(-2.82,-2.5) (-2.32,-2.5) }
			node[right,pos=1] {$\xi$};
			
			\addplot [->] coordinates {(-2.82,-2.5) (-2.82,-1.5) }
			node[right,pos=1] {$\Im(s)$};
			\end{axis}
		\end{tikzpicture}
		\caption{Plot of transformation $\Im(s) = \omega(\xi)$ when $\alpha^2 + 3\beta\delta > 0$.} \label{fig:int_trans_>0}
	\end{figure}
	Consequently,  \eqref{invlap>0} becomes
	\begin{equation} \label{invlap>0*}
		\begin{split}
		y_m(x,t) =& \frac{1}{2\pi i} \sum_{j=1}^3  \int_{\xi_1^+ + \eta_1^+}^{\infty} e^{i\omega(\xi)t} \frac{\Delta_{j,m}^*(\xi)}{\Delta^*(\xi)} e^{\lambda_j^*(\xi)x} (3\beta\xi^2 - 2\alpha \xi - \delta) \tilde{\psi_m}^*(\xi) d\xi \\
		&+\frac{1}{2\pi i}\sum_{j=1}^3 \int_{C_2^+} e^{st} \frac{\Delta_{j,m}(s)}{\Delta(s)} e^{\lambda_j(s)x} \tilde{\psi_m}(s) ds \\
		&+ \frac{1}{2\pi i} \sum_{j=1}^3  \int_{-\infty}^{\xi_2^+ - \eta_2^+} e^{i\omega(\xi)t} \frac{\Delta_{j,m}^*(\xi)}{\Delta^*(\xi)} e^{\lambda_j^*(\xi)x} (3\beta\xi^2 - 2\alpha \xi - \delta) \tilde{\psi_m}^*(\xi) d\xi \\
		\doteq& y_{m,1}^+(x,t) + y_{m,2}^+(x,t) + y_{m,3}^+(x,t),
		\end{split}
	\end{equation}
	where the superscript $^*$ stands for the transformed functions under the change of variable given above. Note also that, with respect to the new variable, we have the following explicit representation for the roots of the characteristic equation \eqref{char_eqn}:
	\begin{equation} \label{char_roots_>0}
		\begin{split}
		\lambda_1^*(\xi) &= i\xi, \\
		\lambda_{2}^*(\xi) &= \frac{-i(\beta \xi - \alpha) - \sqrt{3\beta^2 \xi^2 -2\beta\alpha\xi - \alpha^2 - 4\beta\delta}}{2\beta}, \\
		\lambda_{3}^*(\xi) &= \frac{-i(\beta \xi - \alpha) + \sqrt{3\beta^2 \xi^2 -2\beta\alpha\xi - \alpha^2 - 4\beta\delta}}{2\beta}.
		\end{split}
	\end{equation}
	
	\item[(ii)] Let $\alpha^2 + 3\beta\delta = 0$. Then we see that \eqref{critic_rel} does not hold for any $k,l \in \mathbb{Z}^+$, thus the eigenvalue problem \eqref{eigenprob} has only trivial solution. But this contradicts with the fact that $\varphi$ is an eigenvalue. Thus $\Re (A\varphi, \varphi) < 0$ and the real parts of the all eigenvalues of the operator $A$ are strictly negative. On the other hand
	\begin{equation*}
		s^0 \doteq \frac{i\alpha^3}{27 \beta^2}
	\end{equation*}
	is the point where \eqref{char_eqn} assumes triple root (see Appendix \ref{app_charroots} for details). Thus the integrand of \eqref{inv_lap2} is continuous for all $r > 0$ and we can shift the contour of integration onto the imaginary axis, provided that we avoid $s^0$ by a half-circular arc to the right with a radius $\rho > 0$ denoted by $C_2^0$. Defining also $C_1^0 \doteq (s^0 + i\rho, i\infty)$ and $C_3^0 \doteq (-i\infty, s^0 - i\rho)$ (see Figure \ref{fig:int_path0} below)
	\begin{figure}[H]
		\begin{tikzpicture}[decoration={markings,
				mark=at position 1cm   with {\arrow[line width=1.5pt]{stealth}},
				mark=at position 4.5cm with {\arrow[line width=1.5pt]{stealth}},
				mark=at position 7cm   with {\arrow[line width=1.5pt]{stealth}},
				mark=at position 9.5cm with {\arrow[line width=1.5pt]{stealth}}
			}]
			
			\draw [<->] (-3,0) -- (0,0) -- (3,0) node[right]{$\Re(s)$};
			\draw [<->] (0,-1.2) -- (0,0) -- (0,4.7) node[right]{$\Im(s)$};
			
			\draw[dashed]
			(0.5,1.7) node[right, blue] {$C_2^0$} -- (0,1.7) node[left] {$s^0$};
			\filldraw
			(0,1.7) circle (1.25pt) node[align=center, below] {};
			
			\tikzset{->-/.style={decoration={
						markings,
						mark=at position .5 with {\arrow{>}}},postaction={decorate}}}
			
			\draw [->-, blue, very thick] (0,-0.8) -- (0,0.2) node[right] {$C_3^0$} --  (0,1.2) {};
			\draw [->-, blue, very thick] (0,1.2) arc[start angle=-90, end angle=90, radius=5mm] {};
			\draw [->-, blue, very thick] (0,2.2) -- (0,3.2) node[right] {$C_1^0$} --  (0,4.2) {};			
		\end{tikzpicture}
		\caption{Integration path for the case $\alpha^2 + 3\beta\delta = 0$.} \label{fig:int_path0}
	\end{figure}
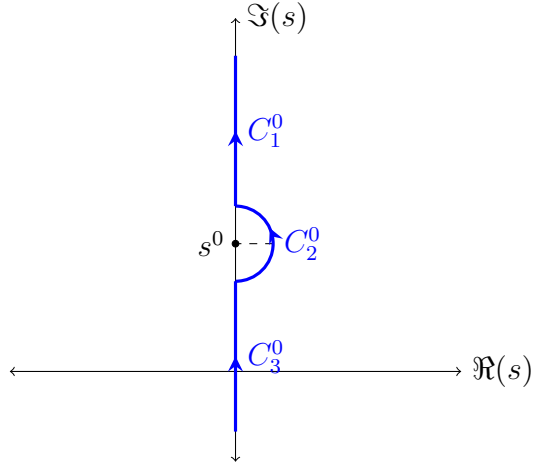
	we can express \eqref{inv_lap2} as
	\begin{equation} \label{invlap=0}
		\begin{split}
			y_m(x,t) =& \frac{1}{2\pi i} \sum_{j=1}^3  \int_{C_1^0} e^{st} \frac{\Delta_{j,m}(s)}{\Delta(s)} e^{\lambda_j(s)x} \tilde{\psi_m}(s) ds
			\\ &+\frac{1}{2\pi i}\sum_{j=1}^3 \int_{C_2^0} e^{st} \frac{\Delta_{j,m}(s)}{\Delta(s)} e^{\lambda_j(s)x} \tilde{\psi_m}(s) ds \\
			&+ \frac{1}{2\pi i} \sum_{j=1}^3  \int_{C_3^0} e^{st} \frac{\Delta_{j,m}(s)}{\Delta(s)} e^{\lambda_j(s)x} \tilde{\psi_m}(s) ds.
		\end{split}
	\end{equation}
	Now let us consider changing variables as $s = i\omega(\xi) = i(\beta \xi^3 - \alpha \xi^2 - \delta \xi)$ for the first and the third integrals. For $\alpha^2 + 3\beta\delta = 0$, note that $\omega(\xi)$ is nondecreasing and $(\xi^0,s^0)$ is the inflection point of $\omega(\xi)$, where after some calculations, $\xi^0$ can be obtained as
	\begin{equation*}
	\xi^0 \doteq \frac{\alpha}{3\beta}.
	\end{equation*}
	See Figure \ref{fig:int_trans_=0} for a graphical illustration.
	\begin{figure}[h]
		\begin{tikzpicture}
		\begin{axis}[
		axis x line=none,
		axis y line=none,
		ticks=none,
		xmin=-1.5,
		xmax=1.5,
		ymin=-2.5,
		ymax=2.5]
		\addplot [domain=-0.78:0.78, smooth, thick] { 3*x^3 } node[above,pos=1] {$\omega(\xi)$};
		
		\addplot [dotted] coordinates {(0,0) (0,-1.5)}
		node[below,pos=1] {$\xi^0$};
		
		\addplot [dotted] coordinates {(0,0) (-0.8,0)}
		node[left,pos=1] {$\Im(s^0)$};
		
		\addplot [->] coordinates {(-1.1,-2.3) (-0.7,-2.3) }
		node[right,pos=1] {$\xi$};
		
		\addplot [->] coordinates {(-1.1,-2.3) (-1.1,-1.5) }
		node[above,pos=1] {$\Im(s)$};
		\end{axis}
		\end{tikzpicture}
		\caption{Plot of transformation $\Im(s) = \omega(\xi)$ when $\alpha^2 + 3\beta\delta = 0$.} \label{fig:int_trans_=0}
	\end{figure}
	Hence we can find the unique inverse images of $C_1^0$ and $C_3^0$ as $(\xi^0 + \eta_1^0,\infty)$ and $(-\infty,\xi^0-\eta_2^0)$ respectively for some $\eta_1^0, \eta_2^0 > 0$. Thus \eqref{invlap=0} becomes
	\begin{equation} \label{invlap=0*}
	\begin{split}
	y_m(x,t) =&  \frac{1}{2\pi i} \sum_{j=1}^3  \int_{\xi^0 + \eta_1^0}^{\infty} e^{i\omega(\xi)t} \frac{\Delta_{j,m}^*(\xi)}{\Delta^*(\xi)} e^{\lambda_j^*(\xi)x} (3\beta\xi^2 - 2\alpha \xi - \delta) \tilde{\psi_m}^*(\xi) d\xi \\
	&+\frac{1}{2\pi i}\sum_{j=1}^3 \int_{C_2^+} e^{st} \frac{\Delta_{j,m}(s)}{\Delta(s)} e^{\lambda_j(s)x} \tilde{\psi_m}(s) ds \\
	&+ \frac{1}{2\pi i} \sum_{j=1}^3  \int_{-\infty}^{\xi^0 - \eta_2^0} e^{i\omega(\xi)t} \frac{\Delta_{j,m}^*(\xi)}{\Delta^*(\xi)} e^{\lambda_j^*(\xi)x} (3\beta\xi^2 - 2\alpha \xi - \delta) \tilde{\psi_m}^*(\xi) d\xi \\
	\doteq& y_{m,1}^0(x,t) + y_{m,2}^0(x,t) + y_{m,3}^0(x,t),
	\end{split}
	\end{equation}
	where $\lambda_j^*$'s are given as in \eqref{char_roots_>0}.
	
	\item[(iii)]Let $\alpha^2 + 3\beta \delta < 0$. Then \eqref{critic_rel} does not hold for any $k, l \in \mathbb{Z}^+$ and all eigenvalues lie on the left half complex plane. On the other hand, there exits two values of $s$ for which \eqref{char_eqn} assumes double root. These values, say $s_1^-$ and $s_2^-$ with $\Re(s_1^-) > 0 > \Re(s_2^-)$ which are symmetric with respect to the imaginary axis (see Appendix \ref{app_charroots}), are also the branch points of the square root function
	\begin{equation*}
		\sqrt{(s - s_1^-)(s - s_2^-)}
	\end{equation*}
	where we choose the branch cut as $\{s \in \mathbb{C} : \Im(s) = \Im (s_1^-), \Re(s_2^-) < \Re(s) < \Re(s_1^-) \}$. Indeed changing variables as $s = i\Im(s_1^-) + r$ and than making some calculations, roots of the characteristic equation \eqref{char_eqn} can be expressed as
	\begin{equation*}
		\lambda_j^\dagger(r) = \frac{1}{3\beta} \left(i\alpha - \frac{\alpha^2 + 3\beta\delta}{\Lambda_j(r)} + \Lambda_j(r)\right),
	\end{equation*}
	where
	\begin{equation*}
		\Lambda_j(r) = -3 \beta^{\frac{2}{3}} e^{\frac{2\pi i j}{3}} \left(\frac{1 - \sqrt{r^2 + \frac{4(\alpha^2+3\beta\delta)}{729 \beta^4}}}{2}\right)^{\frac{1}{3}}, \quad j = 1, 2, 3.
	\end{equation*}
	Note that $\Re(s_1^-)$ and $\Re(s_2^-)$ are now the zeros of the square root above.
	
	In conclusion, what distinguishes this case from the previous cases is that, we have now a zero of $\Delta(s)$ that lies on the right half complex plane which is at the endpoint of the branch cut. Therefore, to deform the integration path, we first shift the vertical integration line to the left until we meet $s_1^-$. Then we deform a part of the path near $s_1^-$ by a half-circular arc to the right with a radius $\rho > 0$. Next we deform the rest of the integration path as, first by horizontal line segments to the left starting from the end points of the arc through the imaginary axis and second continuing from the imaginary axis in the vertical direction towards $+i\infty$ and $-i\infty$ respectively. See Figure \ref{fig:int_path} for the path deformation described here.
	\begin{figure}[H]
		\begin{tikzpicture}[decoration={markings,
			mark=at position 1cm   with {\arrow[line width=1.5pt]{stealth}},
			mark=at position 4.5cm with {\arrow[line width=1.5pt]{stealth}},
			mark=at position 7cm   with {\arrow[line width=1.5pt]{stealth}},
			mark=at position 9.5cm with {\arrow[line width=1.5pt]{stealth}}
		}]
	
		\draw [<->] (-3,0) -- (0,0) -- (3,0) node[right]{$\Re(s)$};
		\draw [<->] (0,-1.2) -- (0,0) -- (0,3.7) node[right]{$\Im(s)$};

		\draw[dotted]
		(1.6,1.25) -- (1.6,0) node[below] {$\Re(s_1^-)$};
		\draw[dotted]
		(1.6,1.25) -- (0,1.25) node[above, left] {};
		
		\draw (-1.2,1.25) node[cross=2pt,rotate=0] {};
		\draw (-0.8,1.25) node[cross=2pt,rotate=0] {};
		\draw (-0.4,1.25) node[cross=2pt,rotate=0] {};
		\draw (0,1.25) node[cross=2pt,rotate=0] {};
		\draw (0.4,1.25) node[cross=2pt,rotate=0] {};
		\draw (0.8,1.25) node[cross=2pt,rotate=0] {};
		\draw (1.2,1.25) node[cross=2pt,rotate=0] {};
		\filldraw
		(-1.6,1.25) circle (1.25pt) node[align=center, above] {$s_2^{-}$};
		\filldraw
		(1.6,1.25) circle (1.25pt) node[align=center, below] {};

		\tikzset{->-/.style={decoration={
					markings,
					mark=at position .5 with {\arrow{>}}},postaction={decorate}}}
				
		\draw [->-, blue, very thick] (0,-0.7) -- (0,-0.3) node[right] {$C_5^-$} --  (0,0.85) {};
		\draw [->-, blue, very thick] (0,0.85) -- (0.8,0.85) node[below] {$C_4^-$} -- (1.6,0.85) {};
		\draw [->-, blue, very thick] (1.6,0.85) arc[start angle=-90, end angle=90, radius=4mm]  node[right] {$\text{ }\text{ }C_3^-$} {};
		\draw [->-, blue, very thick] (1.6,1.65) -- (0.8,1.65) node[above] {$C_2^-$} -- (0,1.65) {};
		\draw [->-, blue, very thick] (0,1.65) -- (0,2.5) node[right] {$C_1^-$} -- (0,3.2) {};
		
		\end{tikzpicture}
		\caption{Integration path for the case $\alpha^2 + 3\beta\delta < 0$.} \label{fig:int_path}
	\end{figure}
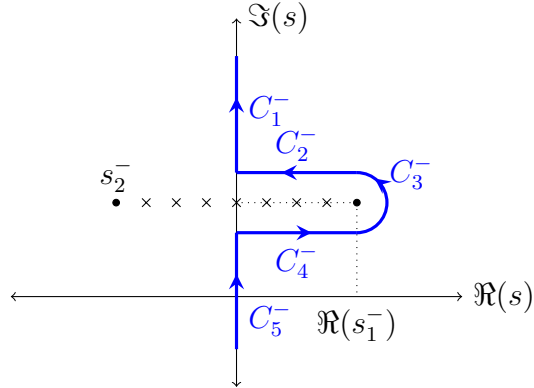
	Consequently, we can write \eqref{inv_lap2} as
	\begin{equation} \label{invlap<0}
	\begin{split}
	y_m(x,t) =& \frac{1}{2\pi i} \sum_{j=1}^3  \int_{C_1^-} e^{st} \frac{\Delta_{j,m}(s)}{\Delta(s)} e^{\lambda_j(s)x} \tilde{\psi_m}(s) ds \\
	&+\frac{1}{2\pi i}\sum_{j=1}^3 \int_{C_2^- \cup C_3^- \cup C_4^-} e^{st} \frac{\Delta_{j,m}(s)}{\Delta(s)} e^{\lambda_j(s)x} \tilde{\psi_m}(s) ds \\
	&+ \frac{1}{2\pi i} \sum_{j=1}^3  \int_{C_5^-} e^{st} \frac{\Delta_{j,m}(s)}{\Delta(s)} e^{\lambda_j(s)x} \tilde{\psi_m}(s) ds.
	\end{split}
	\end{equation}
	Now let us apply change of variable $s = i\omega(\xi) = i(\beta\xi^3 - \alpha\xi^2 - \delta\xi)$ for the first and third integrals. Note that for $\alpha^2 + 3\beta\delta < 0$, this mapping is strictly increasing and the inverse image of $\Im(s_1^-)$ under the transformation $\omega(\xi)$ is the point
	\begin{equation*}
	\xi^- \doteq \frac{\alpha}{3\beta}.
	\end{equation*}
	Then $C_1^-$ and $C_5^-$ are mapped to  $(\xi^- + \eta_1^-,\infty)$ and $(-\infty,\xi^- - \eta_2^-)$ for some $\eta_1^-, \eta_2^-> 0$. See Figure \ref{fig:int_trans_<0}.
	\begin{figure}[h]
		\begin{tikzpicture}
		\begin{axis}[
		axis x line=none,
		axis y line=none,
		ticks=none,
		xmin=-1.5,
		xmax=1.5,
		ymin=-2.5,
		ymax=2.5]
		\addplot [domain=-0.65:0.65, smooth, thick] { 3*x^3+x } node[above,pos=1] {$\omega(\xi)$};
		
		\addplot [dotted] coordinates {(0,0) (0,-1.5)}
		node[below,pos=1] {$\xi^-$};
		
		\addplot [dotted] coordinates {(0,0) (-0.8,0)}
		node[left,pos=1] {$\Im(s_1^-)$};
		
		\addplot [->] coordinates {(-1.1,-2.3) (-0.7,-2.3) }
		node[right,pos=1] {$\xi$};
		
		\addplot [->] coordinates {(-1.1,-2.3) (-1.1,-1.5) }
		node[above,pos=1] {$\Im(s)$};
		\end{axis}
		\end{tikzpicture}
		\caption{Plot of transformation $\Im(s) = \omega(\xi)$ when $\alpha^2 + 3\beta \delta < 0$.} \label{fig:int_trans_<0}
	\end{figure}
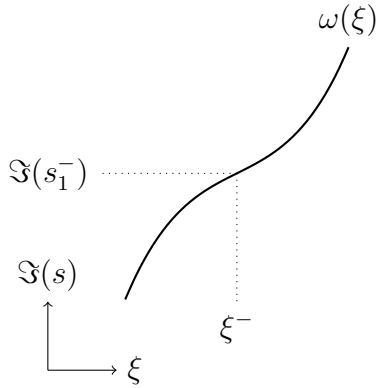
	Thus \eqref{invlap<0} becomes
	\begin{equation} \label{invlap<0*}
		\begin{split}
			y_m(x,t) =& \frac{1}{2\pi i} \sum_{j=1}^3 \int_{\xi^- + \eta_1^-}^\infty e^{i\omega(\xi)t} \frac{\Delta_{j,m}^*(\xi)}{\Delta^*(\xi)} e^{\lambda_j^*(\xi) x} (3\beta^2 \xi^2 - 2\alpha\xi - \delta) \tilde{\psi_m} d\xi \\
			&+\frac{1}{2\pi i}\sum_{j=1}^3 \int_{C_2^- \cup C_3^- \cup C_4^-} e^{st} \frac{\Delta_{j,m}(s)}{\Delta(s)} e^{\lambda_j(s)x} \tilde{\psi_m}(s) ds \\
			&+ \frac{1}{2\pi i} \sum_{j=1}^3 \int_{-\infty}^{\xi^- - \eta_2^-} e^{i\omega(\xi)t} \frac{\Delta_{j,m}^*(\xi)}{\Delta^*(\xi)} e^{\lambda_j^*(\xi) x} (3\beta^2 \xi^2 - 2\alpha\xi - \delta) \tilde{\psi_m}^*(\xi) d\xi \\
			\doteq & y_{m,1}^-(x,t) + y_{m,2}^-(x,t) + y_{m,3}^-(x,t).
		\end{split}
	\end{equation}
	where $\lambda_j^*$'s are given by \eqref{char_roots_>0}.
	\end{enumerate}
	
	In the following three lemmas we provide estimates for $y_m$ for each $m$. Note that for each solution representation \eqref{invlap>0*}, \eqref{invlap=0*} and \eqref{invlap<0*} corresponding to the different cases of $\alpha^2 + 3\beta\delta$, second integrals are bounded on the corresponding integration paths. However, these paths lie on the right half complex plane. Therefore, for a given $T > 0$, we can find $c > 0$ such that the norm estimates that we will obtain below for the first and third integrals also hold for the second integrals but with a constant $e^{cT}$. On the other hand, for each case of $\alpha^2 + 3\beta\delta$, we have to estimate the following form of integrals
	\begin{equation} \label{int_I}
		I_m(x,t) = \frac{1}{2\pi i} \sum_{j=1}^3  \int_{\gamma_1}^{\infty} e^{i\omega(\xi)t} \frac{\Delta_{j,m}^*(\xi)}{\Delta^*(\xi)} e^{\lambda_j^*(\xi)x} (3\beta\xi^2 - 2\alpha \xi - \delta) \tilde{\psi_m}^*(\xi) d\xi
	\end{equation}
	and
	\begin{equation} \label{int_J}
		J_m(x,t) =  \frac{1}{2\pi i} \sum_{j=1}^3  \int_{-\infty}^{\gamma_2} e^{i\omega(\xi)t} \frac{\Delta_{j,m}^*(\xi)}{\Delta^*(\xi)} e^{\lambda_j^*(\xi)x} (3\beta\xi^2 - 2\alpha \xi - \delta) \tilde{\psi_m}^*(\xi) d\xi,
	\end{equation}
	where $\gamma_1 \in \{\xi_1^+ + \eta_1^+,\xi^0 + \eta_1^0,\xi^- + \eta_1^-\}$ and $\gamma_2 \in \{\xi_2^+ + \eta_2^+,\xi^0 + \eta_2^0,\xi^- + \eta_2^-\}$. Thus, it is enough to study \eqref{int_I} and \eqref{int_J} in order to obtain desired norm estimates for $y_m$, $m = 1, 2, 3$.
	
	\begin{lem} \label{wp_lem_ld}
		Let $T, \beta > 0$, $\alpha,\delta \in \mathbb{R}$ and $\psi_1 \in H^{1/3}(0,T)$. Then  $y_1 = y[0,0,\psi_1,0,0]$ belongs to the space $C([0,T];L^2(0,L)) \cap L^2(0,T;H^1(0,L))$ and it also satisfies $\partial_x y_1 \in C([0,L];L^2(0,T))$. Moreover, there exists a constant $c > 0$ such that
		\begin{equation} \label{wp_est1_ld}
			\|y_1\|_{C([0,T];L^2(0,L))} + \|y_1\|_{L^2(0,T;H^1(0,L))} \lesssim e^{cT} \|\psi_1\|_{H^{1/3}(0,T)}
		\end{equation}
		and
		\begin{equation} \label{wp_est2_ld}
			\underset{x \in [0,L]}{\sup} \|\partial_xy_1(x,\cdot)\|_{L^2(0,T)} \lesssim e^{cT} \|\psi_1\|_{H^{1/3}(0,T)}.
		\end{equation}
		If $\alpha^2 + 3\beta \delta < 0$, then $c > \Re(s_1^-) > 0$ where $s_1^-$ is the value for which \eqref{char_eqn} assumes double root.
	\end{lem}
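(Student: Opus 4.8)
The plan is to work directly from the contour representations \eqref{invlap>0*}, \eqref{invlap=0*}, \eqref{invlap<0*} specialized to $m=1$, i.e. to estimate the two unbounded-range integrals $I_1,J_1$ of the form \eqref{int_I}--\eqref{int_J} together with the remaining bounded-contour piece $y_{1,2}$. For the bounded-contour piece I would first record that the deformed path is compact and lies in a strip $\{\Re(s)\le c\}$: in the cases $\alpha^2+3\beta\delta\ge 0$ one may take $c=\rho$, while for $\alpha^2+3\beta\delta<0$ the arc $C_3^-$ forces the path to the right of the double root $s_1^-$, whence $c>\Re(s_1^-)>0$, which is exactly the last assertion of the lemma. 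On this compact portion $\Delta(s)$ is bounded away from zero, because the deformation circumvents all its zeros (the double/triple roots of \eqref{char_eqn} and the possible pure-imaginary eigenvalues of $A$) by arcs of radius $\rho$; hence the integrand is continuous and $\int_{C_2}|\tilde\psi_1(s)|\,|ds|\lesssim\|\psi_1\|_{H^{1/3}(0,T)}$ (using $|\tilde\psi_1|\le(2\Re s)^{-1/2}\|\psi_1^*\|_{L^2}$ off the imaginary axis and $\tilde\psi_1=\hat\psi_1$ on it). Since $|e^{st}|\le e^{cT}$, this piece contributes at most $e^{cT}\|\psi_1\|_{H^{1/3}(0,T)}$ to each of the norms in \eqref{wp_est1_ld}--\eqref{wp_est2_ld}.

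The heart of the argument is a set of uniform symbol bounds for the unbounded integrals. Solving \eqref{cj_system} by Cramer's rule for $m=1$ gives $\Delta_{1,1}=(\lambda_3-\lambda_2)e^{(\lambda_2+\lambda_3)L}$, $\Delta_{2,1}=-(\lambda_3-\lambda_1)e^{(\lambda_1+\lambda_3)L}$, $\Delta_{3,1}=(\lambda_2-\lambda_1)e^{(\lambda_1+\lambda_2)L}$, while $\Delta$ is the corresponding $3\times 3$ determinant. Using the explicit roots \eqref{char_roots_>0} I would record the asymptotics $\Re\lambda_1^*=0$, $\Re\lambda_2^*\sim-\tfrac{\sqrt3}{2}|\xi|$ and $\Re\lambda_3^*\sim+\tfrac{\sqrt3}{2}|\xi|$, so that $e^{(\lambda_1+\lambda_3)L}$ dominates $\Delta$. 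The point is that in each kernel $K_j(x,\xi)\doteq\frac{\Delta_{j,1}^*(\xi)}{\Delta^*(\xi)}e^{\lambda_j^*(\xi)x}$ the growing exponentials cancel: for $j=3$ one combines $e^{(\lambda_1+\lambda_2)L}/e^{(\lambda_1+\lambda_3)L}=e^{(\lambda_2-\lambda_3)L}$ with $e^{\lambda_3 x}$ and uses $x\le L$, and similarly for $j=1,2$. This yields $|K_j(x,\xi)|\lesssim 1$ and $|\partial_x K_j(x,\xi)|\lesssim\langle\xi\rangle$ uniformly for $x\in[0,L]$ and for $\xi$ on the integration range, where $|\Delta^*(\xi)|$ is bounded below precisely because $\omega'$ stays away from zero there.

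Granting these bounds, most of the estimates follow by Plancherel. After the substitution $\tau=\omega(\xi)$ (so that $\tilde\psi_1^*$ becomes $\hat\psi_1$ evaluated at $\tau$ and the Jacobian $\omega'$ is absorbed), $I_1(x,\cdot)$ and $J_1(x,\cdot)$ become pieces of an inverse Fourier transform in $t$; Plancherel in $t$ together with the comparison $\langle\xi\rangle^2\simeq(1+\tau^2)^{1/3}$ gives $\|\partial_x y_1(x,\cdot)\|_{L^2(0,T)}\lesssim\|\psi_1\|_{H^{1/3}(0,T)}$ uniformly in $x$, which is \eqref{wp_est2_ld}, and integrating in $x$ over $(0,L)$ produces the $L^2(0,T;H^1(0,L))$ bound in \eqref{wp_est1_ld}. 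For the $C([0,T];L^2(0,L))$ part I would split off the propagating mode $\lambda_1^*=i\xi$: there $K_1=c_1(\xi)e^{i\xi x}$, and Plancherel in $x$ (rather than in $t$) yields a bound on $\|y_1^{(1)}(\cdot,t)\|_{L^2(0,L)}$ that is \emph{independent of $t$} and $\lesssim\|\psi_1\|_{H^{1/3}}$. Continuity in $t$, and continuity in $x$ of $\partial_x y_1$, are obtained from dominated convergence, as at the end of the proof of Lemma \ref{wp_lem_1}.

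The main obstacle is the $L^2(0,L)$-in-space, $L^\infty$-in-time control of the two boundary-layer modes $j=2,3$, for which $e^{\lambda_j^* x}$ is non-oscillatory and Plancherel in $x$ is unavailable. Integrating the $x$-variable first turns $\|y_1^{(j)}(\cdot,t)\|_{L^2(0,L)}^2$ into a bilinear form in $\hat\psi_1$ whose kernel is $\lesssim(\langle\tau\rangle^{1/3}+\langle\sigma\rangle^{1/3})^{-1}$ in modulus (since $\int_0^L|e^{\lambda_j^* x}|^2dx\lesssim|\Re\lambda_j^*|^{-1}\simeq\langle\tau\rangle^{-1/3}$) but also carries an oscillation in $(\tau,\sigma)$ coming from $\Im\lambda_j^*\sim-\tfrac12\xi$ in addition to the factor $e^{i(\tau-\sigma)t}$. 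Discarding this oscillation is \emph{not} admissible, since the resulting weight fails to be integrable and a naive Schur test diverges; one must retain the phase and bound $\sup_t$ of the bilinear form by the oscillatory/$TT^*$-type estimate in the spirit of \cite{Bona03}. This, together with the justification of the contour deformation near the double/triple roots and the pure-imaginary eigenvalues of $A$ — i.e. the lower bound on $|\Delta^*|$ on the chosen range and the integrability across the arcs $C_2^{\pm}$, $C_2^0$ — is precisely where the subtle analyticity analysis enters and is the source both of the constant $e^{cT}$ and of the refinement $c>\Re(s_1^-)$ in the case $\alpha^2+3\beta\delta<0$.
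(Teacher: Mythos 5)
Most of your outline coincides with the paper's own proof: your Cramer's-rule formulas for $\Delta_{j,1}^*$ agree with \eqref{det11_lap}--\eqref{det31_lap}, your kernel bounds $|K_j|\lesssim 1$, $|\partial_x K_j|\lesssim\langle\xi\rangle$ are exactly the asymptotics \eqref{asym_cj_right1}, \eqref{lem23_>0_intasym}, \eqref{lem23_>0_intasym2}, the bounds \eqref{wp_est2_ld} and the $L^2(0,T;H^1(0,L))$ estimate are obtained precisely as in \eqref{lem23_>0-l2t}--\eqref{lem23_>0_intasym3} (Plancherel in $t$ after $\tau=\omega(\xi)$), continuity comes from dominated convergence, and the bounded arcs produce $e^{cT}$ with $c>\Re(s_1^-)$ when $\alpha^2+3\beta\delta<0$. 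The genuine gap is the step you yourself flag as the main obstacle: the uniform-in-$t$ bound on $\|y_1^{(j)}(\cdot,t)\|_{L^2(0,L)}$ for the non-oscillatory modes $j=2,3$ is never proved. You defer it to an unspecified ``oscillatory/$TT^*$-type estimate in the spirit of \cite{Bona03},'' and your justification for why such machinery is unavoidable is incorrect: discarding the oscillation \emph{is} admissible, and that crude bound is exactly the content of \cite[Lemma 2.5]{Bona03}, which is how the paper dispatches this step in \eqref{lem23_>01_est1}.

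Concretely, with the very kernel bound you state (note it is the \emph{sum} form, not the weaker product form $\langle\tau\rangle^{-1/6}\langle\sigma\rangle^{-1/6}$ that an AM--GM splitting would give and which indeed fails by a logarithm), one has, uniformly in $t$,
\begin{equation*}
\|y_1^{(j)}(\cdot,t)\|_{L^2(0,L)}^2\ \lesssim\ \int\!\!\int\frac{|\hat\psi_1(\tau)|\,|\hat\psi_1(\sigma)|}{\tau^{1/3}+\sigma^{1/3}}\,d\tau\,d\sigma ,
\end{equation*}
where for $j=3$ the growing factor $e^{L(\Re\lambda_3^*(\xi(\tau))+\Re\lambda_3^*(\xi(\sigma)))}$ from the $x$-integral is absorbed by $|\Delta_{3,1}^*/\Delta^*|\sim e^{-\sqrt{3}\xi L}$. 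Writing $|\hat\psi_1(\tau)|=\tau^{-1/3}g(\tau)$, the kernel becomes $K(\tau,\sigma)=\tau^{-1/3}\sigma^{-1/3}(\tau^{1/3}+\sigma^{1/3})^{-1}$, symmetric and homogeneous of degree $-1$, and the Schur test with the weight $h(\sigma)=\sigma^{-1/2}$ closes: substituting $\sigma=\tau v$,
\begin{equation*}
\int_0^\infty K(\tau,\sigma)\,\sigma^{-1/2}\,d\sigma=\tau^{-1/2}\int_0^\infty\frac{v^{-5/6}}{1+v^{1/3}}\,dv=C_0\,\tau^{-1/2},\qquad C_0<\infty,
\end{equation*}
since $5/6<1$ and $5/6+1/3>1$. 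Hence the bilinear form is $\le C_0\|g\|_{L^2}^2=C_0\int\tau^{2/3}|\hat\psi_1(\tau)|^2\,d\tau\lesssim\|\psi_1\|_{H^{1/3}(0,T)}^2$. So ``a naive Schur test diverges'' is true only for the constant weight; with the power weight adapted to the homogeneity (equivalently, a Hilbert-type inequality) the phases $e^{i(\tau-\sigma)t}$ and the oscillation of $\Im\lambda_j^*$ are never needed, and uniformity in $t$ is automatic. (Only the mode $j=1$ requires Plancherel in $x$, as you note, and even there $|\Delta_{1,1}^*/\Delta^*|\sim e^{-\sqrt{3}\xi L/2}$ makes the estimate immediate.) Filling in this elementary computation, or citing \cite[Lemma 2.5]{Bona03} as the paper does, completes your argument; as written, the central estimate of the lemma is left unproven and the difficulty is misdiagnosed.
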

	\begin{proof}
		Let us first obtain the asymptotic behaviours of the ratios $\left|\frac{\Delta^*_{j,1}(\xi)}{\Delta^*(\xi)}\right|$ for large values of $\xi$. Using the relation $\lambda_1^* + \lambda_2^* + \lambda_3^* = \frac{i\alpha}{\beta}$, we have
		\begin{multline}\label{det_lap}
		\Delta(\xi)^* \\= e^{\frac{i\alpha L}{\beta}}
		\left(e^{-\lambda_1^*(\xi)L} (\lambda_3^*(s) - \lambda_2^*(\xi))
		- e^{-\lambda_2^*(\xi)L} (\lambda_3^*(\xi) - \lambda_1^*(\xi))
		+ e^{-\lambda_3^*(\xi)L} (\lambda_2^*(\xi) - \lambda_1^*(\xi))\right)
		\end{multline}
		and
		\begin{align}
		\label{det11_lap} \Delta_{1,1}^* (\xi) &= e^{\frac{i\alpha L}{\beta}} e^{-\lambda_1^*(\xi) L} \left(\lambda_3^*(\xi) - \lambda_2^*(\xi)\right),\\
		\label{det21_lap} \Delta_{2,1}^* (\xi) &= e^{\frac{i\alpha L}{\beta}} e^{-\lambda_2^*(\xi) L} \left(\lambda_1^*(\xi) - \lambda_3^*(\xi)\right),\\
		\label{det31_lap} \Delta_{3,1}^* (\xi) &= e^{\frac{i\alpha L}{\beta}} e^{-\lambda_3^*(\xi) L} \left(\lambda_2^*(\xi) - \lambda_1^*(\xi)\right).
		\end{align}
		Using the roots of the characteristic equation \eqref{char_roots_>0} in the variable $\xi$, we obtain the following large $\xi$ asymptotics
		\begin{equation}  \label{asym_cj_right1}
		\left|\frac{\Delta^*_{j,1}(\xi)}{\Delta^*(\xi)}\right| \sim
		\begin{cases}
		e^{-\frac{\sqrt{3}\xi L}{2}}, &j = 1, \\
		1, &j = 2, \\
		e^{-\sqrt{3}\xi L}, &j = 3.
		\end{cases}
		\end{equation}
		Let us start by taking $L^2-$norm of $I_1$ with respect to its first component and apply \cite[Lemma 2.5]{Bona03} to get
		\begin{equation*}
			\|I_1(\cdot,t)\|_2^2 \lesssim  \sum_{j=1}^3 \int_{\gamma_1}^{\infty} \left(e^{L \Re(\lambda_j^*(\xi))} + 1\right)^2 \left|\frac{\Delta_{j,1}^*(\xi)}{\Delta^*(\xi)}\right|^2 \left|3\beta\xi^2 - 2\alpha \xi - \delta\right|^2 \left|\tilde{\psi_1}^*(\xi)\right|^2 d\xi.
		\end{equation*}
		Using the asymptotic behaviours \eqref{asym_cj_right1}, we have
		\begin{equation}\label{lem23_>0_intasym}
			\left(e^{L \Re(\lambda_j^*(\xi))} +1\right)^2 \left|\frac{\Delta_{j,1}^*(\xi)}{\Delta^*(\xi)} \right|^2  \sim
			\begin{cases}
				e^{-\sqrt{3}\xi L}, & j = 1,\\
				1, & j = 2,\\
				e^{-\sqrt{3}\xi L}, & j = 3,
			\end{cases}
		\end{equation}
		as $\xi \to \infty$. Thus, we can write
		\begin{equation*}
			\|I_1(\cdot,t)\|_2^2 \lesssim \int_{\gamma_1}^{\infty} \left|3\beta\xi^2 - 2\alpha \xi - \delta\right|^2 \left|\tilde{\psi_1}^*(\xi)\right|^2 d\xi.
		\end{equation*}
		Changing variables as $\mu = \beta \xi^3 - \alpha \xi^2 - \delta \xi$, we get
		\begin{equation*}
			\begin{split}
				\|I_1(\cdot,t)\|_2^2 &\lesssim \int_{\omega(\gamma_1)}^{\infty} (1 + \mu^2)^{\frac{1}{3}} \left| \int_0^\infty e^{-i\mu\tau}\psi_1(\tau)d\tau\right|^2 d\mu \\
				&\leq \|\psi_1\|_{H^{1/3}(0,T)}^2.
			\end{split}
		\end{equation*}
		Passing to supremum over $t \in [0,T]$, we obtain
		\begin{equation} \label{lem23_>01_est1}
			\|I_1\|_{C([0,T];L^2(0,L))} \lesssim \|\psi_1\|_{H^{1/3}(0,T)}.
		\end{equation}
			
		Next, we differentiate $I_1$ with respect to its first component, take $L^2-$norm on $(0,T)$ and change variables as $\mu = \beta \xi^3 - \alpha \xi^2 - \delta \xi$ to get
		\begin{equation} \label{lem23_>0-l2t}
			\begin{split}
			&\|\partial_xI_1(x,\cdot)\|_{L^2(0,T)}^2 \\
			=& \left\|\sum_{j=1}^3 \frac{1}{2\pi} \int_{\gamma_1}^{\infty} e^{i\omega(\xi)t} \lambda_j^*(\xi) e^{\lambda_j^*(\xi)x} \frac{\Delta_{j,1}^*(\xi)}{\Delta^*(\xi)} (3\beta\xi^2 - 2\alpha \xi - \delta) \tilde{\psi_1}^*(\xi) d\xi \right\|_{L^2(0,T)}^2 \\
			\lesssim& \sum_{j=1}^3  \left\|\int_{\omega(\gamma_1)}^{\infty} e^{i \mu t} \lambda_j^*(\theta(\mu)) e^{\lambda_{j}^*(\theta(\mu))x} \frac{\Delta_{j,1}^*(\theta(\mu))}{\Delta^*(\theta(\mu))} \tilde{\psi_1}^*(\theta(\mu)) d\mu \right\|_{L^2(0,T)}^2,
			\end{split}
		\end{equation}
		where $\theta(\mu)$ is the real solution of $\mu = \beta\xi^3 - \alpha \xi^2 - \delta \xi$ for $\gamma_1 < \xi < \infty$. Observe that the function
		\begin{equation*}
			\begin{cases}
			\lambda_j^*(\theta(\mu)) e^{\lambda_j^*(\theta(\mu))x} \frac{\Delta_{j,1}^*(\theta(\mu))}{\Delta^*(\theta(\mu))} \tilde{\psi_1}^*(\theta(\mu)), & \mu \in (\omega(\gamma_1),\infty), \\
			0, & \text{elsewhere,}
			\end{cases}
		\end{equation*}
		is the Fourier transform of the function given by the integral. So, thanks to the Plancherel's theorem, we can write
		\begin{equation}\label{lem23_>0_passup}
			\|\partial_x I_1(x,\cdot)\|_{L^2(0,T)}^2 \lesssim \sum_{j=1}^3 \int_{\omega(\gamma_1)}^{\infty} \left| \lambda_j^*(\theta(\mu)) e^{\lambda_j^*(\theta(\mu))x} \frac{\Delta_{j,1}^*(\theta(\mu))}{\Delta^*(\theta(\mu))} \tilde{\psi_1}^*(\theta(\mu)) \right|^2  d\mu
		\end{equation}
		for all $x \in [0,L]$. It follows that
		\begin{multline}\label{lem23_>0l2l2}
			\|\partial_x I_1\|_{L^2(0,L;L^2(0,T))}^2 \leq \underset{x \in [0,L]}{\sup} \|\partial_x I_1(x,\cdot)\|_{L^2(0,T)}^2 \\ \lesssim \sum_{j=1}^3 \int_{\gamma_1}^\infty |\lambda_j^*(\xi)|^2 \underset{x \in [0,L]}{\sup} \left(e^{2 \Re(\lambda_j^*(\xi))x} \right) \left|\frac{\Delta_{j,1}^*(\xi)}{\Delta^*(\xi)}\right|^2 (3\beta\xi^2 - 2\alpha\xi - \delta) \left|\tilde{\psi_1}^*(\xi) \right|^2 d\xi.
		\end{multline}
		Using \eqref{char_roots_>0} and \eqref{asym_cj_right1}, one can obtain the following asymptoic behaviours in $\xi$
		\begin{equation}\label{lem23_>0_intasym2}
			|\lambda_j^*(\xi)|^2 \underset{x \in [0,L]}{\sup} \left(e^{2 \Re(\lambda_j^*(\xi))x} \right) \left|\frac{\Delta_{j,1}^*(\xi)}{\Delta^*(\xi)}\right|^2 \sim
			\begin{cases}
				\xi^2 e^{-\sqrt{3}\xi L}, & j = 1, \\
				\xi^2, & j = 2, \\
				\xi^2 e^{-\sqrt{3}\xi L}, & j = 3.
			\end{cases}
		\end{equation}			
		Using \eqref{lem23_>0_intasym2} in \eqref{lem23_>0l2l2}, and then changing variables back as $\mu = \beta \xi^3 - \alpha \xi^2 - \delta \xi$, we get
		\begin{equation} \label{lem23_>0_intasym3}
			\begin{split}
				\|\partial_x I_1\|_{L^2(0,L;L^2(0,T))}^2 \leq& \underset{x \in [0,L]}{\sup} \|\partial_x I_1(x,\cdot)\|_{L^2(0,T)}^2 \\
				\lesssim& \int_{\gamma_1}^\infty \xi^2 (3\beta \xi ^2 - 2\alpha \xi - \delta) |\tilde{\psi_1}^*(\xi)|^2 d\xi \\
				\lesssim& \int_{\omega(\gamma_1)}^\infty (1 + \mu^2)^\frac{1}{3} \left| \int_0^\infty e^{-i\mu\tau}\psi_1(\tau)d\tau\right|^2 d\mu \\
				\lesssim& \|\psi_1\|_{H^{1/3}(0,T)}^2
			\end{split}
		\end{equation}
		Changing the integration order on $\|\partial_x I_1\|_{L^2(0,L;L^2(0,T))}^2$ and using Poincare inequality, we conclude that \eqref{wp_est1_ld} and \eqref{wp_est2_ld} holds for $I_1$.
	
		To show that the mapping $x \in [0,L] \to \|\partial_x I_1(x,\cdot)\|_{L^2(0,T)}$ is continuous, let $\{x_n\}_{n\in \mathbb{N}} \subset [0,L]$ be such that $x_n \to x$ as $n \to \infty$ and let us write
		\begin{multline}
			\partial_x I_1 (x,t) - \partial_x I_1(x_n,t) \\
			= \frac{1}{2\pi i} \sum_{j = 1}^3 \int_{\gamma_1}^\infty e^{i \omega(\xi) t} \lambda_j^*(\xi) \left(e^{\lambda_j^*(\xi) x} - e^{\lambda_j^*(\xi)x_n}\right) \frac{\Delta_{j,1}^*(\xi)}{\Delta^*(\xi)} \tilde{\psi_1^*}(\xi) d\xi
		\end{multline}
		Applying the arguments above in \eqref{lem23_>0-l2t}-\eqref{lem23_>0_intasym3}, one can obtain that
		\begin{equation*}
			\begin{split}
				&\|\partial_x I_1 (x,\cdot) - \partial_x I_1(x_n,\cdot)\|_{L^2(0,T)}^2 \\
				\lesssim& \sum_{j=1}^3 \int_{\omega(\gamma_1)}^{\infty} \left| \lambda_j^*(\theta(\mu)) \left( e^{\lambda_j^*(\theta(\mu))x} - e^{\lambda_j^*(\theta(\mu))x_n}\right) \frac{\Delta_{j,1}^*(\theta(\mu))}{\Delta^*(\theta(\mu))} \tilde{\psi_1}^*(\theta(\mu)) \right|^2  d\mu \\
				\lesssim& \|\psi_1\|_{H^{1/3}(0,T)}^2,
			\end{split}
		\end{equation*}
		for all $n \in \mathbb{N}$. Hence, by the dominated convergence theorem, we see that
		\begin{equation*}
			\lim_{n \to \infty} \|\partial_x I_1 (x,\cdot) - \partial_x I_1(x_n,\cdot)\|_{L^2(0,T)} \to 0.
		\end{equation*}
		
		Applying a similar procedure yields the same results for $J_1$.
	\end{proof}
	
	\begin{lem} \label{wp_lem_rd}
		Let $T, \beta > 0$, $\alpha,\delta \in \mathbb{R}$ and $\psi_2 \in H^{1/3}(0,T)$. Then $y_2 = y[0,0,0,\psi_2,0]$ belongs to the space $C([0,T];L^2(0,L)) \cap L^2(0,T;H^1(0,L))$ and also satisfies $\partial_x y_2 \in C([0,L];L^2(0,T))$. Moreover, there exists a constant $c > 0$ such that
		\begin{equation} \label{wp_est1_rd}
			\|y_2\|_{C([0,T];L^2(0,L))} + \|y_2\|_{L^2(0,T;H^1(0,L))} \lesssim e^{cT} \|\psi_2\|_{H^{1/3}(0,T)}
		\end{equation}
		and
		\begin{equation} \label{wp_est2_rd}
			\underset{x \in [0,L]}{\sup} \|\partial_xy_2(x,\cdot)\|_{L^2(0,T)} \lesssim e^{cT} \|\psi_2\|_{H^{1/3}(0,T)}.
		\end{equation}
		If $\alpha^2 + 3\beta \delta < 0$, then $c > \Re(s_1^-) > 0$ where $s_1^-$ is the value for which \eqref{char_eqn} assumes double root.
	\end{lem}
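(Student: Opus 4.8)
The plan is to transcribe the proof of Lemma \ref{wp_lem_ld} almost verbatim; the only genuinely new ingredient is the large-$\xi$ asymptotics of the ratios $\left|\frac{\Delta_{j,2}^*(\xi)}{\Delta^*(\xi)}\right|$ attached to the second boundary datum. First I would compute the cofactor determinants $\Delta_{j,2}^*$ directly from \eqref{cj_system}, replacing the $j$-th column of the coefficient matrix by $(0,1,0)^T$ and expanding. A short computation gives
\begin{align*}
\Delta_{1,2}^*(\xi) &= \lambda_2^*(\xi) e^{\lambda_2^*(\xi) L} - \lambda_3^*(\xi) e^{\lambda_3^*(\xi) L}, \\
\Delta_{2,2}^*(\xi) &= \lambda_3^*(\xi) e^{\lambda_3^*(\xi) L} - \lambda_1^*(\xi) e^{\lambda_1^*(\xi) L}, \\
\Delta_{3,2}^*(\xi) &= \lambda_1^*(\xi) e^{\lambda_1^*(\xi) L} - \lambda_2^*(\xi) e^{\lambda_2^*(\xi) L},
\end{align*}
while $\Delta^*$ remains as in \eqref{det_lap}. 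Note that, in contrast with the $m=1$ case, each entry now carries an extra factor $\lambda_k^*$.

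Using the explicit roots \eqref{char_roots_>0}, whose real parts behave like $\Re(\lambda_1^*)=0$, $\Re(\lambda_2^*)\sim -\tfrac{\sqrt{3}}{2}\xi$, $\Re(\lambda_3^*)\sim \tfrac{\sqrt{3}}{2}\xi$ as $\xi\to\infty$ (with $|\lambda_j^*|\sim\xi$), I would extract the dominant balances. Since the dominant term of $\Delta^*$ grows like $\xi\,e^{\sqrt{3}\xi L/2}$, the computation yields
\[
\left|\frac{\Delta_{j,2}^*(\xi)}{\Delta^*(\xi)}\right| \sim
\begin{cases}
1, & j = 1, \\
1, & j = 2, \\
e^{-\frac{\sqrt{3}}{2}\xi L}, & j = 3,
\end{cases}
\]
as $\xi\to\infty$. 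The crucial consequence, exactly as in \eqref{lem23_>0_intasym} and \eqref{lem23_>0_intasym2}, is that the composite weights entering the $L^2$- and the $\partial_x$-estimates stay bounded by the right powers of $\xi$, namely $(e^{L\Re(\lambda_j^*)}+1)^2 \left|\frac{\Delta_{j,2}^*}{\Delta^*}\right|^2 \sim 1$ and $|\lambda_j^*|^2 \sup_{x\in[0,L]} e^{2\Re(\lambda_j^*)x} \left|\frac{\Delta_{j,2}^*}{\Delta^*}\right|^2 \sim \xi^2$ for each $j=1,2,3$. For $j=3$ this is where the favorable cancellation occurs: the growth $e^{\sqrt{3}\xi L}$ produced by $\sup_x e^{2\Re(\lambda_3^*)x}=e^{2\Re(\lambda_3^*)L}$ is precisely compensated by the decay $e^{-\sqrt{3}\xi L}$ of the squared ratio.

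With these asymptotics in hand, the remainder of the argument replays the proof of Lemma \ref{wp_lem_ld} line by line. I would bound $\|I_2(\cdot,t)\|_2^2$ via \cite[Lemma 2.5]{Bona03}, reduce it to $\int_{\gamma_1}^\infty |3\beta\xi^2 - 2\alpha\xi - \delta|^2\,|\tilde{\psi_2}^*(\xi)|^2\,d\xi$, and change variables $\mu=\omega(\xi)$ to recognize the $H^{1/3}$-norm of $\psi_2$; the $\partial_x$-estimate is identical save for the extra factor $\xi^2$, which is absorbed by the same weight $(1+\mu^2)^{1/3}$. The tail integral $J_2$ is treated symmetrically, and the continuity of $x\mapsto\|\partial_x I_2(x,\cdot)\|_{L^2(0,T)}$ follows from the dominated convergence theorem as before. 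The cases $\alpha^2+3\beta\delta=0$ and $\alpha^2+3\beta\delta<0$ employ the same $\lambda_j^*$, hence the same asymptotics, and differ only in the integration endpoints $\gamma_1,\gamma_2$; the bounded middle portions of the deformed contours lie in the right half-plane and account for the factor $e^{cT}$, with $c>\Re(s_1^-)>0$ when $\alpha^2+3\beta\delta<0$.

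The main obstacle is not deep, since the scheme is dictated by Lemma \ref{wp_lem_ld}. The one place demanding genuine care is verifying that the altered column structure of $\Delta_{j,2}$ — whose entries now carry the extra factors $\lambda_k^*$ — still yields boundedness of the composite weights, and in particular that the potentially dangerous exponentially growing contribution coming from $\lambda_3^*$ is exactly cancelled. This is the sole computation requiring attention; everything downstream is a verbatim repetition of the earlier lemma.
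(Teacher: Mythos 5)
Your proposal is correct and follows the paper's proof essentially verbatim: the paper likewise computes the cofactors $\Delta_{j,2}^*$ exactly as you do, records the asymptotics $\left|\Delta_{j,2}^*/\Delta^*\right| \sim 1,\,1,\,e^{-\sqrt{3}\xi L/2}$ for $j=1,2,3$, and then declares the rest identical to the proof of Lemma \ref{wp_lem_ld}. Your additional verification that the composite weights $(e^{L\Re(\lambda_j^*)}+1)^2\left|\Delta_{j,2}^*/\Delta^*\right|^2$ and $|\lambda_j^*|^2\sup_x e^{2\Re(\lambda_j^*)x}\left|\Delta_{j,2}^*/\Delta^*\right|^2$ stay bounded by $1$ and $\xi^2$ respectively — in particular the exact cancellation of $e^{\sqrt{3}\xi L}$ against $e^{-\sqrt{3}\xi L}$ for $j=3$ — is precisely the point the paper leaves implicit, so there is nothing to correct.
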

	\begin{proof}
		We start by obtaining large $\xi$ asymptotics for $\left|\frac{\Delta_{j,2}^*(\xi)}{\Delta^*(\xi)}\right|$. Let us write
		\begin{align}
		\label{det12_lap} \Delta_{1,2}^* (\xi) &= \lambda_2^*(\xi) e^{\lambda_2^*(\xi) L} - \lambda_3^*(\xi) e^{\lambda_3^*(\xi) L},\\
		\label{det22_lap} \Delta_{2,2}^* (\xi) &= \lambda_3^*(\xi) e^{\lambda_3^*(\xi) L} - \lambda_1^*(\xi) e^{\lambda_1^*(\xi) L},\\
		\label{det32_lap} \Delta_{3,2}^* (\xi) &= \lambda_1^*(\xi) e^{\lambda_1^*(\xi) L} - \lambda_2^*(\xi) e^{\lambda_2^*(\xi) L},
		\end{align}
		and then, use $\Delta^*$ given in \eqref{det_lap} and characteristic roots given in \eqref{char_roots_>0} to obtain
		\begin{equation}  \label{asym_cj_right2}
		\left|\frac{\Delta^*_{j,2}(\xi)}{\Delta^*(\xi)}\right| \sim
		\begin{cases}
		1, &j = 1, \\
		1, &j = 2, \\
		e^{-\frac{\sqrt{3}\xi L}{2}}, &j = 3.
		\end{cases}
		\end{equation}
		The rest of the proof is as in the proof of previous lemma.
		\end{proof}
	
	\begin{lem} \label{wp_lem_rn}
		Let $T, \beta > 0$, $\alpha,\delta \in \mathbb{R}$ and $\psi_3 \in L^2(0,T)$. Then $y_3 = y[0,0,\psi_3,0,0]$ belongs to the space $C([0,T];L^2(0,L)) \cap L^2(0,T;H^1(0,L))$ and also satisfies $\partial_x y_3 \in C([0,L];L^2(0,T))$. Moreover, there exists a constant $c > 0$ such that
		\begin{equation} \label{wp_est1_rn}
			\|y_3\|_{C([0,T];L^2(0,L))} + \|y_3\|_{L^2(0,T;H^1(0,L))} \lesssim e^{cT} \|\psi_3\|_{L^2(0,T)}
		\end{equation}
		and
		\begin{equation} \label{wp_est2_rn}
			\underset{x \in [0,L]}{\sup} \|\partial_xy_3(x,\cdot)\|_{L^2(0,T)} \lesssim e^{cT} \|\psi_3\|_{L^2(0,T)}
		\end{equation}
		If $\alpha^2 + 3\beta \delta < 0$, then $c > \Re(s_1^-) > 0$ where $s_1^-$ is the value for which \eqref{char_eqn} assumes double root.
	\end{lem}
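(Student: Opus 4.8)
The plan is to follow the architecture of the proof of Lemma \ref{wp_lem_ld} essentially verbatim, the only genuinely new input being the large-$\xi$ asymptotics of the determinant ratios attached to the third boundary datum. Since it suffices to estimate the model integrals $I_3$ and $J_3$ from \eqref{int_I}--\eqref{int_J}, I would first record the cofactors produced by Cramer's rule when the right-hand side of \eqref{cj_system} is $(0,0,1)^T$; expanding the relevant determinant along the column carrying the single $1$ gives
\begin{equation*}
\Delta_{1,3}^*(\xi) = e^{\lambda_3^*(\xi)L} - e^{\lambda_2^*(\xi)L}, \quad \Delta_{2,3}^*(\xi) = e^{\lambda_1^*(\xi)L} - e^{\lambda_3^*(\xi)L}, \quad \Delta_{3,3}^*(\xi) = e^{\lambda_2^*(\xi)L} - e^{\lambda_1^*(\xi)L}.
\end{equation*}
Feeding the explicit roots \eqref{char_roots_>0} into these expressions and into the formula \eqref{det_lap} for $\Delta^*$ --- and using that $\Re(\lambda_1^*)=0$, $\Re(\lambda_2^*)\sim -\tfrac{\sqrt3}{2}\xi$, $\Re(\lambda_3^*)\sim \tfrac{\sqrt3}{2}\xi$ while $|\Delta^*|\sim \xi\, e^{\sqrt3\xi L/2}$ --- I would extract
\begin{equation*}
\left|\frac{\Delta_{j,3}^*(\xi)}{\Delta^*(\xi)}\right| \sim
\begin{cases}
\xi^{-1}, & j = 1, \\
\xi^{-1}, & j = 2, \\
\xi^{-1} e^{-\sqrt3\xi L/2}, & j = 3,
\end{cases}
\end{equation*}
as $\xi\to\infty$, with the analogous behaviour in $|\xi|$ as $\xi\to-\infty$ governing $J_3$.

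The decisive observation is that these ratios decay one power of $\xi$ faster than their counterparts \eqref{asym_cj_right1} for $\psi_1$, and this extra factor is exactly what replaces the $H^{1/3}$ regularity by $L^2$. Indeed, repeating the computation leading to \eqref{lem23_>0_intasym}, the combination $\left(e^{L\Re(\lambda_j^*(\xi))}+1\right)^2\left|\Delta_{j,3}^*/\Delta^*\right|^2$ is now $\sim \xi^{-2}$ for \emph{every} $j$, so that after multiplying by the Jacobian weight $|3\beta\xi^2-2\alpha\xi-\delta|^2\sim\xi^4$ one is left with a factor $\sim\xi^{2}$. Carrying out the substitution $\mu=\omega(\xi)=\beta\xi^3-\alpha\xi^2-\delta\xi$, $d\mu=(3\beta\xi^2-2\alpha\xi-\delta)\,d\xi$, consumes one copy of the weight while the surviving quotient $\xi^{-2}(3\beta\xi^2-2\alpha\xi-\delta)$ stays bounded; hence
\begin{equation*}
\|I_3(\cdot,t)\|_2^2 \lesssim \int_{\omega(\gamma_1)}^\infty |\tilde{\psi_3}^*(\theta(\mu))|^2\, d\mu \leq \|\psi_3\|_{L^2(0,T)}^2,
\end{equation*}
where $\theta$ is the real inverse of $\omega$ on $(\gamma_1,\infty)$ and Plancherel's theorem identifies the integral with the $L^2$-norm. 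Note that the weight $(1+\mu^2)^{1/3}$ which forced the $H^{1/3}$-norm in Lemma \ref{wp_lem_ld} is here precisely cancelled by the gain of $\xi^{-2}$.

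For the derivative estimate I would mimic \eqref{lem23_>0-l2t}--\eqref{lem23_>0_intasym3}: differentiating $I_3$ in $x$ brings down a factor $\lambda_j^*(\xi)$, and using \eqref{char_roots_>0} together with the asymptotics above one checks that $|\lambda_j^*(\xi)|^2\sup_{x\in[0,L]}e^{2\Re(\lambda_j^*(\xi))x}\,|\Delta_{j,3}^*/\Delta^*|^2\sim 1$ for each $j$ (the growth $|\lambda_3^*|^2 e^{2\Re(\lambda_3^*)L}\sim \xi^2 e^{\sqrt3\xi L}$ being exactly balanced by $\xi^{-2}e^{-\sqrt3\xi L}$). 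Multiplying by the weight and substituting as above yields $\sup_{x\in[0,L]}\|\partial_x I_3(x,\cdot)\|_{L^2(0,T)}^2\lesssim\|\psi_3\|_{L^2(0,T)}^2$; interchanging the order of integration and invoking the Poincaré inequality then promotes this to the $L^2(0,T;H^1(0,L))$ bound in \eqref{wp_est1_rn}, exactly as before. The continuity of $x\mapsto\|\partial_x I_3(x,\cdot)\|_{L^2(0,T)}$ follows from the dominated convergence theorem applied to the difference $\partial_x I_3(x,\cdot)-\partial_x I_3(x_n,\cdot)$, just as in Lemma \ref{wp_lem_ld}. The same arguments dispatch $J_3$, and the contributions of the bounded middle pieces $y_{3,2}$ (on $C_2^{+}$, $C_2^{0}$, or $C_2^{-}\cup C_3^{-}\cup C_4^{-}$) are absorbed into the constant $e^{cT}$ as explained before the lemma; in the case $\alpha^2+3\beta\delta<0$ the arc $C_3^-$ passes to the right of $s_1^-$, which is what forces $c>\Re(s_1^-)>0$.

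The only real obstacle is the bookkeeping of the asymptotics --- in particular verifying that the $j=3$ contribution, where both $\Delta^*$ in the denominator and $e^{L\Re(\lambda_3^*)}$ in the numerator blow up, remains under control thanks to the compensating $\xi^{-1}e^{-\sqrt3\xi L/2}$ decay of $\Delta_{3,3}^*/\Delta^*$. Once the single asymptotic display is established, the remainder is a transcription of Lemma \ref{wp_lem_ld} with $H^{1/3}(0,T)$ systematically replaced by $L^2(0,T)$.
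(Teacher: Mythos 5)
Your proposal is correct and is essentially the paper's own proof: the paper likewise records the cofactors \eqref{det13_lap}--\eqref{det33_lap}, derives exactly your asymptotics \eqref{asym_cj_right3}, and then defers to the argument of Lemma \ref{wp_lem_ld}, which is precisely the transcription you carry out (with the extra $\xi^{-1}$ decay of the ratios cancelling the $(1+\mu^2)^{1/3}$ weight and thereby replacing $H^{1/3}(0,T)$ by $L^2(0,T)$). Your filled-in bookkeeping for the $j=3$ term, the derivative estimate, and the $c>\Re(s_1^-)$ remark all agree with the paper's intended reasoning.
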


	\begin{proof}
	As in the previous proofs, let us obtain large $\xi$ asymptotics of the ratios $\left|\frac{\Delta_{j,3}^*(\xi)}{\Delta^*(\xi)}\right|$. To this end, we write
	\begin{align}
	\label{det13_lap} \Delta_{1,3}^* (\xi) &= e^{\lambda_3^*(\xi) L} - e^{\lambda_2^*(\xi) L},\\
	\label{det23_lap} \Delta_{2,3}^* (\xi) &= e^{\lambda_1^*(\xi) L} - e^{\lambda_3^*(\xi) L},\\
	\label{det33_lap} \Delta_{3,3}^* (\xi) &= e^{\lambda_2^*(\xi) L} - e^{\lambda_1^*(\xi) L},
	\end{align}
	and then use $\Delta^*$ given in \eqref{det_lap} and characteristic roots given in \eqref{char_roots_>0} to obtain
	\begin{equation}  \label{asym_cj_right3}
	\left|\frac{\Delta^*_{j,3}(\xi)}{\Delta^*(\xi)}\right| \sim
	\begin{cases}
	\xi^{-1}, &j = 1, \\
	\xi^{-1}, &j = 2, \\
	\xi^{-1} e^{-\frac{\sqrt{3}\xi L}{2}}, &j = 3.
	\end{cases}
	\end{equation}
	Proceeding as in the proof of Lemma \ref{wp_lem_ld}, we can obtain the desired result.
	\end{proof}
	
	Combining Lemmas \ref{wp_lem_ld}, \ref{wp_lem_rd} and \ref{wp_lem_rn}, we obtain the following result for $y[0,0,\psi_1,\psi_2,\psi_3]$.
	
	\begin{lem} \label{wp_lem_3}
		Let $\phi \equiv f \equiv 0$. For $T > 0$ and $(\psi_1,\psi_2,\psi_3) \in H^{1/3}(0,T) \times H^{1/3}(0,T) \times L^2(0,T)$, \eqref{wp_pde} admits a unique solution which belongs to the space $C([0,T];L^2(0,L)) \cap L^2(0,T;H^1(0,L))$ with $y_x \in C([0,L];L^2(0,T))$. Moreover there exists a constant $c > 0$ such that
		\begin{equation} \label{wp_est_5}
			\|y_3\|_{C([0,T];L^2(0,L))} + \|y_3\|_{L^2(0,T;H^1(0,L))} \lesssim e^{cT} \|\psi_3\|_{L^2(0,T)}
		\end{equation}
		and
		\begin{equation} \label{wp_est_6}
			\underset{x \in [0,L]}{\sup} \|\partial_xy_3(x,\cdot)\|_{L^2(0,T)} \lesssim e^{cT} \|\psi_3\|_{L^2(0,T)}
		\end{equation}
		If $\alpha^2 + 3\beta \delta < 0$, then $c > \Re(s_1^-) > 0$ where $s_1^-$ is the value for which \eqref{char_eqn} assumes double root.
	\end{lem}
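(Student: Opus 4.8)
The plan is to obtain Lemma~\ref{wp_lem_3} as an immediate consequence of the three preceding lemmas together with a short uniqueness argument, using the linearity of \eqref{wp_pde} in the boundary data. First I would invoke the decomposition already recorded just before \eqref{inv_lap2}: since $\phi \equiv f \equiv 0$, the map $(\psi_1,\psi_2,\psi_3) \mapsto y[0,0,\psi_1,\psi_2,\psi_3]$ is linear, so we may write $y = y_1 + y_2 + y_3$, where each $y_m$ solves \eqref{wp_pde} with only the $m$-th boundary datum active and the remaining two set to zero. Each $y_m$ is represented by its Bromwich integral and, after the contour deformation and change of variables carried out separately in the three cases $\alpha^2+3\beta\delta \gtrless 0$, has already been estimated: $y_1$ in Lemma~\ref{wp_lem_ld}, $y_2$ in Lemma~\ref{wp_lem_rd}, and $y_3$ in Lemma~\ref{wp_lem_rn}.

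Next I would simply add the three bounds. From Lemmas~\ref{wp_lem_ld}--\ref{wp_lem_rn}, each $y_m$ lies in $C([0,T];L^2(0,L)) \cap L^2(0,T;H^1(0,L))$ with $\partial_x y_m \in C([0,L];L^2(0,T))$, and there are constants $c_1,c_2,c_3>0$ with
\begin{align*}
\|y_1\|_{C([0,T];L^2)} + \|y_1\|_{L^2(0,T;H^1)} &\lesssim e^{c_1 T}\|\psi_1\|_{H^{1/3}(0,T)}, \\
\|y_2\|_{C([0,T];L^2)} + \|y_2\|_{L^2(0,T;H^1)} &\lesssim e^{c_2 T}\|\psi_2\|_{H^{1/3}(0,T)}, \\
\|y_3\|_{C([0,T];L^2)} + \|y_3\|_{L^2(0,T;H^1)} &\lesssim e^{c_3 T}\|\psi_3\|_{L^2(0,T)},
\end{align*}
together with the analogous trace bounds for $\sup_{x}\|\partial_x y_m(x,\cdot)\|_{L^2(0,T)}$. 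Setting $c \doteq \max\{c_1,c_2,c_3\}$ and applying the triangle inequality in each norm yields membership of $y$ in the asserted space and the combined estimate, with right-hand side controlled by $e^{cT}\big(\|\psi_1\|_{H^{1/3}(0,T)} + \|\psi_2\|_{H^{1/3}(0,T)} + \|\psi_3\|_{L^2(0,T)}\big)$. In the case $\alpha^2+3\beta\delta<0$, each $c_m$ already exceeds $\Re(s_1^-)$ because the bounded contour pieces $C_2^-\cup C_3^-\cup C_4^-$ lie in the right half-plane out to $\Re(s_1^-)$; hence the lower bound $c>\Re(s_1^-)>0$ is inherited by the maximum.

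For uniqueness I would argue that the difference $z$ of two solutions sharing the data $(\phi,f,\psi_1,\psi_2,\psi_3)$ solves \eqref{wp_pde} with $\phi\equiv f\equiv 0$ and $\psi_i\equiv 0$. Taking the $L^2$-inner product of its equation with $2z$ and using the integration-by-parts identities \eqref{lin1iden2}--\eqref{lin1iden5} — all boundary terms now vanishing because the data are homogeneous — gives $\frac{d}{dt}\|z(\cdot,t)\|_2^2 = -\beta|z_x(0,t)|^2 \le 0$ with $z(\cdot,0)=0$, so $z\equiv 0$. Equivalently, $z$ is the zero mild solution generated by the contraction semigroup $S(t)$ from \cite{Ceballos05}, whose generator $A$ carries precisely these homogeneous boundary conditions in its domain.

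I expect the only point requiring care to be the justification of the energy identity at the regularity level $C([0,T];L^2)\cap L^2(0,T;H^1)$, which is below the smoothness needed to integrate by parts directly. As in the proof of Lemma~\ref{wp_lem_1}, this is handled by first establishing the identity for $\phi\in D(A)$, where the solution is smooth, and then passing to the limit by density. The genuinely analytic difficulties — the contour choice dictated by the sign of $\alpha^2+3\beta\delta$ and the large-$\xi$ asymptotics of $|\Delta_{j,m}^*/\Delta^*|$ — have already been absorbed into Lemmas~\ref{wp_lem_ld}--\ref{wp_lem_rn}, so the present statement needs nothing beyond this bookkeeping and the density argument.
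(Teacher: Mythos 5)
Your proposal is correct and takes essentially the same route as the paper: the paper obtains Lemma \ref{wp_lem_3} precisely by the linear decomposition $y = y_1 + y_2 + y_3$ recorded before \eqref{inv_lap2} and by combining Lemmas \ref{wp_lem_ld}, \ref{wp_lem_rd} and \ref{wp_lem_rn}, exactly as you do, with $c$ the largest of the three constants. Your additional uniqueness argument and the density justification of the energy identity are sound and merely fill in details the paper leaves implicit.
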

		
	Now for the sake of our study, let us consider the problem
	\begin{eqnarray} \label{wp_pde_hreg}
		\begin{cases}
			iy_t + i\beta y_{xxx} +\alpha y_{xx} +i\delta y_x = 0, x\in (0,L), t\in (0,T),\\
			y(0,t)=0, y(L,t)=0, y_x(L,t)=\psi(t),\\
			y(x,0)=\phi(x),
		\end{cases}
	\end{eqnarray}
	From Lemma \ref{wp_lem_1} and Lemma \ref{wp_lem_rn}, we know that solution $y$ of \eqref{wp_pde_hreg} belongs to the space $X_T^0$ and satisfies
	\begin{equation*}
		\|y\|_{X_T^0} \lesssim \|\phi\|_2 + e^{cT}\|\psi\|_{L^2(0,T)}.
	\end{equation*}
	Let $v = y_t$. Then $v$ solves the linear model below
	\begin{eqnarray}\label{targetfv}
		\begin{cases}
			iv_t + i\beta v_{xxx} +\alpha v_{xx} +i\delta v_x = 0, x\in (0,L), t\in (0,T),\\
			v(0,t)=0, v(L,t)=0, v_x(L,t)=\psi^\prime(t),\\
			v(x,0)=\tilde{\phi}(x),
		\end{cases}
	\end{eqnarray}
	where $\tilde{\phi} \doteq -\beta \phi^{\prime\prime\prime} +i\alpha \phi^{\prime\prime} -\delta \phi^\prime$. Assume that $\tilde{\phi} \in L^2(0,L)$. From Lemma \ref{wp_lem_1} and Lemma \ref{wp_lem_rn}, $v$ satisfies
	\begin{equation*}
		\|v\|_{X_T^0} \lesssim \|\tilde\phi\|_2 + e^{cT}\|\psi^\prime\|_{L^2(0,T)}.
	\end{equation*}
	Set $y(x,t) = \phi(x) + \int_0^t v(x,\tau) d\tau$. Then due to compatibility conditions  and $\phi^\prime(L) = \psi(0)$, $y$ satisfies the initial and boundary conditions
	\begin{equation*}
		\begin{split}
		y(x,0) &= \phi(x), \\
		y(0,t) &= \phi(0) + \int_0^t v(0,\tau) d\tau = \phi(0) + y(0,t) - y(0,0) = 0,\\
		y(L,t) &= \phi(L) + \int_0^t v(L,\tau) d\tau = \phi(L) + y(L,t) - y(L,0) = 0, \\
		y_x(L,t) &= \phi^\prime(L) + \int_0^t v_x(L,\tau) d\tau = \phi^\prime(L) + y_x(L,t) - y_x(L,0) = \psi(t).
		\end{split}
	\end{equation*}
	Moreover,
	\begin{equation*}
		\begin{split}
		(i y_t &+ i\beta y_{xxx} + \alpha y_{xx} + i\delta y_x)(x,t)\\
		=& i v(x,t) + \int_0^t (i\beta v_{xxx} + \alpha v_{xx} + i\delta v_x)(x,\tau) d\tau + i\beta \phi^{\prime\prime\prime}(x) +\alpha \phi^{\prime\prime}(x) + i\delta \phi^\prime(x)\\
		=& i v(x,0) + \int_0^t iv_t(x,\tau) d\tau + \int_0^t (i\beta v_{xxx} + \alpha v_{xx} + i\delta v_x)(x,\tau) d\tau \\
		&+ i\beta \phi^{\prime\prime\prime}(x) +\alpha \phi^{\prime\prime}(x) + i\delta \phi^\prime(x) \\
		=&  i v(x,0) + \int_0^t (-i\beta v_{xxx} -\alpha v_{xx} -i\delta v_x)(x,\tau) d\tau \\
		&+ \int_0^t (i\beta v_{xxx} + \alpha v_{xx} + i\delta v_x)(x,\tau) d\tau + i\beta \phi^{\prime\prime\prime}(x) +\alpha \phi^{\prime\prime}(x) + i\delta \phi^\prime(x)\\
		=& 0.
		\end{split}
	\end{equation*}
	Thus, $y$ solves \eqref{wp_pde_hreg}. Now, from the main equation of \eqref{wp_pde_hreg}, we have
	\begin{equation} \label{xt3-eq1}
		\beta \|y_{xxx}(\cdot,t)\|_2 \leq \|v(\cdot,t)\|_2 +\alpha \|y_{xx}(\cdot,t)\|_2 +\delta \|y_x(\cdot,t)\|_2
	\end{equation}
	Applying Gagliardo--Nirenberg interpolation inequality and then $\epsilon-$Young's inequality to the second term at the right hand side of \eqref{xt3-eq1}, we get
	\begin{equation}
		\begin{split}
			\label{xt3-eq2}
			\alpha^2 \|y_{xx}(\cdot,t)\|_2^2 &\leq c_\alpha \|y_{xxx}(\cdot,t)\|_2^\frac{4}{3} \|y(\cdot,t)\|_2^\frac{2}{3} \\
			&\leq \epsilon \|y_{xxx}(\cdot,t)\|_2^2 + c_{\alpha,\epsilon } \|y(\cdot,t)\|_2^2.
		\end{split}
	\end{equation}
	Similarly, for the third term at the right hand side of \eqref{xt3-eq1}, we have
	\begin{equation}
		\begin{split}
			\label{xt3-eq3}
			\delta^2 \|y_{x}(\cdot,t)\|_2^2 &\leq  c_\delta \|y_{xxx}(\cdot,t)\|_2^\frac{2}{3} \|y(\cdot,t)\|_2^\frac{4}{3} \\
			&\leq \epsilon \|y_{xxx}(\cdot,t)\|_2^2 + c_{\delta,\epsilon } \|y(\cdot,t)\|_2^2.
		\end{split}
	\end{equation}
	Using \eqref{xt3-eq2}-\eqref{xt3-eq3} on \eqref{xt3-eq1}, we obtain
	\begin{equation*}
		(\beta - 2\epsilon)\|y_{xxx}(\cdot,t)\|_2^2 \leq \|v(\cdot,t)\|_2^2 + c_{\alpha,\delta,\epsilon}\|y(\cdot,t)\|_2^2.
	\end{equation*}
	Therefore, for sufficiently small $\epsilon > 0$, we get
	\begin{equation} \label{xt3-eq4}
		\|y_{xxx}(\cdot,t)\|_2^2 \lesssim \|v(\cdot,t)\|_2^2 + \|y(\cdot,t)\|_2^2.
	\end{equation}
	Passing to supremum over $t \in [0,T]$ and using the fact that the right hand side belongs to $C([0,T],L^2(0,L))$, we have $y \in C([0,T];H^3(0,L))$. Next, differentiating the main equation of \eqref{wp_pde_hreg} with respect to $x$ and taking $L^2-$norms of each term, we get
	\begin{equation} \label{xt3-eq5}
		\beta \|y_{xxxx}(\cdot,t)\|_2 \leq \|v_x(\cdot,t)\|_2 +\alpha \|y_{xxx}(\cdot,t)\|_2 +\delta \|y_{xx}(\cdot,t)\|_2.
	\end{equation}
	Thanks to Gagliardo--Nirenberg's interpolation inequality, $\epsilon-$Young's inequality and Poincare inequality, the second term at the right hand side of \eqref{xt3-eq5} can be estimated as
	\begin{equation}
		\label{xt3-eq6}
		\begin{split}
			\alpha^2 \|y_{xxx}(\cdot,t)\|_2^2 &\leq c_\alpha \|y_{xxxx}(\cdot,t)\|_2^\frac{3}{2} \|y(\cdot,t)\|_2^\frac{1}{2},\\
			&\leq \epsilon \|y_{xxxx}(\cdot,t)\|_2^2 + c_{\alpha,\epsilon} \|y(\cdot,t)\|_2^2 \\
			&\leq \epsilon \|y_{xxxx}(\cdot,t)\|_2^2 + c_{\alpha,\epsilon} \|y_x(\cdot,t)\|_2^2.
		\end{split}
	\end{equation}
	Using the same inequalities, the third term in \eqref{xt3-eq5} is estimated as
	\begin{equation}
		\label{xt3-eq7}
		\begin{split}
			\delta^2\|y_{xx}(\cdot,t)\|_2^2 &\leq c_\delta \|y_{xxxx}(\cdot,t)\|_2 \|y(\cdot,t)\|_2 \\
			&\leq \epsilon \|y_{xxxx}(\cdot,t)\|_2^2 + c_{\delta,\epsilon} \|y(\cdot,t)\|_2^2\\
	&\leq \epsilon \|y_{xxxx}(\cdot,t)\|_2^2 + c_{\delta,\epsilon} \|y_x(\cdot,t)\|_2^2.
		\end{split}
	\end{equation}
	Using \eqref{xt3-eq6}-\eqref{xt3-eq7} on \eqref{xt3-eq5} and choosing $\epsilon > 0$ sufficiently small, we obtain
	\begin{equation} \label{xt3-eq8}
		\|y_{xxxx}(\cdot,t)\|_2^2 \lesssim \|v_x(\cdot,t) \|_2^2 + \|y_x(\cdot,t)\|_2^2.
	\end{equation}
	Right hand side belongs to $L^2(0,T)$, so the left hand side does too. This implies $y \in L^2(0,T;H^4(0,L))$. Combining this result with the previous one, we proved the following lemma.
	\begin{lem} \label{lem_wp_pde_xt3}
		Let $T > 0$, $(\phi,\psi) \in H^3(0,L) \times H^1(0,T)$ satisfy the compatibility conditions. Then \eqref{wp_pde_hreg} has a unique solution $y \in X_T^3$ with $y_t \in X_T^0$ and it satisfies the following estimate
		\begin{equation*}
		\|y\|_{X_T^3} + \|y_t\|_{X_T^0} \lesssim \|\phi\|_{H^3(0,L)} + e^{cT}\|\psi\|_{H^1(0,T)}.
		\end{equation*}
	\end{lem}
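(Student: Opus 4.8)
The plan is to lift the $L^2$-level wellposedness of \eqref{wp_pde_hreg}, already available from Lemma \ref{wp_lem_1} and Lemma \ref{wp_lem_rn}, up to the $H^3$ level by differentiating the problem in time and then recovering the lost spatial derivatives algebraically from the equation. First I would set $v \doteq y_t$ and note that, formally, $v$ solves the same third-order Schrödinger-type equation with homogeneous Dirichlet data at both endpoints, Neumann datum $v_x(L,t) = \psi^\prime(t)$, and initial datum $v(x,0) = \tilde{\phi}(x) \doteq -\beta\phi^{\prime\prime\prime} + i\alpha\phi^{\prime\prime} - \delta\phi^\prime$. Since $\phi \in H^3(0,L)$ we have $\tilde{\phi} \in L^2(0,L)$, and since $\psi \in H^1(0,T)$ we have $\psi^\prime \in L^2(0,T)$; hence Lemma \ref{wp_lem_1} (for $\tilde{\phi}$) and Lemma \ref{wp_lem_rn} (for $\psi^\prime$) yield a unique $v \in X_T^0$ with $\|v\|_{X_T^0} \lesssim \|\tilde{\phi}\|_2 + e^{cT}\|\psi^\prime\|_{L^2(0,T)}$.

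The core of the argument is to reconstruct $y$ from $v$ by time integration, setting $y(x,t) \doteq \phi(x) + \int_0^t v(x,\tau)\,d\tau$, and to verify that this $y$ is an honest solution of \eqref{wp_pde_hreg} with $y_t = v$. The hard part will be precisely this closure step, where the compatibility conditions $\phi(0) = \phi(L) = 0$ and $\phi^\prime(L) = \psi(0)$ enter decisively: they ensure that the boundary traces of the integrated function agree with the prescribed boundary data at $t = 0$, so that $y(0,t) = y(L,t) = 0$ and $y_x(L,t) = \psi(t)$ hold for all $t \in (0,T)$, while substituting $y = \phi + \int_0^t v$ into the main equation and using that $v$ solves the differentiated PDE shows the interior equation is satisfied identically. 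Without these matching conditions the freely constructed $v$ could not be integrated back into a solution respecting the inhomogeneous boundary data.

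With $y_t = v \in X_T^0$ secured, the remaining spatial regularity is read off the PDE. Solving the main equation for $y_{xxx}$ bounds $\|y_{xxx}(\cdot,t)\|_2$ by $\|v(\cdot,t)\|_2$ together with the intermediate-order terms $\|y_{xx}(\cdot,t)\|_2$ and $\|y_x(\cdot,t)\|_2$; I would absorb these latter terms using the Gagliardo–Nirenberg interpolation inequality followed by $\epsilon$-Young, arriving at $\|y_{xxx}(\cdot,t)\|_2^2 \lesssim \|v(\cdot,t)\|_2^2 + \|y(\cdot,t)\|_2^2$. Taking the supremum over $t \in [0,T]$ and using that the right-hand side is continuous in time gives $y \in C([0,T];H^3(0,L))$. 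Differentiating the equation once in $x$ and running the same interpolation scheme controls $\|y_{xxxx}(\cdot,t)\|_2^2$ by $\|v_x(\cdot,t)\|_2^2 + \|y_x(\cdot,t)\|_2^2$, whose right-hand side lies in $L^2(0,T)$ because $v \in X_T^0$; this delivers $y \in L^2(0,T;H^4(0,L))$.

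Combining the two regularity statements gives $y \in X_T^3$ with $y_t \in X_T^0$, and chaining the estimates from Lemma \ref{wp_lem_1} and Lemma \ref{wp_lem_rn} at both the $(\phi,\psi)$ and $(\tilde{\phi},\psi^\prime)$ levels produces the claimed bound $\|y\|_{X_T^3} + \|y_t\|_{X_T^0} \lesssim \|\phi\|_{H^3(0,L)} + e^{cT}\|\psi\|_{H^1(0,T)}$. Uniqueness is inherited from the $L^2$-uniqueness for the homogeneous problem generated by the contraction semigroup $S(t)$.
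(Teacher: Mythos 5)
Your proposal follows essentially the same route as the paper: differentiate in time to get $v = y_t$ solving the same problem with data $(\tilde{\phi},\psi^\prime) \in L^2(0,L)\times L^2(0,T)$, apply Lemma \ref{wp_lem_1} and Lemma \ref{wp_lem_rn} to $v$, reconstruct $y = \phi + \int_0^t v\,d\tau$ using the compatibility conditions to verify the boundary data and the equation, and then recover $\|y_{xxx}\|_2$ and $\|y_{xxxx}\|_2$ from the PDE via Gagliardo--Nirenberg interpolation and $\epsilon$-Young. The argument and the resulting estimate coincide with the paper's proof, so no gaps to report.
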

	
	Now letting $z = v_t$, one can see that $z$ solves the following model
	\begin{eqnarray}\label{targetfz}
	\begin{cases}
	iz_t + i\beta z_{xxx} +\alpha z_{xx} +i\delta z_x =0, x\in (0,L), t\in (0,T),\\
	z(0,t)=0, z(L,t)=0, z_x(L,t)=\psi^{\prime\prime}(t),\\
	z(x,0)=\tilde{\tilde{\phi}}(x),
	\end{cases}
	\end{eqnarray}
	where $\tilde{\tilde{\phi}} \doteq -\beta \tilde{\phi}^{\prime\prime\prime} +i\alpha \tilde{\phi}^{\prime\prime} -\delta \tilde{\phi}^\prime$. Let $\psi^{\prime\prime}(t) \in L^2(0,T)$ and $\tilde{\tilde{\phi}} \in L^2(0,L)$. Then from Lemma \ref{wp_lem_1} and Lemma \ref{wp_lem_rn}, $z$ satisfies
	\begin{equation}
		\|z\|_{X_T^0} \lesssim \|\tilde{\tilde{\phi}}\|_2 + e^{cT}\|\psi^{\prime\prime}\|_{L^2(0,T)}.
	\end{equation}
	Define $v(x,t) \doteq \tilde{\phi}(x) + \int_0^t z(x,\tau)d\tau$. If $(\tilde{\phi},\psi^\prime)$ satisfies the compatibility conditions, one can show that $v$ satisfies the following initial--boundary conditions:
	\begin{equation*}
		v(x,0) = \tilde{\phi}(x)
	\end{equation*}
	and
	\begin{equation*}
		v(0,t) = 0, \quad v(L,t) = 0, \quad v_x(L,t) = \phi^\prime(t).
	\end{equation*}
	Then one can also show that $v$ solves \eqref{targetfv}. Now defining $v = y_t$ and repeating the same analysis as we did through \eqref{targetfv}-\eqref{xt3-eq8}, one concludes the following lemma.
	\begin{lem} \label{lem_wp_pde_xt6}
		Let $T > 0$, $(\phi,\psi) \in H^6(0,L) \times H^2(0,T)$ satisfy the higher order compatibility conditions. Then \eqref{wp_pde_hreg} has a unique solution in $X_T^6$ with $y_{tt} \in X_T^0$ and it satisfies the following estimate,
		\begin{equation*}
		\|y\|_{X_T^6} + \|y_{tt}\|_{X_T^0} \lesssim \|\phi\|_{H^6(0,L)} + e^{cT}\|\psi\|_{H^2(0,T)}.
		\end{equation*}
	\end{lem}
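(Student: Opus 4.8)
The plan is to prove the lemma by performing one further step of the regularity bootstrap that produced Lemma \ref{lem_wp_pde_xt3}, combined with the trading of a time derivative for spatial derivatives afforded by the equation. The key structural observation is that the higher order compatibility conditions on $(\phi,\psi)$ are exactly the ordinary compatibility conditions for the pair $(\tilde\phi,\psi')$ relative to problem \eqref{targetfv}: the requirements $\tilde\phi(0)=\tilde\phi(L)=0$ and $\tilde\phi'(L)=\psi'(0)$ are precisely what is needed to make the reconstructed functions attain the correct Dirichlet and Neumann traces. Hence the whole argument reduces to running the machinery of Section \ref{estimates} at the level of $v=y_t$ rather than of $y$.

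First I would use the construction preceding the statement: the function $z=v_t=y_{tt}$ formally solves \eqref{targetfz} with boundary datum $\psi''$ and initial datum $\tilde{\tilde\phi}$. Since $\phi\in H^6(0,L)$ gives $\tilde{\tilde\phi}\in L^2(0,L)$ and $\psi\in H^2(0,T)$ gives $\psi''\in L^2(0,T)$, Lemma \ref{wp_lem_1} and Lemma \ref{wp_lem_rn} yield $z\in X_T^0$ together with $\|z\|_{X_T^0}\lesssim \|\tilde{\tilde\phi}\|_2+e^{cT}\|\psi''\|_{L^2(0,T)}\lesssim \|\phi\|_{H^6(0,L)}+e^{cT}\|\psi\|_{H^2(0,T)}$. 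Defining $v(x,t)=\tilde\phi(x)+\int_0^t z(x,\tau)d\tau$ and invoking the compatibility of $(\tilde\phi,\psi')$, one checks as in the text that $v$ attains the initial datum $\tilde\phi$ and the traces $v(0,t)=v(L,t)=0$, $v_x(L,t)=\psi'(t)$, and that $v$ solves \eqref{targetfv}. At this point $v$ plays exactly the role that $y$ played in Lemma \ref{lem_wp_pde_xt3}, with $z$ in the role of $v$; repeating verbatim the estimates \eqref{xt3-eq1}--\eqref{xt3-eq8} therefore gives $v=y_t\in X_T^3$ with $\|v\|_{X_T^3}\lesssim \|\tilde\phi\|_{H^3(0,L)}+e^{cT}\|\psi'\|_{H^1(0,T)}\lesssim \|\phi\|_{H^6(0,L)}+e^{cT}\|\psi\|_{H^2(0,T)}$.

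The remaining step promotes the extra spatial regularity into $y$ itself. From Lemma \ref{lem_wp_pde_xt3} (whose hypotheses are contained in ours) we already have $y\in X_T^3$, and we have just shown $y_t=v\in X_T^3$. Solving the main equation of \eqref{wp_pde_hreg} for the top spatial derivative gives the pointwise-in-time identity $\beta y_{xxx}=-y_t+i\alpha y_{xx}-\delta y_x$. Reading this with $y_t=v\in C([0,T];H^3(0,L))$ held fixed, the right-hand side lies in $C([0,T];H^1(0,L))$, so $y_{xxx}\in C([0,T];H^1(0,L))$, i.e. $y\in C([0,T];H^4(0,L))$; feeding this back raises the Sobolev index of $y_{xx},y_x$, and hence of the right-hand side, by one at each pass, so that after three passes $y\in C([0,T];H^6(0,L))$. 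The identical iteration run with $v\in L^2(0,T;H^4(0,L))$ yields $y\in L^2(0,T;H^7(0,L))$, whence $y\in X_T^6$ and $y_{tt}=z\in X_T^0$; collecting the three displayed bounds gives the asserted estimate. Equivalently, one may retain the Gagliardo--Nirenberg and $\epsilon$-Young absorption of \eqref{xt3-eq1}--\eqref{xt3-eq8} and simply run it with the more regular forcing $v$.

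The only genuinely delicate point, and the one I would write out with care, is the verification that the time-integrated reconstructions $z\mapsto v\mapsto y$ honestly realize the inhomogeneous Neumann trace and solve the equations: the telescoping identity for $v_x(L,t)$ uses $\tilde\phi'(L)=\psi'(0)$, and the analogous identity one level down uses $\phi'(L)=\psi(0)$. These are exactly the higher order compatibility conditions, and without them the reconstruction would fail to meet the boundary data. The final bootstrap closes without circularity because the source $y_t=v$ carries a fixed $X_T^3$ regularity throughout; everything else is routine bookkeeping of Sobolev indices and constants.
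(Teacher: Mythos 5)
Your proposal is correct and follows essentially the same route as the paper: differentiate in time twice to get $z=y_{tt}$ solving \eqref{targetfz} with data $(\tilde{\tilde{\phi}},\psi'')\in L^2(0,L)\times L^2(0,T)$, apply Lemmas \ref{wp_lem_1} and \ref{wp_lem_rn} to place $z\in X_T^0$, reconstruct $v=y_t$ using the compatibility of $(\tilde\phi,\psi')$ and run the Lemma \ref{lem_wp_pde_xt3} analysis to get $v\in X_T^3$, and finally bootstrap the spatial regularity of $y$ through the equation to reach $X_T^6$. Your explicit three-pass iteration on the identity $\beta y_{xxx}=-y_t+i\alpha y_{xx}-\delta y_x$ is just a spelled-out version of the paper's instruction to "repeat the analysis of \eqref{targetfv}--\eqref{xt3-eq8}," so no substantive difference exists.
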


	\section{Controller design} \label{ctrl}
	In this section, first we prove the existence of a smooth backstepping kernel. Then we state the result of the invertibility of the backstepping transformation with a bounded inverse. Next, we prove the global wellposedness and exponential stability results.
	
	\subsection{Backstepping kernel}
	Let us express the main equation in \eqref{kernel_pG} as
	\begin{equation*}
	G_{sst}=DG \doteq \frac{1}{3\beta}\left[\beta(3G_{tts}-G_{ttt})-i\alpha(G_{tt}-2G_{ts})-\delta G_t-rG\right].
	\end{equation*}
	Integrating the above expression in the first variable and using $G_s(0,t) = \frac{rt}{3\beta}$ we obtain
	\begin{eqnarray*}
		G_{st}(s,t)=\frac{r}{3\beta}+\int_0^s[DG](\xi,t)d\xi.
	\end{eqnarray*}
	Integrating once again in the first variable and using $G(0,t) = 0$ we get
	\begin{eqnarray*}
		G_{t}(s,t) = \frac{r}{3\beta}s+\int_0^s\int_0^\omega[DG](\xi,t)d\xi d\omega.
	\end{eqnarray*}
	Finally, integrating in the second variable and using $G(s,0) = 0$ we obtain that $G$ solves
	\begin{eqnarray} \label{kernel_int_pG}
	\label{GepsInt}
	G(s,t)=\frac{r}{3\beta}st+\int_0^t\int_0^s\int_0^\omega[DG](\xi,\eta)d\xi d\omega d\eta.
	\end{eqnarray}
	So the solution of the boundary value problem \eqref{kernel_pG} can be constructed by applying a successive approximation method to the integral equation \eqref{kernel_int_pG}.
	\begin{lem} \label{lemkernel}
		There exists a $C^\infty-$function $G$ such that $G$ solves the integral equation \eqref{kernel_int_pG}.
	\end{lem}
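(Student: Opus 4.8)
The plan is to solve the integral equation \eqref{kernel_int_pG} by successive approximations and to show that the resulting series converges in $C^\infty(\overline{\Delta_{s,t}})$, which simultaneously produces a solution and certifies its smoothness. Write the equation as $G = G_0 + \mathcal{T}[G]$, where $G_0(s,t) \doteq \frac{r}{3\beta}st$ and $\mathcal{T}$ is the linear operator
\[
  \mathcal{T}[G](s,t) \doteq \int_0^t\int_0^s\int_0^\omega [DG](\xi,\eta)\,d\xi\,d\omega\,d\eta .
\]
I would define iterates $G^{[0]} \doteq G_0$, $G^{[n+1]} \doteq \mathcal{T}[G^{[n]}]$, and set $G \doteq \sum_{n=0}^\infty G^{[n]}$. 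Since $D$ is a constant-coefficient linear differential operator and $\mathcal{T}$ is linear and continuous in the Fréchet topology of $C^\infty$, once the series is known to converge in $C^\infty$ one may pass $\mathcal{T}$ through it, $\mathcal{T}[G]=\sum_{n\ge 0}\mathcal{T}[G^{[n]}]=\sum_{n\ge 1}G^{[n]}=G-G_0$, so that $G$ solves \eqref{kernel_int_pG}.

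First I would observe that every iterate is a polynomial, so that smoothness of each summand is automatic and only convergence is at issue. Indeed $G^{[0]}$ has total degree $2$; each term of $D$ carries at least one derivative except the zeroth-order term $-\frac{r}{3\beta}G$, so $\deg(DG)\le \deg G$, while the three integrations in $\mathcal{T}$ raise the total degree by exactly $3$. Hence $\deg G^{[n]}\le 3n+2$. A finer bookkeeping of the $t$-variable is the crucial input for the convergence estimate: the $t$-degree is raised by at most one per step (only through the lower-order terms $-\frac{r}{3\beta}G,\ -\frac{\delta}{3\beta}G_t,\ \tfrac{2i\alpha}{3\beta}G_{ts}$, each followed by the single $t$-integration), whereas the highest-order terms \emph{lower} it; thus $\deg_t G^{[n]}\le n+1$.

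The heart of the matter, and the step I expect to fight hardest, is a quantitative bound forcing $\sum_n \partial_s^a\partial_t^b G^{[n]}$ to converge uniformly on $\overline{\Delta_{s,t}}$ for every $(a,b)$. Here the geometry resists a naive argument: $D$ contains the terms $G_{tts}$ and $G_{ttt}$ (with coefficients $1$ and $-\tfrac13$), whose order in $t$ exceeds the \emph{single} $t$-integration supplied by $\mathcal{T}$, so $\mathcal{T}$ does not smooth in $t$ the way it smooths in $s$ and $\|\mathcal{T}[G]\|_{C^0}$ cannot be controlled by $\|G\|_{C^0}$ alone. This is a Cauchy--Kovalevskaya-type loss of regularity, and I would resolve it with a majorant/scale-of-norms estimate adapted to the triangle, e.g.\ using the weighted norm $\|F\|_\rho \doteq \sum_{i,j\ge 0}|c_{ij}|\,\rho^{i+j}/(i!\,j!)$ for $F=\sum_{i,j}c_{ij}s^it^j$. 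Acting on a monomial $s^it^j$, the composite $\mathcal{T}\circ(\text{a }t\text{-differentiating term of }D)$ produces combinatorial factors of size at most $j(j-1)\lesssim n^2$ on the $n$-th iterate (this is exactly where the bound $\deg_t G^{[n]}\le n+1$ enters), while the triple integration divides the coefficient by $(i+1)(i+2)(j+1)$. Compounding these gains against the degree bound $\deg G^{[n]}\le 3n+2$ yields a factorial denominator of order $\sim 3n$ that defeats both the geometric constant growth and the polynomial-in-$n$ combinatorial growth; the fact that $G_0$ is linear in $t$ switches the troublesome terms off for the first steps and lets them turn on only gradually, which is what keeps the recursion from degenerating.

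Closing the argument, the estimate gives $\sum_n \sup_{\overline{\Delta_{s,t}}}|\partial_s^a\partial_t^b G^{[n]}|<\infty$ for every $(a,b)$ (equivalently $\sum_n\|G^{[n]}\|_\rho<\infty$ on a suitable radius), so the series converges in $C^\infty(\overline{\Delta_{s,t}})$; in fact the same bounds show $G$ is real-analytic. Passing the continuous operators $D$ and $\mathcal{T}$ through the convergent series then verifies that $G$ solves \eqref{kernel_int_pG}, and differentiating the integral representation recovers \eqref{kernel_pG}. The decisive and most delicate point throughout is the derivative estimate for $\mathcal{T}$: selecting the weight and checking that the factorial gain from iterated integration dominates the under-smoothed $G_{ttt}$ and $G_{tts}$ contributions.
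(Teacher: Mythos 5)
Your skeleton coincides with the paper's own proof: there too the solution is built by successive approximations (the paper's $H^n$ is, up to the constant $\tfrac{3\beta}{r}$, exactly your $G^{[n]}=\mathcal{T}^nG_0$, with $\mathcal T$ the operator $P$ of \eqref{aP}), every iterate is a polynomial with $\deg_s\le 2n+1$ and $\deg_t\le n+1$, and the convergence is won by playing integration denominators against the combinatorial factors created by the $t$-derivatives. The gap is at the step you yourself call decisive: the norm $\|F\|_\rho=\sum_{i,j}|c_{ij}|\rho^{i+j}/(i!\,j!)$ cannot carry the estimate. Your premise that ``the triple integration divides the coefficient by $(i+1)(i+2)(j+1)$'' is true only for the zeroth-order term $-\tfrac{r}{3\beta}G$ of $D$; for the dangerous terms the derivatives eat the matching integrations. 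Concretely (cf. \eqref{acm1}--\eqref{acm2}), the $G_{tts}$-part of $\mathcal T$ sends $s^mt^k\mapsto \tfrac{k}{m+1}s^{m+1}t^{k-1}$ and the $G_{ttt}$-part sends $s^mt^k\mapsto -\tfrac{k(k-1)}{3(m+1)(m+2)}s^{m+2}t^{k-2}$, whose $\|\cdot\|_\rho$-ratios are $\bigl(\tfrac{k}{m+1}\bigr)^2$ and $\tfrac{[k(k-1)]^2}{3[(m+1)(m+2)]^2}$. At stage $n$ one only knows $k\le n+1\le m$, so the worst-case ratios summed over the six terms tend to $1+\tfrac13+O(n^{-2})=\tfrac43>1$: a blanket submultiplicative bound in your norm gives geometric \emph{growth}, not a factorial gain. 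The smallness that actually holds is path-dependent --- a monomial can only have $t$-degree comparable to its $s$-degree if it previously absorbed many applications of the zeroth-order term, each carrying a factor $O(n^{-4})$ --- and a single scalar norm cannot record that trade-off. The paper's remedy is a two-parameter induction on the individual coefficients, \eqref{aclaim}: $|C_{r,n}|\le M^n/[(n+1)!(\sigma+1)!]$, where $\sigma+1$ is the current $t$-degree; the factors $k$ and $k(k-1)$ are absorbed by the change in $(\sigma+1)!$, while the $s$-integrations (whose degree is at least the step number) generate the $1/(n+1)!$.

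The second gap is your closing claim that derivative-sup summability is ``equivalently $\sum_n\|G^{[n]}\|_\rho<\infty$''; this equivalence is false, and it is the implication you actually need. The weights $1/(i!\,j!)$ forgive precisely the factorials that differentiation restores: $\sup_{\Delta_{s,t}}|\partial_s^a\partial_t^b(c\,s^it^j)|=|c|\,\tfrac{i!}{(i-a)!}\tfrac{j!}{(j-b)!}L^{i+j-a-b}$, and since $\deg G^{[n]}$ grows linearly in $n$, recovering such quantities from $\|G^{[n]}\|_\rho$ costs factors of size about $(2n)!\,n!$. No decay the iteration can produce survives this: even inserting the paper's sharp coefficient bounds one only gets $\|G^{[n]}\|_\rho\lesssim (6M)^n\max(1,\rho)^{3n+2}/[(n+1)!]^2$, which is swamped by $(2n)!\,n!$. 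This is why the paper bounds the monomial coefficients themselves, with no factorial discount, and then estimates each derivative directly as in \eqref{Hnest}, $\sup|\partial_s^a\partial_t^bH^n|\le (2n+1)^a(n+1)^b6^nM^nL^{3n+2-a-b}/(n+1)!$, which is summable in $n$ for each fixed $(a,b)$. If you replace your norm by this per-coefficient, per-path induction, your outline becomes precisely the paper's proof; as written, the central estimate fails.
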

	\begin{proof}
		Let $P$ be defined by
		\begin{equation}\label{aP}
		(P f)(s,t) \doteq \int_0^t\int_0^s\int_0^\omega[Df](\xi,\eta)d\xi d\omega d\eta.
		\end{equation}
		Then we express \eqref{GepsInt} as
		\begin{equation}\label{GPG}
		G(s,t)=\frac{{r}}{3\beta}st+PG(s,t).
		\end{equation}
		Define $G^0\equiv 0,$ $\displaystyle G^1(s,t)=-\frac{{r}}{3\beta}st,$ and $G^{n+1}=G^1+P G^n$, $n\geq 1$. Then we have
		\begin{equation}\label{GPG_Cauchy}
		G^{n+1} - G^n = P (G^n - G^{n-1}), \quad n \geq 1.
		\end{equation}
		To prove the existence of a solution of \eqref{GPG}, it is enough to show that the sequence $(G^n)$ and its partial derivatives are Cauchy with respect to the supremum norm $\|\cdot\|_\infty$. To this end, define $H^0(s,t) = st$, $H^n = \frac{3\beta}{r}(G^{n+1} - G^n)$. Then by \eqref{GPG_Cauchy}, $H^{n+1} = PH^n$ and for $j > i$,
		\begin{equation}\label{aCauchy}
		G^j-G^i= \sum_{n=i}^{j-1}(G^{n+1}-G^{n})=\frac{r}{3\beta}\sum_{n=i}^{j-1}H^{n}.
		\end{equation}
		We see from \eqref{aCauchy} that the sequence $(G^n)$ (and its partial derivatives) is Cauchy with respect to the norm $\|\cdot\|_\infty$, which implies that $(G^n)$ is convergent and its limit solves \eqref{kernel_int_pG} if and only if the sequence $(H^n)$ (and its partial derivatives) is absolutely summable sequence with respect to the same norm.
		
		To show that $H^n$'s are absolutely summable, let us express $P$ as sum of six operators
		$$P= P_{1,-1}+P_{2,-2}+P_{2,-1}+P_{1,0}+P_{2,0}+P_{2,1},$$
		where
		
		$$P_{1,-1}f\doteq\int_0^t\int_0^s\int_0^\omega f_{tts}(\xi,\eta) d\xi d\omega d\eta, \quad P_{2,-2}f\doteq -\frac{1}{3}\int_0^t\int_0^s\int_0^\omega f_{ttt}(\xi,\eta) d\xi d\omega d\eta,$$
		$$P_{2,-1}f\doteq-\frac{i\alpha}{3\beta}\int_0^t\int_0^s\int_0^\omega f_{tt}(\xi,\eta) d\xi d\omega d\eta, \quad P_{1,0}f\doteq \frac{2 i\alpha}{3\beta}\int_0^t\int_0^s\int_0^\omega f_{ts}(\xi,\eta) d\xi d\omega d\eta,$$
		$$P_{2,0}f\doteq -\frac{\delta}{3\beta}\int_0^t\int_0^s\int_0^\omega f_{t}(\xi,\eta) d\xi d\omega d\eta, \quad P_{2,1}f\doteq -\frac{r}{3\beta}\int_0^t\int_0^s\int_0^\omega f(\xi,\eta) d\xi d\omega d\eta.$$
		Then
		\begin{equation}\label{aproduct}
			\begin{split}
		H^n=P^nH^0&=(P_{1,-1}+P_{2,-2}+P_{2,-1}+P_{1,0}+P_{2,0}+P_{2,1})^nst\\
		&=\sum_{r=1}^{6^n}R_{r,n}st,
			\end{split}
		\end{equation}
		where
		\begin{equation*}
		R_{r,n}\doteq P_{i_{r,n}, j_{r,n}}P_{i_{r,n-1}, j_{r,n-1}}\cdot\cdot\cdot P_{i_{r,1},j_{r,1}}, \quad i_{r,q} \in \{1,2\}, \quad j_{r,q} \in \{-2,-1,0,1\},
		\end{equation*}
		for $1\le q\le n$. Observe that for positive integers $m$ and nonnegative integers $k$,
		\begin{equation} \label{acm1}
		P_{1,-1}s^mt^k = c_{1,-1}s^{m+1}t^{k-1}, \quad c_{1,-1} =
		\begin{cases*}
		0, & $k \leq 0$, \\
		\frac{k}{m+1}, & else,
		\end{cases*}
		\end{equation}
		\begin{equation} \label{acm2}
		P_{2,-2}s^mt^k = c_{2,-2}s^{m+2}t^{k-2}, \quad c_{2,-2} =
		\begin{cases*}
		0, & $k \leq 1$, \\
		-\frac{k(k-1)}{3(m+1)(m+2)}, & else,
		\end{cases*}
		\end{equation}
		\begin{equation}
		P_{2,-1}s^mt^k = c_{2,-1}s^{m+2}t^{k-1}, \quad c_{2,-1} =
		\begin{cases*}
		0, & $k \leq 0$, \\
		-\frac{i\alpha k}{3\beta(m+1)(m+2)}, & else,
		\end{cases*}
		\end{equation}
		\begin{equation}
		P_{1,0}s^mt^k = c_{1,0}s^{m+1}t^{k}, \quad c_{1,0} = \frac{2i\alpha}{3\beta(m+1)},
		\end{equation}
		\begin{equation}
		P_{2,0}s^mt^k = c_{2,0}s^{m+2}t^{k}, \quad c_{2,0} = -\frac{\delta}{3\beta(m+1)(m+2)},
		\end{equation}
		\begin{equation} \label{ac1}
		P_{2,1}s^mt^k = c_{2,1}s^{m+2}t^{k+1}, \quad c_{2,1} = -\frac{r}{3\beta(m+1)(m+2)(k+1)}.
		\end{equation}
		Let $\sigma = \sigma(r) \equiv \sum_{q = 1}^n j_{r,q}$. Then from \eqref{acm1}-\eqref{ac1}, for eeach $n$ and $r$,
		\begin{equation}\label{amonomials}
		R_{r,n}st=
		\begin{cases}
		0, & \text { if } \sigma \leq-1,\\
		C_{r,n}s^\gamma t^{\sigma+1}, & \text { if } \sigma > -1,\\
		\end{cases}
		\end{equation}
		where $n+1\leq \gamma \leq 2n+1$ and $C_{r,n}$ is a constant which only depends on $r$ and $n$. Let $M=\max\{1,\frac{\alpha}{\beta},\frac{\delta}{\beta},\frac{r}{\beta}\}$. We claim that for each $r$ and $n$,
		\begin{equation}\label{aclaim}
		|C_{r,n}|\leq \frac{M^n}{(n+1)!(\sigma+1)!}.
		\end{equation}
		Taking $m=1$, $k=1$ in  \eqref{acm1}-\eqref{ac1} we see that \eqref{aclaim} holds for $n=1$. Suppose it holds for $n=\ell-1$ and for all $r \in \{1,2,\dotsc ,6^{\ell -1}\}$. Then for $n=\ell$ and $r^* \in \{1,2,\dotsc ,6^{\ell}\}$, using \eqref{acm1} and \eqref{amonomials}, we get
		\begin{equation*}
			R_{r^*,\ell}st=P_{i,j} R_{r,\ell-1}st= C_{r,\ell-1}P_{i,j} s^\gamma t^{\sigma+1}=C_{r,\ell-1}c_{i,j} s^{\gamma^*} t^{\sigma^*+1}
		\end{equation*}
		for some $i\in\{1,2\}$, $j\in\{-2,-1,0,1\}$ and $r \in \{1,2,.. ,6^{\ell -1}\}$, where $\gamma^*$ is either $\gamma+1$ or $\gamma+2$, $\sigma^*=\sigma +j$. By the induction assumption \eqref{aclaim},
		\begin{equation*}
		C_{r,\ell-1}\leq \frac{M^{\ell-1}}{\ell!(\sigma+1)!}.
		\end{equation*}
		Moreover using \eqref{acm1}-\eqref{ac1} and the fact that $\gamma\geq \ell$  we see that $|c_{i,j}|\leq M\frac{\sigma+1}{\ell+1}$ for $j=-1,-2$, $|c_{i,0}| <\frac{M}{\ell+1}$, and $|c_{i,1}|< \frac{M}{(\sigma+2)(\ell+1)}$. Hence for each $i\in \{1,2\}$ and $j\in \{-2,-1,0,1\}$ we obtain
		\begin{equation*}
			|C_{r^*,\ell}|= |C_{r,(\ell-1)}c_{i,j}| \leq \frac{M^{\ell}}{(\ell+1)!(\sigma+j+1)!}=\frac{M^{\ell}}{(\ell+1)!(\sigma^*+1)!},
		\end{equation*}
		which proves that the claim holds for $n=\ell$ as well.
		
		Using \eqref{aproduct}, \eqref{amonomials}, \eqref{aclaim} and the fact that $0\leq s, t \leq L$ in the triangle $\Delta_{s,t}$ we obtain
		\begin{equation}
		\label{Hnest0}\|H^n\|_{\infty}\leq \frac{6^n M^n L^{3n+2}}{(n+1)!}.
		\end{equation}
		This shows $H^n$ is absolutely summable. On the other hand since $H^n$ is a linear combination of $6^n$ monomials of the form $s^\gamma t^{\sigma+1}$ with $\gamma\leq 2n+1$ and $\sigma\leq n$,
		any partial derivative $\partial^a_s \partial^b_t H^n$ of $H^n$ will be absolutely less than
		\begin{equation}
		\label{Hnest}\displaystyle\frac{(2n+1)^a (n+1)^b 6^n M^nL^{3n+2-a-b}}{(n+1)!},
		\end{equation}
		which is a summable sequence.
	\end{proof}

	See Figure \ref{fig:p_kernel} for a graph and a contour plot of the kernel and Figure \ref{fig:p_ctrlgain} for the corresponding control gains for $L=\pi$, $\beta = 1$, $\alpha = 2$, $\delta = 8$ and $r = 0.05$.
	\begin{figure}[h]
		\centering
		\begin{subfigure}[b]{0.5\textwidth}
			\includegraphics[width=\textwidth]{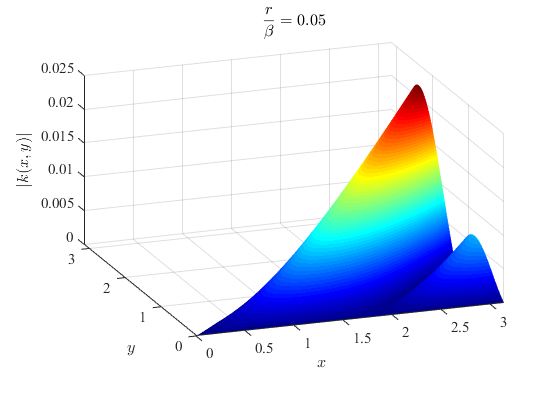}
			\label{fig:kernel_k_3d}
		\end{subfigure}
		~ %add desired spacing between images, e. g. ~, \quad, \qquad, \hfill etc.
		%(or a blank line to force the subfigure onto a new line)
		\begin{subfigure}[b]{0.5\textwidth}
			\includegraphics[width=\textwidth]{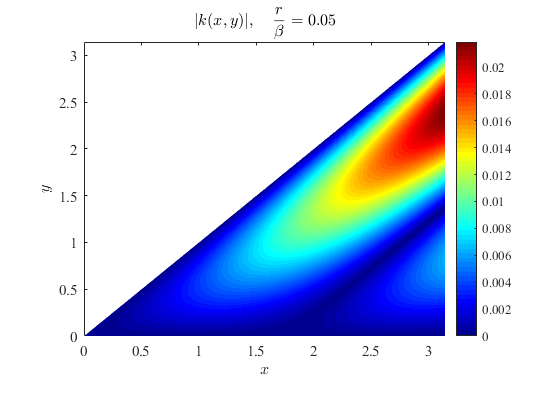}
			\label{fig:kernel_k_contour}
		\end{subfigure}
		\vspace*{-15mm}
		\caption{Backstepping kernel on $\Delta_{x,y}$ for $L=\pi$, $\beta = 1$, $\alpha = 2$, $\delta = 8$ and $r = 0.05$.}
		\label{fig:p_kernel}
	\end{figure}

	\begin{figure}[h]
		\centering
		\begin{subfigure}[b]{0.5\textwidth}
			\includegraphics[width=\textwidth]{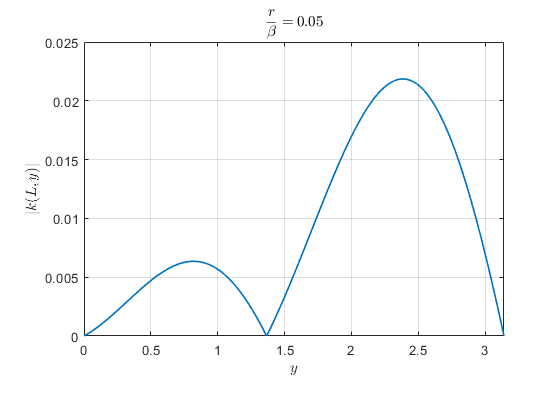}
			\label{fig:control_gain_g}
		\end{subfigure}
		~ %add desired spacing between images, e. g. ~, \quad, \qquad, \hfill etc.
		%(or a blank line to force the subfigure onto a new line)
		\begin{subfigure}[b]{0.5\textwidth}
			\includegraphics[width=\textwidth]{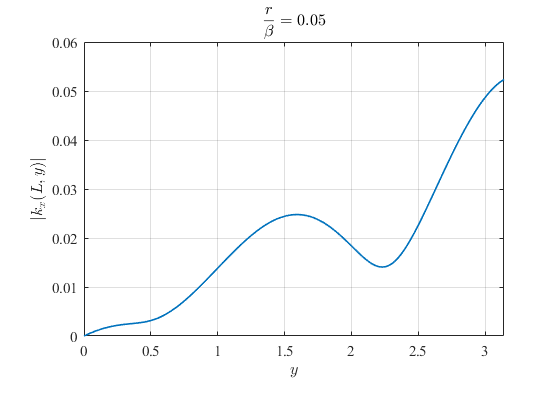}
			\label{fig:control_gain_h}
		\end{subfigure}
		\vspace*{-15mm}
		\caption{Control gains for the Dirichlet (left) and Neumann (right) boundary conditions. $L=\pi$, $\beta = 1$, $\alpha = 2$, $\delta = 8$ and $r = 0.05$.}
		\label{fig:p_ctrlgain}
	\end{figure}
	
	Next let $\eta = \eta(x,y)$ be a $C^\infty-$function defined on $\Delta_{x,y}$ and $\Upsilon_\eta : H^l(0,L) \to H^l(0,L)$, $l \geq 0$ be an integral operator defined by
	\begin{equation*}
		[\Upsilon_\eta \varphi](x) \doteq \int_0^x \eta(x,y) \varphi(y) dy.	
	\end{equation*}
	Then, we have the following lemma for the operator $I - \Upsilon_\eta$.

	\begin{lem} \label{inverselem}
		$I-\Upsilon_{\eta}$ is invertible with a bounded inverse from $H^l(0,L)\rightarrow H^l(0,L)$ ($l\ge 0$). Moreover,  $(I-\Upsilon_{\eta})^{-1}$ can be written as $I+\Phi$, where $\Phi$ is a bounded operator from $L^2(0,L)$ into $H^l(0,L)$ for $l=0,1,2$ and from $H^{l-2}(0,L)$ into $H^{l}(0,L)$ for $l> 2$.
	\end{lem}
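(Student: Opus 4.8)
The plan is to treat $\Upsilon_\eta$ as a Volterra operator and invert $I-\Upsilon_\eta$ by a Neumann series, then read off the structure of $\Phi$ from the resolvent kernel. First I would record the iterated kernels: setting $\eta_1=\eta$ and $\eta_n(x,y)=\int_y^x \eta(x,z)\,\eta_{n-1}(z,y)\,dz$, a straightforward induction gives the factorial bound $|\eta_n(x,y)|\le M^n (x-y)^{n-1}/(n-1)!$ on $\Delta_{x,y}$, where $M\doteq\|\eta\|_\infty$. Since $\Upsilon_\eta^n$ is the Volterra operator with kernel $\eta_n$, Schur's test (or a direct Cauchy--Schwarz estimate) yields $\|\Upsilon_\eta^n\|_{2\to2}\le M^n L^n/n!$, so the series $\sum_{n\ge0}\Upsilon_\eta^n$ converges in the operator norm on $L^2(0,L)$. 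This establishes invertibility of $I-\Upsilon_\eta$ on $L^2$ with $(I-\Upsilon_\eta)^{-1}=I+\Phi$, where $\Phi=\sum_{n\ge1}\Upsilon_\eta^n=\Upsilon_\rho$ and $\rho\doteq\sum_{n\ge1}\eta_n$ is the resolvent kernel.

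Next I would upgrade this to $H^l$. Differentiating under the integral sign shows $(\Upsilon_\eta\varphi)'=\eta(x,x)\varphi+\Upsilon_{\eta_x}\varphi$, and iterating this identity exhibits $\Upsilon_\eta$ as a bounded map $H^l\to H^l$ for every $l\ge0$, since each differentiation produces a smooth multiplier plus a Volterra operator with a smooth kernel. The same factorial estimates, now carrying the extra polynomial factors coming from the differentiations as in \eqref{Hnest}, remain summable, so the Neumann series converges in $\mathcal{B}(H^l)$ as well; since $H^l(0,L)$ embeds densely in $L^2(0,L)$ and the inverse is unique, the $H^l$-inverse necessarily coincides with $I+\Phi$. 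One should also check that $\rho\in C^\infty(\Delta_{x,y})$ by confirming that the series $\sum_n\eta_n$ converges together with all its partial derivatives, again using that factorial decay dominates the polynomial growth under differentiation.

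The refined smoothing of $\Phi=\Upsilon_\rho$ is the delicate point, and it rests on the diagonal behaviour of $\rho$. From $|\eta_n(x,y)|\le M^n(x-y)^{n-1}/(n-1)!$ we get $\eta_n(x,x)=0$ for $n\ge2$, hence $\rho(x,x)=\eta(x,x)$; for the backstepping kernels at hand one has $\eta(x,x)=0$ (see \eqref{kernel_pk} and \eqref{kernel_p}), so $\rho(x,x)=0$. With this, the boundary term at the first differentiation drops out, $(\Upsilon_\rho\varphi)'=\Upsilon_{\rho_x}\varphi$, so a second differentiation gives $(\Upsilon_\rho\varphi)''=\rho_x(x,x)\varphi+\Upsilon_{\rho_{xx}}\varphi\in L^2$ whenever $\varphi\in L^2$; this is exactly the statement $\Phi:L^2\to H^2$ (and a fortiori $\Phi:L^2\to H^1,L^2$). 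For $l>2$ I would continue differentiating and keep track of which derivatives of $\varphi$ appear: an induction shows that for $m\ge2$ the derivative $(\Upsilon_\rho\varphi)^{(m)}$ equals $\Upsilon_{\partial_x^m\rho}\varphi$ plus boundary terms involving at most $\varphi^{(m-2)}$, so $(\Upsilon_\rho\varphi)^{(l)}\in L^2$ as soon as $\varphi\in H^{l-2}$, giving $\Phi:H^{l-2}\to H^l$.

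I expect the main obstacle to be this last step: correctly bookkeeping, at each order of differentiation, the highest-order trace of $\varphi$ that survives, and verifying that the top-order boundary term is annihilated precisely by $\rho(x,x)=0$. The convergence of the differentiated resolvent series in $H^l$ is routine once the factorial bounds are in place, but the two-derivative gain claimed for $\Phi$ is genuinely a consequence of the diagonal vanishing and must be argued with care.
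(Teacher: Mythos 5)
Your proposal is correct, and since the paper does not actually prove this lemma (the proof is omitted with a citation to \cite{Liu03,BatalOzsari2018-1}, where the argument is precisely the Volterra/Neumann-series one you give), your write-up is essentially a reconstruction of the intended proof: iterated kernels with factorial bounds, the resolvent kernel $\rho=\sum_{n\ge 1}\eta_n$, smoothness of $\rho$ from termwise factorial decay, and the two-derivative gain read off from the diagonal behaviour of $\rho$.

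One point you raise deserves emphasis, because it is not a technicality: the claim $\Phi:L^2(0,L)\to H^2(0,L)$ genuinely requires $\eta(x,x)\equiv 0$, a hypothesis absent from the lemma as stated. Since $\eta_n(x,x)=0$ for $n\ge 2$, one has $\rho(x,x)=\eta(x,x)$, and if this trace does not vanish then $(\Upsilon_\rho\varphi)'=\rho(x,x)\varphi+\Upsilon_{\rho_x}\varphi$ contains the raw term $\rho(x,x)\varphi$, so a second differentiation costs a derivative of $\varphi$. Concretely, for $\eta\equiv 1$ one computes $\Phi\varphi(x)=\int_0^x e^{x-y}\varphi(y)\,dy$ and $(\Phi\varphi)'=\varphi+\Phi\varphi$, so $\Phi\varphi\in H^2$ would force $\varphi\in H^1$; hence the smoothing claim fails for arbitrary smooth $\eta$. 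The lemma is nonetheless safe in this paper because it is only ever applied with $\eta=k^*$ or $\eta=p^*$, which vanish on the diagonal by \eqref{kernel_pk} and \eqref{kernel_p}. Your proof makes this dependence explicit—which the paper's statement does not—and your induction for $l>2$ (each derivative of order $m$ producing $\Upsilon_{\partial_x^m\rho}\varphi$ plus boundary terms involving at most $\varphi^{(m-2)}$) is the correct way to finish.
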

	We omit the proof since it can be done as in \cite{Liu03,BatalOzsari2018-1}.
	
	\subsection{Wellposedness} \label{ctrl_wp}
	We first investigate the local and global wellposedness of the target model.
	Then, using Lemma \ref{lemkernel} and Lemma \ref{inverselem}, we deduce the wellposedness of the original plant \eqref{plant_lin}. To this end, let us consider the modified target model
	\begin{eqnarray} \label{wp_pde_2}
		\begin{cases}
			iw_t + i\beta w_{xxx} +\alpha w_{xx} +i\delta w_x  + ir w= i\beta k_y(x,0) w_x(0,t), x\in (0,L), t\in (0,T),\\
			w(0,t)= w(L,t)= w_x(L,t)=0,\\
			w(x,0)=w_0(x) \doteq u_0(x) - \int_0^x k(x,y)u(y,t)dy.
		\end{cases}
	\end{eqnarray}
	Consider the operator $A$ defined in \eqref{A_opr} with domain $D(A)$ defined in \eqref{A_dom}. Let us express \eqref{wp_pde_2} in the abstract operator theoretic form as
	\begin{equation*}
		\begin{cases}
			\dot y = Ay + Fy,\\
			y(0) = y_0,
		\end{cases}
	\end{equation*}
	where $F\varphi \doteq -ir \varphi + i\beta a(\cdot) \Gamma_0^1 \varphi$ and $\Gamma_0^1$ is the first order trace operator at the left end point. Operator $A$ generates strongly continuous semigroup of contractions, $\{S(t)\}_{t \geq 0}$, in $L^2(0,L)$ \cite{Ceballos05}. Define the operator
	\begin{equation} \label{wp_op}
		w = [\Psi z](t) \doteq S(t)w_0 + \int_0^t S(t - s)Fz(s) ds
	\end{equation}
	and the space
	\begin{equation}
		Y_T \doteq \{ z \in X_T^0 : z_x \in C([0,L];L^2(0,T)) \}
	\end{equation}
	endowed with the norm
	\begin{equation}
		\|z\|_{Y_T} \doteq \left(\|z\|_{C([0,T];L^2(0,L))}^2 + \|z\|_{L^2(0,T;H^1(0,L))}^2 + \|z_x\|_{C([0,L];L^2(0,T))}^2 \right)^{\frac{1}{2}}.
	\end{equation}
	We prove the following result.
	
	\begin{prop}[Local wellposedness] \label{ctrl_localwp} Let $T^\prime > 0$ and $w_0 \in L^2(0,L)$. Then, there exists $T \in (0,T^\prime)$ which is independent of size of $w_0$ such that \eqref{wp_pde_2} possesses a unique local solution $w \in Y_T$.
	\end{prop}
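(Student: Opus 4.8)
The plan is to apply the Banach fixed point theorem to the map $\Psi$ defined in \eqref{wp_op} on a closed ball of the space $Y_T$, for $T>0$ sufficiently small. The decisive point is that the feedback operator $F\varphi = -ir\varphi + i\beta a(\cdot)\Gamma_0^1\varphi$, where $a \doteq k_y(\cdot,0) \in C^\infty([0,L]) \subset L^2(0,L)$ since $k$ is smooth by Lemma \ref{lemkernel}, maps $Y_T$ boundedly into $L^1(0,T;L^2(0,L))$ with an operator norm that is small when $T$ is small. Once this is established, Lemma \ref{wp_lem_1} controls the homogeneous part $S(t)w_0 = y[w_0,0,0,0,0]$ while Lemma \ref{wp_lem_2} controls the Duhamel part $y[0,Fz,0,0,0]$, and both the self-mapping and contraction properties follow.

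First I would estimate $Fz$ in $L^1(0,T;L^2(0,L))$ for $z \in Y_T$. Splitting $Fz = -irz + i\beta a(\cdot) z_x(0,\cdot)$, the zeroth-order term obeys
\begin{equation*}
\|{-}irz\|_{L^1(0,T;L^2(0,L))} \le rT\,\|z\|_{C([0,T];L^2(0,L))},
\end{equation*}
while for the trace term, writing $\int_0^T \|a\,z_x(0,t)\|_2\,dt = \|a\|_2\int_0^T |z_x(0,t)|\,dt$ and using Cauchy--Schwarz in $t$,
\begin{equation*}
\|i\beta a(\cdot)z_x(0,\cdot)\|_{L^1(0,T;L^2(0,L))} \le \beta\|a\|_2\sqrt{T}\,\|z_x(0,\cdot)\|_{L^2(0,T)} \le \beta\|a\|_2\sqrt{T}\,\|z\|_{Y_T}.
\end{equation*}
Here the last inequality uses $\|z_x(0,\cdot)\|_{L^2(0,T)} \le \sup_{x\in[0,L]}\|z_x(x,\cdot)\|_{L^2(0,T)}$, which is precisely the third component of the $Y_T$ norm. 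Hence $\|Fz\|_{L^1(0,T;L^2(0,L))} \lesssim (T+\sqrt{T})\|z\|_{Y_T}$.

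Combining Lemma \ref{wp_lem_1} with Lemma \ref{wp_lem_2}, all three pieces of the $Y_T$ norm of $\Psi z$ are simultaneously bounded, giving
\begin{equation*}
\|\Psi z\|_{Y_T} \le c_0\|w_0\|_2 + c_1(1+\sqrt{T})(T+\sqrt{T})\|z\|_{Y_T}
\end{equation*}
for constants $c_0,c_1$ depending only on $\beta,\alpha,\delta,r$ and $\|a\|_2$. Since the linear part $S(t)w_0$ cancels in a difference, applying the same argument to $\Psi z_1 - \Psi z_2 = \int_0^t S(t-s)F(z_1-z_2)(s)\,ds$ yields
\begin{equation*}
\|\Psi z_1 - \Psi z_2\|_{Y_T} \le c_1(1+\sqrt{T})(T+\sqrt{T})\|z_1 - z_2\|_{Y_T}.
\end{equation*}
I would then fix $T\in(0,T')$ small enough that $\theta \doteq c_1(1+\sqrt{T})(T+\sqrt{T}) \le \tfrac12$; this threshold depends only on $c_1$, not on $w_0$, which is what makes the existence time independent of the size of the data. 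On the closed ball of radius $R = 2c_0\|w_0\|_2$ in $Y_T$, the map $\Psi$ is both a self-map and a contraction, so it has a unique fixed point $w$, which is the sought local solution; uniqueness throughout $Y_T$ follows directly from the contraction estimate.

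The main obstacle, and the reason the space $Y_T$ carries the extra trace component $\|z_x\|_{C([0,L];L^2(0,T))}$, is exactly the boundary trace $z_x(0,\cdot)$ appearing in $Fz$: without the hidden boundary smoothing furnished by part (iii) of Lemmas \ref{wp_lem_1} and \ref{wp_lem_2}, the term $i\beta a(\cdot)z_x(0,t)$ would not even be a well-defined $L^1(0,T;L^2(0,L))$ forcing for a generic $z\in X_T^0$. It is the $\sqrt{T}$ gain from Cauchy--Schwarz in time on $\int_0^T|z_x(0,t)|\,dt$ that renders the operator norm of $F:Y_T\to L^1(0,T;L^2(0,L))$ small for small $T$, which is what closes the fixed-point scheme.
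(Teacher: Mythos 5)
Your proposal is correct and follows essentially the same route as the paper's proof: the same fixed-point map $\Psi$ from \eqref{wp_op}, Lemma \ref{wp_lem_1} for the semigroup part, Lemma \ref{wp_lem_2} for the Duhamel part, and the Cauchy--Schwarz-in-time estimate on the trace term $k_y(\cdot,0)z_x(0,\cdot)$ to extract the small factor $\sqrt{T}$ that makes $\Psi$ a contraction for $T$ small, independently of $\|w_0\|_2$. The only cosmetic difference is that you contract on a closed ball of radius $2c_0\|w_0\|_2$, whereas the paper contracts on all of $Y_T$ directly (which is possible since $\Psi$ is affine); both are valid.
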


	\begin{proof}
		We first show that $\Psi$, defined by \eqref{wp_op} maps $Y_T$ into itself. To see this, first of all, we obtain from \eqref{wp_op} that
		\begin{align*}
			\|w\|_{Y_T} &= \|\Psi z\|_{Y_T} \leq \| S(t) w_0\|_{Y_T} + \left\|\int_0^t S(t-s)[Fz](s)ds \right\|_{Y_T}.
		\end{align*}
		By using Lemma \ref{wp_lem_1}, the first term at the right hand side of the above inequality can be estimated as
		\begin{equation} \label{into_1}
			\| S(t) w_0\|_{Y_T} \lesssim \sqrt{1 + T}\|w_0\|_2.
		\end{equation}
		Using Lemma \ref{wp_lem_2} and then applying Cauchy--Schwarz inequality, the second term can be estimated as
		\begin{equation}
		\label{into_2}
			\begin{split}
				\left\|\int_0^t S(t-s)Fz(s)ds \right\|_{Y_T} &\leq C \sqrt{1 + T} \|-rz + \beta k_y(\cdot,0) z_x(0,\cdot)\|_{L^1(0,T;L^2(0,L))} \\
				&\leq c_{\beta,r} \sqrt{T(1 + T)} (1 + \|k_y(\cdot,0)\|_2) \| z\|_{Y_T}.
			\end{split}
		\end{equation}
		Combining \eqref{into_1} and \eqref{into_2}, we see that $\Psi$ maps $Y_T$ into itself. To see that $\Psi$ is contraction on $Y_T$, let $z_1, z_2 \in Y_T$ and $w_1 = \Psi z_1$, $w_2 = \Psi z_2$. Using the similar arguments as above, we get
		\begin{equation*}
			\begin{split}
				\|w_2 - w_1 \|_{Y_T} &= \|\Psi z_2  - \Psi z_1\|_{Y_T} \\
				&\leq c_{\beta,r} \sqrt{T(1 + T)} (1 + \|k(\cdot,0)\|_2) \|z_2 - z_1\|_{Y_T}.
			\end{split}
		\end{equation*}
		In order for the map $\Psi$ to be a contraction, we choose $T \in (0,T^\prime)$ such that $0 < \sqrt{T(1 + T)} \leq \left(c_{\beta,r}(1 + \|k(\cdot,0)\|_2) \right)^{-1}$ which is independent of the size of the initial datum. This guarantees the existence of a unique local solution $w \in Y_T$.
	\end{proof}
		This proposition shows the existence of a maximal time, $T_{\max}$, of the existence of the solution $w \in Y_T$ for all $T < T_{\max}$. To prove that $w$ is global, it is enough to show that $\lim_{T \to T_{\max}^-} \|w\|_{Y_T} < \infty$.
		
	\begin{prop}[Global wellposedness] \label{ctrl_globalwp}
		Let $w_0 \in L^2(0,L)$. Then $w$ extends as a global solution in $Y_T$.
	\end{prop}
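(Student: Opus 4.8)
The plan is to rule out finite-time blow-up of the $Y_T$-norm by establishing an a priori energy estimate, after which the Duhamel representation from Proposition \ref{ctrl_localwp} upgrades the $L^2$-bound to a full $Y_T$-bound. By the remark following Proposition \ref{ctrl_localwp}, it suffices to show $\lim_{T\to T_{\max}^-}\|w\|_{Y_T}<\infty$.

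First I would derive the basic energy identity. Taking the $L^2$-inner product of the main equation in \eqref{wp_pde_2} with $2w$ and retaining imaginary parts, the boundary conditions $w(0,t)=w(L,t)=w_x(L,t)=0$ are identical to those used in Lemma \ref{wp_lem_1}, so the computations \eqref{lin1iden2}--\eqref{lin1iden5} carry over verbatim: the dispersive term contributes $\beta|w_x(0,t)|^2$, while the $\alpha$- and $\delta$-terms vanish. The damping term yields $2r\|w\|_2^2$, and the trace source on the right-hand side produces an interior integral coupled to the boundary trace $w_x(0,t)$, giving
$$
\frac{d}{dt}\|w(\cdot,t)\|_2^2 + \beta|w_x(0,t)|^2 + 2r\|w(\cdot,t)\|_2^2 = 2\beta\,\Im\!\left[i\,w_x(0,t)\int_0^L k_y(x,0)\overline{w}(x,t)\,dx\right].
$$

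The key --- and essentially the only delicate --- step is the treatment of this right-hand side, because it couples the boundary trace $w_x(0,t)$ to the $L^2$-mass of $w$ (note $k_y(\cdot,0)$ is complex-valued, so I would estimate in modulus). Applying Cauchy--Schwarz in $x$ and then $\epsilon$-Young's inequality, its modulus is bounded by $\beta|w_x(0,t)|^2 + \beta\|k_y(\cdot,0)\|_2^2\|w(\cdot,t)\|_2^2$, so that the boundary dissipation $\beta|w_x(0,t)|^2$ on the left is exactly absorbed. What remains is the differential inequality
$$
\frac{d}{dt}\|w(\cdot,t)\|_2^2 \le \left(\beta\|k_y(\cdot,0)\|_2^2 - 2r\right)\|w(\cdot,t)\|_2^2,
$$
and Gronwall's inequality yields $\|w(\cdot,t)\|_2^2 \le \|w_0\|_2^2\, e^{(\beta\|k_y(\cdot,0)\|_2^2 - 2r)t}$; in particular $\sup_{t\in[0,T]}\|w(\cdot,t)\|_2$ is finite for every $T<\infty$. (The same computation under $2r>\beta\|k_y(\cdot,0)\|_2^2$ is precisely what drives the exponential decay in Proposition \ref{ctrl_tar_stab}, but for global existence only finiteness is needed.)

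Finally I would upgrade this to a bound on the full $Y_T$-norm. Integrating the energy identity over $(0,T)$ controls the boundary dissipation $\|w_x(0,\cdot)\|_{L^2(0,T)}$, and repeating the multiplier-$2xw$ argument of Lemma \ref{wp_lem_1}(ii) bounds $\|w\|_{L^2(0,T;H^1(0,L))}$, both in terms of $\|w_0\|_2$ and $T$. Returning to the integral representation $w = S(t)w_0 + \int_0^t S(t-s)Fw(s)\,ds$ and invoking Lemma \ref{wp_lem_1} and Lemma \ref{wp_lem_2}, it remains to observe that
$$
\|Fw\|_{L^1(0,T;L^2(0,L))} \le r\|w\|_{L^1(0,T;L^2(0,L))} + \beta\|k_y(\cdot,0)\|_2\,\|w_x(0,\cdot)\|_{L^1(0,T)}
$$
is finite by the preceding bounds, whence $\|w\|_{Y_T}<\infty$ uniformly for $T<T_{\max}$. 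Thus the $Y_T$-norm cannot blow up, so $T_{\max}=+\infty$ and $w$ extends as a global solution.
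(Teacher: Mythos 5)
Your proof is correct and follows essentially the same route as the paper's: the multiplier $2w$ energy identity with Young's inequality on the trace term and Gronwall for the $L^2$ bound, the multiplier $2x\overline{w}$ for the $L^2_tH^1_x$ bound, and the Duhamel representation with Lemmas \ref{wp_lem_1} and \ref{wp_lem_2} for the $C([0,L];L^2(0,T))$ piece. The only (harmless) difference is bookkeeping: the paper chooses $\epsilon$ so as to retain $\tfrac{\beta}{2}|w_x(0,t)|^2$ on the left and runs Gronwall on $E(t)=2\|w\|_2^2+\beta\int_0^t|w_x(0,\tau)|^2d\tau$ in one pass, whereas you absorb the full boundary term first and then revisit the integrated identity (with a smaller Young constant) to recover $\|w_x(0,\cdot)\|_{L^2(0,T)}$.
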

	\begin{proof}
		Taking $L^2-$inner product of the main equation of \eqref{wp_pde_2} by $2w$, taking the imaginary parts of both sides and applying several integration by parts together with imposing the boundary conditions, we derive
		\begin{equation} \label{gwp_1}
		\frac{d}{dt}\|w(\cdot,t)\|_2^2 + \beta|w_x(0,t)|^2 + 2r\left\|w(\cdot,t)\right\|_2^2 = 2\beta \Re \int_0^L k_y(x,0) w_x(0,t) \overline{w}(x,t) dx.
		\end{equation}
		By using $\epsilon-$Young's inequality, right hand side of \eqref{gwp_1} can be estimated as
		\begin{equation*}
			2\beta \Re \int_0^L k_y(x,0) w_x(0,t) \overline{w}(x,t) \leq \frac{\beta}{ 2\epsilon} \int_0^L |k_y(x,0)|^2 |w(x,t)|^2 dx + 2\epsilon\beta L |w_x(0,t)|^2
		\end{equation*}
		Choosing $\epsilon = \frac{1}{4L}$, \eqref{gwp_1} becomes
		\begin{equation*}
			\frac{d}{dt}\|w(\cdot,t)\|_2^2 + \frac{\beta}{2}|w_x(0,t)|^2 \leq 2(\beta L \|k_y(\cdot,0)\|_{\infty}^2 - r) \|w(\cdot,t)\|_2^2.
		\end{equation*}
		Now integrating the above inequality over $(0,t)$ yields
		\begin{equation} \label{gwp_est_0.5}
			2\|w(\cdot,t)\|_2^2 + \beta\int_0^t|w_x(0,\tau)|^2d\tau \leq 2\|w_0\|_2^2 + 4(\beta L \|k_y(\cdot,0)\|_{\infty}^2 - r) \int_0^t \|w(\cdot,\tau)\|_2^2 d\tau.
		\end{equation}
		Define $E(t) \doteq 2\|w(\cdot,t)\|_2^2 + \beta\int_0^t|w_x(0,\tau)|^2d\tau$. Then, from \eqref{gwp_est_0.5}
		\begin{equation*}
			E(t) \leq 2\|w_0\|_2^2 + 4\left| \beta L \|k_y(\cdot,0)\|_{\infty}^2 - r\right| \int_0^t E(\tau)d\tau.
 		\end{equation*}
 		Thanks to Gronwall's inequality,
 		\begin{equation} \label{gwp_est_1}
 			E(t) = 2\|w(\cdot,t)\|_2^2 + \beta\int_0^t|w_x(0,\tau)|^2d\tau \leq 2 \|w_0\|_2^2 e^{4\left(\beta L \|k_y(\cdot,0)\|_{\infty}^2 - r\right)t},
 		\end{equation}
 		for all $t \in [0,T]$. Passing to supremum on $[0,T]$ and then letting $T \to T_\text{max}^-$, we get
 		\begin{equation} \label{gwp_est_2}
 			\lim_{T \to T_\text{max}} \|w\|_{C([0,T];L^2(0,L))} \leq \|w_0\|_2 e^{2\left(\beta L \|k(\cdot,0)\|_{\infty}^2 -r\right)T_\text{max}} < \infty.
 		\end{equation}
 		Using
		\begin{equation}
			\underset{0 \leq t \leq T}{\sup} \|w(\cdot,t)\|_2^2 = \frac{1}{T} \|w\|_{L^2(0,T;L^2(0,L))}^2
		\end{equation}		 		
		and then letting $T \to T_\text{max}^-$, we also get
 		\begin{equation} \label{gwp_est_3}
 			\lim_{T \to T_\text{max}^-} \|w\|_{L^2(0,T;L^2(0,L))} \leq \sqrt{T_\text{max}} \|w_0\|_2 e^{2\left(\beta L \|k(\cdot,0)\|_{\infty}^2 -r\right)T_\text{max}}.
 		\end{equation}
 		Next, we multiply the main equation of \eqref{wp_pde_2} by $2x\overline{w}$,  integrate over $(0,t) \times (0,L)$, consider the imaginary parts and apply several integration by parts to get
 		\begin{equation} \label{wp_mult_2xy}
 			\begin{split}
 			&\int_0^L x |w(x,t)|^2dx + 3\beta \int_0^t \int_0^L |w_x(x,\tau)|^2 dx d\tau + 2r\int_0^L x |w(x,t)|^2 dx \\
 		 	=& \int_0^L x |w_0(x)|^2 dx + \delta \int_0^t \int_0^L |w(x,\tau)|^2 dx d\tau + 2\beta\int_0^t\int_0^L x k_y(x,0) w_x(0,\tau) \overline{w}(x,\tau) dxd\tau
 		 	\end{split}
 		\end{equation}
 		Thanks to Cauchy--Schwarz inequality, the last term at the right hand side of \eqref{wp_mult_2xy} can be estimated as
 		\begin{equation*}
 			\begin{split}
 				&2\beta\int_0^t\int_0^L x k_y(x,0)(x) w_x(0,\tau) \overline{y}(x,\tau) dx d\tau \\
 				\leq& 2\beta L \int_0^t\int_0^L |k_y(x,0)| |w_x(0,\tau)| |w(x,\tau)| dx d\tau \\
 				\leq& \beta L^2 \|k_y(\cdot,0)\|_\infty^2\int_0^t |w_x(0,\tau)|^2 d\tau + \beta L \int_0^t\int_0^L |w(x,\tau)|^2 dx d\tau.
 			\end{split}
 		\end{equation*}
 		Dropping the first and third terms at the left hand side of \eqref{wp_mult_2xy}, and using the above estimate, it follows that
 		\begin{equation*}
 			\begin{split}
	 			&\|w_x\|_{L^2(0,t;L^2(0,L))}^2 \\
	 			\leq& \frac{L}{3\beta} \|w_0\|_2^2 + \frac{\beta L + \delta}{3\beta} \int_0^t \int_0^L |w(x,\tau)|^2 dx d\tau + \frac{L^2 \|k_y(\cdot,0)\|_\infty^2}{3} \int_0^t |y_x(0,\tau)|^2d\tau \\
	 		\leq& \frac{L}{3\beta} \|w_0\|_2^2 + \left(\frac{\beta L + \delta}{6\beta}+\frac{L^2 \|k_y(\cdot,0)\|_\infty^2}{3\beta}\right) \int_0^t E(\tau)d\tau.
	 		\end{split}
 		\end{equation*}
 		Using \eqref{gwp_est_1} we get,
 		 \begin{equation} \label{gwp_est_4}
 		 	\begin{split}
 			&\lim_{T \to T_\text{max}} \|w_x\|_{L^2(0,T;L^2(0,L))} \\ \leq& \sqrt{\frac{L}{3\beta}} \|w_0\|_2 + \left(\sqrt{\frac{\beta L + \delta}{6\beta}}+\frac{L\|k_x(\cdot,0)\|_\infty}{\sqrt{3\beta}}\right) \sqrt{2 T_\text{max}}\|w_0\|_2 e^{2\left(\beta L \|k_y(\cdot,0)\|_{\infty}^2 -r\right)T_\text{max}}.
 			\end{split}
 		\end{equation}
 		Combining \eqref{gwp_est_3} and \eqref{gwp_est_4}, we deduce that
 		\begin{equation} \label{gwp_est_5}
 			\begin{split}
 			&\lim_{T \to T_\text{max}^-} \|w\|_{L^2(0,T;H^1(0,L))} \\
 			\leq& \sqrt{\frac{L}{3\beta}} \|w_0\|_2 + \left(1 + \sqrt{\frac{\beta L + \delta}{6\beta}}+\frac{L\|k_y(\cdot,0)\|_\infty}{\sqrt{3\beta}}\right) \sqrt{2 T_\text{max}}\|w_0\|_2 e^{2\left(\beta L \|k_y(\cdot,0)\|_{\infty}^2 -r\right)T_\text{max}} \\
 			<& \infty.
 			\end{split}
 		\end{equation}
 		
 		From Proposition \ref{ctrl_localwp}, $w$ is the fixed point of \eqref{wp_pde_2}, so it satisfies
 		\begin{equation*}
 			w = S(t)w_0 + \int_0^t S(t - \tau)[Fw](\tau)d\tau
 		\end{equation*}
 		for some $t \in (0,T')$.
 		From  Lemma \ref{wp_lem_1}-(iii) and \ref{wp_lem_3}-(iii), we now that
 		\begin{equation}\label{gwp_est_6}
 			\underset{x\in[0,L]}{\sup} \|\partial_x[S(t)w_0](x)\|_{L^2(0,T)} \lesssim \sqrt{T}\|w_0\|_2
 		\end{equation}
 		and
 		\begin{equation} \label{gwp_est_7}
 			\underset{x\in[0,L]}{\sup} \left\|\partial_x \left[\int_0^t S(t - \tau) [Fw](\tau) d\tau \right] (x)\right\|_{L^2(0,T)} \lesssim \sqrt{T}\int_0^T \|[Fw](\cdot,t)\|_2 dt
 		\end{equation}
 		holds. Using the definition of $Fw$, right hand side of \eqref{gwp_est_7} can be estimated as
 		\begin{equation}
 			\begin{split} \label{gwp_est_8}
 			\sqrt{T}\int_0^T \|[Fw](\cdot,t)\|_{L^2(0,L)} dt &\leq \sqrt{T}r\int_0^T \|w(\cdot,t)\|_2 dt + \sqrt{T} \beta\|k(\cdot,0)\|_2 \int_0^T|w_x(0,t)|dt \\
 			&\leq \frac{\sqrt{T} r}{2} \int_0^T \sqrt{E(t)}dt + \sqrt{T} \|k(\cdot,0)\|_2 \sqrt{E(t)}	\\
 			&\leq \left(\frac{T \sqrt{T} r}{2} +\sqrt{T} \beta \|k(\cdot,0)\|_2^2\right) \sqrt{2} \|w_0\|_2 e^{2\left(\beta L \|k_y(\cdot,0)\|_{\infty}^2 - r\right)T}.
 			\end{split}
 		\end{equation}
 		Finally using \eqref{gwp_est_6}-\eqref{gwp_est_8}
 		\begin{equation*}
 			\begin{split}
 				&\lim_{T \to T_\text{max}^-} \|w_x\|_{C([0,L];L^2(0,T))} \\
 			\lesssim& \sqrt{T_{\text{max}}}\|w_0\|_2 + \left(\frac{T \sqrt{T} r}{2} +\sqrt{T} \beta \|k(\cdot,0)\|_2^2\right) \sqrt{2} \|w_0\|_2 e^{2\left(\beta L \|k_y(\cdot,0)\|_{\infty}^2 - r\right)T} < \infty.
 			\end{split}
 		\end{equation*}
 		This completes the proof.
	\end{proof}
	
	Choosing $w_0 \in H^3(0,L)$ that satisfies compatibility conditions, the global solution enjoys higher order regularity given by the following proposition.
	\begin{prop}[Regularity]
		Let $w_0 \in H^3(0,L)$ satisfy the compatibility conditions. Then $w \in Y_T^3$.
	\end{prop}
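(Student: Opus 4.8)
The plan is to bootstrap in time and then recover spatial regularity from the equation, exactly along the lines of Lemma \ref{lem_wp_pde_xt3}. Set $v \doteq w_t$. Differentiating the main equation of \eqref{wp_pde_2} in $t$, together with the homogeneous boundary conditions $w(0,t)=w(L,t)=w_x(L,t)=0$, one sees that $v$ formally solves the \emph{same} target model \eqref{wp_pde_2} — the spatial factor $k_y(\cdot,0)$ of the trace forcing is time-independent, so $F$ commutes with $\partial_t$ — now with initial datum
\begin{equation*}
v_0(x)\doteq -\beta w_0^{\prime\prime\prime}(x)+i\alpha w_0^{\prime\prime}(x)-\delta w_0^\prime(x)-r w_0(x)+\beta k_y(x,0)w_0^\prime(0),
\end{equation*}
obtained by evaluating the equation at $t=0$. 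Since $w_0\in H^3(0,L)$ and $k$ is smooth by Lemma \ref{lemkernel}, we have $v_0\in L^2(0,L)$.

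To make this rigorous I would argue as in Lemma \ref{lem_wp_pde_xt3}: let $v\in Y_T$ be the global solution of \eqref{wp_pde_2} with datum $v_0$ furnished by Proposition \ref{ctrl_globalwp}, define $\tilde w(x,t)\doteq w_0(x)+\int_0^t v(x,\tau)\,d\tau$, and verify directly that $\tilde w$ satisfies \eqref{wp_pde_2}. The boundary conditions of $\tilde w$ follow from $v(0,\cdot)=v(L,\cdot)=v_x(L,\cdot)=0$ together with the compatibility relations $w_0(0)=w_0(L)=w_0^\prime(L)=0$; that $\tilde w$ solves the main equation is a telescoping computation in which the spurious boundary terms involving $w_0^\prime(0)$ cancel. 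By uniqueness $\tilde w=w$, so $w_t=v\in Y_T$; in particular $w_t\in C([0,T];L^2(0,L))\cap L^2(0,T;H^1(0,L))$ and $(w_t)_x\in C([0,L];L^2(0,T))$.

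With $w_t$ controlled, I recover the spatial regularity from the equation. Solving \eqref{wp_pde_2} for $w_{xxx}$ gives $\beta w_{xxx}=-w_t+i\alpha w_{xx}-\delta w_x-rw+\beta k_y(\cdot,0)w_x(0,t)$; since $w(\cdot,t)\in H_0^1(0,L)$, Gagliardo--Nirenberg together with $\epsilon$-Young absorb $\|w_{xx}\|_2$ and $\|w_x\|_2$ into $\|w_{xxx}\|_2$ (precisely as in \eqref{xt3-eq1}-\eqref{xt3-eq4}), yielding
\begin{equation*}
\|w_{xxx}(\cdot,t)\|_2^2\lesssim \|w_t(\cdot,t)\|_2^2+\|w(\cdot,t)\|_2^2+|w_x(0,t)|^2.
\end{equation*}
The decisive point is controlling the scalar trace $w_x(0,t)$: since $w_x(0,\cdot)\in L^2(0,T)$ (from $w\in Y_T$) and $\partial_t w_x(0,\cdot)=(w_t)_x(0,\cdot)\in L^2(0,T)$ (from $w_t\in Y_T$), we obtain $w_x(0,\cdot)\in H^1(0,T)\hookrightarrow C([0,T])$, so $t\mapsto |w_x(0,t)|^2$ is continuous and bounded. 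Thus the right-hand side lies in $C([0,T];L^2(0,L))$ and $w\in C([0,T];H^3(0,L))$.

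Finally, differentiating the equation once in $x$ and repeating the interpolation argument gives $\beta w_{xxxx}=-(w_t)_x+i\alpha w_{xxx}-\delta w_{xx}-rw_x+\beta\,\partial_x[k_y(x,0)]\,w_x(0,t)$ and, after absorbing the intermediate derivatives,
\begin{equation*}
\|w_{xxxx}(\cdot,t)\|_2^2\lesssim \|(w_t)_x(\cdot,t)\|_2^2+\|w_x(\cdot,t)\|_2^2+|w_x(0,t)|^2.
\end{equation*}
The right-hand side is integrable on $(0,T)$ because $(w_t)_x\in L^2(0,T;L^2(0,L))$, $w_x\in L^2(0,T;L^2(0,L))$, and $w_x(0,\cdot)\in L^\infty(0,T)$; hence $w\in L^2(0,T;H^4(0,L))$. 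Combining the two conclusions, $w\in X_T^3$ with the attendant trace regularity, i.e.\ $w\in Y_T^3$. I expect the main obstacle to be the first two paragraphs — justifying that $w_t$ genuinely solves the same model and controlling the trace forcing $w_x(0,t)$ — rather than the (routine) interpolation estimates, which follow the template of Lemma \ref{lem_wp_pde_xt3} verbatim.
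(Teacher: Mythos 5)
Your proposal is correct and follows the paper's own proof almost verbatim: the paper likewise sets $v=w_t$, observes that $v$ solves the same modified target model \eqref{wp_pde_2} with exactly your initial datum $v_0$, recovers $w$ as $w_0(x)+\int_0^t v(x,\tau)\,d\tau$ using the compatibility conditions, and then reads off $w_{xxx}$ and $w_{xxxx}$ from the equation, absorbing the intermediate derivatives via Gagliardo--Nirenberg and $\epsilon$-Young exactly as in \eqref{xt3-eq1}-\eqref{xt3-eq8}. The one place you diverge is the treatment of the trace forcing $w_x(0,t)$. The paper uses the identity $w_x(0,t)=-\int_0^L w_{xx}(x,t)\,dx$ (valid since $w_x(L,t)=0$), so that $|w_x(0,t)|\le\sqrt{L}\,\|w_{xx}(\cdot,t)\|_2$ and the trace term is absorbed by the same interpolation step as the other terms, pointwise in $t$, with no temporal regularity of the trace required. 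Your alternative — $w_x(0,\cdot)\in L^2(0,T)$ from $w\in Y_T$, $\partial_t w_x(0,\cdot)=v_x(0,\cdot)\in L^2(0,T)$ from $v\in Y_T$, hence $w_x(0,\cdot)\in H^1(0,T)\hookrightarrow C([0,T])$ — is also valid; the commutation you need is justified by writing $w_x(x,t)=w_0^\prime(x)+\int_0^t v_x(x,\tau)\,d\tau$ and using continuity in $x$ of $v_x$ as an $L^2(0,T)$-valued map. It is, however, slightly heavier machinery: it invokes the full $Y_T$ trace regularity of both $w$ and $v$, where the paper needs only the boundary condition and Cauchy--Schwarz. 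What your route buys in exchange is the additional conclusion $w_x(0,\cdot)\in C([0,T])$, which the paper's argument does not provide.
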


	\begin{proof}
		Let $v = w_t$ and consider the following problem
		\begin{eqnarray*}
			\begin{cases}
				i{v}_t + i\beta {v}_{xxx} +\alpha {v}_{xx} +i\delta {v}_x  + ir {v}= i\beta k_y(x,0) {v}_x(0,t) , x\in (0,L), t\in (0,T),\\
				{v}(0,t)= {v}(L,t)= {v}_x(L,t)=0,\\
				{v}(x,0)={v}_0(x),
			\end{cases}
		\end{eqnarray*}
		where ${v}_0(x) \doteq -\beta {w}_0^{\prime\prime\prime}(x) + i\alpha {w}_0^{\prime\prime}(x) -\delta {w}_0^\prime(x) - r{w}_0(x) + \beta k_y(x,0) {w}_0^\prime(0)$. For a given $v_0 \in L^2(0,L)$, we know from Proposition \ref{ctrl_localwp} that $v \in Y_T^0$. Set ${w}(x,t) = {w}_0(x) + \int_0^t {v}(x,\tau)d\tau$. Under the compatibility conditions, one can show that $w$ solves \eqref{wp_pde_2}. From the main equation of \eqref{wp_pde_2}, we have
		\begin{equation*}
		i\beta w_{xxx}(x,t) = (-iv - \alpha w_{xx} - i\delta w_x - irw)(x,t) + i\beta k_y(x,0)w_x(0,t).
		\end{equation*}
		Observe that $w_x(0,t) = - \int_0^L w_{xx}(x,t)dx$. Using this in the above expression and then taking $L^2-$norms of both sides with respect to $x$, we get
		\begin{equation*}
		\beta^2 \|w_{xxx}(\cdot,t)\|_2^2 \leq \|v(\cdot,t)\|_2^2 + \left(\alpha^2 + \beta^2\|k_y(\cdot,0)\|_2^2\right)\|w_{xx}(\cdot,t)\|_2^2 + \delta^2 \|w_{x}(\cdot,t)\|_2^2
		+ r^2\|w(\cdot,t)\|_2^2.
		\end{equation*}
		Similar work as we did on \eqref{xt3-eq3}-\eqref{xt3-eq4} yields
		\begin{equation*}
		\|w_{xxx}(\cdot,t)\|_2^2 \lesssim \|v(\cdot,t) \|_2^2 + \|w(\cdot,t) \|_2^2.
		\end{equation*}
		Taking supremum on both sides, we obtain $w \in C([0,T];H^3(0,L))$. Next, we differentiate the main equation of \eqref{wp_pde_2} with respect to $x$ and take $L^2-$norm of both sides with respect to $x$ to get
		\begin{align*}
		&\beta^2 \|w_{xxxx}(\cdot,t)\|_2^2 \\ \leq& \|v_x(\cdot,t)\|_2^2 + \alpha^2\|w_{xxx}(\cdot,t)\|_2^2
		+ \left(\delta^2+ \beta^2\|k_{yx}(\cdot,0)\|_2^2\right) \|w_{xx}(\cdot,t)\|_2^2 + r^2\|w_x(\cdot,t)\|_2^2.
		\end{align*}
		Proceeding as in \eqref{xt3-eq6}-\eqref{xt3-eq7}, we get
		\begin{equation*}
		\|w_{xxxx}(\cdot,t)\|_2^2 \lesssim \|v_x(\cdot,t) \|_2^2 + \|w_x(\cdot,t) \|_2^2.
		\end{equation*}
		Now the right hand side belongs to $L^2(0,T)$, so $w$ belongs to $L^2(0,T;H^4(0,L))$. Combining with the previous result, we deduce that $w \in X_T^3$ if $w_0 \in H^3(0,L)$.
	\end{proof}

	Now the first part of Theorem \ref{ctrl_thm} follows from the fact that backstepping kernel is a smooth function over a compact set and backstepping transformation is invertible on $L^2(0,L)$ and $H^3(0,L)$.
	
	\subsection{Stability} \label{ctrl_stab}
	In this part, we obtain exponential stability for the original plant. This will be done by first obtaining the exponential stability result for the modified target model \eqref{wp_pde_2}. Thanks to the invertibility of the backstepping transformation, this result will imply the exponential decay of  solutions of the original plant.
	\begin{prop} \label{ctrl_tar_stab}
		Let $\beta > 0$, $\alpha, \delta \in \mathbb{R}$, $k$ be a smooth backstepping kernel solving \eqref{kernel_pk}. Then for sufficiently small $r> 0$, there exists $\lambda = \beta\left(\frac{r}{\beta} -  \frac{\|k_y(\cdot,0;r)\|_2^2}{2}\right) > 0$ such that solution, $w$, of \eqref{p_tar_lin} satisfies the following decay estimate
		\begin{equation*}
			\|w(\cdot,t)\|_2 \leq \|w_0\|_2 e^{-\lambda t}
		\end{equation*}
		for $t \geq 0$.
	\end{prop}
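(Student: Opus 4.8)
The plan is to run an energy estimate on \eqref{p_tar_lin}, handling the boundary term more sharply than in the global wellposedness proof so as to extract the precise decay rate $\lambda$. First I would reuse the energy identity \eqref{gwp_1}, which comes from taking the $L^2$-inner product of the main equation with $2w$ and passing to imaginary parts:
$$\frac{d}{dt}\|w(\cdot,t)\|_2^2 + \beta|w_x(0,t)|^2 + 2r\|w(\cdot,t)\|_2^2 = 2\beta\Re\int_0^L k_y(x,0)w_x(0,t)\overline{w}(x,t)\,dx.$$
Since $w_x(0,t)$ is independent of $x$, it factors out of the integral on the right, and the Cauchy--Schwarz inequality bounds the remaining integral by $\|k_y(\cdot,0)\|_2\|w(\cdot,t)\|_2$. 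Hence the right-hand side is at most $2\beta|w_x(0,t)|\,\|k_y(\cdot,0)\|_2\|w(\cdot,t)\|_2$.

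The key step is to apply Young's inequality with exactly the weighting that makes the resulting boundary term cancel the good term $\beta|w_x(0,t)|^2$ on the left. Writing $2ab\le a^2+b^2$ with $a=\sqrt{\beta}\,|w_x(0,t)|$ and $b=\sqrt{\beta}\,\|k_y(\cdot,0)\|_2\|w(\cdot,t)\|_2$ yields the bound $\beta|w_x(0,t)|^2 + \beta\|k_y(\cdot,0)\|_2^2\|w(\cdot,t)\|_2^2$. Substituting into the identity and cancelling the $\beta|w_x(0,t)|^2$ terms gives the differential inequality
$$\frac{d}{dt}\|w(\cdot,t)\|_2^2 \le -\bigl(2r - \beta\|k_y(\cdot,0)\|_2^2\bigr)\|w(\cdot,t)\|_2^2 = -2\lambda\|w(\cdot,t)\|_2^2,$$
where $\lambda$ is precisely the quantity in the statement. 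Grönwall's inequality then gives $\|w(\cdot,t)\|_2^2 \le \|w_0\|_2^2\, e^{-2\lambda t}$, and taking square roots finishes the decay estimate.

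The main obstacle is not this energy computation but verifying that $\lambda>0$ for sufficiently small $r$, i.e. that $\|k_y(\cdot,0;r)\|_2^2$ is genuinely smaller than $\tfrac{2r}{\beta}$ near $r=0$. I expect this to follow from the successive-approximation construction of Lemma \ref{lemkernel}: the source term $\tfrac{r}{3\beta}st$ carries an explicit factor of $r$, while the operator $P$ is $r$-independent to leading order, since the constant $M=\max\{1,\alpha/\beta,\delta/\beta,r/\beta\}$ reduces to $\max\{1,\alpha/\beta,\delta/\beta\}$ once $r$ is small. Consequently $G$, and therefore $k$ and the trace $k_y(\cdot,0)$, scale linearly in $r$ as $r\to0$, so that $\|k_y(\cdot,0;r)\|_2 = O(r)$ and hence $\lambda = r - \tfrac{\beta}{2}\|k_y(\cdot,0;r)\|_2^2 = r - O(r^2) > 0$ for all sufficiently small $r>0$. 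This is exactly where the smallness of $r$ is essential, consistent with the Remark following Theorem \ref{ctrl_thm}, and it is what forces one to relinquish arbitrarily rapid stabilization in exchange for a fixed exponential rate.
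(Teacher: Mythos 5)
Your proof is correct and follows essentially the same route as the paper: the identical energy identity, the same sharp Young weighting (your choice $2ab\le a^2+b^2$ with $a=\sqrt{\beta}\,|w_x(0,t)|$ is exactly the paper's $\epsilon$-Young with $\epsilon=\tfrac12$, cancelling $\beta|w_x(0,t)|^2$), and Gr\"onwall. Your argument for $\lambda>0$ is also the paper's: from the successive approximation $G=\tfrac{r}{3\beta}\sum_n H^n$ with the bounds on $H^n$ and its derivatives uniform in $r$ for $r<\beta$ (since $M$ is then independent of $r$), one gets $\|k_y(\cdot,0;r)\|_\infty=\|G_t(\cdot,0)\|_\infty=O(r)$, hence $\lambda=r-O(r^2)>0$ for small $r$.
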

	\begin{proof}	
	Taking the $L^2-$inner product of the main equation of \eqref{p_tar_lin} by $2w$ and proceeding as in \eqref{lin1iden2}-\eqref{lin1iden5}, we get
	\begin{equation} \label{stab_1}
		\frac{d}{dt}\left\|w(\cdot,t)\right\|_2^2 + {\beta}|w_x(0,t)|^2 + 2r\left\|w(\cdot,t)\right\|_2^2
		= 2 \beta \Re\left(w_x(0,t) \int_0^L k_y(x,0) \overline{w}(x,t) dx\right).
	\end{equation}
	Using $\epsilon-$Young's inequality and then the Cauchy--Schwarz inequality, the term at the right hand side can be estimated as
	\begin{equation} \label{rhs_est}
		\begin{split}
		2\beta \Re \int_0^L k_y(x,0) w_x(0,t) \overline{w}(x,t)dx
		&\leq 2\beta \left(\frac{1}{4\epsilon} |w_x(0,t)|^2 + \epsilon\left(\int_0^L |k_y(x,0)| |\overline{w}(x,t)|dx\right)^2 \right) \\
		&\leq \frac{\beta}{2\epsilon} |w_x(0,t)|^2 + 2\beta\epsilon \|k_y(\cdot,0)\|_2^2 \|w(\cdot,t)\|_2^2.
		\end{split}
	\end{equation}
	Combining this estimate with \eqref{stab_1} and choosing $\epsilon = \frac{1}{2}$, we get
	\begin{equation*}
	\frac{d}{dt}\|w(\cdot,t)\|_2^2 + 2\beta\left(\frac{r}{\beta} - \frac{ \|k_y(\cdot,0)\|_2^2}{2}\right) \|w(\cdot,t)\|_2^2 \leq 0,
	\end{equation*}
	which implies
	\begin{equation} \label{w_est}
	\|w(\cdot,t)\|_2 \lesssim \|w_0\|_2 e^{-\lambda t}, \quad \lambda \doteq \beta\left(\frac{r}{\beta} -  \frac{\|k_y(\cdot,0)\|_2^2}{2}\right).
	\end{equation}
	Next we prove that $\lambda > 0$ to show that \eqref{w_est} is indeed a decay estimate. Define $\tilde{\alpha} = \frac{\alpha}{\beta}$, $\tilde{\delta} = \frac{\delta}{\beta}$, $\tilde{r} = \frac{r}{\beta}$ and let $M=\max\{1,\tilde{\alpha},\tilde{\delta},\tilde{r}\}$. Differentiating \eqref{aCauchy} with respect to $t$, taking $i = 0$ and passing to limit as $n \to \infty$, we obtain
	\begin{equation} \label{sum_Gt}
		G_t(s,t) = \frac{\tilde{r}}{3} \sum_{n=0}^\infty H_t^n(s,t).
	\end{equation}
	Then, considering $\tilde{r} < 1$, we first see that $M$ is independent of $\tilde{r}$. Using this, we get from \eqref{Hnest} that the term inside the summation \eqref{sum_Gt} is absolutely less than some constant $c_{L,\tilde{\alpha},\tilde{\delta}}$ which is independent of $\tilde{r}$. So from \eqref{sum_Gt}, we get
	\begin{equation*}
	\|G_t\|_{L^\infty(\Delta_{s,t})} \leq \frac{\tilde{r}c_{L,\tilde{\alpha},\tilde{\delta}}}{3},
	\end{equation*}
	and therefore we have
	\begin{equation*}
	\|k_y(\cdot,0)\|_2^2 \leq L \|k_y(\cdot,0)\|_\infty^2 \leq L\|G_t(\cdot,0)\|_\infty^2  \leq L\|G_t\|_{L^\infty(\Delta_{s,t})}^2 \leq \frac{L\tilde{r}^2c_{L,\tilde{\alpha},\tilde{\delta}}^2}{9}.
	\end{equation*}
	Using this estimate, we get
	\begin{equation*}
	\lambda  =  2\beta\left(\tilde{r} -  \frac{\|k_y(\cdot,0)\|_2^2}{2}\right) = 2\beta\tilde{r}^2\left(\frac{1}{\tilde{r}} -  \frac{\|k_y(\cdot,0)\|_2^2}{2\tilde{r}^2}\right) \geq 2\beta\tilde{r}^2\left(\frac{1}{\tilde{r}} -  \frac{Lc_{L,\tilde{\alpha},\tilde{\delta}}^2}{18} \right),
	\end{equation*}
	which remains positive for sufficiently small $r$.
	\end{proof}
	Now using \eqref{p_bt} and the fact that $k = k(x,y)$ is a smooth function on a compact set $\Delta_{x,y}$, we have
	\begin{equation} \label{w0_est}
	\|w_0\|_2 \leq \left(1 + \|k(\cdot,\cdot)\|_{L^2(\Delta_{x,y})}\right) \|u_0\|_2.
	\end{equation}
	Moreover, using the invertibility of the backstepping transformation given by Lemma \ref{inverselem}, we have
	\begin{equation} \label{u_est}
	\|u(\cdot,t)\|_2 \leq \|I - \Upsilon_{k}\|_{2 \to 2} \|w(\cdot,t)\|_2.
	\end{equation}
	Combining \eqref{w0_est} and \eqref{u_est}, we deduce
	\begin{equation*}
	\|u(\cdot,t)\|_2 \leq \|I - \Upsilon_{k}\|_{2 \to 2}  \left(1 + \|k(\cdot,\cdot)\|_{L^2(\Delta_{x,y})}\right) \|u_0\|_2 e^{-\lambda t}.
	\end{equation*}
	So we conclude the proof of the second part of Therem \ref{ctrl_thm}.
	
	Table \ref{table_decay} below shows some values of $r$ and corresponding decay rates $\lambda$. Results are obtained by choosing $\beta = 1$, $\alpha = 2$, $\delta = 8$, domain length $L = \pi$ and $N = 1001$ spatial node points.
	\begin{table}[h!]
		\centering
		{\tabulinesep=1.2mm
			\begin{tabu} {l c}
				\hline\hline
				$r$ & $\lambda = \beta\left(\displaystyle\frac{r}{\beta} -  \frac{\|k_y(\cdot,0;r)\|_2^2}{2}\right)$ \\
				\hline
				$0.001$ & $0.001981$ \\
				$0.01$  & $0.018054$    \\
				$0.02$  & $0.032221$  \\
				$0.03$  & $0.042507$  \\
				$0.04$  & $0.048918$  \\
				$0.05$  & $0.051463$  \\
				$0.1$   & $0.006407$  \\
				%\hdashline
				$0.11$  & $-0.014113$ \\
				$0.5$   & $-3.729586$ \\
				$1$   & $-16.379897$ \\
				\hline\hline
			\end{tabu}
			\caption{Some numerical values for the decay rate $\lambda$ corresponding to various values of $r$.} \label{table_decay}
		}
	\end{table}

	\section{Observer design} \label{obs}
	In this section, our aim is to prove the wellposedness and exponential stability of the plant--observer--error system.
	
	\subsection{Wellposedness} \label{obs_wp}
	We start by the wellposedness analysis of the error model \eqref{err_lin}. To this end, we first study the target error model given by \eqref{err_tar_lin} and then use the invertibility of the transformation \eqref{bt_p} and deduce that same results also hold for \eqref{err_lin}. To see that \eqref{bt_p} is invertible with a bounded inverse, we change variables as $s = L - y$ and $t = L - x$ on \eqref{kernel_p}, and obtain that $p = p(x,y)$ solves \eqref{kernel_p} if and only if $p(x,y) \equiv H(s,t)$ solves
	\begin{eqnarray*}
		\begin{cases}
			\beta(H_{sss}+H_{ttt})-i\alpha(H_{ss}-H_{tt})+\delta(H_{s}+H_{t})-rH=0, \\
			H(s,s)= H(s,0)=0,\\
			H_{s}(s,s)=-\frac{rs}{3\beta}.
		\end{cases}
	\end{eqnarray*}
	Observe that this model is exactly the same model given in \eqref{kernel_pk} except that $r$ is replaced by $-r$. Therefore, we obtain the following relation
	\begin{equation} \label{p_eq_k}
	p(x,y) = H(s,t) = k(s,t;-r) = k(L-y,L-x;-r),
	\end{equation}
	where $k$ solves \eqref{kernel_pk}. Consequently existence of smooth kernel $p = p(x,y)$ is guaranteed and \eqref{bt_p} is invertible with a bounded inverse. See Figure \ref{fig:p_kernel_p} for a graph and a contour plot of $p(x,y)$ for $L=\pi$, $\beta = 1$, $\alpha = 2$, $\delta = 8$ and $r = 0.05$.
	\begin{figure}[h]
		\centering
		\begin{subfigure}[b]{0.5\textwidth}
			\includegraphics[width=\textwidth]{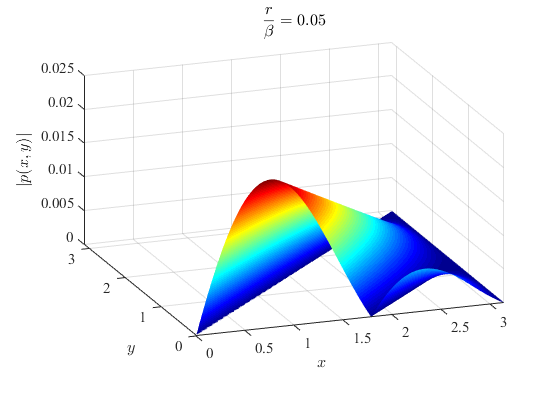}
			\label{fig:kernel_p_3d}
		\end{subfigure}
		~ %add desired spacing between images, e. g. ~, \quad, \qquad, \hfill etc.
		%(or a blank line to force the subfigure onto a new line)
		\begin{subfigure}[b]{0.5\textwidth}
			\includegraphics[width=\textwidth]{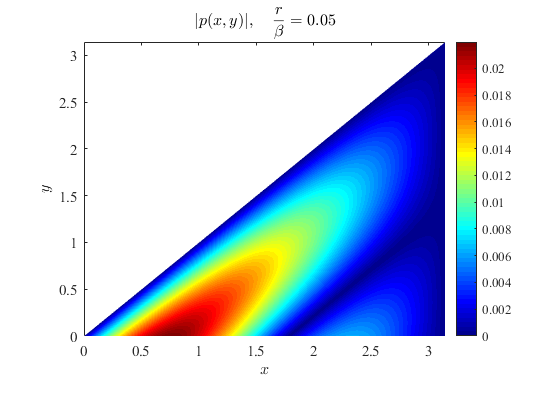}
			\label{fig:kernel_p_contour}
		\end{subfigure}
		\vspace*{-15mm}
		\caption{$p(x,y)$ defined on $\Delta_{x,y}$ for $L=\pi$, $\beta = 1$, $\alpha = 2$, $\delta = 8$ and $r = 0.05$.}
		\label{fig:p_kernel_p}
	\end{figure}

	\subsubsection{Error model} \label{wp_tar_err} Let us prove the wellposedness of the target error model. To this end, let us first consider the following model
	\begin{eqnarray}
		\begin{cases}
			i\tilde{w}_t + i\beta \tilde{w}_{xxx} +\alpha \tilde{w}_{xx} +i\delta \tilde{w}_x + ir\tilde{w} = 0, x\in (0,L), t\in (0,T),\\
			\tilde{w}(0,t)= \tilde{w}(L,t)= 0, \tilde{w}_x(L,t)= \tilde{\psi}(t),\\
			\tilde{w}(x,0)= \tilde{w}_0(x).
		\end{cases}
	\end{eqnarray}
	Note that the function $y$, defined by $y(\cdot,t) \doteq e^{rt} \tilde{w}(\cdot,t)$ together with the initial and boundary conditions $\tilde{w}_0$ and $\psi(t) \doteq e^{rt} \tilde{\psi}(t)$, satisfies the results obtained in Lemma \ref{wp_lem_1} and Lemma \ref{wp_lem_3}. Thus, for given $\tilde{w}_0 \in H^6(0,L)$, $\psi \in H^2(0,T)$ satisfying the higher order compability conditions, Lemma \ref{lem_wp_pde_xt6} implies that
	\begin{equation}\label{estlem25}
		\|y\|_{X_T^6} + \|y_{tt}\|_{X_T^0} \lesssim \|\tilde{w}_0\|_{H^6(0,L)} + e^{cT}\|\psi\|_{H^2(0,T)}.
	\end{equation}
	Notice that the original boundary condition of the problem \eqref{err_tar_lin} is of feedback type, given by
	\begin{equation*}
		\psi(t) = \psi(\tilde{w})(t) = \int_0^L p_x(L,y) \tilde{w}(y,t) dy.
	\end{equation*}
	We will treat the wellposedness of the target error model by using a fixed point argument. To this end, let us define the Banach space $Q_T \equiv \left\{\tilde{w} \in X_T^6: \tilde{w}_{tt} \in X_T^0 \right\}$ and its complete metric subspace $\tilde{Q}_T \equiv \left\{\tilde{w} \in Q_T : \tilde{w}(\cdot,0) = \tilde{w}_0(\cdot) \right\}$ equipped with the metric induced by the norm associated with $Q_T$. Since $p = p(x,y)$ is a smooth solution of \eqref{kernel_p}, for a given $\tilde{w}^* \in \tilde{Q}_T$, we have
	\begin{align*}
		\|\psi(\tilde{w}^*)\|_{H^2(0,T)} &= \left\|\int_0^L p_x(L,y) \tilde{w}^*(y,\cdot) dy\right\|_{H^2(0,T)} \\
		&\leq \sqrt{T} \|p_x(L,\cdot)\|_2 \sum_{j=0}^2 \|\partial_t^j \tilde{w}^*\|_{X_T^0} < \infty.
	\end{align*}
	Thus by the Lemma \ref{lem_wp_pde_xt6}, for $\psi(\tilde{w}^*)(t) \in H^2(0,T)$, the problem \eqref{wp_pde} with $f \equiv 0$ has a unique solution. This naturally defines a map $\Psi :\tilde{Q}_T \to \tilde{Q}_T$, $\Psi\tilde{w}^* = \tilde{w}$. Now let $\tilde{w}_1, \tilde{w}_2 \in \tilde{Q}_T$. Using the estimates \eqref{estlem25}, we get
	\begin{align*}
	\|\Psi\tilde{w}_1 - \Psi\tilde{w}_2\|_{\tilde{Q}_T} &\leq C \|\psi(\tilde{w}_1) - \psi(\tilde{w}_2)\|_{H^2(0,T)} \\
	&\leq C\sqrt{T} \|\tilde{w}_1 - \tilde{w}_2\|_{\tilde{Q}_T}.
	\end{align*}
	For sufficiently small $T$, we can guarantee that the mapping $\Psi :\tilde{Q}_T \to \tilde{Q}_T$ is contraction. Thanks to the Banach fixed point theorem, this yields the existence of a unique local solution of the target error system. As we show in Proposition \ref{tar_err_stab} in the following section, the local solution remains uniformly bounded in time. This yields the unique global solution and we have the following proposition.
	
	\begin{prop}\label{tar_err_wp}
		Let $T, \beta > 0$, $\alpha, \delta \in \mathbb{R}$, $p$ be a smooth backstepping kernel solving \eqref{kernel_p} and $(\tilde{w}_0,\psi) \in H^6(0,L) \times H^2(0,T)$ satisfies higher order compatibility conditions where $\psi = \psi(\tilde{w})  = \int_0^L p_x(L,y) \tilde{w}(y,t)dy$. Then \eqref{err_tar_lin} admits a unique global solution $\tilde{w} \in X_T^6$.
	\end{prop}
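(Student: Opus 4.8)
The plan is to solve \eqref{err_tar_lin} by a contraction mapping argument that decouples the feedback boundary condition, along the lines set up in the discussion preceding the statement. First I would absorb the damping term: setting $y(\cdot,t)\doteq e^{rt}\tilde{w}(\cdot,t)$ turns \eqref{err_tar_lin} into the undamped problem \eqref{wp_pde_hreg} with boundary datum $e^{rt}\tilde\psi(t)$ and the same initial datum, so that the linear regularity theory of Section \ref{estimates}—in particular Lemma \ref{lem_wp_pde_xt6}, which produces an $X_T^6$ solution with $y_{tt}\in X_T^0$ from $(H^6,H^2)$ data satisfying the higher order compatibility conditions—applies verbatim. This reduces the task to treating the feedback boundary condition $\psi(\tilde{w})(t)=\int_0^L p_x(L,y)\tilde{w}(y,t)\,dy$ as a perturbation to be handled by a fixed point.

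Next I would freeze the boundary input: given $\tilde{w}^*\in\tilde{Q}_T$, prescribe the boundary datum $\psi(\tilde{w}^*)$ and let $\Psi\tilde{w}^*\doteq\tilde{w}$ be the unique solution furnished by Lemma \ref{lem_wp_pde_xt6}. Well-definedness of $\Psi$ rests on the smoothness of the kernel $p$, which via \eqref{p_eq_k} and Lemma \ref{lemkernel} gives $p_x(L,\cdot)\in C^\infty$ and hence the bound $\|\psi(\tilde{w}^*)\|_{H^2(0,T)}\lesssim \sqrt{T}\,\|p_x(L,\cdot)\|_2\sum_{j=0}^2\|\partial_t^j\tilde{w}^*\|_{X_T^0}<\infty$ recorded above; the crucial gain is the explicit factor $\sqrt{T}$ coming from integrating the fixed initial trace in time. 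Since both $\psi$ and the solution map $\psi\mapsto\tilde{w}$ are linear, the difference $\Psi\tilde{w}_1-\Psi\tilde{w}_2$ solves the same problem with zero initial datum and boundary datum $\psi(\tilde{w}_1-\tilde{w}_2)$; combining \eqref{estlem25} with the $\sqrt{T}$ bound on $\psi$ yields $\|\Psi\tilde{w}_1-\Psi\tilde{w}_2\|_{\tilde{Q}_T}\le C\sqrt{T}\,\|\tilde{w}_1-\tilde{w}_2\|_{\tilde{Q}_T}$ with $C$ independent of the data size. Choosing $T$ small makes $\Psi$ a contraction, and the Banach fixed point theorem delivers a unique local solution in $\tilde{Q}_T\subset X_T^6$.

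Finally I would globalize. Because the contraction time depends only on $C$ (hence on $p$ and the fixed parameters) and not on the size of $\tilde{w}_0$, the local solution may be restarted on successive intervals of equal length: at each restart the solution is a genuine solution, so its boundary trace is automatically consistent and the higher order compatibility conditions hold at the new initial time. The only obstruction to reaching any prescribed horizon is blow-up of the strong norm, and this is precluded by the uniform-in-time a priori bound of Proposition \ref{tar_err_stab}, from which the $X_T^6$ bound follows by applying that bound to the time-differentiated problems ($\tilde{w}_t,\tilde{w}_{tt}$) and trading time derivatives for spatial derivatives through the interpolation and $\epsilon$-Young arguments used in \eqref{xt3-eq1}--\eqref{xt3-eq8}. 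Patching the restarts gives the global $\tilde{w}\in X_T^6$.

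I expect the main obstacle to be the compatibility bookkeeping inside the fixed point: Lemma \ref{lem_wp_pde_xt6} requires $\psi(\tilde{w}^*)$ to be higher order compatible with $\tilde{w}_0$, yet membership in $\tilde{Q}_T$ only fixes $\tilde{w}^*(\cdot,0)=\tilde{w}_0$ and leaves $\tilde{w}^*_t(\cdot,0),\tilde{w}^*_{tt}(\cdot,0)$ free, whereas the conditions $\tilde\phi'(L)=\psi'(0)$ and its analogue constrain exactly these traces. The clean fix is to shrink $\tilde{Q}_T$ to those $\tilde{w}^*$ whose first and second time-traces at $t=0$ equal the values dictated by the equation applied to $\tilde{w}_0$, so that $\psi'(0)$ and $\psi''(0)$ are forced to the compatible values; one then checks that this refined set is still complete and invariant under $\Psi$. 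The zeroth-order condition $\tilde{w}_0'(L)=\psi(0)=\int_0^L p_x(L,y)\tilde{w}_0(y)\,dy$ is already built into the hypothesis, and the remaining verifications of the $\sqrt{T}$ smallness and of the a priori bound feeding the continuation are routine given the lemmas in hand.
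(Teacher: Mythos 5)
Your proposal follows essentially the same route as the paper's own proof: the $e^{rt}$ reduction to \eqref{wp_pde_hreg}, Lemma \ref{lem_wp_pde_xt6} applied to the frozen-boundary problem, the contraction on $\tilde{Q}_T$ driven by the bound $\|\psi(\tilde{w}^*)\|_{H^2(0,T)}\lesssim \sqrt{T}\,\|p_x(L,\cdot)\|_2\sum_{j=0}^2\|\partial_t^j\tilde{w}^*\|_{X_T^0}$, and globalization via the uniform-in-time bound of Proposition \ref{tar_err_stab}. Your compatibility remark is in fact a genuine refinement of the paper's write-up: the paper invokes Lemma \ref{lem_wp_pde_xt6} for $(\tilde{w}_0,\psi(\tilde{w}^*))$ without ensuring $\psi(\tilde{w}^*)'(0)$ matches $\tilde\phi'(L)$ for an arbitrary $\tilde{w}^*\in\tilde{Q}_T$ (membership only fixes $\tilde{w}^*(\cdot,0)$), and restricting to the subset whose time-traces at $t=0$ are those dictated by the equation applied to $\tilde{w}_0$ --- a set that is complete and invariant under $\Psi$, and on which differences have vanishing zeroth and first time-traces so the difference problem is also compatible --- is exactly the patch needed to make the fixed-point step airtight.
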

	Thanks to the bounded invertibility of the backstepping transformation \eqref{bt_p}, we obtain under the same assumptions that $\tilde{u} \in X_T^6$.
	
	\subsubsection{Observer model}
	Consider the target observer model \ref{tar_p_obs_lin}
	\begin{eqnarray} \label{tar_obs_pde_wp}
		\begin{cases}
		i\hat{w}_t + i\beta \hat{w}_{xxx} +\alpha \hat{w}_{xx} +i\delta \hat{w}_x  + ir 	\hat{w}= i\beta k_y(x,0) \hat{w}_x(0,t) \\
		+ f(x,t), \quad x\in (0,L), t\in (0,T),\\
		\hat{w}(0,t)= \hat{w}(L,t)= \hat{w}_x(L,t)=0,\\
		\hat{w}(x,0)=\hat{w}_0(x).
		\end{cases}
	\end{eqnarray}
	where $f(x,t) \doteq [(I - \Upsilon_{k})p_1](x) \tilde{w}_{x}(0,t) + [(I - \Upsilon_{k})p_2](x) \tilde{w}_{xx}(0,t)$. Recall that the backstepping transformation \eqref{bt_p} transforms \eqref{obs_lin} to \eqref{tar_obs_pde_wp} if $p_1, p_2$ are chosen such that $p_1(x) i\beta p_y(x,0) - \alpha p(x,0)$ and $p_2(x) = -i\beta p(x,0)$ where $p$ is the backstepping kernel that solves \eqref{kernel_p}. An example for the real and imaginary parts of the observer gains for a problem defined on $[0,\pi]$ and  the coefficients $\beta = 1, \alpha = 2, \delta = 8, r = 0.05$ are given in Figure \ref{fig:obs_gain}.
	\begin{figure}[h]
		\centering
		\begin{subfigure}[b]{0.5\textwidth}
			\includegraphics[width=\textwidth]{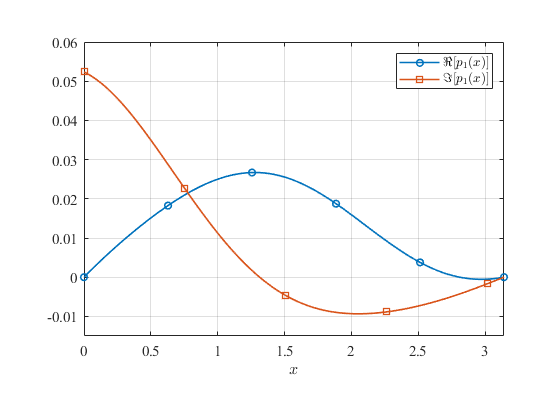}
			\label{fig:p1}
		\end{subfigure}
		~ %add desired spacing between images, e. g. ~, \quad, \qquad, \hfill etc.
		%(or a blank line to force the subfigure onto a new line)
		\begin{subfigure}[b]{0.5\textwidth}
			\includegraphics[width=\textwidth]{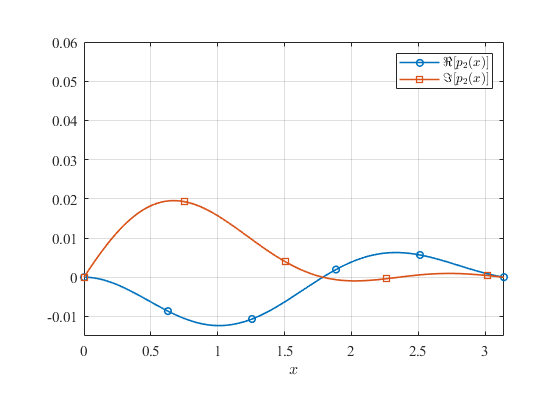}
			\label{fig:p2}
		\end{subfigure}
		\vspace*{-15mm}
		\caption{Observer gains for $L=\pi$, $\beta = 1$, $\alpha = 2$, $\delta = 8$ and $r = 0.05$.}
		\label{fig:obs_gain}
	\end{figure}

	For a given $\hat{w}_0 \in L^2(0,L)$, let us first show that $\hat{w} \in X_T^0$. Thanks to Proposition \ref{ctrl_globalwp}, given $\hat{w}_0 \in L^2(0,L)$, we know that solution, $\hat{w}$, of \ref{tar_obs_pde_wp} with $f\equiv 0$ belongs to the space $X_T^0$. Let us express it as $\hat{w}_i(x,t) \doteq W(t) \hat{w}_0(x)$ and now consider the problem where $\hat{w}_0 \equiv 0$. Let us express its solution as
	\begin{equation*}
	\hat{w}_f(x,t) \doteq \int_0^t W(t-\tau)f(x,\tau)d\tau.
	\end{equation*}
	Recall that $k = k(x,y)$, $p_1(x) = -i\beta p_y(x,0) + \alpha p(x,0)$ and $p_2(x) = i\beta p(x,0)$ are smooth functions. Also, we will see in Proposition \ref{tar_err_stab}-(ii) below that, if $\tilde{w}_0 \in H^3(0,L)$, then $\tilde{w}_x(0,t), \tilde{w}_{xx}(0,t) \in L^1(0,T)$. This implies that $f \in L^1(0,T;H^\infty(0,L))$. Now to see that $\hat{w}_f \in X_T^0$, first observe that
	\begin{equation}\label{obs_nonhom}
	\|\hat{w}_f(\cdot,t)\|_2 \leq \int_0^t \|W(t - \tau) f(\cdot,\tau) \|_2 d\tau \leq \int_0^t \|f(\cdot,\tau)\|_2 d\tau = \|f\|_{L^1(0,T;L^2(0,L))}.
	\end{equation}
	Taking supremum in $t \in [0,T]$ yields
	\begin{equation*}
	\|\hat{w}_f\|_{C([0,T];L^2(0,L))} \leq \|f\|_{L^1(0,T;L^2(0,L))} \leq \|f\|_{L^1(0,T;H^\infty(0,L))}.
	\end{equation*}
	Following from \eqref{obs_nonhom}, we also have
	\begin{equation*}
	\|\hat{w}_f\|_{L^2(0,T;L^2(0,L))} \leq \sqrt{T}\|f\|_{L^1(0,T;L^2(0,L))} \leq \sqrt{T} \|f\|_{L^1(0,T;H^\infty(0,L))}.
	\end{equation*}
	Using similar arguments, one can get
	\begin{equation*}
	\|\partial_x\hat{w}_f\|_{L^2(0,T;L^2(0,L))} \leq \sqrt{T}\|f\|_{L^1(0,T;H^\infty(0,L))}.
	\end{equation*}
	Consequently, $\hat{w}_f \in X_T^0$. As a conclusion given $\hat{w}_0 \in L^2(0,L)$, $\tilde{w}_0 \in H^3(0,L)$ and $f\in L^1(0,T;H^\infty(0,L))$, we obtain that $\hat{w} \in X_T^0$.
	
	Next, we show that $\hat{w} \in X_T^3$. To this end, we set $\tilde{z} = \tilde{w}_t$. Then $\tilde{z}$ satisfies
	\begin{equation*}
	\begin{cases}
	i\tilde{z}_t + i \beta\tilde{z}_{xxx} + \alpha \tilde{z}_{xx} + i\delta \tilde{z}_x + ir \tilde{z} = 0, \quad x\in(0,L), t \in (0,T), \\
	\tilde{z}(0,t) = \tilde{z}(L,t) = 0, \tilde{z}_x(L,t) = \psi^\prime(\tilde{z}), \\
	\tilde{z}(x,0) = \tilde{z}_0(x),
	\end{cases}
	\end{equation*}
	where $\tilde{z}_0 \doteq -\beta \tilde{w}^{\prime\prime\prime}_0 + i\alpha \tilde{w}_0^{\prime\prime} - \delta \tilde{w}_0^\prime - r\tilde{w}_0$.
	Applying the arguments in Section \ref{wp_tar_err}, we see that if $(\tilde{z}_0,\psi^\prime) \in H^3(0,L) \times H^1(0,T)$ satisfies compatibility conditions, then $z \in X_T^3$. Moreover, thanks to the Proposition \ref{tar_err_stab}-(ii) below, we have $\tilde{z}_x(0,t), \tilde{z}_{xx}(0,t) \in L^1(0,T)$. This implies by using $\tilde{z} = \tilde{w}_t$ that, $\tilde{w}_{xt}(0,t), \tilde{w}_{xxt}(0,t) \in L^1(0,T)$ where $\tilde{w}$ solves
	target error model satisfying $(\tilde{w}_0,\psi) \in H^6(0,L) \times H^2(0,T)$ higher order compatibility. Thus $f \in W^{1,1}(0,T;H^\infty(0,L))$.

	Now let us set $\hat{v} = \hat{w}_t$. Then $\hat{v}$ solves
	\begin{eqnarray*}
		\begin{cases}
			i\hat{v}_t + i\beta \hat{v}_{xxx} +\alpha \hat{v}_{xx} +i\delta \hat{v}_x  + ir 	\hat{v}= i\beta k_y(x,0) \hat{v}_x(0,t)
			\\+ f_t(x,t),\quad x\in (0,L), t\in (0,T),\\
			\hat{v}(0,t)= \hat{v}(L,t)= \hat{v}_x(L,t)=0,\\
			\hat{v}(x,0)=\hat{v}_0(x),
		\end{cases}
	\end{eqnarray*}
	where
	\begin{equation*}
		\hat{v}_0(x) \doteq -\beta \hat{w}_0^{\prime\prime\prime}(x) + i\alpha \hat{w}_0^{\prime\prime}(x) -\delta \hat{w}_0^\prime(x) - r\hat{w}_0(x) + \beta k_y(x,0) \hat{w}_0^\prime(0) -i f_t(x,0).
	\end{equation*}
	Assume that $\hat{v}_0 \in L^2(0,L)$. Then, from the above study, we deduce that $\hat{v} \in X_T^0$. If $\hat{w}_0$ satisfies the compatibility conditions, then we can also show that $\hat{w}$, defined by  $\hat{w}(x,t) = \hat{w}_0(x) + \int_0^t \hat{v}(x,\tau)d\tau$ solves \eqref{tar_obs_pde_wp}. Now from the main equation of \eqref{tar_obs_pde_wp}, we have
	\begin{equation*}
		i\beta \hat{w}_{xxx}(x,t) = (-i\hat{v} - \alpha \hat{w}_{xx} - i\delta \hat{w}_x - ir\hat{w})(x,t) + i\beta k_y(x,0)\hat{w}_x(0,t) + f(x,t).
	\end{equation*}
	Using $\hat{w}_x(0,t) = - \int_0^L \hat{w}_{xx}(x,t)dx$ and taking $L^2-$norms of both side we get
	\begin{multline*}
		\beta^2 \|\hat{w}_{xxx}(\cdot,t)\|_2^2 \leq \|\hat{v}(\cdot,t)\|_2^2 + \left(\alpha^2 + \beta^2 \|k_y(\cdot,0)\|_2^2\right)\|\hat{w}_{xx}(\cdot,t)\|_2^2
		\\ + \delta^2 \|\hat{w}_{x}(\cdot,t)\|_2^2
		+ r^2\|\hat{w}(\cdot,t)\|_2^2 + \|f(\cdot,t)\|_2^2,
	\end{multline*}
	Using Gagliardo--Nirenberg's interpolation inequality and $\epsilon-$Young's inequality, second and third terms at the right hand side can be estimated as
	\begin{equation*}
		(\alpha^2 + \beta^2 \|k_y(\cdot,0)\|_2^2)\|\hat{w}_{xx}(\cdot,t)\|_2^2 \leq \epsilon \|\hat{w}_{xxx}\|_2^2 + c_{\beta,\alpha,k,\epsilon} \|\hat{w}(\cdot,t)\|_2^2
	\end{equation*}
	and
	\begin{equation*}
		\delta^2 \|\hat{w}_{x}(\cdot,t)\|_2^2 \leq \epsilon \|\hat{w}_{xxx}\|_2^2 + c_{\delta,\epsilon} \|\hat{w}(\cdot,t)\|_2^2
	\end{equation*}
	respectively. Choosing $\epsilon > 0$ sufficiently small, we obtain
	\begin{equation*}
		\|\hat{w}_{xxx}(\cdot,t)\|_2^2 \lesssim \|\hat{v}(\cdot,t) \|_2^2 + \|\hat{w}(\cdot,t) \|_2^2 + \|f(\cdot,t)\|_2^2.
	\end{equation*}
	Notice from Proposition \ref{tar_err_stab}-(ii) that supremum of the trace terms $\tilde{w}_x(0,t), \tilde{w}_{xx}(0,t)$ exist. Therefore, taking supremum on both sides, we deduce that $\hat{w} \in C([0,T];H^3(0,L))$. Next, again from the main equation, we have
	\begin{equation*}
	i\beta \hat{w}_{xxxx}(x,t) = (-i\hat{v}_x - \alpha \hat{w}_{xxx} - i\delta \hat{w}_{xx} - ir\hat{y}_x)(x,t)
	+ i\beta k_{yx}(x,0)\hat{w}_x(0,t) + f_x(x,t),
	\end{equation*}
	and therefore we get
	\begin{multline*}
	\beta^2 \|\hat{w}_{xxxx}(\cdot,t)\|_2^2 \leq \|\hat{v}_x(\cdot,t)\|_2^2 + \alpha^2\|\hat{w}_{xxx}(\cdot,t)\|_2^2 \\
	+ \left(\delta^2+\beta^2 \|k_{xy}(\cdot,0)\|_2^2\right) \|\hat{w}_{xx}(\cdot,t)\|_2^2 + r^2\|\hat{w}_x(\cdot,t)\|_2^2 + \|f_x(\cdot,t)\|_2^2.
	\end{multline*}
	Similarly, by Gagliardo--Nirenberg's inequality and then $\epsilon-$Young's inequality, we get
	\begin{align*}
		\alpha^2\|\hat{w}_{xxx}(\cdot,t)\|_2^2 &\leq \epsilon \|\hat{w}_{xxxx}(\cdot,t)\|_2^2 + c_{\alpha,\epsilon}\|\hat{w}(\cdot,t)\|_2^2, \\
		\left(\delta^2+\beta^2 \|k_y(\cdot,0)\|_2^2\right) \|\hat{w}_{xx}(\cdot,t)\|_2^2 &\leq \epsilon \|\hat{w}_{xxxx}(\cdot,t)\|_2^2 + c_{\beta,\delta,k,\epsilon}\|\hat{w}(\cdot,t)\|_2^2, \\
		r^2\|\hat{w}_x(\cdot,t)\|_2^2 &\leq \epsilon \|\hat{w}_{xxxx}(\cdot,t)\|_2^2 + c_{r,\epsilon}\|\hat{w}(\cdot,t)\|_2^2.
	\end{align*}
	Using these estimates, we obtain that
	\begin{equation*}
		\|\hat{w}_{xxxx}(\cdot,t)\|_2^2 \lesssim \|\hat{v}_x(\cdot,t) \|_2^2 + \|\hat{w}(\cdot,t) \|_2^2 + \|f_x(\cdot,t)\|_2^2.
	\end{equation*}
	We see that right hand side belongs to $L^2(0,T)$, so $\hat{w}$ belongs to $L^2(0,T;H^4(0,L))$. Combining with the previous result, we obtain that $\hat{w} \in X_T^3$ if $\hat{w}_0 \in H^3(0,L)$. This finishes the proof of the following proposition.
	\begin{prop}\label{tar_obs_wp}
		Let $T,\beta > 0$, $\alpha, \delta \in \mathbb{R}$, $k$ and $p$ be smooth backstepping kernels solving \eqref{kernel_pk} and \eqref{kernel_p} respectively, and $p_1(x) = -i\beta p_y(x,0) + \alpha p(x,0)$, $p_2(x) = i\beta p(x,0)$. Assume that $\hat{w}_0 \in H^3(0,L)$ satisfies the compatibility conditions and the initial--boundary pair of the target error model $(\tilde{w}_0,\psi) \in H^6(0,L) \times H^2(0,T)$ satisfies the higher order compatibility conditions. Then \eqref{tar_obs_pde_wp} admits a unique solution $\hat{w} \in X_T^3$.
	\end{prop}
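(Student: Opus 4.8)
The plan is to treat the target observer model \eqref{tar_obs_pde_wp} as a linear problem whose right-hand side $f(x,t) \doteq [(I-\Upsilon_{k})p_1](x)\tilde{w}_x(0,t) + [(I-\Upsilon_{k})p_2](x)\tilde{w}_{xx}(0,t)$ is a \emph{known} forcing once the error model has been solved, and then to bootstrap regularity from $X_T^0$ up to $X_T^3$. First I would decompose the solution as $\hat{w} = \hat{w}_i + \hat{w}_f$, where $\hat{w}_i(x,t) = W(t)\hat{w}_0(x)$ solves \eqref{tar_obs_pde_wp} with $f \equiv 0$ and $\hat{w}_f(x,t) = \int_0^t W(t-\tau)f(x,\tau)\,d\tau$ is the Duhamel term. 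By Proposition \ref{ctrl_globalwp}, $\hat{w}_i \in X_T^0$ for $\hat{w}_0 \in L^2(0,L)$. To place $\hat{w}_f$ in $X_T^0$, the decisive input is that $f \in L^1(0,T;H^\infty(0,L))$: the factors $(I-\Upsilon_{k})p_1$ and $(I-\Upsilon_{k})p_2$ are smooth in $x$ because $k, p_1, p_2$ are smooth, so the only issue is the time integrability of the error traces $\tilde{w}_x(0,\cdot)$ and $\tilde{w}_{xx}(0,\cdot)$; these lie in $L^1(0,T)$ by Proposition \ref{tar_err_stab}-(ii), provided $\tilde{w}_0 \in H^3(0,L)$. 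A direct Duhamel estimate then yields $\|\hat{w}_f\|_{C([0,T];L^2(0,L))} \le \|f\|_{L^1(0,T;L^2(0,L))}$ together with the analogous $L^2(0,T;H^1(0,L))$ bounds, so $\hat{w} \in X_T^0$.

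To reach $X_T^3$ I would differentiate in time: setting $\hat{v} = \hat{w}_t$, one sees that $\hat{v}$ solves the same model with forcing $f_t$ and initial datum $\hat{v}_0 \doteq -\beta\hat{w}_0''' + i\alpha\hat{w}_0'' - \delta\hat{w}_0' - r\hat{w}_0 + \beta k_y(\cdot,0)\hat{w}_0'(0) - if_t(\cdot,0)$. Repeating the previous step for $\hat{v}$ requires $f \in W^{1,1}(0,T;H^\infty(0,L))$, i.e. $\tilde{w}_{xt}(0,\cdot),\tilde{w}_{xxt}(0,\cdot) \in L^1(0,T)$; this is obtained by applying the error-model analysis of Section \ref{wp_tar_err} to $\tilde{z} = \tilde{w}_t$, which solves the same damped equation with datum $\tilde{z}_0 = -\beta\tilde{w}_0''' + i\alpha\tilde{w}_0'' - \delta\tilde{w}_0' - r\tilde{w}_0$, so that $\tilde{z}_x(0,\cdot),\tilde{z}_{xx}(0,\cdot) \in L^1(0,T)$ whenever $\tilde{w}_0 \in H^6(0,L)$ and the higher order compatibility conditions hold. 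This yields $\hat{v} \in X_T^0$, and under the compatibility conditions on $\hat{w}_0$ one verifies that $\hat{w}(x,t) = \hat{w}_0(x) + \int_0^t \hat{v}(x,\tau)\,d\tau$ indeed solves \eqref{tar_obs_pde_wp}.

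The remaining, purely elliptic-in-$x$ step is to extract spatial regularity from the equation itself. Solving the main equation of \eqref{tar_obs_pde_wp} for $\hat{w}_{xxx}$, rewriting the boundary trace as $\hat{w}_x(0,t) = -\int_0^L \hat{w}_{xx}(x,t)\,dx$, and taking $L^2$-norms in $x$, I would bound $\|\hat{w}_{xxx}(\cdot,t)\|_2^2$ in terms of $\|\hat{v}\|_2^2$, $\|f\|_2^2$ and the intermediate norms $\|\hat{w}_{xx}\|_2^2,\|\hat{w}_x\|_2^2$; the latter are absorbed into $\|\hat{w}_{xxx}\|_2^2$ by the Gagliardo--Nirenberg interpolation inequality followed by $\epsilon$-Young, exactly as in \eqref{xt3-eq2}--\eqref{xt3-eq4}. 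Passing to the supremum in $t$ gives $\hat{w} \in C([0,T];H^3(0,L))$. Differentiating the main equation once in $x$ and repeating the same interpolation argument bounds $\|\hat{w}_{xxxx}\|_2$ by $\|\hat{v}_x\|_2$, $\|f_x\|_2$ and $\|\hat{w}_x\|_2$, placing $\hat{w} \in L^2(0,T;H^4(0,L))$; together these give $\hat{w} \in X_T^3$. Uniqueness follows from the linearity of the map $f \mapsto \hat{w}$ and the contraction structure already exploited for \eqref{wp_pde_2}. The hardest part is not the interpolation bootstrap but the regularity of the forcing: one must chain the error-model smoothing estimates through one time differentiation to secure $f \in W^{1,1}(0,T;H^\infty(0,L))$, which is precisely why the error data must be taken in $H^6$ with higher order compatibility rather than merely in $H^3$.
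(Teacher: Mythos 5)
Your proposal is correct and follows essentially the same route as the paper: the same decomposition into $W(t)\hat{w}_0$ plus the Duhamel term, the same use of Proposition \ref{tar_err_stab}-(ii) to place the error traces (and, after passing to $\tilde{z}=\tilde{w}_t$, their time derivatives) in $L^1(0,T)$ so that $f \in W^{1,1}(0,T;H^\infty(0,L))$, the same time-differentiation step $\hat{v}=\hat{w}_t$, and the same Gagliardo--Nirenberg/$\epsilon$-Young bootstrap to recover $C([0,T];H^3(0,L)) \cap L^2(0,T;H^4(0,L))$. Your closing observation about why $H^6$ data with higher order compatibility is needed is exactly the mechanism the paper exploits.
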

	
	Finally, thanks to bounded invertibility of the backstepping transformation \eqref{p_bt}, we obtain under the same assumptions that $\hat{u} \in X_T^3$. Combining the wellposedness of $\hat{u}$ and $\tilde{u}$, we obtained the wellposedness of \eqref{plant_lin} and proved the first part of Theorem \ref{obs_thm}.
	
	\subsection{Stability}
	
	In this part, we obtain exponential stability estimates for the plant--observer--error system. This will be done by first considering the target error and target observer models. Then we use the bounded invertibility of the backstepping transformations, which will yield the exponential stability for the error and observer models, consequently for the original plant.
	\subsubsection{Error model}
	
	\begin{prop} \label{tar_err_stab}
		Let $\beta > 0$, $\alpha, \delta \in \mathbb{R}$ and $p$ is the smooth backstepping kernel that solves \eqref{kernel_p}. Then for sufficiently small $r > 0$, it is true that
		\begin{equation*}
			\mu = \beta\left(\frac{r}{\beta} - \frac{\|p_x(L,\cdot;r)\|_2^2}{2}\right) > 0.
		\end{equation*}
		Moreover, the solution $\tilde{w}$ of \eqref{err_tar_lin} satisfies the following estimates
		\begin{enumerate}
			\item [(i)] $\|\tilde{w}(\cdot,t)\|_2 \leq \|\tilde{w}_0\|_2 e^{-\mu t}$,
			\item [(ii)] $|\tilde{w}_{xx}(0,t)| + |\tilde{w}_x(0,t)| + \|\tilde{w}(\cdot,t)\|_{H^3(0,L)} \lesssim \|\tilde{w}_0\|_{H^3(0,L)} e^{-\mu t}$,
		\end{enumerate}
		for $t \geq 0$.
	\end{prop}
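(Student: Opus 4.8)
The plan is to mirror the energy method from the proof of Proposition \ref{ctrl_tar_stab}, the only structural novelty being the nonhomogeneous Neumann trace $\tilde{w}_x(L,t)=\int_0^L p_x(L,y)\tilde{w}(y,t)dy$ at the right endpoint, which replaces the interior trace term appearing there. Before the decay estimates, I would first settle the positivity of $\mu$. Exploiting the reflection identity $p(x,y)\equiv k(L-y,L-x;-r)$ recorded before \eqref{err_tar_lin}, a direct differentiation gives $p_x(L,y)=-k_y(L-y,0;-r)$, whence the substitution $s=L-y$ yields $\|p_x(L,\cdot;r)\|_2^2=\|k_y(\cdot,0;-r)\|_2^2$. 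The smallness estimate established in Proposition \ref{ctrl_tar_stab} depends on $\tilde r=|r|/\beta<1$ only through the constant $M$ and is therefore insensitive to the sign of $r$; it gives $\|p_x(L,\cdot;r)\|_2^2\lesssim r^2$ with a constant independent of $r$, so that $\mu=\beta\left(\frac{r}{\beta}-\frac{1}{2}\|p_x(L,\cdot;r)\|_2^2\right)\ge r(1-cr)$ remains positive for $r$ small.

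For part (i), I would take the $L^2$-inner product of the main equation of \eqref{err_tar_lin} with $2\tilde{w}$ and keep the imaginary part. Integrating by parts as in \eqref{lin1iden2}--\eqref{lin1iden5}, the dispersive term now produces two boundary contributions $\beta|\tilde{w}_x(0,t)|^2-\beta|\tilde{w}_x(L,t)|^2$, since $\tilde{w}_x(L,t)$ no longer vanishes, while the $\alpha$- and $\delta$-terms drop out because $\tilde{w}(0,t)=\tilde{w}(L,t)=0$. This yields the identity
\begin{equation*}
\frac{d}{dt}\|\tilde{w}(\cdot,t)\|_2^2+\beta|\tilde{w}_x(0,t)|^2+2r\|\tilde{w}(\cdot,t)\|_2^2=\beta|\tilde{w}_x(L,t)|^2.
\end{equation*}
Inserting the feedback law and applying the Cauchy--Schwarz inequality gives $|\tilde{w}_x(L,t)|^2\le\|p_x(L,\cdot)\|_2^2\|\tilde{w}(\cdot,t)\|_2^2$; dropping the nonnegative term $\beta|\tilde{w}_x(0,t)|^2$ and rearranging produces $\frac{d}{dt}\|\tilde{w}\|_2^2+2\mu\|\tilde{w}\|_2^2\le0$, and Gronwall's inequality closes (i).

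For part (ii), I would differentiate \eqref{err_tar_lin} in time and set $\tilde{z}=\tilde{w}_t$; since the boundary condition is linear in $\tilde{w}$, the function $\tilde{z}$ solves the same target error model with feedback trace $\tilde{z}_x(L,t)=\int_0^L p_x(L,y)\tilde{z}(y,t)dy$ and initial datum $\tilde{z}_0=-\beta\tilde{w}_0'''+i\alpha\tilde{w}_0''-\delta\tilde{w}_0'-r\tilde{w}_0$. Applying part (i) to $\tilde{z}$ gives $\|\tilde{w}_t(\cdot,t)\|_2\lesssim\|\tilde{w}_0\|_{H^3}e^{-\mu t}$. Solving the main equation for $\beta\tilde{w}_{xxx}$ and absorbing the intermediate derivatives via the Gagliardo--Nirenberg and $\epsilon$-Young inequalities exactly as in \eqref{xt3-eq1}--\eqref{xt3-eq4} yields $\|\tilde{w}_{xxx}(\cdot,t)\|_2^2\lesssim\|\tilde{w}_t(\cdot,t)\|_2^2+\|\tilde{w}(\cdot,t)\|_2^2$, so $\|\tilde{w}(\cdot,t)\|_{H^3}\lesssim\|\tilde{w}_0\|_{H^3}e^{-\mu t}$. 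Finally the traces are controlled by the embedding $H^3(0,L)\hookrightarrow C^2([0,L])$, giving $|\tilde{w}_x(0,t)|+|\tilde{w}_{xx}(0,t)|\lesssim\|\tilde{w}(\cdot,t)\|_{H^3}$, which completes (ii).

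The main obstacle I anticipate is not the algebra but the justification of the time-differentiation: one must ensure $\tilde{z}=\tilde{w}_t$ is an admissible solution, i.e.\ that $(\tilde{z}_0,\psi')$ meets the compatibility requirements and that $\tilde{z}$ has enough regularity to run the energy identity. This is guaranteed by the higher-order wellposedness of Proposition \ref{tar_err_wp} together with a density argument, so the estimates may first be established for smooth data and then extended by approximation. The positivity of $\mu$ is the other delicate point, but as explained above it is inherited from the kernel estimate of Proposition \ref{ctrl_tar_stab} through the reflection identity $p(x,y)\equiv k(L-y,L-x;-r)$.
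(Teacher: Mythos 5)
Your proposal is correct, and for most of its length it coincides with the paper's own proof: the paper likewise multiplies the equation by $2\tilde{w}$, obtains the identity with $\beta|\tilde{w}_x(L,t)|^2$ on the right, bounds it by $\beta\|p_x(L,\cdot)\|_2^2\|\tilde{w}(\cdot,t)\|_2^2$ via Cauchy--Schwarz, and settles $\mu>0$ by reducing to Proposition \ref{ctrl_tar_stab} through the identity $p_x(L,y)=-k_y(L-y,0;-r)$ (your remark that the series bound in Lemma \ref{lemkernel} depends only on $|r|/\beta$, hence is insensitive to the sign of $r$, is precisely the justification the paper leaves implicit in its one-line "as we did in Proposition \ref{ctrl_tar_stab}"). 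Part (ii) also starts identically: differentiate in time, apply (i) to $\tilde{z}=\tilde{w}_t$ with $\tilde{z}_0=-\beta\tilde{w}_0'''+i\alpha\tilde{w}_0''-\delta\tilde{w}_0'-r\tilde{w}_0$, then recover $\|\tilde{w}(\cdot,t)\|_{H^3(0,L)}\lesssim\|\tilde{w}_0\|_{H^3(0,L)}e^{-\mu t}$ by solving for $\beta\tilde{w}_{xxx}$ and absorbing with Gagliardo--Nirenberg and Young. The one place you genuinely diverge is the boundary traces: the paper does not invoke a Sobolev embedding, but instead multiplies the equation by $2(L-x)\overline{\tilde{w}}_{xx}$, integrates by parts, and reads the traces off the boundary terms, arriving at $L\bigl(\beta|\tilde{w}_{xx}(0,t)|^2+\delta|\tilde{w}_x(0,t)|^2\bigr)$ controlled by $\|\tilde{w}_t\|_2^2+\|\tilde{w}\|_{H^3(0,L)}^2$. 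Your route via $H^3(0,L)\hookrightarrow C^2([0,L])$ is perfectly rigorous in one dimension, shorter, and has a concrete advantage: it is insensitive to the signs of the coefficients, whereas the paper's multiplier identity weights $|\tilde{w}_x(0,t)|^2$ by $\delta$ and therefore only yields a two-trace bound directly when $\delta>0$, requiring extra care otherwise. What the multiplier computation buys in exchange is independence from pointwise Sobolev control, which matters in settings where such embeddings are unavailable; for the statement as given, your shortcut proves the same inequality.
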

	\begin{proof}
		\hfill
		\begin{enumerate}
		\item[(i)] Taking the $L^2-$inner product of the main equation of \eqref{err_tar_lin} with $2\tilde{w}$, following \eqref{lin1iden2}-\eqref{lin1iden5} and applying Cauchy--Schwarz inequality at the right hand side, we get
		\begin{equation*}
			\begin{split}
			\frac{d}{dt} \|\tilde{w}(\cdot,t)\|_2^2 + 2r \|\tilde{w}(\cdot,t)\|_2^2 + \beta|\tilde{w}_x(0,t)|^2 &= \beta |\tilde{w}_x(L,t)|^2 \\
			&= \beta\left|\int_0^Lp_x(L,y) \tilde{w}(y,t)dy\right|^2 \\
			&\leq \beta \|p_x(L,\cdot)\|_2^2 \|\tilde{w}(\cdot,t)\|_2^2.
			\end{split}
		\end{equation*}
		It follows from the last expression that
		\begin{equation*}
			\frac{d}{dt} \|\tilde{w}(\cdot,t)\|_2^2 + 2\beta\left(\frac{r}{\beta} - \frac{\|p_x(L,\cdot)\|_2^2}{2}\right) \|\tilde{w}(\cdot,t)\|_2^2 \leq 0.
		\end{equation*}
		Denoting $\mu \doteq \beta\left(\frac{r}{\beta} - \frac{\|p_x(L,\cdot)\|_2^2}{2}\right)$ and integrating the above estimate yields (i). Recall that that $p_x(L,y) = - k_y(x,0)$. Thus, we can prove that $\mu > 0$ for sufficiently small $r > 0$ as we did
		in Proposition \ref{ctrl_tar_stab}.
		
		\item[(ii)] We differentiate \eqref{err_tar_lin} with respect to $t$, take $L^2-$inner product by $2\tilde{w}_t$ and following similar steps as in part (i), we obtain
		\begin{equation*}
			\frac{d}{dt} \|\tilde{w}_t(\cdot,t)\|_2^2 + 2\beta\left(\frac{r}{\beta} - \frac{\|p_x(L,\cdot)\|_2^2}{2}\right) \|\tilde{w}_t(\cdot,t)\|_2^2 \leq 0,
		\end{equation*}
		which implies
		\begin{equation} \label{h3_est_1}
			\|\tilde{w}_t(\cdot,t)\|_2 \leq\|\tilde{w}_t(\cdot,0)\|_2 e^{-\mu t}.
		\end{equation}
		In particular, from the main equation of \eqref{err_tar_lin} together with \eqref{h3_est_1}, we get
		\begin{equation} \label{h3_est_2}
			\begin{split}
			\|\tilde{w}_t(\cdot,t)\|_2 &\leq \|\tilde{w}_t(\cdot,0)\|_2 e^{-\mu t} \\
			&= \|-\beta \tilde{w}_0^{\prime\prime\prime} + i\alpha \tilde{w}_0^{\prime\prime} - \delta \tilde{w}_0^\prime - r\tilde{w}_0\|_2 e^{-\mu t} \\
			&\lesssim \|\tilde{w}_0\|_{H^3(0,L)} e^{-\mu t}.
			\end{split}
		\end{equation}
		On the other hand, again from \eqref{err_tar_lin}, we also have
		\begin{equation} \label{h3_est_3}
			\beta \|\tilde{w}_{xxx}(\cdot,t)\|_2 ^2 \leq \alpha \|\tilde{w}_{xx}(\cdot,t)\|_2^2 + \delta \|\tilde{w}_{x}(\cdot,t)\|_2^2 + r \|\tilde{w}(\cdot,t)\|_2^2 + \|\tilde{w}_{t}(\cdot,t)\|_2^2.
		\end{equation}
		Applying Gagliardo--Nirenberg interpolation inequality and then $\epsilon-$Young's inequality, the first term at the right hand side can be estimated as
		\begin{align}
			\alpha \|\tilde{w}_{xx}(\cdot,t)\|_2^2 &\leq c_\alpha \|\tilde{w}_{xxx}(\cdot,t)\|_2^\frac{4}{3} \|\tilde{w}(\cdot,t)\|_2^\frac{2}{3} \nonumber\\
			\label{h3_est_4}
			&\leq \epsilon \|\tilde{w}_{xxx}(\cdot,t)\|_2^2 + c_{\alpha,\epsilon } \|\tilde{w}(\cdot,t)\|_2^2.
		\end{align}
		Similarly, the second term can be estimated as
		\begin{equation}
			\label{h3_est_5}
			\delta \|\tilde{w}_{x}(\cdot,t)\|_2^2 \leq \epsilon \|\tilde{w}_{xxx}(\cdot,t)\|_2^2 + c_{\delta,\epsilon } \|w(\cdot,t)\|_2^2.
		\end{equation}
		Combining \eqref{h3_est_4}-\eqref{h3_est_5} with \eqref{h3_est_3}, and then choosing $\epsilon > 0$ sufficiently small, we get
		\begin{equation*}
			\|\tilde{w}(\cdot,t)\|_{H^3(0,L)}^2 \lesssim \|\tilde{w}(\cdot,t)\|_2^2 + \|\tilde{w}_t(\cdot,t)\|_2^2,
		\end{equation*}
		Using \eqref{h3_est_2} and (i) , it follows that
		\begin{equation} \label{h3_est_6}
			\|\tilde{w}(\cdot,t)\|_{H^3(0,L)} \lesssim \|\tilde{w}_0\|_{H^3(0,L)} e^{-\mu t}.
		\end{equation}
		To estimate the trace terms in (ii), we take $L^2-$inner product of \eqref{err_tar_lin} by $2(L - x)\tilde{w}_{xx}$ and consider only the imaginary terms to get
		\begin{multline} \label{h3_est_7}
			2\Re\int_0^L \tilde{w}_t \overline{\tilde{w}}_{xx}(L-x)dx + 2\beta\Re \int_0^L \tilde{w}_{xxx}\overline{\tilde{w}}_{xx}(L-x) dx + 2\alpha \Im \int_0^L \tilde{w}_{xx}\overline{\tilde{w}}_{xx}(L-x) dx \\ +
			2\delta\Re \int_0^L \tilde{w}_x \overline{\tilde{w}}_{xx}(L-x) dx  + 2r\Re \int_0^L \tilde{w} \overline{\tilde{w}}_{xx}(L-x) dx = 0.
		\end{multline}
		Integrating by parts, the second term is equivalent to
		\begin{equation*}
			\begin{split}
			2\beta\Re\int_0^L \tilde{w}_{xxx}\overline{\tilde{w}}_{xx}(L-x) dx &= \beta \Re \int_0^L \frac{d}{dx}|\tilde{w}_{xx}|^2 (L - x) dx \\
			&= \beta\left(-L|\tilde{w}_{xx}(0,t)|^2 + \|\tilde{w}_{xx}(\cdot,t)\|_2^2\right).
			\end{split}
		\end{equation*}
		The third term vanishes since it is pure real. The fourth term, again by integration by parts, can be expressed as
		\begin{equation*}
			\begin{split}
			2\delta\Re \int_0^L \tilde{w}_x \overline{\tilde{w}}_{xx}(L-x) dx &= \delta \Re\int_0^L \frac{d}{dx} |\tilde{w}_x|^2 (L - x) dx \\
			&= \delta \left(-L |\tilde{w}_x(0,t)|^2 + \|\tilde{w}_x(\cdot,t)\|_2^2\right).
			\end{split}
		\end{equation*}
		Using  these estimates in \eqref{h3_est_7}, we obtain that
		\begin{equation*}
			\begin{split}
			L(\beta |\tilde{w}_{xx}(0,t)|^2 + \delta |\tilde{w}_x(0,t)|^2) =& 2 \Re \int_0^L \tilde{w}_t \overline{\tilde{w}}_{xx}(L-x)dx + \beta  \|\tilde{w}_{xx}(\cdot,t)\|_2^2 \\
			&+ \delta \|\tilde{w}_x(\cdot,t)\|_2^2 + 2r \Re \int_0^L \tilde{w} \overline{\tilde{w}}_{xx}(L-x) dx.
			\end{split}
		\end{equation*}
		Applying Cauchy--Schwarz inequality and then Young's inequality on the first and last terms at the right hand side, using \eqref{h3_est_2} and \eqref{h3_est_6}, we get
		\begin{equation*}
			\begin{split}
			|\tilde{w}_{xx}(0,t)|^2 + |\tilde{w}_x(0,t)|^2 &\lesssim \|\tilde{w}_t(\cdot,t)\|_2^2 + \|\tilde{w}(\cdot,t)\|_{H^3(0,L)}^2 \\
			&\lesssim e^{-\mu t}\|\tilde{w}_0\|_{H^3(0,L)}.
			\end{split}
		\end{equation*}
		Combining this result with \eqref{h3_est_6} yields (ii).
		\end{enumerate}
	\end{proof}
	Since $p$ is a smooth function on a compact set $\Delta_{x,y}$ and the backstepping transformation \eqref{kernel_p} is invertible on  $L^2(0,L)$ and  $H^3(0,L)$ with a bounded inverse, we obtain that
	\begin{equation} \label{er_est_1}
		\|\tilde{u}(\cdot,t)\|_2 \leq c_p \|\tilde{u}_0\|_2, \quad c_p = \left(1 + \|p\|_{L^2(\Delta_{x,y})}\right) \|(I - \Upsilon_p)^{-1}\|_{2 \to 2}
	\end{equation}
	and
	\begin{equation}\label{er_est_2}
		\|\tilde{u}(\cdot,t)\|_{H^3(0,L)} \leq c_{p^\prime} \|\tilde{u}_0\|_{H^3(0,L)}, \quad c_{p^\prime} = \left(1 + \|p\|_{H^3(\Delta_{x,y})}\right) \|(I - \Upsilon_p)^{-1}\|_{H^3(0,L) \to H^3(0,L)}.
	\end{equation}

	\subsubsection{Observer model}
	\begin{prop} \label{tar_obs_stab}
		Let $\beta > 0$, $\alpha, \delta \in \mathbb{R}$ and $k, p$ be the smooth backstepping kernels solving \eqref{kernel_pk}, \eqref{kernel_p} respectively. Then for sufficiently small $r,\epsilon > 0$, it is true that
		\begin{equation*}
			\nu \doteq \beta\left(\frac{r}{\beta} - \frac{\|k_y(\cdot,0;r)\|_2^2}{2} - \epsilon \left(\|\Pi_1\|_2^2 + \|\Pi_2\|_2^2\right)\right) > 0,
		\end{equation*}
		where $\|\Pi_j\|_2^2 = \|(I - \Upsilon_k) p_j\|_2^2 $,  $j = 1,2$ with $p_1(x) = -i\beta p_y(x,0) + \alpha p(x,0)$, $p_2(x) = i\beta p(x,0)$. Moreover,  the solution $\hat{w}$ of \eqref{tar_p_obs_lin} satisfies the following estimate
		\begin{equation} \label{obs_obs_stab_est}
			\|\hat{w}(\cdot,t)\|_2 \lesssim e^{-\nu t} \left(\|\hat{w}_0\|_2 + \|\tilde{w}_0\|_{H^3(0,L)}\right)
		\end{equation}
		for $t \geq 0$.
	\end{prop}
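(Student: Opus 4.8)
The plan is to run the standard $L^2$ energy method on \eqref{tar_p_obs_lin}, treating the two error–trace contributions as exponentially decaying forcing and then closing with a differential inequality rather than a plain Gronwall bound. First I would take the $L^2$-inner product of the main equation with $2\hat{w}$ and pass to imaginary parts. Exactly as in \eqref{lin1iden2}--\eqref{lin1iden5}, the dispersive and damping terms produce
\begin{equation*}
\frac{d}{dt}\|\hat{w}(\cdot,t)\|_2^2 + \beta|\hat{w}_x(0,t)|^2 + 2r\|\hat{w}(\cdot,t)\|_2^2 = \mathrm{RHS},
\end{equation*}
where $\mathrm{RHS}$ collects the three trace terms on the right-hand side of \eqref{tar_p_obs_lin}.

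The first term $2\beta\Re\bigl(\hat{w}_x(0,t)\int_0^L k_y(x,0)\overline{\hat{w}}(x,t)\,dx\bigr)$ is handled precisely as in \eqref{rhs_est}: by $\epsilon$-Young (parameter $\tfrac12$) and Cauchy--Schwarz it is bounded by $\beta|\hat{w}_x(0,t)|^2 + \beta\|k_y(\cdot,0)\|_2^2\|\hat{w}(\cdot,t)\|_2^2$, and the boundary contribution $\beta|\hat{w}_x(0,t)|^2$ cancels the one on the left. For the two remaining terms $2\Im\int_0^L \Pi_j(x)(\cdots)\overline{\hat{w}}\,dx$, where $\Pi_1=(I-\Upsilon_k)p_1$ carries $\tilde{w}_x(0,t)$ and $\Pi_2=(I-\Upsilon_k)p_2$ carries $\tilde{w}_{xx}(0,t)$, I would apply Cauchy--Schwarz and then $\epsilon$-Young to split each as $2\beta\epsilon\|\Pi_j\|_2^2\|\hat{w}\|_2^2 + \tfrac{1}{2\beta\epsilon}|\tilde{w}^{(j)}(0,t)|^2$. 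Collecting everything gives
\begin{equation*}
\frac{d}{dt}\|\hat{w}(\cdot,t)\|_2^2 + 2\nu\|\hat{w}(\cdot,t)\|_2^2 \le \frac{1}{2\beta\epsilon}\bigl(|\tilde{w}_x(0,t)|^2 + |\tilde{w}_{xx}(0,t)|^2\bigr),
\end{equation*}
with $\nu$ exactly the constant in the statement, since $2r-\beta\|k_y(\cdot,0)\|_2^2-2\beta\epsilon(\|\Pi_1\|_2^2+\|\Pi_2\|_2^2)=2\nu$.

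To establish $\nu>0$ I would argue as in Proposition \ref{ctrl_tar_stab}: $\|k_y(\cdot,0)\|_2^2=O(r^2)$ with a constant independent of $r$, and since $p_1,p_2$ are built from traces of $p(x,y)=k(L-y,L-x;-r)$, which vanishes as $r\to0$, the quantity $\|\Pi_1\|_2^2+\|\Pi_2\|_2^2$ is likewise $O(r^2)$; hence the linear term $r/\beta$ dominates and $\nu>0$ for small $r,\epsilon$. Moreover, because $p_x(L,y)=-k_y(x,0)$, comparison with Proposition \ref{tar_err_stab} gives $\nu=\mu-\beta\epsilon(\|\Pi_1\|_2^2+\|\Pi_2\|_2^2)<\mu$, which is the ordering recorded in Theorem \ref{obs_thm}. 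Finally, invoking Proposition \ref{tar_err_stab}-(ii) to bound the right-hand side by $C\|\tilde{w}_0\|_{H^3(0,L)}^2 e^{-2\mu t}$, I would solve the differential inequality with integrating factor $e^{2\nu t}$; since $\nu<\mu$, the integral $\int_0^t e^{2(\nu-\mu)s}\,ds$ stays bounded by $\tfrac{1}{2(\mu-\nu)}$, yielding
\begin{equation*}
\|\hat{w}(\cdot,t)\|_2^2 \le e^{-2\nu t}\Bigl(\|\hat{w}_0\|_2^2 + \tfrac{C}{2(\mu-\nu)}\|\tilde{w}_0\|_{H^3(0,L)}^2\Bigr),
\end{equation*}
and a square root gives \eqref{obs_obs_stab_est}.

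The genuinely delicate point is the mismatch of decay rates: the forcing inherited from the error system decays at the faster rate $\mu$, while the observer's intrinsic damping is only $\nu<\mu$, so the cascade must be exploited through the integrating-factor computation rather than a naive Gronwall argument. Keeping the constant $\tfrac{1}{2(\mu-\nu)}$ finite is exactly what the strict inequality $\nu<\mu$ secures, and it is precisely the coupling through Proposition \ref{tar_err_stab}-(ii) — together with the boundedness of the smooth quantities $\Pi_1,\Pi_2$ on the compact triangle — that lets the whole estimate close uniformly in time.
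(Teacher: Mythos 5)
Your proposal is correct and follows essentially the same route as the paper's proof: the same $L^2$ energy identity, the same Young/Cauchy--Schwarz splitting of the three trace terms (with the $\beta|\hat{w}_x(0,t)|^2$ cancellation and the $\epsilon$-weighted $\Pi_j$ terms producing exactly the stated $\nu$), the same use of Proposition \ref{ctrl_tar_stab} and Proposition \ref{tar_err_stab}-(ii), and the same integration of the resulting differential inequality, which the paper leaves implicit but which your integrating-factor computation with the $\tfrac{1}{2(\mu-\nu)}$ constant makes explicit.
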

	\begin{proof}
		We take $L^2-$inner product of the main equation of \eqref{tar_p_obs_lin} by $2\hat{w}$ and following the steps \eqref{lin1iden2}-\eqref{lin1iden5}, we get
		\begin{equation} \label{tar_obs_stab_1}
			\begin{split}
			\frac{d}{dt} \|\hat{w}(\cdot,t)\|_2^2 + \beta |\hat{w}_x(0,t)|^2 + 2r \|\hat{w}(\cdot,t)\|_2^2
			=& 2\beta \Re \int_0^L k_y(x,0) \hat{w}_x(0,t)  \overline{\hat{w}}(x,t) dx \\
			&+ 2\Im\int_0^L \Pi_1(x) \tilde{w}_{x}(0,t) \overline{\hat{w}}(x,t) dx \\
			&+ 2\Im\int_0^L  \Pi_2(x) \tilde{w}_{xx}(0,t) \overline{\hat{w}}(x,t) dx.
			\end{split}
		\end{equation}
		Using Young's inequality and then Cauchy--Schwarz inequality, the first term at the right hand side can be estimated as
		\begin{equation*}
			2\beta \Re \int_0^L k_y(x,0) \hat{w}_x(0,t) \overline{\hat{w}}(x,t)dx
		 \leq \beta |\hat{w}_x(0,t)|^2 + \beta \|k_y(\cdot,0)\|_2^2 \|\hat{w}(\cdot,t)\|_2^2.
		\end{equation*}
		Applying Cauchy--Schwarz inequality and $\epsilon-$Young's inequality to the second and third terms at the right hand side of \eqref{tar_obs_stab_1}, we get
		\begin{equation*}
			\left|2\Im\int_0^L  \Pi_j(x) \tilde{w}_{x}(0,t) \overline{\hat{w}}(x,t)\right|
			\leq 2\epsilon\beta \bigr\| \Pi_j\bigr\|_2^2 \|\hat{w}(\cdot,t)\|_2^2 + \frac{1}{2\epsilon\beta} |\tilde{w}_{x}(0,t)|^2, \quad j = 1, 2.
		\end{equation*}
		Using these estimates in \eqref{tar_obs_stab_1}, we obtain that
		\begin{multline} \label{tar_obs_stab_2}
			\frac{d}{dt} \|\hat{w}(\cdot,t)\|_2^2 + 2\beta\left(\frac{r}{\beta} - \frac{\|k_y(\cdot,0)\|_2^2}{2} - \epsilon \left(\|\Pi_1\|_2^2 + \|\Pi_2\|_2^2\right)\right)\|\hat{w}(\cdot,t)\|_2^2 \\
			\leq \frac{1}{2\epsilon\beta} \left(|\tilde{w}_{x}(0,t)|^2 + |\tilde{w}_{xx}(0,t)|^2\right).
		\end{multline}
		From Proposition \ref{ctrl_tar_stab} we know that, there exists a sufficiently small $r >0$ such that the term $\left(\frac{r}{\beta} - \frac{\|k_y(\cdot,0)\|_2^2}{2}\right)$ remains positive. So choosing $\epsilon$ sufficiently small, we are able to guarantee that the term
		\begin{equation*}
			\nu \doteq \beta\left(\frac{r}{\beta} - \frac{\|k_y(\cdot,0)\|_2^2}{2} - \epsilon \left(\|\Pi_1\|_2^2 + \|\Pi_2\|_2^2\right)\right)
		\end{equation*}
		remains positive. Now applying Proposition \ref{tar_err_stab}-(ii) to the right hand side of \eqref{tar_obs_stab_2}, we get
		\begin{equation*}
			\frac{d}{dt} \|\hat{w}(\cdot,t)\|_2^2 + 2 \nu \|\hat{w}(\cdot,t)\|_2^2 \lesssim \|\tilde{w}_0\|_{H^3(0,L)}^2 e^{-2\mu t}.
		\end{equation*}
		Also, observing $\|p_x(L,\cdot)\|_2 = \| k_y(\cdot,0)\|_2$ and comparing $\nu$ with $\mu$, we observe that $\mu > \nu$. Thus, integrating the above inequality from $0$ to $t$, we finally obtain
		\begin{equation*}
			\|\hat{w}(\cdot,t)\|_2 \lesssim e^{-\nu t} \left(\|\hat{w}_0\|_2 + \|\tilde{w}_0\|_{H^3(0,L)}\right).
		\end{equation*}
	\end{proof}
	Since $k$, $p$ are smooth backstepping kernels on the triangular domain $\Delta_{x,y}$ and thanks to the invertibility of the corresponding backstepping transformations \eqref{kernel_pk}, \eqref{kernel_p} on $L^2(0,L)$ and $H^3(0,L)$ respectively, with a bounded inverse, we deduce that
	\begin{equation} \label{obs_est_1}
		\|\hat{u}(\cdot,t)\|_2\lesssim  c_{k,p} e^{-\nu t} \left(\|\hat{u}_0\|_2 + \|\tilde{u}_0\|_{H^3(0,L)}\right),
	\end{equation}
	where $c_{k,p}$ is the maximum of
	\begin{equation*}
		c_k = \left(1 + \|k\|_{L^2(\Delta_{x,y})}\right) \|(I - \Upsilon_k)^{-1}\|_{2 \to 2}
	\end{equation*}
	and
	\begin{equation*}
		c_p = \left(1 + \|p\|_{H^3(\Delta_{x,y})}\right) \|(I - \Upsilon_p)^{-1}\|_{H^3(0,L) \to H^3(0,L)}.
	\end{equation*}
	Finally, combining \eqref{er_est_1} and \eqref{obs_est_1}
	\begin{align*}
		\|u(\cdot,t)\|_2 &= \|(\hat{u} + \tilde{u})(\cdot,t)\|_2 \\
		&\leq \|\hat{u}(\cdot,t)\|_2 + \|(u - \hat{u})(\cdot,t)\|_2 \\
		&\lesssim c_{k,p}\left(\|\hat{u}_0\|_2 + \|u_0 - \hat{u}_0\|_{H^3(0,L)} \right)e^{-\nu t} + c_p\|u_0 - \hat{u}_0\|_2 e^{-\mu t}.
	\end{align*}
	This gives us the second part of Theorem \ref{obs_thm}.
	
	\section{Numerical simulations} \label{num_sim}
	In this part, we present our numerical algorithm and numerical simulations for controller and observer designs.
	
	\subsection{Controller design} \label{num_ctrl}
	Our algorithm consists of three steps. We first obtain an approximation for the backstepping kernel $k$ by solving the integral equation \eqref{GepsInt}. Then we solve the modified target equation \eqref{p_tar_lin} numerically. As a third and final step, we use the invertibility of the backstepping transformation and end up with the numerical solution to the original plant. Details are given in the below.
	
	\begin{itemize}
		\item [\textbf{Step i.}] We solve the integral equation
		\begin{eqnarray*}
			G^{j+1}(s,t)=\frac{r}{3\beta}st+\int_0^t\int_0^s\int_0^\omega[DG^j](\xi,\eta)d\xi d\omega d\eta, \quad j = 1, 2, \dotsc
		\end{eqnarray*}
		iteratively, where the iteration is initialized with
		\begin{equation*}
			G^1(s,t) = \frac{r}{3\beta}st.
		\end{equation*}
		As the initial function is a polynomial, the result of the each iteration yields again a polynomial. Thus, here, we use the advantage of the fact that summation and multiplication with a scalar of polynomials, their differentiation and integration can be carried out easily by simple algebraic operations. To perform these operations computationally, we express a given $n-$th degree polynomial with complex coefficients, say
		\begin{equation} \label{PolRep}
			\begin{split}
		P(s,t) =& \alpha_{0,0} + \alpha_{1,0}s + \alpha_{0,1}t + \alpha_{2,0}s^2 + \alpha_{1,1} st + \alpha_{0,2} t^2 + \dotsm \\
		&+ \alpha_{n,0}s^n + \alpha_{n-1,1}s^{n-1}t + \alpha_{n-2,2}s^{n-2}t^2 + \dotsm + \alpha_{0,n}t^n,
			\end{split}
		\end{equation}
		in a more convenient form as
		\begin{equation} \label{MatRep}
		\left[\mathrm{P}\right] =
		\begin{bmatrix}
		\alpha_{0,0} & \alpha_{0,1} & \cdots     & \alpha_{0,n-1}    & \alpha_{0,n} \\
		\alpha_{1,0} & \alpha_{1,1} & \cdots     & \alpha_{1,n-1} & \\
		%\alpha_{2,0} & \alpha_{2,1} & \alpha_{2,2} & \cdots & \alpha_{2,p-2}  &           & \\
		\vdots  &  \vdots & \reflectbox{$\ddots$} & \\
		\alpha_{n-1,0} & \alpha_{n-1,1} && \mbox{\Huge 0}\\
		\alpha_{n,0} & &  &&
		\end{bmatrix}.
		\end{equation}
		Once we introduce this matrix representation \eqref{MatRep} of $P$ in our algorithm, then it is easy to perform summation and scalar multiplication. Moreover, using the elementary row and column operations, one can perform differentiation and integration. For instance multiplying the $j-$th row of $[P]$ by $j-1$, writing the result to the $(j-1)-$th row and repeating this process for each $j$, $j = 2, 3, \dotsc , n + 1$ yields the matrix representation of $P_s(s,t)$, $[P_s]$. Similarly, multiplying the $j-$th row of $[P]$ by $1/j$, writing the result to the $(j+1)-$th row and repeating this process for each $j$, $j = 1, 2, \dotsc , n + 1$ yields $[\int_0^s P ds]$. Differentiation and integration with respect to $t$ can be done by performing analogous column operations.
		
		\item [\textbf{Step ii.}] Let us consider the uniform discretization of $[0,L]$ with the set of $M > 3$ node points $\{x_m\}_{m=1}^M$ where $x_m = (m - 1)h_x$ and $h_x = \frac{L}{M -1}$ is the the uniform spatial grid spacing. Let us introduce the following finite dimensional vector space
		\begin{eqnarray*}
			\mathrm{X}^M &\doteq& \left\{\mathbf{w} = [w_1 \cdots w_M]^T \in \mathbb{C}^M \right\}
		\end{eqnarray*}
		where each $\mathbf{w} \in \mathrm{X}^M$ satisfies
		\begin{eqnarray}
		\label{num_bc1} w_1(t) = w_M(t) &=& 0, \\
		\label{num_bc2} \frac{w_{M - 2}(t) - 4  w_{M - 1}(t) + 3 w_{M}(t)}{2h_x} &=& 0,
		\end{eqnarray}
		for $t > 0$. Note that $w_m(t)$ is an approximation to $w(x,t)$ at the point $x = x_m$ and \eqref{num_bc1} and \eqref{num_bc2} correspond to Dirichlet and Neumann type boundary conditions respectively. Consider the standard forward and backward difference operators $\mathbf{\Delta}_+ : \mathrm{X}^M \to \mathrm{X}^M$ and $\mathbf{\Delta}_-: \mathrm{X}^M \to \mathrm{X}^M$, respectively and let us introduce the following finite difference operators on $\mathrm{X}^M$:
		\begin{equation}
		\label{fin_dif_op}
		\mathbf{\Delta} \doteq \frac{1}{2} \left(\mathbf{\Delta}_+ + \mathbf{\Delta}_-\right)\quad
		\mathbf{\Delta}^2 \doteq \mathbf{\Delta}_{+} \mathbf{\Delta}_{-} \quad
		\mathbf{\Delta}^3 \doteq \mathbf{\Delta}_+ \mathbf{\Delta}_+ \mathbf{\Delta}_-.
		\end{equation}
		Next assume $N$ to be a positive integer, $T$ be the final time and consider the nodal points in time axis $t_n = (n - 1)k$, where $n = 1, \dotsc , N$ is time index and $h_t = \frac{T}{N - 1}$ is the time step size. Let $\mathbf{w^n} = [w_1^n \cdots w_m^n]^T$ be an approximation of the solution at the $n$-th time step where $w_m^n$ is an approximation to $w(x,t)$ at the point $(x_m,t_n)$. Discretizing \eqref{p_tar_lin} in space by using the finite difference operators \eqref{fin_dif_op} and in time by using Crank--Nicolson time stepping, we end up with the discrete problem: Given $\mathbf{w}^n \in \mathrm{X}^M$, find $\mathbf{w}^{n+1} \in \mathrm{X}^M$ such that
		\begin{equation}
			\left(\mathbf{I}^M + \frac{h_t}{2}\mathbf{A} - \frac{\beta h_t}{2} \mathbf{K}_y^M(\cdot,0) \mathbf{\Gamma}_0^{1,M} \right)\mathbf{w}^{n+1} = \mathbf{F}\mathbf{w}^{n}, \quad n = 1,2, \dotsc, N,
		\end{equation}
		where $\mathbf{I}^M$ is the identity matrix on $\mathrm{X}^M$, $\mathbf{A}$ is defined as
		\begin{equation}
			\mathbf{A} \doteq \beta \mathbf{\Delta}^3 - i\alpha \mathbf{\Delta}^2 + \delta \mathbf{\Delta} + r \mathbf{I}^M,
		\end{equation}
		$\mathbf{K}_y^M(\cdot,0)$ is an $M \times M$ diagonal matrix, where each element on the diagonal consists of the elements of the form $k_y(x_m,0)$, $m = 1, \dotsc, M$ and $k_y(x,0)$ is obtained exactly in the previous step, $\mathbf{\Gamma}_0^{1,M}$ is a discrete counterpart of the trace operator $\mathbf{\Gamma}_0^{1}$ and given by an $M \times M$ matrix
		\begin{equation}\label{traceop}
			\mathbf{\Gamma}_0^{1,M} = \frac{1}{2h_x}
			\begin{bmatrix}
				 -3 & 4 & -1 & 0 & \cdots & 0 \\
				 -3 & 4 & -1 & 0 & \cdots & 0 \\
				\vdots & \vdots & \vdots & \vdots  & \ddots & \vdots  \\
				 -3 & 4 & -1 & 0 & \cdots & 0\\
			\end{bmatrix},
		\end{equation}
		and
		\begin{equation*}
			\mathbf{F} \doteq \mathbf{I}^M - \frac{h_t}{2}\mathbf{A} + \frac{\beta h_t}{2} \mathbf{K}_y^M(\cdot,0) \mathbf{\Gamma}_0^{1,M}.
		\end{equation*}
		Note that the nonzero elements in the matrix $\mathbf{\Gamma}_0^{1,M}$ given in \eqref{traceop} are due to the one--sided second order finite difference approximation to the first order derivative at the point $x = 0$.
		
		\item [\textbf{Step iii.}] Now we find the inverse image, $u$, of $w$ under the backstepping transformation: Given $w$, we find $u$ by using succession method. More precisely, we set $v \doteq \Upsilon_ku$, therefore we obtain $u = v + w$ and substitute $u$ by $v + w$ on \eqref{p_bt} to get
		\begin{equation*}
			v(x,t) = \int_0^x k(x,y) w(y,t) dy + \int_0^x k(x,y) v(y,t) dy.
		\end{equation*}
		Now given $w$ obtained numerically in the previous step, we solve this equation successively for $v$. Using the numerical results for $w$ and $v$ on $u = v + w$, we obtain a numerical solution for $u$.	
	\end{itemize}

	Now, let us present a numerical simulation that verifies our stability results. We take $M = 1001$ spatial nodes, $N = 5001$ time steps. The iteration for the backstepping kernel is performed $j = 27$ times so that the error is around
	\begin{equation*}
		\underset{(s,t) \in \Delta_{s,t}}{\max}|G^{j+1} - G^j| \sim 10^{-14}.
	\end{equation*}
	We consider the following model
	\begin{eqnarray} \label{ctrl_sim}
	\begin{cases}
	iu_t + i u_{xxx} + 2u_{xx} +8i u_x = 0, \quad x\in (0,\pi), t\in (0,T),\\
	u(0,t)=0, u(\pi,t)=h_0(t), u_x(\pi,t)=h_1(t),\\
	u(x,0)= 3 - e^{4ix} - 2e^{-2ix}.
	\end{cases}
	\end{eqnarray}
	In the absence of controllers, i.e. $h_0(t) \equiv h_1(t) \equiv 0$, we have a stationary solution $u(x,t) = 3 - e^{4ix} - 2e^{-2ix}$. Let us choose $r = 0.05$. This choice yields a positive exponent value $\lambda$, defined in Proposition \ref{ctrl_tar_stab}, i.e. solution is decaying exponentially in time (see Figure \ref{table_decay}). Contour plot of the corresponding solution and time evolution of its $L^2-$norm are given by Figures \ref{fig:c_plant}.
	\begin{figure}[h]
		\centering
		\begin{subfigure}[b]{0.5\textwidth}
			\includegraphics[width=\textwidth]{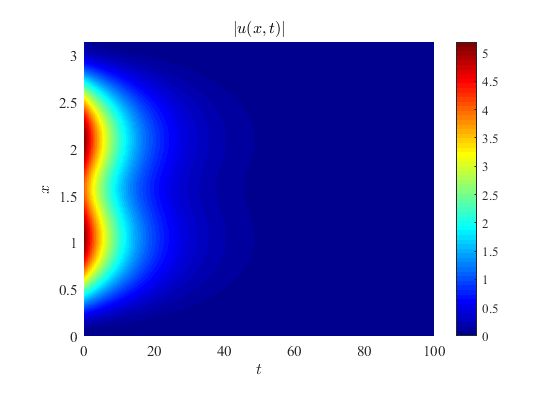}
			\label{fig:c_plant_contour}
		\end{subfigure}
		~ %add desired spacing between images, e. g. ~, \quad, \qquad, \hfill etc.
		%(or a blank line to force the subfigure onto a new line)
		\begin{subfigure}[b]{0.5\textwidth}
			\includegraphics[width=\textwidth]{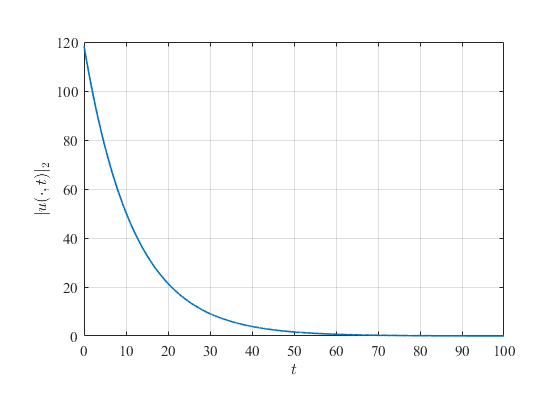}
			\label{fig:c_l2}
		\end{subfigure}
		\vspace*{-15mm}
		\caption{Numerical results in the presence of controllers. Left: Time evolution of $|u(x,t)|$. Right: Time evolution of $\|u(\cdot,t)\|_2$.}
		\label{fig:c_plant}
	\end{figure}

	\subsection{Observer design}
	Our algorithm consists of five steps. First we obtain an approximation for the backstepping kernel $p$. In the second and third steps, we obtain a numerical solution for the error model \eqref{err_lin} and modified target observer model \eqref{tar_p_obs_lin}, respectively. As a fourth step, we get a numerical solution for the observer model by using the invertibility of the backstepping transformation \eqref{kernel_pk}. At the fifth and the last step, we deduce numerical solution of the original plant via $u = \hat{u} + \tilde{u}$.

	\begin{itemize}
		\item [\textbf{Step i.}] Following the same procedure we introduced in the first step of Section \ref{num_ctrl} and then changing the variables first as $\tilde{s} = L - t$, $\tilde{t} = L - s - t$ then as $s = x - y$, $t = y$, we get
		\begin{equation*}
			G(\tilde{s},\tilde{t};-r) = G(L - t,L - s - t;-r) = k(L-y,L-x;-r) = p(x,y).
		\end{equation*}
		Note that using $p$, we also derive $p_1(x) = -i\beta p_y(x,0) + \alpha p(x,0)$ and $p_2(x) = i\beta p(x,0)$.
		
		\item [\textbf{Step ii.}] To solve \eqref{err_lin}, we apply the same discretization procedure as we introduced in the second step of Section \ref{num_ctrl}. The trace terms included in the main equation of \eqref{err_lin} are approximated by the following one sided second order finite differences
		\begin{equation}
			\begin{split}
			\label{trace_fin_dif}	
			\tilde{u}_x(0,t) &\approx \frac{-3\tilde{u}_0(t) + 4\tilde{u}_1(t) - \tilde{u}_2(t)}{2h_x},\\
			\tilde{u}_{xx}(0,t) &\approx \frac{2\tilde{u}_{0}(t) - 5\tilde{u}_{1}(t) + 4\tilde{u}_{2}(t) - \tilde{u}_{3}(t)}{h_x^2}.
			\end{split}
		\end{equation}
		
		\item [\textbf{Step iii.}] Applying the similar discretization procedure, now we solve \eqref{tar_p_obs_lin} numerically. Note that using $\tilde{w}(0,t) = 0$ and $p(x,x) = 0$, one can show  by using the backstepping transformation \eqref{bt_p} that $\tilde{w}_x(0,t) = \tilde{u}_x(0,t)$ and $\tilde{w}_{xx}(0,t) = \tilde{u}_{xx}(0,t)$. Therefore, instead of approximating the first order and second order traces of $\tilde{w}$ at the left end point, we can use \eqref{trace_fin_dif}. Note also that a discrete counterpart, $\mathbf{\Upsilon}_k^M$, of $\Upsilon_k$ can be obtained by applying a suitable numerical integration technique. For instance applying composite trapezoidal rule yields the following representation
		\begin{equation}\label{IntOp}
		\mathbf{\Upsilon}_k^M = h_x
		\begin{bmatrix}
		0 & 0 & \cdots & 0 & 0 \\
		\frac{1}{2}k(x_{2},x_{1}) & \frac{1}{2}k(x_{2},x_{2}) & \cdots & 0 & 0 \\
		\vdots  & \vdots  & \ddots & \vdots & \vdots  \\
		\frac{1}{2}k(x_{M-1},x_1) & k(x_{M-1},x_2) & \cdots & \frac{1}{2}k(x_{M-1},x_{M-1}) & 0 \\
		\frac{1}{2}k(x_M,x_1) & k(x_M,x_2) & \cdots & k(x_M,x_{M-1}) & \frac{1}{2}k(x_M,x_M)
		\end{bmatrix}.
		\end{equation}
		
		\item [\textbf{Step iv.}] Using the invertibility of the backstepping transformation \eqref{p_bt}, we obtain inverse image $\hat{u}$ of $\hat{w}$. This will be done by applying a similar procedure as we introduced in the first step of Section \ref{num_ctrl}.
		
		\item [\textbf{Step v.}] Using the numerical results for the observer and error models and setting $u = \hat{u} + \tilde{u}$, we deduce an approximation for the solution of the original plant.
	\end{itemize}

	Now let us go on with the numerical simulations. We obtain our results by taking $M = 1001$ spatial nodes, $N = 5001$ time steps. We performed the iteration for $p(x,y)$ and $k(x,y)$ several times so that the error is around $\underset{(s,t) \in \Delta_{s,t}}{\max}|G^{j+1} - G^j| \sim 10^{-14}$. We consider the same model
	\begin{eqnarray*}
		\begin{cases}
			iu_t + i u_{xxx} + 2u_{xx} +8i u_x = 0, \quad x\in (0,\pi), t\in (0,T),\\
			u(0,t)=0, u(\pi,t)=h_0(t), u_x(\pi,t)=h_1(t),
		\end{cases}
	\end{eqnarray*}
	where, unlike the controller design case, the feedback controllers use the state of the observer model. We initialize the error model as $\tilde{u}(x,0) = 3 - e^{4ix} - 2e^{-2ix}$ and observer model $\hat{u}(x,0) \equiv 0$. We take $r = 0.05$. Since the problem parameters are same as the previous numerical example, this choice will yield positive exponent vales $\mu > \nu > 0$ where $\mu, \nu$ are defined in Proposition \ref{tar_err_stab} and Proposition \ref{tar_obs_stab}.
	
	Contour plot of the numerical solution of original plant is given at the left side of Figure \ref{fig:o_plant}. At the right, we show time evolution of the $L^2-$norms of solutions of plant-observer-error system.
	
	\begin{figure}[h]
		\centering
		\begin{subfigure}[b]{0.5\textwidth}
			\includegraphics[width=\textwidth]{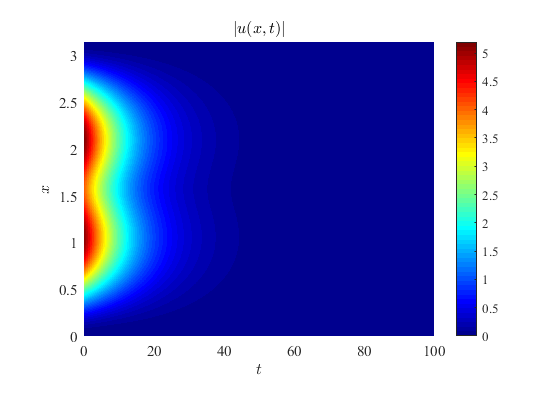}
			%\caption{3d plot of $|u(x,t)|$.}
			\label{fig:o_plant_contour}
		\end{subfigure}
		~ %add desired spacing between images, e. g. ~, \quad, \qquad, \hfill etc.
		%(or a blank line to force the subfigure onto a new line)
		\begin{subfigure}[b]{0.5\textwidth}
			\includegraphics[width=\textwidth]{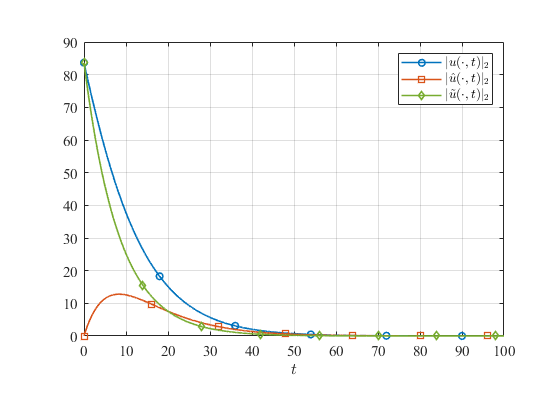}
			%\caption{Contour plot of $|u(x,t)|$.}
			\label{fig:o_l2s}
		\end{subfigure}
		\vspace*{-15mm}
		\caption{Numerical results. Left: Time evolution of $|u(x,t)|$. Right: Time evolution of $|u(\cdot,t)|_2$.}
		\label{fig:o_plant}
	\end{figure}
	
	\appendix
	\section{Deduction of the kernel pde model \eqref{kernel_k}} \label{app1}
	In this section, we present the details of the calculations for obtaining the kernel model given in \eqref{kernel_k}. Differentiating both sides of \eqref{bt} with respect to $t$ we get
	\begin{equation*}
		\begin{split}
			iw_t(x,t) =& iu_t(x,t)- \int_0^x i k(x,y)u_t(y,t)dy\\
			=&iu_t(x,t) + \int_0^xk(x,y)(i\beta u_{yyy}(y,t) + \alpha u_{yy}(y,t)+ i\delta u_y(y,t))dy\\
			=& i u_t(x,t) \\
			&+ i\beta \left( k(x,y)u_{yy}(y,t) - k_y(x,y)u_y(y,t) + k_{yy}(x,y) u(y,t)\Biggr|_0^x \right.\\
			&\left.- \int_0^x k_{yyy}(x,y)u(y,t)dy \right) \\
			&+\alpha \left(k(x,y) u_y(y,t) - k_y(x,y) u(y,t) \Biggr|_0^x + \int_0^x k_{yy}(x,y)u(y,t)dy \right) \\
			&+i\delta \left(k(x,y) u(y,t)\Biggr|_0^x - \int_0^x k_y(x,y)u(y,t)dy\right).
		\end{split}
	\end{equation*}
	Using the boundary condition $u(0,t) = 0$, rearrenging the last expression in terms of $u(x,t)$,$u_x(x,t)$, $u_{xx}(x,t)$, $u_x(0,t)$ and $u_{xx}(0,t)$, we obtain
	\begin{equation} \label{wrtt}
		\begin{split}
			iw_t(x,t) =& i u_t(x,t) + \int_0^x \left(-i\beta k_{yyy} + \alpha k_{yy} - i\delta k_y\right)(x,y) u(y,t) dy \\
			&+ \left(i\beta k_{yy} (x,x) - \alpha k_y(x,x) + i\delta k(x,x)\right) u(x,t) \\
			&+\left(-i\beta k_y (x,x) + \alpha k(x,x)\right)u_x(x,t) + i\beta k(x,x) u_{xx}(x,t) \\
			&+ \left(i\beta k_y(x,0) - \alpha k(x,0)\right) u_x(0,t) - i\beta k(x,0) u_{xx}(0,t).
		\end{split}
	\end{equation}
	Next we differentiate both sides of \eqref{p_bt} with respect to $x$ up to the order three and multiply the results by $i\delta$, $\alpha$ and $i\beta$ respectively to obtain
	\begin{equation} \label{wrtx}
		\begin{split}
			i\delta w_x(x,t) &= i\delta u_x(x,t) - i\delta\frac{\partial}{\partial x} \int_0^x  k(x,y)u(y,t)dy \\
			&= i\delta u_x(x,t) - \int_0^x i\delta k_x(x,y) u(y,t)dy - i\delta k(x,x) u(x,t),
		\end{split}
	\end{equation}
	\begin{align} \label{wrtxx}
		\begin{split}
			\alpha w_{xx}(x,t) =& \alpha u_{xx}(x,t) - \alpha\frac{\partial}{\partial x} \int_0^x 		k_x(x,y)u(y,t)dy - \alpha \frac{\partial}{\partial x}\left(k(x,x) u(x,t)\right) \\
			=& \alpha u_{xx}(x,t) - \int_0^x \alpha k_{xx}(x,y)u(y,t)dy \\
			&+\alpha  \left(-k_x(x,x) - \frac{d}{dx} k(x,x)\right) u(x,t) - \alpha k(x,x) u_x(x,t),
		\end{split}
	\end{align}
	and
	\begin{equation} \label{wrtxxx}
		\begin{split}
			i\beta w_{xxx}(x,t) =& i\beta u_{xxx}(x,t) - i\beta \frac{\partial}{\partial x}\int_0^x k_{xx}(x,y)u(y,t)dy \\
			&- i\beta \frac{\partial}{\partial x} \left(\left(k_x(x,x) + \frac{d}{dx} k(x,x)\right)u(x,t) + k(x,x)u_x(x,t)\right) \\
			=& i\beta u_{xxx}(x,t) - \int_0^x i \beta k_{xxx}(x,y)u(y,t)dy \\
			&+ i\beta \left(-k_{xx}(x,x) - \frac{d}{dx}k_x(x,x) - \frac{d^2}{dx^2} k(x,x)\right)u(x,t)  \\
			&+i\beta \left(-k_x(x,x) - 2\frac{d}{dx} k(x,x)\right)u_x(x,t) - i\beta  k(x,x) u_{xx}(x,t).
		\end{split}
	\end{equation}
	Adding \eqref{wrtt}-\eqref{wrtxxx} side by side together with $irw(x,t) = iru(x,t) - ir \int_0^x k(x,y)u(y,t)dy$ and using the main equation of the linear plant, we obtain
	\begin{align}\label{k_cond_1}
		&i w_t + i\beta w_{xxx} + \alpha w_{xx} + i\delta w_x + ir w \nonumber\\
		=& \int_0^x \left(-i\beta(k_{xxx} + k_{yyy}) - \alpha(k_{xx} - k_{yy}) -i\delta (k_x + k_y) - irk\right)(x,y)u(y,t) dy \\
		\label{k_cond_2}
		=& \left(i\beta \left(k_{yy}(x,x) - k_{xx}(x,x) - \frac{d}{dx} k_x(x,x) - \frac{d^2}{dx^2} k(x,x)\right)\right.\nonumber\\
		&\left.+ \alpha \left(-k_y(x,x) - k_x(x,x) - \frac{d}{dx} k(x,x)\right) + ir\right)u(x,t) \\
		\label{k_cond_3}
		&-i\beta \left(k_y(x,x) + k_x(x,x) + 2 \frac{d}{dx }k(x,x)\right)u_x(x,t) \\
		\label{k_cond_4}
		&+\left(i\beta k_y(x,0)- \alpha k(x,0)\right)u_x(0,t) \\
		\label{k_cond_5}
		&-i\beta k(x,0) u_{xx}(0,t).
	\end{align}
	From \eqref{k_cond_5} we have $k(x,0) = 0$ and therefore, from \eqref{k_cond_4} we get $k_y(x,0) = 0$. Using the relation $\frac{d}{dx}k(x,x) = k_x(x,x) + k_y(x,x)$, we obtain from \eqref{k_cond_3} that
	\begin{equation*}
		\frac{d}{dx} k(x,x) = 0
	\end{equation*}
	and thanks to $k(x,0) = 0$, this implies $k(x,x) = 0$. Next, we differentiate $\frac{d}{dx}k(x,x) = k_x(x,x) + k_y(x,x)$ with respect to $x$ and use $\frac{d}{dx} k(x,x) = 0$ to obtain $k_{yy}(x,x) = -2k_{xy}(x,x) - k_{xx}(x,x)$. Using this result on \eqref{k_cond_2}, we deduce that
	\begin{equation*}
		\frac{d}{dx} k_x(x,x) = \frac{r}{3\beta}
	\end{equation*}
	which, by the implications $k_y(x,0) = 0 \Rightarrow k_x(x,0) = 0 \Rightarrow k_x(0,0) = 0$, is equivalent to
	\begin{equation*}
		k_x(x,x) = \frac{rx}{3\beta}.
	\end{equation*}
	Also note that taking $x = 0$ in the backstepping transformation implies $w(0,t) = u(0,t) = 0$ and, taking $x = L$ implies
	\begin{equation*}
		w(L,t) = u(L,t) - \int_0^L k(L,y) u(y,t) dy = 0
	\end{equation*}
	and
	\begin{equation*}
		w_x(L,t) = u_x(L,t) - \int_0^L k_x(L,y) u(y,t) dy - k(0,0) u(0,t) = 0.
	\end{equation*}
	So the boundary conditions are being satisfied without any extra conditions on $k$.
	
	As a conclusion, linear plant is mapped to target model (not modified one) if $k(x,y)$ satisfies the following boundary value problem
	\begin{eqnarray*}
		\begin{cases}
			\beta(k_{xxx}+k_{yyy})-i\alpha(k_{xx}-k_{yy})+\delta(k_x+k_y)+rk=0, \\
			k(x,x)= k_y(x,0) = k(x,0)=0,\\
			k_x(x,x)=\frac{rx}{3\beta},
		\end{cases}
	\end{eqnarray*}
	on $\Delta_{x,y}$.

\section{Deduction of the kernel pde model  \eqref{kernel_p_old}}
\label{app2}
In this section, we present the details of the calculations for obtaining the kernel model given in \eqref{kernel_p}. Differentiating \eqref{btp} with respect to $t$, we get
\begin{equation*}
	\begin{split}
		i\tilde{u}_t(x,t) =& i\tilde{w}_t(x,t)- \int_0^x i p(x,y)\tilde{w}_t(y,t)dy\\
		=&i\tilde{w}_t(x,t) + \int_0^x p(x,y)(i\beta \tilde{w}_{yyy}(y,t) + \alpha \tilde{w}_{yy}(y,t)+ i\delta \tilde{w}_y(y,t) + ir \tilde{w})dy\\
		=& i \tilde{w}_t(x,t) \\
		&+ i\beta \left( p(x,y)\tilde{w}_{yy}(y,t) - p_y(x,y)\tilde{w}_y(y,t) + p_{yy}(x,y) \tilde{w}(y,t)\Biggr|_0^x \right.\\
		&\left.- \int_0^x p_{yyy}(x,y)\tilde{w}(y,t)dy \right) \\
		&+\alpha \left(p(x,y) \tilde{w}_y(y,t) - p_y(x,y) \tilde{w}(y,t) \Biggr|_0^x + \int_0^x p_{yy}(x,y)\tilde{w}(y,t)dy \right) \\
		&+i\delta \left(p(x,y) \tilde{w}(y,t)\Biggr|_0^x - \int_0^x p_y(x,y)\tilde{w}(y,t)dy\right) \\
		&+ir \int_0^L p(x,y)\tilde{w}(y,t)dy.
	\end{split}
\end{equation*}
Using the boundary conditions $w(0,t)= 0$, rearranging the last expression in terms of $\tilde{w}(x,t)$,$\tilde{w}_x(x,t)$, $\tilde{w}_{xx}(x,t)$, $\tilde{w}_x(0,t)$ and $\tilde{w}_{xx}(0,t)$, we obtain
\begin{equation} \label{wrtt2}
	\begin{split}
		i\tilde{u}_t(x,t) =& i \tilde{w}_t(x,t) + \int_0^x \left(-i\beta p_{yyy} + \alpha p_{yy} - i\delta p_y +ir p\right)(x,y) \tilde{w}(y,t) dy \\
		&+ \left(i\beta p_{yy} (x,x) - \alpha p_y(x,x) + i\delta p(x,x)\right) \tilde{w}(x,t) \\
		&+\left(-i\beta p_y (x,x) + \alpha p(x,x)\right)\tilde{w}_x(x,t) + i\beta p(x,x) \tilde{w}_{xx}(x,t) \\
		&+ \left(i\beta p_y(x,0) - \alpha p(x,0)\right) \tilde{w}_x(0,t) - i\beta p(x,0) \tilde{w}_{xx}(0,t).
	\end{split}
\end{equation}
Next we differentiate \eqref{btp} up to order three and multiply the results by $i\delta$, $\alpha$ and $i\beta$ respectively to obtain
\begin{equation} \label{wrtx2}
	\begin{split}
		i\delta \tilde{u}_x(x,t) &= i\delta \tilde{w}_x(x,t) - i\delta\frac{\partial}{\partial x} \int_0^x  p(x,y)\tilde{w}(y,t)dy \\
		&= i\delta \tilde{w}_x(x,t) - \int_0^x i\delta p_x(x,y) \tilde{w}(y,t)dy - i\delta p(x,x) \tilde{w}(x,t),
	\end{split}
\end{equation}
\begin{equation} \label{wrtxx2}
	\begin{split}
		\alpha \tilde{u}_{xx}(x,t) =& \alpha \tilde{w}_{xx}(x,t) - \alpha\frac{\partial}{\partial x} \int_0^x 		p_x(x,y)\tilde{w}(y,t)dy - \alpha \frac{\partial}{\partial x}\left(p(x,x) \tilde{w}(x,t)\right) \\
		=& \alpha \tilde{w}_{xx}(x,t) - \int_0^x \alpha p_{xx}(x,y)\tilde{w}(y,t)dy \\
		&+\alpha  \left(-p_x(x,x) - \frac{d}{dx} p(x,x)\right) \tilde{w}(x,t) - \alpha p(x,x) \tilde{w}_x(x,t),
	\end{split}
\end{equation}
and
\begin{equation} \label{wrtxxx2}
	\begin{split}
		i\beta \tilde{u}_{xxx}(x,t) =& i\beta \tilde{w}_{xxx}(x,t) - i\beta \frac{\partial}{\partial x}\int_0^x p_{xx}(x,y)\tilde{w}(y,t)dy \\
		&- i\beta \frac{\partial}{\partial x} \left(\left(p_x(x,x) + \frac{d}{dx} p(x,x)\right)\tilde{w}(x,t) + p(x,x)\tilde{w}_x(x,t)\right) \\
		=& i\beta \tilde{w}_{xxx}(x,t) - \int_0^x i \beta p_{xxx}(x,y)\tilde{w}(y,t)dy \\
		&+ i\beta \left(-p_{xx}(x,x) - \frac{d}{dx}p_x(x,x) - \frac{d^2}{dx^2} p(x,x)\right)\tilde{w}(x,t) \\
		&+i\beta \left(-p_x(x,x) - 2\frac{d}{dx} p(x,x)\right)\tilde{w}_x(x,t) - i\beta  p(x,x) \tilde{w}_{xx}(x,t).
	\end{split}
\end{equation}
From \eqref{wrtx2} and \eqref{wrtxx2} we also have
\begin{equation} \label{obs_gain1}
	p_1(x)\tilde{u}_x(0,t) = p_1(x)\tilde{w}_x(0,t)
\end{equation}
and
\begin{equation} \label{obs_gain2}
	p_2(x)\tilde{u}_{xx}(0,t) = p_2(x)(\tilde{w}_{xx}(0,t) - p(0,0)\tilde{w}_x(0,t)).
\end{equation}
Adding \eqref{wrtt2}-\eqref{obs_gain2} side by side we obtain
\begin{align}\label{p_cond_1}
	&i \tilde{u}_t + i\beta \tilde{u}_{xxx} + \alpha \tilde{u}_{xx} + i\delta \tilde{u}_x + p_1(x) \tilde{u}_x(0,t) + p_2(x)\tilde{u}_{xx}(0,t)\nonumber\\
	=&i\tilde{w}_t + i\beta \tilde{w}_{xxx} + \alpha \tilde{w}_{xx} + i\delta \tilde{w}_x \\
	\label{p_cond_2}
	&+ \int_0^x \left(-i\beta(p_{xxx} + p_{yyy}) - \alpha(p_{xx} - p_{yy}) -i\delta (p_x + p_y) + irp\right)(x,y)\tilde{w}(y,t) dy \\
	\label{p_cond_3}
	=& \left(i\beta \left(p_{yy}(x,x) - p_{xx}(x,x) - \frac{d}{dx} p_x(x,x) - \frac{d^2}{dx^2} p(x,x)\right)\right.\nonumber\\
	&\left.+ \alpha \left(-p_y(x,x) - p_x(x,x) - \frac{d}{dx} p(x,x)\right)\right)\tilde{w}(x,t) \\
	\label{p_cond_4}
	&-i\beta \left(p_y(x,x) + p_x(x,x) + 2 \frac{d}{dx }p(x,x)\right)\tilde{w}_x(x,t) \\
	\label{p_cond_5}
	&+\left(i\beta p_y(x,0)- \alpha p(x,0) + p_1(x) - p(0,0)p_2(x)\right)\tilde{w}_x(0,t) \\
	\label{p_cond_6}
	&+(-i\beta p(x,0)+p_2(x)) \tilde{w}_{xx}(0,t).
\end{align}
Note that, taking $x = L$ on \eqref{bt_p} and using the boundary condition $\tilde{u}(L,t) = 0$, we must have $p(L,y) = 0$ in order to get $\tilde{w}(L,t) = 0$. On the other hand, using the relation $\frac{d}{dx}p(x,x) = p_x(x,x) + p_y(x,x)$, we get from \eqref{p_cond_4} that
\begin{equation} \label{p_bc_1}
	\frac{d}{dx} p(x,x) = 0,
\end{equation}
which, thanks to $p(L,y) = 0$  implies
\begin{equation*}
	p(x,x) = 0.
\end{equation*}
Next, we differentiate $\frac{d}{dx}p(x,x) = p_x(x,x) + p_y(x,x)$ with respect to $x$ and use $\frac{d}{dx} p(x,x) = 0$ to obtain $p_{yy}(x,x) = -2p_{xy}(x,x) - p_{xx}(x,x)$. Using this result in \eqref{p_cond_3}, we deduce that
\begin{equation*}
	\frac{d}{dx} p_x(x,x) = -\frac{r}{3\beta}.
\end{equation*}
which, due to the implications $p(L,y) = 0 \Rightarrow p_y(L,y) = 0 \Rightarrow p_y(L,L) = 0 \Rightarrow p_x(L,L) = 0$, is equivalent to
\begin{equation*}
	p_x(x,x) = \frac{r}{3\beta}(L - x).
\end{equation*}
On the other hand we obtain from \eqref{p_cond_5}-\eqref{p_cond_6} that
\begin{align*}
	p_1(x) &= -i\beta p_y(x,0) + \alpha p(x,0) \\
	p_2(x)&= i\beta p(x,0).
\end{align*}
Note that for $x=0$ in \eqref{bt_p}, we have $\tilde{u}(0,t) = \tilde{w}(0,t) = 0$. For $x = L$ and thanks to $p(L,y) = 0$, we have $\tilde{u}(L,t) =\tilde{w}(L,t) = 0$. Also for $x = L$ on \eqref{wrtx2}, we see that $\tilde{w}_x(L,t) = 0$ holds if $p_x(L,y) = 0$.

As a conclusion, the error model is mapped to the target error model (not modified one), if $p$ satisfies the following boundary value problem
\begin{eqnarray*}
	\begin{cases}
		\beta(p_{xxx}+p_{yyy})-i\alpha(p_{xx}-p_{yy})+\delta(p_x+p_y)-rp=0, \\
		p(x,x)= p(L,y) = p_x(L,y)=0,\\
		p_x(x,x) = \frac{r}{3\beta}(L - x),
	\end{cases}
\end{eqnarray*}
on $\Delta_{x,y}$.

\section{Roots of the characteristic equation \eqref{char_eqn}.} \label{app_charroots}
	In this part, we investigate the roots $\lambda_j = \lambda_j(s)$, $j = 1,2,3$, of the characteristic equation
	\begin{equation*}
		s + \beta \lambda^3 + i\alpha \lambda^2 + \delta \lambda = 0
	\end{equation*}
	that is obtained by the one parameter family of boudary value problems \eqref{lap_trans}. More precisely, we show that \eqref{char_eqn} has double or possibly triple roots only for finitely many exceptional cases of the values of $s$ in the complex plane. The location of $s$ in the complex plane is directly related with the sign of the quantity $\alpha^2 + 3\beta\delta$. Also in the following calculations, we drop notation for $s$ dependence and simply write $\lambda_j(s) = \lambda_j$, $j = 1,2,3$ for simplicity.
	
	To this end, let us assume that two roots, say $\lambda_2$ and $\lambda_3$, are equal for some $s \in \mathbb{C}$. Then $\lambda_1$ and $\lambda_2$  satisfy
	\begin{align}
		\label{a21}
		\lambda_1+ 2\lambda_2 &= \frac{i\alpha}{\beta}, \\
		\label{a22}
		2\lambda_1\lambda_2 + \lambda_2^2 &= \frac{\delta}{\beta}, \\
		\label{a23}
		\lambda_1\lambda_2^2 &= - \frac{s}{\beta},
	\end{align}
	where $s \in (r -i\infty,r + i\infty)$ for some $r \in \mathbb{R}^+$. Let $\lambda_1 = a + ib$, $\lambda_2 = \lambda_3 = c+id$, $a, b, c, d$ are real functions of $s$. From the real and imaginary parts of \eqref{a21}-\eqref{a22}, we have following equations:
	\begin{align}
		\label{a24}
		a + 2c &= 0, \\
		\label{a25}
		b + 2d &= \frac{\alpha}{\beta}, \\
		\label{a26}
		2ac - 2bd + c^2 - d^2 &= \frac{\delta}{\beta}, \\
		\label{a27}
		ad + bc + cd &= 0.
	\end{align}
	Using \eqref{a24} in \eqref{a27}, we get $c(b-d) = 0$.
	
	\begin{enumerate}
		\item [(i)] Let $b = d$. By \eqref{a25}, $b = d = \frac{\alpha}{3\beta}$. Substituting these into \eqref{a26} yields
		\begin{equation*}
		c(2a + c) = \frac{\alpha^2 + 3\beta\delta}{3\beta^2}.
		\end{equation*}
		Using \eqref{a24}, we get
		\begin{equation*}
		 a^2 = -\frac{4(\alpha^2 + 3\beta\delta)}{9\beta^2}, \quad c^2 = -\frac{\alpha^2 + 3\beta\delta}{9\beta^2}.
		\end{equation*}
		Assuming $\alpha^2 + 3\beta \delta > 0$ yields a contradiction. Assuming $\alpha^2 + 3\beta \delta = 0$ yields $\lambda_1 = \lambda_2 = \lambda_3 = \frac{i\alpha}{3\beta}$. For this case we see from \eqref{a23} that, the only value for $s$ is pure imaginary and given by
		\begin{equation*}
			s^0 = \frac{i\alpha^3}{27\beta^2}.
		\end{equation*}
		Now let $\alpha^2 + 3\beta \delta < 0$. Then we have
		\begin{equation*}
			a_{1,2} = \mp\frac{2\sqrt{-(\alpha^2 + 3\beta\delta)}}{3\beta}, \quad c_{1,2} = \pm\frac{\sqrt{-(\alpha^2 + 3\beta\delta)}}{3\beta}.
		\end{equation*}
		Note that using \eqref{a24} and $b = d$, we obtain from \eqref{a23} there are two possible values of $s$, denoted by $s_1^-$ and $s_2^-$, which are given by
		\begin{equation*}
			\Re(s_1^-) = \frac{2}{27 \beta^2} (-\alpha^2 - 3\beta\delta)^{3/2}, \quad \Re(s_2^-) = - \frac{2}{27 \beta^2} (-\alpha^2 - 3\beta\delta)^{3/2}
		\end{equation*}
		and
		\begin{equation*}
			\Im(s_1^-) = \Im(s_2^-) = -\frac{\alpha}{27\beta^2} (2\alpha^2 + 9\beta\delta).
		\end{equation*}		
		\item[(ii)] Let $c = 0$. Then, by \eqref{a24}, $a = 0$. Using this, direct calculation from \eqref{a25}-\eqref{a26} yields
		\begin{equation*}
			b_{1,2} = \frac{\alpha \mp 2\sqrt{\alpha^2 + 3\beta\delta}}{3\beta}, \quad d_{1,2} = \frac{\alpha \pm \sqrt{\alpha^2 + 3\beta\delta}}{3\beta}.
		\end{equation*}
		Observe that $\alpha^2 + 3\beta\delta<0$ yields a contradiction and $\alpha^2 + 3\beta\delta = 0$ ends up with the triple root case investigated in (i). Now let $\alpha^2 + 3\beta\delta > 0$. From \eqref{a23}, we see that there are two possible values of $s$, denoted by $s_1^+$ and $s_2^+$, given by
		\begin{equation*}
			s_1^+ = \frac{i}{27\beta^2} \left(\alpha^3 - 3\alpha (\alpha^2 + 3\beta\delta) + 2 (\alpha^2 + 3\beta\delta)^{3/2}\right)
		\end{equation*}
		and
		\begin{equation*}
			s_2^+ = \frac{i}{27\beta^2} \left(\alpha^3 - 3\alpha (\alpha^2 + 3\beta\delta) - 2 (\alpha^2 + 3\beta\delta)^{3/2}\right).
		\end{equation*}
	\end{enumerate}

	\section*{Acknowledgements}. We would like to thank Professor Bing-Yu Zhang (University of Cincinnati) and Professor Shu Ming Sun (Virginia Tech) for their fruitful comments regarding the application of the Laplace transform method that we used in Section 2 to prove the boundary smoothing properties.
	\bibliographystyle{amsplain}

\begin{thebibliography}{10}

\bibitem{agrawal}
Govind~P. Agrawaal, \emph{Nonlinear fiber optics}, 5th ed., Academic Press,
  2013.

\bibitem{batal2020}
Ahmet Batal, T{\"u}rker {\"O}zsar{\i}, and Kemal~Cem Y{\i}lmaz,
  \emph{Stabilization of higher order schr{\"o}dinger equations on a finite
  interval: Part i}, Evolution Equations \& Control Theory (2020), 0.

\bibitem{Bis07}
Eleni Bisognin, Vanilde Bisognin, and Octavio~Paulo Vera~Villagr\'{a}n,
  \emph{Stabilization of solutions to higher-order nonlinear {S}chr\"{o}dinger
  equation with localized damping}, Electron. J. Differential Equations (2007),
  No. 06, 18. \MR{2278420}

\bibitem{Bona03}
Jerry~L. Bona, Shu~Ming Sun, and Bing-Yu Zhang, \emph{A nonhomogeneous
  boundary-value problem for the {K}orteweg-de {V}ries equation posed on a
  finite domain}, Comm. Partial Differential Equations \textbf{28} (2003),
  no.~7-8, 1391--1436. \MR{1998942}

\bibitem{Carvajal03}
X.~Carvajal and F.~Linares, \emph{A higher-order nonlinear {S}chr\"{o}dinger
  equation with variable coefficients}, Differential Integral Equations
  \textbf{16} (2003), no.~9, 1111--1130. \MR{1989544}

\bibitem{Carvajal04}
Xavier Carvajal, \emph{Local well-posedness for a higher order nonlinear
  {S}chr\"{o}dinger equation in {S}obolev spaces of negative indices},
  Electron. J. Differential Equations (2004), No. 13, 10. \MR{2036197}

\bibitem{Carvajal06}
\bysame, \emph{Sharp global well-posedness for a higher order {S}chr\"{o}dinger
  equation}, J. Fourier Anal. Appl. \textbf{12} (2006), no.~1, 53--70.
  \MR{2215677}

\bibitem{Ceballos05}
Juan~Carlos Ceballos~V., Ricardo Pavez~F., and Octavio~Paulo
  Vera~Villagr\'{a}n, \emph{Exact boundary controllability for higher order
  nonlinear {S}chr\"{o}dinger equations with constant coefficients}, Electron.
  J. Differential Equations (2005), No. 122, 31. \MR{2174554}

\bibitem{cerpacoron2013}
Eduardo Cerpa and Jean-Michel Coron, \emph{Rapid stabilization for a
  {K}orteweg-de {V}ries equation from the left {D}irichlet boundary condition},
  IEEE Trans. Automat. Control \textbf{58} (2013), no.~7, 1688--1695.
  \MR{3072853}

\bibitem{chen2018}
Mo~Chen, \emph{Stabilization of the higher order nonlinear {S}chr\"{o}dinger
  equation with constant coefficients}, Proc. Indian Acad. Sci. Math. Sci.
  \textbf{128} (2018), no.~3, Art. 39, 15. \MR{3814780}

\bibitem{coronlu2014}
Jean-Michel Coron and Qi~L\"{u}, \emph{Local rapid stabilization for a
  {K}orteweg-de {V}ries equation with a {N}eumann boundary control on the
  right}, J. Math. Pures Appl. (9) \textbf{102} (2014), no.~6, 1080--1120.
  \MR{3277436}

\bibitem{dasilva}
Patr\'{\i}cia~N. da~Silva and Carlos~F. Vasconcellos, \emph{On the
  stabilization and controllability for a third order linear equation}, Port.
  Math. \textbf{68} (2011), no.~3, 279--296. \MR{2832799}

\bibitem{Glass10}
O.~Glass and S.~Guerrero, \emph{Controllability of the {K}orteweg-de {V}ries
  equation from the right {D}irichlet boundary condition}, Systems Control
  Lett. \textbf{59} (2010), no.~7, 390--395. \MR{2724598}

\bibitem{staffilani2}
Carlos~E. Kenig and Gigliola Staffilani, \emph{Local well-posedness for higher
  order nonlinear dispersive systems}, J. Fourier Anal. Appl. \textbf{3}
  (1997), no.~4, 417--433. \MR{1468372}

\bibitem{koda85}
Yuji Kodama, \emph{Optical solitons in a monomode fiber}, vol.~39, 1985,
  Transport and propagation in nonlinear systems (Los Alamos, N.M., 1984),
  pp.~597--614. \MR{807002}

\bibitem{koda87}
Yuji Kodama and Akira Hasegawa, \emph{Nonlinear pulse propagation in a monomode
  dielectric guide}, IEEE Journal of Quantum Electronics \textbf{23} (1987),
  no.~5, 510--524.

\bibitem{smys4}
Miroslav Krstic, Bao-Zhu Guo, and Andrey Smyshlyaev, \emph{Boundary controllers
  and observers for {S}chr{\"o}dinger equation}, 2007 46th IEEE Conference on
  Decision and Control, IEEE, 2007, pp.~4149--4154.

\bibitem{KrsBook}
Miroslav Krstic and Andrey Smyshlyaev, \emph{Boundary control of {PDE}s},
  Advances in Design and Control, vol.~16, Society for Industrial and Applied
  Mathematics (SIAM), Philadelphia, PA, 2008, A course on backstepping designs.
  \MR{2412038}

\bibitem{Laurey97}
Corinne Laurey, \emph{The {C}auchy problem for a third order nonlinear
  {S}chr\"{o}dinger equation}, Nonlinear Anal. \textbf{29} (1997), no.~2,
  121--158. \MR{1446222}

\bibitem{Liu03}
Weijiu Liu, \emph{Boundary feedback stabilization of an unstable heat
  equation}, SIAM J. Control Optim. \textbf{42} (2003), no.~3, 1033--1043.
  \MR{2002146}

\bibitem{Caval19}
Mauricio A. Sepulveda~C. Marcelo M.~Cavalcanti, Wellington J.~Correa and
  Rodrigo~Vejar Asem, \emph{Finite difference scheme for a high order nonlinear
  {S}chrödinger equation with localized damping}, Stud. Univ. Babes-Bolyai
  Math. \textbf{64} (2019), no.~2, 161--172.

\bibitem{Marx18}
Swann Marx and Eduardo Cerpa, \emph{Output feedback stabilization of the
  {K}orteweg--de {V}ries equation}, Automatica J. IFAC \textbf{87} (2018),
  210--217. \MR{3733917}

\bibitem{eda}
T\"{u}rker \"{O}zsar\i and Eda Arabac\i, \emph{Boosting the decay of solutions
  of the linearised {K}orteweg--de {V}ries--{B}urgers equation to a
  predetermined rate from the boundary}, Internat. J. Control \textbf{92}
  (2019), no.~8, 1753--1763. \MR{3976014}

\bibitem{BatalOzsari2018-1}
T\"{u}rker \"{O}zsar{\i} and Ahmet Batal, \emph{Pseudo-backstepping and its
  application to the control of {K}orteweg--de {V}ries equation from the right
  endpoint on a finite domain}, SIAM J. Control Optim. \textbf{57} (2019),
  no.~2, 1255--1283. \MR{3934103}

\bibitem{Pazy}
A.~Pazy, \emph{Semigroups of linear operators and applications to partial
  differential equations}, Applied Mathematical Sciences, vol.~44,
  Springer-Verlag, New York, 1983. \MR{710486}

\bibitem{smys1}
Andrey Smyshlyaev and Miroslav Krstic, \emph{Closed-form boundary state
  feedbacks for a class of 1-{D} partial integro-differential equations}, IEEE
  Trans. Automat. Control \textbf{49} (2004), no.~12, 2185--2202. \MR{2106749}

\bibitem{smys2}
\bysame, \emph{Backstepping observers for a class of parabolic {PDE}s}, Systems
  Control Lett. \textbf{54} (2005), no.~7, 613--625. \MR{2142358}

\bibitem{smys3}
\bysame, \emph{Boundary control of an anti-stable wave equation with
  anti-damping on the uncontrolled boundary}, Systems Control Lett. \textbf{58}
  (2009), no.~8, 617--623. \MR{2542119}

\bibitem{staffilani1}
Gigliola Staffilani, \emph{On the generalized {K}orteweg-de {V}ries-type
  equations}, Differential Integral Equations \textbf{10} (1997), no.~4,
  777--796. \MR{1741772}

\bibitem{Taka00}
Hideo Takaoka, \emph{Well-posedness for the higher order nonlinear
  {S}chr\"{o}dinger equation}, Adv. Math. Sci. Appl. \textbf{10} (2000), no.~1,
  149--171. \MR{1769176}

\bibitem{Tang2013}
S.~Tang and M.~Krstic, \emph{Stabilization of linearized {K}orteweg-de {V}ries
  systems with anti-diffusion}, American control conference (2013).

\bibitem{Tang2015}
\bysame, \emph{Stabilization of linearized {K}orteweg-de {V}ries with
  antidiffusion by boundary feedback with non-collocated observation}, American
  control conference (2015).

\bibitem{xu2002soliton}
Zhiyong Xu, Lu~Li, Zhonghao Li, and Guosheng Zhou, \emph{Soliton interaction
  under the influence of higher-order effects}, Optics Communications
  \textbf{210} (2002), no.~3-6, 375--384.

\end{thebibliography}

\end{document}